\newtheorem{theorem}{Theorem}
\newtheorem{proposition}[theorem]{Proposition}
\newtheorem{corollary}[theorem]{Corollary}
\newtheorem{lemma}[theorem]{Lemma}
\theoremstyle{definition}
\newtheorem{definition}[theorem]{Definition}
\newtheorem{remark}[theorem]{Remark}
\newtheorem{example}[theorem]{Example}
\numberwithin{equation}{section}
\numberwithin{theorem}{section}
\renewcommand{\a}{\alpha}
\renewcommand{\b}{\beta}
\newcommand{\g}{\gamma}
\newcommand{\G}{\Gamma}
\renewcommand{\d}{\delta}
\newcommand{\D}{\Delta}
\renewcommand{\i}{\iota}
\renewcommand{\k}{\kappa}
\renewcommand{\l}{\lambda}
\renewcommand{\L}{\Lambda}
\newcommand{\m}{\mu}
\newcommand{\n}{\nu}
\renewcommand{\o}{\omega}
\renewcommand{\O}{\Omega}
\renewcommand{\r}{\rho}
\newcommand{\s}{\sigma}
\renewcommand{\SS}{\Sigma}
\renewcommand{\t}{\tau}
\renewcommand{\th}{\theta}
\newcommand{\e}{\varepsilon}
\newcommand{\f}{\varphi}
\newcommand{\F}{\Phi}
\newcommand{\x}{\xi}
\newcommand{\y}{\eta}
\newcommand{\z}{\zeta}
\newcommand{\p}{\psi}
\newcommand{\Ac}{{\mathcal A}}
\newcommand{\Cc}{{\mathcal C}}
\newcommand{\Dc}{{\mathcal D}}
\newcommand{\Fc}{{\mathcal F}}
\newcommand{\Gc}{{\mathcal G}}
\newcommand{\Lc}{{\mathcal L}}
\newcommand{\Pc}{{\mathcal P}}
\newcommand{\Qc}{{\mathcal Q}}
\newcommand{\Hc}{{\mathcal H}}
\newcommand{\R}{{\mathbb R}}
\newcommand{\Z}{{\mathbb Z}}
\renewcommand{\Pr}{{\mathbb P}}
\def\({\left(}
\def\){\right)}
\def\l\{{\left\{}
\def\r\}{\right\}}
\def\wt{\widetilde}
\def\wh{\widehat}
\def\wbar{\overline}
\def\id{{\rm id}}
\def\Cay{{\rm Cay}}
\def\Leb{{\rm Leb}}
\def\ev{{\rm ev\,}}
\def\1{{\bf 1}}
\def\gr{{\rm gr}}
\def\conj{{\bf conj}}
\def\ac{{\rm ac}}
\def\sing{{\rm sing}}
\def\H{{\rm H}}
\def\bS{{\bf S}}
\def\CAT{{\rm CAT}}
\def\Dil{{\rm Dil}}
\newcommand\numberthis{\addtocounter{equation}{1}\tag{\theequation}}
   \newcommand{\SC}[1]{
     {\color{red} \bf{(SC: #1)}}
   }
  \newcommand{\RT}[1]{
    {\color{blue} \bf{(RT: #1)}}
  }
\author{Stephen Cantrell}
\address{Department of Mathematics, 
University of Warwick, Coventry, CV4 7AL, UK}
\email{stephen.cantrell@warwick.ac.uk}
\author{Ryokichi Tanaka}
\address{Department of Mathematics, 
Kyoto University, Kyoto 606-8502, JAPAN}
\email{rtanaka@math.kyoto-u.ac.jp}
\date{\today. 
\\
2020 \textit{Mathematics Subject Classification}. Primary 20F67; Secondary 37D35, 37D40. 
\\
\textit{Key words and phrases}.
Hyperbolic group, the Manhattan curve, Patterson-Sullivan measure, symbolic dynamics and Hausdorff dimension}
\title[The Manhattan curve and rough similarity rigidity]
{The Manhattan curve, ergodic theory of\\ topological flows and rigidity}
\begin{document}

\maketitle

\begin{abstract}
For every non-elementary hyperbolic group, we introduce the Manhattan curve associated to each pair of left-invariant hyperbolic metrics which are quasi-isometric to a word metric. 
It is convex; we show that it is continuously differentiable
and moreover is a straight line if and only if the corresponding two metrics are roughly similar, i.e., they are within bounded distance after multiplying by a positive constant.
Further, we prove that  the Manhattan curve associated to two strongly hyperbolic metrics
 is twice continuously differentiable.
The proof is based on the ergodic theory of topological flows associated to general hyperbolic groups and analyzing the multifractal structure of Patterson-Sullivan measures.
We exhibit some explicit examples including a hyperbolic triangle group and compute the exact value of the mean distortion for pairs of word metrics.
\end{abstract}

\section{Introduction}

Let $\Gamma$ be a non-elementary hyperbolic group.
Given a pair of hyperbolic metrics $d$ and $d_\ast$ which are left-invariant and quasi-isometric to a word metric on $\G$ (hence they are quasi-isometric each other), 
we determine exactly when they are \textit{roughly similar}, i.e., $d$ and $d_\ast$ are within bounded distance after rescaling by a positive constant,
in terms of the \textit{Manhattan curve} for the pair of metrics.

For $d$ (resp.\ $d_\ast$), let us define the stable translation length by
\[
\ell[x]:=\lim_{n \to \infty} \frac{1}{n}d(o, x^n) \quad \text{for $x \in \G$},
\]
(resp.\ $\ell_\ast[x]$),
where the limit exists since the function $n \mapsto d(o, x^n)$ is subadditive,
and $o$ denotes the identity element in $\G$.
Note that the stable translation length for $x$ depends only on the conjugacy class of $x$, and thus defines the function on the set of conjugacy classes $\conj$ in $\G$.
Let us consider the following series with two parameters
\[
\mathcal{Q}(a,b) = \sum_{[x]  \in  \conj }\exp(-a\ell_\ast[x] - b\ell[x]) \quad \text{for $a,b \in \mathbb{R}$}.
\]
The Manhattan curve $\Cc_M$ for a pair $(d, d_\ast)$ is defined by 
the boundary of the following convex set
\[
\big\{(a,b) \in \mathbb{R}^2 \ : \ \Qc(a, b)<\infty\big\}.
\]
This curve was introduced by Burger in \cite{BurgerManhattan} for a class of groups acting on a non-compact symmetric space of rank one.

Let $\Dc_\G$ be the set of hyperbolic metrics which are left-invariant and quasi-isometric to some (equivalently, every) word metric in $\G$.
Recall that for $d, d_\ast \in \Dc_\G$, 
we say that $d$ and $d_\ast$ are roughly similar if there exist constants $\t>0$ and $C \ge 0$ such that
\[
|d_\ast(x, y)-\t d(x, y)| \le C \quad \text{for all $x, y \in \G$}.
\]

\begin{theorem}\label{Thm:thm1}
Let $\G$ be a non-elementary hyperbolic group.
For every pair $(d, d_\ast)$ in $\Dc_\G$,
the Manhattan curve $\Cc_M$ for $(d, d_\ast)$ is continuously differentiable,
and it is a straight line
if and only if $d$ and $d_\ast$ are roughly similar.
\end{theorem}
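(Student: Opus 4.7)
The plan is to realize the Manhattan curve as the zero set of a topological pressure function on a symbolic model of $\G$, derive differentiability from analytic dependence of pressure in the thermodynamic formalism, and obtain the rigidity dichotomy via a Liv\v{s}ic-type cohomological argument.

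First, I would set up a topologically mixing subshift of finite type $(\SS,\s)$, for instance from a strongly Markov automatic structure on $\G$, whose periodic orbits correspond to conjugacy classes in $\G$. On this shift, for each $d \in \Dc_\G$ I would construct a H\"older potential $\f_d\colon\SS\to\R$ whose Birkhoff sums recover $d(o,x)$ up to bounded error and whose periodic sums recover $\ell[x]$ (and similarly $\f_{d_\ast}$ for $\ell_\ast$). The Poincar\'e series $\Qc(a,b)$ is then comparable to a dynamical zeta-type sum, so $\Qc(a,b)$ diverges precisely when the topological pressure $P(a,b):=P_\s(-a\f_{d_\ast}-b\f_d)$ is positive; the Manhattan curve $\Cc_M$ therefore coincides with the level set $\{P(a,b)=0\}$.

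Second, for continuous differentiability, I would invoke the classical fact that pressure is real-analytic in H\"older potentials on a mixing shift of finite type, with gradient $\nabla P(a,b) = \bigl(-\int \f_{d_\ast}\,d\m_{a,b},\ -\int \f_d\,d\m_{a,b}\bigr)$, where $\m_{a,b}$ denotes the unique equilibrium state for $-a\f_{d_\ast}-b\f_d$. Both integrals are strictly positive, since both metrics are proper and grow linearly along generic orbits. The implicit function theorem applied to $P(a,b)=0$ then yields real-analyticity of $\Cc_M$, and in particular continuous differentiability.

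Third, for the rigidity dichotomy, the easy direction is direct: if $|d_\ast(x,y)-\t d(x,y)|\le C$ uniformly, then $\ell_\ast[x]=\t\ell[x]$ for every conjugacy class, so $\Qc(a,b)=\sum_{[x]}\exp(-(a\t+b)\ell[x])$ and $\Cc_M$ is the line $a\t+b=v$, where $v$ is the exponential growth rate of $\ell$. Conversely, if $\Cc_M$ is a straight line of slope $-\t$, then $\nabla P$ has constant direction along $\Cc_M$, forcing all equilibrium states $\m_{a,b}$ on the line to share the ratio $\int\f_{d_\ast}\,d\m/\int\f_d\,d\m=\t$. A standard convexity-uniqueness argument identifies these equilibrium states, so by Liv\v{s}ic's theorem $\f_{d_\ast}-\t\f_d$ is a H\"older coboundary; reading this back through the coding shows that $|d_\ast(o,x)-\t d(o,x)|$ is uniformly bounded on $\G$, and left $\G$-invariance of the two metrics upgrades this to rough similarity in full.

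The principal technical obstacle is the first step: producing a single symbolic coding that respects \emph{both} $d$ and $d_\ast$ simultaneously so that both $\f_d$ and $\f_{d_\ast}$ are genuinely H\"older on $(\SS,\s)$, and for which the periodic-orbit/conjugacy-class correspondence is precise enough for the Poincar\'e-series/pressure comparison. This requires careful control of how Gromov products for each metric vary on the Gromov boundary; for word metrics a strongly Markov automatic structure suffices, but a general pair in $\Dc_\G$ demands more delicate boundary-regularity estimates. Once this foundational coding is in hand, the pressure calculus and Liv\v{s}ic rigidity proceed in a standard manner.
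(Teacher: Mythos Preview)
Your strategy has two structural gaps that are not merely technical.

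First, the subshift coming from a Cannon automatic structure is in general neither topologically mixing nor even transitive; the underlying directed graph can have several maximal recurrent components with no paths between them. Consequently the pressure $P(a,b)$ is not a single analytic function but the maximum of finitely many component-wise pressures, the equilibrium state is not unique, and Liv\v{s}ic's theorem (which needs transitivity) is unavailable in the form you invoke. The paper confronts exactly this issue in Section~\ref{Sec:C2}: even when H\"older potentials \emph{do} exist (strongly hyperbolic or word metrics), one must prove separately that the first and second derivatives of the component pressures agree on every maximal component, and this yields only $C^2$, not real analyticity. Your claimed implicit-function step would give $C^1$ only after that matching is established, and the coboundary conclusion in your rigidity argument does not follow from Liv\v{s}ic on a non-transitive shift.

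Second, for a general $d\in\Dc_\G$ there is no known construction of a H\"older potential $\f_d$ on any automatic coding whose Birkhoff sums recover $d(o,x)$ up to bounded error. The paper produces such potentials only for word metrics (Example~\ref{Ex:combable1}) and strongly hyperbolic metrics (Example~\ref{Ex:combable2}); the obstruction for arbitrary $d\in\Dc_\G$ is precisely that the Busemann function is only a quasi-cocycle, so the would-be potential is not H\"older. Your remark that this ``demands more delicate boundary-regularity estimates'' understates the problem: no such estimates are known, and this is why the paper proves Theorem~\ref{Thm:thm1} by an entirely different route. It builds a one-parameter family of Patterson-Sullivan measures $\m_{a,b}$ directly on $\partial\G$, pushes them to flow-invariant measures on Mineyev's topological flow space $\Fc_\k$, proves weak-$\ast$ continuity in $(a,b)$ via ergodicity (Corollary~\ref{Cor:limit}), and identifies $-\th'(a)$ with a local intersection number $\t_{a,b}$ that is continuous for this reason (Lemma~\ref{Lem:tau}, Theorem~\ref{Thm:C1}). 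The rigidity half (Theorem~\ref{Thm:rigidity}) uses a Lebesgue differentiation argument comparing $\m$, $\m_\ast$ and $\m_{v_\ast/2,\,v/2}$ on shadows, not a cohomological equation. Your thermodynamic plan is, in spirit, the method of Section~\ref{Sec:C2}, but it applies only to the restricted class of metrics in Theorem~\ref{Thm:thm3}, and even there requires the additional multi-component analysis you have not addressed.
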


Note that, if $v$ (resp.\ $v_\ast$) is the abscissa of convergence for $\Qc(0, b)$ in $b$ (resp.\ $\Qc(a, 0)$ in $a$),
then 
\[
v:=\lim_{r \to \infty}\frac{1}{r}\log \#B(o, r) \quad \text{where \hspace{1mm} $B(o, r):=\{x \in \G \ : \ d(o, x) \le r\}$},
\]
(similarly $v_\ast$ for $d_\ast$) and $\#B$ denotes the cardinality of $B$. In particular $(0, v)$ and $(v_\ast, 0)$ lie on $\Cc_M$ (see Section \ref{Sec:general}).

\begin{theorem}\label{Thm:thm2}
Let $\G$ be a non-elementary hyperbolic group.
For every pair $(d, d_\ast)$ in $\Dc_\G$,
the following limit exists:
\begin{equation}\label{Eq:distortion}
\t(d_\ast/d):=\lim_{r \to \infty}\frac{1}{\#B(o, r)}\sum_{x \in B(o, r)}\frac{d_\ast(o, x)}{r},
\end{equation}
where the balls $B(o, r)$ are defined for $d$, and we have
\begin{equation}\label{Eq:distortion_ineq}
\t(d_\ast/d) \ge \frac{v}{v_\ast}.
\end{equation}
Moreover, the following are equivalent:
\begin{enumerate}
\item[\upshape{(1)}] the equality $\t(d_\ast/d)=v/v_\ast$ holds,
\item[\upshape{(2)}] there exists a constant $c>0$ such that $\ell_\ast[x]=c\ell[x]$ for all $[x] \in \conj$, and
\item[\upshape{(3)}] $d$ and $d_\ast$ are roughly similar.
\end{enumerate}
\end{theorem}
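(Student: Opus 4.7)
The plan is to identify the mean distortion $\tau(d_\ast/d)$ with the negated slope of the Manhattan curve $\Cc_M$ at the point $(0,v)$, and then extract both the inequality and the three equivalences from convexity together with Theorem \ref{Thm:thm1}. Parametrize $\Cc_M$ as the graph $b=\theta(a)$ of a convex decreasing function $\theta$ defined on a neighborhood of $[0,v_\ast]$, with $\theta(0)=v$ and $\theta(v_\ast)=0$ (well-posed because $\Qc$ is decreasing in each variable).

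The main technical step is to prove
\[
\lim_{r\to\infty}\frac{1}{r}\log Z_r(a)=\theta(a),\qquad Z_r(a):=\sum_{x\in B(o,r)} e^{-a d_\ast(o,x)},
\]
uniformly for $a$ in a neighborhood of $0$. The Manhattan curve is defined via the series $\Qc$ over conjugacy classes and stable translation lengths, whereas $Z_r$ is a sum over lattice points in balls with displacements; bridging these two viewpoints (periodic orbits of the topological flow versus orbit points of $\Gamma$) is the principal obstacle, and is the place where the symbolic coding of $\Gamma$, the ergodic theory of the associated topological flow, and Patterson--Sullivan theory for the pair $(d,d_\ast)$ that underlies Theorem \ref{Thm:thm1} all have to be brought to bear.

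Granting this identification, observe that for each $r$ the function $a\mapsto r^{-1}\log Z_r(a)$ is convex on $\R$, being a normalized log-sum-exp. Since $\theta$ is $C^1$ at $a=0$ by Theorem \ref{Thm:thm1}, the classical fact that pointwise limits of convex functions converge in derivative at interior points of differentiability of the limit gives
\[
\tau(d_\ast/d)=\lim_{r\to\infty}\frac{-1}{r}\cdot\frac{1}{Z_r(0)}\frac{d Z_r}{da}(0)=-\theta'(0),
\]
because the middle expression is exactly $(r\,\#B(o,r))^{-1}\sum_{x\in B(o,r)} d_\ast(o,x)$. This establishes both existence of the limit (\ref{Eq:distortion}) and identifies its value. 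The inequality (\ref{Eq:distortion_ineq}) then follows immediately: convexity of $\theta$ together with $\theta(0)=v$ and $\theta(v_\ast)=0$ forces $\theta'(0)\le -v/v_\ast$, hence $\tau(d_\ast/d)\ge v/v_\ast$.

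For the chain of equivalences, (1)$\Leftrightarrow$(3) is now transparent: equality $\tau(d_\ast/d)=v/v_\ast$ is equivalent by convexity and the endpoint conditions to $\theta$ being affine on $[0,v_\ast]$, i.e., to $\Cc_M$ being a straight line, which Theorem \ref{Thm:thm1} equates with rough similarity of $d$ and $d_\ast$. The implication (3)$\Rightarrow$(2) is elementary: applying the rough-similarity bound to powers $x^n$ and dividing by $n$ yields $\ell_\ast[x]=\tau\ell[x]$. Conversely, if (2) holds with constant $c$, then $\Qc(a,b)=\sum_{[x]\in\conj}\exp(-(ac+b)\ell[x])$, which converges precisely when $ac+b>v$, so $\Cc_M$ coincides with the straight line $\{ac+b=v\}$, and a final appeal to Theorem \ref{Thm:thm1} delivers (3), closing the chain.
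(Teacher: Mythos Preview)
Your approach is correct and, for the identification $\tau(d_\ast/d)=-\theta'(0)$, genuinely different from the paper's. The paper (Theorem~\ref{Thm:distortion}) proceeds by first showing, via the ergodic theorem on the topological flow (Lemma~\ref{Lem:tau}), that the local intersection number equals a constant $\tau_{0,v}$ for $\mu_{0,v}$-a.e.\ boundary point; it then transfers this to a law-of-large-numbers statement for the uniform measure on spheres $S(n,R)$ using the shadow covering and the Patterson--Sullivan comparison, and finally passes from spheres to balls. Your argument bypasses this concentration step entirely: it deduces $\phi_r'(0)\to\theta'(0)$ directly from the classical fact that derivatives of pointwise-convergent convex functions converge at points of differentiability of the limit. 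This is cleaner and gives existence of the limit together with its identification in one stroke; the paper's route, on the other hand, yields the finer concentration statement that later feeds into the large-deviation principle (Theorem~\ref{Thm:LDT}).

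One correction: the ``principal obstacle'' you flag is much smaller than you suggest. Bridging $\Qc$ (conjugacy classes) and $\Pc$ (group elements) is exactly Proposition~\ref{Prop:conjugacy}, and the sphere estimate $\sum_{x\in S(n,R_0)}e^{-ad_\ast(o,x)}\asymp e^{\theta(a)n}$ is Corollary~\ref{Cor:ab}(2), a direct consequence of the Patterson--Sullivan construction, with no symbolic coding or flow required. Summing a geometric series then gives $r^{-1}\log Z_r(a)\to\theta(a)$ for every $a$ with $\theta(a)>0$, hence on a neighborhood of $0$; pointwise (not uniform) convergence is all the convex-derivative lemma needs. The remainder of your argument---the inequality from convexity of $\theta$ and the endpoints $(0,v)$, $(v_\ast,0)$, and the chain (1)$\Leftrightarrow$(3) via Theorem~\ref{Thm:thm1}, (3)$\Rightarrow$(2) by stabilizing, (2)$\Rightarrow$(3) by reading off that $\Cc_M$ is a line---matches the paper's (cf.\ Lemma~\ref{Lem:asymprigidity} and the proof of Theorem~\ref{Thm:thm2}).
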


Let us call $\t(d_\ast/d)$ defined by \eqref{Eq:distortion} the \textit{mean distortion} of $d_\ast$ over $d$, and the inequality \eqref{Eq:distortion_ineq} the \textit{distortion inequality}.
The proof of Theorem \ref{Thm:thm2} is based on Theorem \ref{Thm:thm1} and relies on the following property of the Manhattan curve $\Cc_M$:
the slope of $\Cc_M$ at $(0, v)$ is $-\t(d_\ast/d)$ (Theorem \ref{Thm:distortion}).

If both metrics $d$ and $d_\ast$ are strongly hyperbolic (e.g., induced by an isometric cocompact action on a $\CAT(-1)$-space; see Definition \ref{Def:SH}),
then we have the following.

\begin{theorem}\label{Thm:thm3}
Let $\Gamma$ be a non-elementary hyperbolic group. 
If $d$ and $d_\ast$ are strongly hyperbolic metrics in $\Dc_\G$,
then the Manhattan curve $\mathcal{C}_M$ for $(d, d_\ast)$ is twice continuously differentiable.
\end{theorem}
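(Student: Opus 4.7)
The plan is to realize $\Cc_M$ as the zero locus of a topological pressure function in two real parameters, and then combine the Hölder regularity of Busemann cocycles (which is the defining virtue of strong hyperbolicity) with the smoothness theory of pressure for Hölder potentials. Convexity of $\Cc_M$ together with Theorem \ref{Thm:thm1} lets us write the curve locally as a graph $b = \theta(a)$ on an open interval of the $a$-axis, and so it suffices to prove that $\theta$ is twice continuously differentiable.

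First I would use the topological flow attached to $\G$, built earlier in the paper, to re-express $\Qc(a, b)$ as a weighted periodic-orbit sum: under the correspondence between conjugacy classes in $\G$ and closed orbits of the flow, the stable translation lengths $\ell[x]$ and $\ell_\ast[x]$ become periods computed against the Busemann cocycles of $d$ and $d_\ast$. Passing to a symbolic model — a topological Markov shift equipped with a roof function — the two cocycles give rise to potentials $\phi$ and $\phi_\ast$ on the shift, and $\Cc_M$ is characterized by the pressure equation $P(-a \phi_\ast - b \phi) = 0$, where $P$ denotes topological pressure on the symbolic space.

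The decisive use of strong hyperbolicity is that the Busemann cocycles extend to Hölder continuous functions on the Gromov boundary with respect to a visual metric, and thus pull back to genuinely Hölder continuous potentials $\phi, \phi_\ast$ on the shift. The Ruelle–Parry–Pollicott thermodynamic formalism then yields that $(a, b) \mapsto P(-a \phi_\ast - b \phi)$ is real-analytic on the interior of its finiteness domain, with partial derivatives $\partial_a P = -\int \phi_\ast \, d\mu_{a,b}$ and $\partial_b P = -\int \phi \, d\mu_{a,b}$, where $\mu_{a,b}$ is the unique equilibrium state associated to $-a\phi_\ast - b \phi$. Strict positivity of $\int \phi \, d\mu_{a,b}$ — reflecting the fact that $\phi$ encodes a genuinely exponentially growing Busemann roof — lets me invoke the analytic implicit function theorem to conclude that $\theta$ is real-analytic, and in particular $C^2$.

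The main obstacle is that the symbolic coding of a general hyperbolic group is not a mixing shift of finite type: the construction only produces a subshift that is block-transitive on its strongly connected components, with a suspension over a roof whose own regularity must be tracked. One therefore needs to apply the thermodynamic formalism on each maximal transitive component, verify that the equilibrium state on the dominant component projects down to the Patterson–Sullivan measure appearing on $\partial \G$, and check that the spectral-gap arguments underlying the analyticity of pressure survive under the suspension by the Busemann roof function. Once this book-keeping — carried out in the multifractal and topological-flow sections of the paper — is in place, the $C^2$ regularity of $\Cc_M$ follows at once from the analytic dependence of pressure on Hölder potentials.
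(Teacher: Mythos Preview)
Your high-level strategy is right: encode both metrics as H\"older potentials on a symbolic model, write the curve as the zero set of a pressure function, and apply the implicit function theorem. You also correctly isolate the main obstruction, namely that the coding of a general hyperbolic group need not be transitive. But the way you dispose of this obstruction (``book-keeping'') misses the actual content of the argument, and your conclusion of real-analyticity is too strong.

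The point is that the pressure $\Pr(\Psi_{a,b})$ equals the \emph{maximum} of $\Pr_{\Cc}(\Psi_{a,b})$ over the components $\Cc$, and a maximum of finitely many real-analytic functions is in general not even $C^1$. There is no single ``dominant component'': the set of maximal components can change with $(a,b)$, and at any given parameter there may be several of them. What must be proved---and what the paper does prove, in Proposition \ref{Prop:multi}---is that the first \emph{and second} partial derivatives of $\Pr_{\Cc}(\Psi_{a,b})$ agree on all maximal components. The first-order agreement uses ergodicity of the Patterson--Sullivan measure together with the $\G$-invariance of the level sets of the local intersection number; the second-order agreement requires a further argument identifying the variance/covariance on each component via Birkhoff averages of $(S_n\Psi_i - nJ_i)(S_n\Psi_j - nJ_j)$ and again invoking ergodicity on $\partial\G$. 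Only after this is established does one know that the max is $C^2$; real-analyticity is \emph{not} obtained in general (it would follow if the automaton had a single recurrent component, as the paper notes after Theorem \ref{Thm:2nd-d-sh}).

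A secondary difference: the paper does not work with the topological flow and a suspension. Instead it fixes an auxiliary word metric $d_S$, takes the associated Cannon automatic structure, and encodes both strongly hyperbolic metrics simultaneously on that shift via their Busemann functions (Example \ref{Ex:combable2}). This produces a two-parameter pressure $\wt\th(a,b)=\Pr(a\Psi_\ast+b\Psi)$ with $\wt\th(a,b)=0$ iff $b=\th(a)$, and the implicit function theorem is applied to $\wt\th$. Your flow/suspension route may be workable, but you would still have to confront the same multiplicity-of-components problem, and it does not appear in ``the multifractal and topological-flow sections'' in a form that can be quoted.
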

We show this in Theorem \ref{Thm:2nd-d-sh}
and prove the analogous result for  pairs of word metrics in Theorem \ref{Thm:2nd-d-words}.

The various methods we use throughout our work allow us to connect the geometrical features of $\Cc_M$ with properties of the corresponding metrics.
For example, by comparing our methods to those of the first author in \cite{stats}, we connect the second differential  of $\Cc_M$ at $(0, v)$ with the variance of a central limit theorem for uniform counting measures on spheres (see Theorem \ref{thmwords} and the remark thereafter).  
We also connect the asymptotic gradients of $\Cc_M$ with the dilation constants
\[
\Dil_-:=\inf_{[x] \in \conj_{>0}}\frac{\ell_\ast[x]}{\ell[x]} \quad \text{and} \quad \Dil_+:=\sup_{[x] \in \conj_{>0}}\frac{\ell_\ast[x]}{\ell[x]},
\]
where $\conj_{>0}$ is the set of $[x] \in \conj$ such that $\ell[x]$ (and hence $\ell_\ast[x]$) is non-zero. We also show that, for every pair of word metrics, $\Dil_-$ and $\Dil_+$ are rational (Proposition \ref{Prop:extreme}).
Note that $\Dil_-=\Dil_+$ if and only if $\t(d_\ast/d)=v/v_\ast$ by Theorem \ref{Thm:thm2}.

\subsection{Historical backgrounds}

Burger introduced the Manhattan curve associated to a finitely generated, non-elementary group $\Gamma$ which acts on a rank one symmetric space $X$ properly discontinuously, convex cocompactly and without fixed points \cite{BurgerManhattan}.
For each convex cocompact realization of $\Gamma$ into the isometry group of $X$ there is a natural length function defined on the conjugacy classes of $\Gamma$: one can assign to each conjugacy class in $\Gamma$ the geometric length of the corresponding closed geodesic in the quotient.
Burger's Manhattan curve is defined using two of these length functions.
He showed that the curve is continuously differentiable and it is a straight line if and only if the isomorphism of lattices associated to the two corresponding realizations extends to an isomorphism of the ambient Lie groups \cite[Theorem 1]{BurgerManhattan}.
An important special case includes two isomorphic copies of torsion-free cocompact Fuchsian groups acting on the hyperbolic plane.
In this case, Sharp has shown that the associated Manhattan curve is real analytic by employing thermodynamic formalism for geodesic flows \cite{SharpManhattan}.
Recently, Kao has shown that the Manhattan curve is real analytic for a class of non-compact hyperbolic surfaces \cite{Kao}.
A similar rigidity problem related to the Manhattan curve is discussed for cusped Hitchin representations in \cite{BrayCanaryKaoMartone}.

Kaimanovich, Kapovich and Schupp have extensively studied similar problems for a free group $F$ of rank at least $2$ \cite{KKS}. 
They compared the pair of word metrics  for the generating sets $S$ and $\phi(S)$ where $S$ is the free generating set and $\phi$ is an automorphism of $F$.
They introduced the \textit{generic stretching factor} $\lambda(\phi)$ which is defined as the average or typical growth rate of $|\phi(x_n)|/n$ when $x_n$ is chosen uniformly at random from the words of length $n$ in $S$.
Using our terminology it amounts to considering $\t(S/\phi(S))=\lambda(\phi)$.
An automorphism $\phi$ of $F$ is called simple if it is a composition of an inner automorphism and a permutation of $S$.
It has been shown that $\phi$ is simple if and only if $\lambda(\phi)=1$ \cite[Theorem F]{KKS} (in which case $\phi$ gives rise to a rough similarity on the Cayley graph of $(F, S)$).
Sharp has pointed out its connection to the corresponding Manhattan curve \cite{SharpFree}:
he has identified $\lambda(\phi)$ with the slope of the normal line at the point $(\log(2k-1), 0)$ where $k$ is the rank of $F$. 
The mean distortion is a generalization of the generic stretching factor for hyperbolic groups and has appeared in the work of Calegari and Fujiwara \cite{CalegariFujiwara2010}
(though the identification is not immediately clear at first sight).
They have shown that for every pair of word metrics the distortion inequality \cite[Remark 4.28]{CalegariFujiwara2010} holds,
and also that the mean distortion is an algebraic integer \cite[Corollary 4.27]{CalegariFujiwara2010}.
Furthermore, a (possibly degenerate) central limit theorem (CLT) has been shown \cite[Theorem 4.25]{CalegariFujiwara2010} (see also \cite[Section 3.6]{Calegari}),
and their result has been generalized in
\cite[Theorem 1.2]{stats} and \cite[Theorem 1.1]{GTT}.
Moreover, the variance of CLT is zero if and only if two metrics are roughly similar \cite[Lemma 5.1]{stats} and \cite[Theorem 1.1]{GTT}.

Furman has proposed a general framework which can be used to compare metrics belonging to $\Dc_\Gamma$ when $\Gamma$ is a torsion-free hyperbolic group \cite{FurmanCoarse}.
In his work he introduced an abstract geodesic flow in a measurable category for a general hyperbolic group,
and showed that two metrics are roughly similar if and only if the associated Bowen-Margulis currents are not mutually singular on the boundary square $\partial^2 \G$ (which is the set of two distinct ordered pairs of points in the Gromov boundary $\partial \G$) \cite[Theorem 2]{FurmanCoarse}. He also claims that two metrics in $\Dc_\Gamma$ are roughly similar if and only if their associated translation length functions are proportional. A main motivation of the present paper is to  incorporate the properties of the Manhattan curve into rigidity statements that characterize rough similarity.
As a consequence we strengthen and generalize Furman's result for all non-elementary hyperbolic groups.

\subsection{Outline of proofs}

Let us sketch the proof of Theorem \ref{Thm:thm1}.
First we consider the following series in $a, b \in \R$,
\[
\Pc(a, b)=\sum_{x \in \G}\exp(-a d_\ast(o, x)-b d(o, x)),
\]
and identify the Manhattan curve $\Cc_M$ with the graph of $b=\th(a)$ 
where $\th(a)$ is the abscissa of convergence in $b$ for each fixed $a$ (Proposition \ref{Prop:conjugacy}).
In what follows, we also call the function $\th$ which parametrizes $\Cc_M$ the Manhattan curve.
Next we perform the Patterson-Sullivan construction for $a d_\ast+b d$ for $(a, b)$ with $b=\th(a)$ 
and construct a one-parameter family of measures $\m_{a, b}$ on $\partial \G$ for $(a, b) \in \Cc_M$ (Corollary \ref{Cor:ab}).
A key step in the proof of Theorem \ref{Thm:thm1} is understanding the variation of $\m_{a, b}$ in $a$.
Every measure $\m_{a, b}$ (which is not necessarily unique) is ergodic with respect to the $\G$-action on $\partial \G$ for each fixed $(a, b) \in \Cc_M$ and the
proof of this is adapted from classical arguments in \cite{Coornaert1993}.
Moreover, $\m_{a, b}$ is doubly ergodic, i.e., $\m_{a, b} \otimes \m_{a, b}$ is ergodic with respect to the diagonal action of $\G$ on $\partial^2 \G$.
Proving this amounts to showing that the geodesic flow is ergodic if it is properly defined e.g., in the case of manifolds (where we owe this idea to the work of Kaimanovich \cite{KaimanovichInvariant}).
Furman has constructed a framework where machinery concerning geodesic flows works for general hyperbolic groups \cite{FurmanCoarse} (see also \cite{BaderFurman}),
but the space in his setup has only a measurable structure 
and so difficulties arise when we discuss a family of flow-invariant measures (on the same space) and carry out limiting arguments with those measures.
We employ a compact model of geodesic flow defined by Mineyev \cite{MineyevFlow},
and show that there exist associated flow-invariant probability measures $m_{a, b}$ (which is unique) for each $(a, b) \in \Cc_M$ and they are continuous in $a \in \R$ in the weak-star topology (Section \ref{Sec:top-flow}).

Finally we define the \textit{local intersection number} $\t(\x)$ at $\x \in \partial \G$ for a pair $(d, d_\ast)$
as the limit of $d_\ast(\g_\x(0), \g_\x(t))/d(\g_\x(0), \g_\x(t))$ as $t\to\infty$, where $\g_\x$ is a quasi-geodesic such that $\g_\x(t) \to \x$ as $t \to \infty$,
if the limit exists.
We show that the limit exists and is equal to a constant $\t_{a, b}$ for $\m_{a, b}$-almost every $\x \in \partial \G$.
Furthermore $\t_{a, b}$ is continuous in $(a, b) \in \Cc_M$ (Section \ref{Sec:proofofC1}).
In fact, $\th'(a)=-\t_{a, b}$ if $\th$ is differentiable at $a \in \R$.
Note that $\th$ is convex (as seen from the definition) and thus is continuous everywhere and differentiable at all but at most countably many points.
Since we have shown that $\t_{a, b}$ is continuous in $a$, the function $\th$ is $C^1$.
In the last step, where we show that $\t_{a, b}$ coincides with $-\th'(a)$ (if it exists),
we prove that $\m_{a, b}$ assigns a full measure to the set where the local intersection number $\t(\x)$ is defined and is equal to $-\th'(a)$.
This discussion naturally leads us to study the multifractal spectrum of $\t(\x)$, i.e., to determine all the possible values $\a$ of $\t(\x)$ and the size of the level sets for which $\t(\x)=\a$.
This spectrum is actually the multifractal profile of a Patterson-Sullivan measure for $d_\ast$, where the Hausdorff dimension is defined by a quasi-metric associated with $d$ (Theorem \ref{Thm:MFS}).
Furthermore, the profile function is the Legendre transform of the Manhattan curve and is defined on the interval $(\Dil_-, \Dil_+)$.
(It would be interesting if it is defined on $[\Dil_-, \Dil_+]$, including the two extrema, which is indeed the case for some special case, e.g., word metrics. See Remark \ref{Rem:MFS} and our investigation of this point in Section \ref{Sec:extreme}.)
Based on this discussion, we show that the Manhattan curve $\Cc_M$ for a pair $(d, d_\ast)$ is a straight line if and only if
$d$ and $d_\ast$ are roughly similar (Theorem \ref{Thm:rigidity}) and thus conclude Theorem \ref{Thm:thm1}.
The discussion also yields Theorem \ref{Thm:thm2} by identifying $-\th'(0)$ with the mean distortion $\t(d_\ast/d)$ 
(see Theorem \ref{Thm:distortion}).

Let us briefly describe the proof of Theorem \ref{Thm:thm3} as our methods are quite different to those used in the proof of Theorem \ref{Thm:thm1}.
We introduce a subshift of finite type coming from the coding built upon Cannon's automatic structure. 
This allows us to employ techniques from thermodynamic formalism which we
apply within the strengthened thermodynamic framework of Gou\"ezel \cite{GouezelLocalLimit} (Section \ref{Sec:C2}).
This enables us to relate $\Cc_M$ to a family of real analytic (pressure) functions associated to suitable potentials on the subshift.
Unfortunately, in general, we do not know whether the subshift is topologically transitive, i.e., whether Cannon's automatic structure has a single recurrent component which is dominant.
This introduces additional difficulties.
Since the automatic structure may contain various large components,
our pressure functions of interest are not coming from a single component.
In particular, we must compare the corresponding dominating eigenvalues of a collection of transfer operators, each of which depend on one of these components.
To overcome these difficulties we use the following ideas.
First, we introduce a multi-parameter family of Patterson-Sullivan measures developed in Section \ref{Sec:PS}. 
Second, we compare these measures to a collection of pressure functions and show that the first and second order partial derivatives of these functions coincide at certain points; our proof of this latter part is developed upon an argument of Calegari and Fujiwara \cite[Section 4.5]{CalegariFujiwara2010}. 
This allows us to show that a function $\wt \theta(a,b)$, which we obtain from gluing together our analytic pressure functions, is twice continuously differentiable.
Finally, we realize $\Cc_M$ as the solution to $\wt \theta(a, b) =0$ and then apply the implicit function theorem to conclude the proof.

After showing Theorem \ref{Thm:thm3} (Theorem \ref{Thm:2nd-d-sh}), 
we investigate further properties of the Manhattan curve for strongly hyperbolic and word metrics.
For a pair of word metrics,
we obtain a finer version of Theorem \ref{Thm:thm1} and show that two word metrics are not roughly similar if and only if the corresponding Manhattan curve is globally strictly convex (Theorem \ref{thmwords}).
Furthermore we show that there are two lines, the slopes of which we can express explicitly, which are within bounded distance of the Manhattan curve at $\pm \infty$ (Proposition \ref{Prop:extreme}).
As an application we obtain a  precise large deviation principle for $d_\ast(o, x_n)/n$ when $x_n$ uniformly distributes on the sphere $d(o, x)=n$, for word metrics $d, d_\ast$. We identify the effective domain (where the rate function is finite) with $[\Dil_-, \Dil_+]$ (Theorem \ref{Thm:LDT} and Corollary \ref{Cor:ldt}).

\subsection{Organization of the paper}
In Section \ref{Sec:hyperbolic}, we review basic theory on hyperbolic groups, a classical Patterson-Sullivan construction in a generalized form and a topological flow.
In Section \ref{Sec:general}, we show Theorems \ref{Thm:thm1} and \ref{Thm:thm2}.
In Section \ref{Sec:C2}, we discuss thermodynamic formalism.
We show Theorem \ref{Thm:thm3} in Theorem \ref{Thm:2nd-d-sh}, the analogous result for word metrics in Theorem \ref{Thm:2nd-d-words},
a stronger version of Theorem \ref{Thm:thm1} for word metrics in Theorem \ref{thmwords},
and that $\Dil_-$ and $\Dil_+$ are rational for word metrics in Proposition \ref{Prop:extreme}. The proof of this proposition is based on a finer analysis of a transfer operator in Proposition \ref{Prop:Pmax}.
We exhibit an application to a large deviation principle in Theorem \ref{Thm:LDT} and Corollary \ref{Cor:ldt}.
In Section \ref{Sec:example}, 
we compute explicit examples: the free group of rank $2$ and the $(3, 3, 4)$-triangle group.
In particular, in the case of the latter group, we find a pair of word metrics for which the mean distortion is algebraic irrational.
In Appendix \ref{Sec:Lebesgue}, we show Lemma \ref{Lem:diff} which we use in the proof of main rigidity result Theorem \ref{Thm:rigidity} (the second part of Theorem \ref{Thm:thm1}).

\bigskip

{\bf Notation}:
Throughout the article, we denote by $C, C', C'', \dots$ constants
whose explicit values may change from line to line, and by $C_R, C'_R, C''_R, \dots$
constants with subscript $R$ to indicate their dependency on a parameter $R$.
For real-valued functions $f(t)$ and $g(t)$ in $t \in \R$,
we write $f(t) \asymp g(t)$ if there exist constants $C_1, C_2>0$ independent of $t$
such that $C_1 g(t) \le f(t) \le C_2 g(t)$, and $f(t) \asymp_R g(t)$ if those constants $C_1$ and $C_2$ depend only on $R$.
Further we use the big-$O$ and small-$o$ notations: 
$f(t)=O(g(t))$ if there exist constants $C >0$ and $T>0$ such that $|f(t)| \le C|g(t)|$ for all $t \ge T$,
while $f(t)=O_R(g(t))$ if the implied constant is $C_R$,
and
$f(t)=g(t)+o(t)$ as $t \searrow 0$ if $|f(t)-g(t)|/t \to 0$ as $t \to 0$ for $t>0$.
We say that two measures $\m_1$ and $\m_2$ defined on the common measurable spaces are {\it comparable} if there exists constant $C>0$ such that $C^{-1} \m_1 \le \m_2 \le C \m_1$.
We use the notation $\#A$ which stands for the cardinality of a set $A$.


\section{Preliminaries}

\subsection{Hyperbolic groups}\label{Sec:hyperbolic}
We briefly review some fundamental material concerning hyperbolic groups. 
See the original work by Gromov
\cite{GromovHyperbolic} and \cite{GhysdelaHarpe} for background.
Let $(X, d)$ be a metric space.
The Gromov product is defined by
\[
(x|y)_w:=\frac{d(w, x)+d(w, y)-d(x, y)}{2} \quad \text{for $x, y, w \in X$}.
\]
For $\d \ge 0$, a metric space $(X, d)$ is called $\d$-{\it hyperbolic} if
\[
(x|y)_w \ge \min\big\{(x|z)_w, (z|y)_w\big\}-\d \quad \text{for all $x, y, z, w \in X$}.
\]
We say that a metric space is {\it hyperbolic} if it is $\d$-hyperbolic for some $\d \ge 0$.

Let $\G$ be a finitely generated group.
We call a finite set of generators $S$ of $\G$ {\it symmetric}
if $s^{-1} \in S$ whenever $s \in S$.
The word metric associated to a symmetric finite set of generators $S$
 is defined by
\[
d_S(x, y):=|x^{-1}y|_S \quad \text{for $x, y \in \G$},
\]
where
$|x|_S:=\min\{k \ge 0 \ : \ x=s_1\cdots s_k, \ s_i \in S\}$ and $0$ for the identity element.
We say that $\G$ is a {\it hyperbolic} group if the pair $(\G, d_S)$ is hyperbolic for some word metric $d_S$.
If $(\G, d_S)$ is $\d$-hyperbolic, then for every finite, symmetric set of generators $S'$,
the pair $(\G, d_{S'})$ is $\d'$-hyperbolic for some $\d'$.
A hyperbolic group is called {\it non-elementary} if
it is non-amenable, and {\it elementary} otherwise.
Elementary hyperbolic groups are either finite groups or contain $\Z$ as a finite index subgroup.

We say that two metrics $d$ and $d_\ast$ on $\Gamma$ are {\it quasi-isometric} if there exist constants $L > 0$ and $C \ge 0$ such that
\[
L^{-1}d(x, y)-C \le d_\ast(x, y) \le L d(x, y)+C \quad \text{for all $x, y \in \G$},
\] 
and {\it roughly similar} if there exist constants $\lambda>0$ and $C \ge 0$ such that
\[
\lambda d(x, y)-C \le d_\ast(x, y) \le \lambda d(x, y)+C \quad \text{for all $x, y \in \G$}.
\]
Suppose that $(\G, d)$ is $\d$-hyperbolic.
If $d_\ast$ is roughly similar to $d$,
then $(\G, d_\ast)$ is $\d'$-hyperbolic for some (possibly different) $\d'$.
However, if $d_\ast$ is just quasi-isometric to $d$,
then $(\G, d_\ast)$ is not necessarily hyperbolic.
We will discuss a category of metrics which are hyperbolic and quasi-isometric to some hyperbolic metric in $\G$.

Let $\G$ be a non-elementary hyperbolic group.
We define $\Dc_\G$ to be the set of metrics on $\G$ that are left-invariant, i.e., $d(g x, g y)=d(x, y)$ for all $x, y$ and $g \in \G$,
hyperbolic,
and quasi-isometric to some (equivalently, every) word metric.

\begin{example}
Let $\G$ be the fundamental group of a compact negatively curved manifold $(M, d_M)$.
The group $\G$ acts on the universal cover $(\wt M, d_{\wt M})$ isometrically and freely.
For each point $p$ in $\wt M$, if we define $d(x, y):=d_{\wt M}(xp, yp)$ for $x, y \in \G$,
then $d$ yields a metric, which is left-invariant, hyperbolic and quasi-isometric to a word metric by the Milnor-\v{S}varc lemma. 
Such $d$ therefore belongs to 
$\Dc_\G$.
More generally, if $\G$ is a non-elementary hyperbolic group and acts on a ${\rm CAT}(-1)$-space isometrically and freely with a precompact fundamental domain, then as in the same way above the metric of the ${\rm CAT(-1)}$-space yields a metric on $\G$ in $\Dc_\G$.
\end{example}

A particular subclass of metrics that we will be interested in are strongly hyperbolic metrics.

\begin{definition}\label{Def:SH}
A hyperbolic metric $d$ on $\G$ is called {\it strongly hyperbolic} if 
there exist $L \ge 0, c>0$ and $R_0 \ge 0$ such that for all $x, x', y, y' \in \G$ and all $R \ge R_0$,
the condition
\[
d(x, y)-d(x, x')-d(y, y')+d(x', y') \ge R,
\]
implies that
\[
|d(x, y)-d(x', y)-d(x, y')+d(x', y')| \le L e^{-c R}.
\]
\end{definition}

Every hyperbolic group $\G$ admits a {\it strongly hyperbolic metric} in $\Dc_\G$.
This was shown by Mineyev \cite[Theorem 32]{MineyevFlow} (see also Nica-{\v S}pakula \cite{NicaSpakula}).
We use the existence of such a metric in the course of our proofs.\\

Let us consider a metric $d$ on $\G$.
We say that for an interval $I$ in $\R$, a map $\g:I \to (\G, d)$ is an $(L, C)$-{\it quasi-geodesic} for constants $L>0$ and $C \ge 0$
if it holds that
\[
L^{-1}|s-t|-C \le d(\g(s), \g(t)) \le L|s-t|+C \quad \text{for all $s, t \in I$},
\]
and a $C$-{\it rough geodesic} for $C \ge 0$
if it holds that
\[
|s-t|-C \le d(\g(s), \g(t)) \le |s-t|+C \quad \text{for all $s, t \in I$}.
\]
A {\it geodesic} is a $C$-rough geodesic with $C=0$.
A metric $d$ is called $C$-roughly geodesic if for all $x, y \in \G$ there exists a $C$-rough geodesic $\g:[a, b] \to \G$
such that $\g(a)=x$ and $\g(b)=y$, and called \textit{roughly geodesic} if it is $C$-roughly geodesic for some $C \ge 0$.
If $d \in \Dc_\G$, then $(\G, d)$ is not necessarily a geodesic metric space, but it is a roughly geodesic space \cite[Proposition 5.6]{BonkSchramm}.
In many places, we use the following fact which we refer to as the \textit{Morse lemma}:
if $d$ is a proper (i.e., all balls of finite radius consist of finitely many points) $C_0$-roughly geodesic hyperbolic metric in $\G$,
then every $(L, C)$-quasi-geodesic $\g$ in $(\G, d)$,
there exists a $C_0$-rough geodesic $\g'$ such that $\g$ and $\g'$ are within Hausdorff distance $D$
where $D$ depends only on $C_0, L, C$ and the hyperbolic constant of $d$ (cf.\ \cite[Th\'eor\`emes 21 et 25, Chapitre 5]{GhysdelaHarpe} and \cite[the proof of Proposition 5.6]{BonkSchramm}).

\subsection{Boundary at infinity}

Let us define the (geometric) boundary of $\G$.
Let $o$ be the identity element in $\G$.
Fix $d \in \Dc_\G$ and consider the corresponding Gromov product in $\G$.
We say that a sequence $\{x_n\}_{n=0}^\infty$ is {\it divergent}
if $(x_n|x_m)_o \to \infty$ as $n, m \to \infty$,
and define an equivalence relation in the set of divergent sequences by
\[
\{x_n\}_{n=0}^\infty \sim \{x_n'\}_{n=0}^\infty \iff (x_n|x_m')_o \to \infty \quad \text{as $n, m \to \infty$}.
\]
Let us define $\partial \G$ the set of equivalence classes of divergent sequences in $\G$ and call it the {\it boundary} of $(\G, d)$.
For $\x \in \partial \G$, if $\{x_n\}_{n=0}^\infty \in \x$, then we write $x_n \to \x$ as $n \to \infty$.
We extend the Gromov product to $\G \cup \partial \G$ by setting
\[
(\x|\y)_o:=\sup\big\{\liminf_{n \to \infty}(x_n|y_n)_o \ : \ \{x_n\}_{n=0}^\infty \in \x, \{y_n\}_{n=0}^\infty \in \y \big\},
\]
where if $\x$ or $\y$ is in $\G$, then $(\x|\y)_o$ is defined by taking the constant sequences $\x_n=\x$ or $\y_n=\y$.
Note that if divergent sequences $\{x_n\}_{n=0}^\infty$ and $\{y_n\}_{n=0}^\infty$ are equivalent to $\{x_n'\}_{n=0}^\infty$ and $\{y_n'\}_{n=0}^\infty$, respectively, 
then
\[
\liminf_{n \to \infty}(x_n'|y_n')_o \ge \limsup_{n \to \infty}(x_n|y_n)_o-2\d.
\]
This implies that for all $\x, \y, \z \in \G \cup \partial \G$,
\[
(\x|\y)_o \ge \min\big\{(\x|\z)_o, (\z|\y)_o\big\}-3\d.
\]
Let us define a {\it quasi-metric} by
\[
\rho(\x, \y):=e^{-(\x|\y)_o} \quad \text{for $\x, \y \in \partial \G$}.
\]
In general, $\rho$ is not a metric in $\partial \G$, but it satisfies that
$\rho(\x, \y)=0$ if and only if $\x=\y$, 
$\rho(\x, \y)=\rho(\y, \x)$ for all $\x, \y \in \partial \G$, 
and there exists a constant $C>0$ such that
\[
\rho(\x, \y) \le C \max\big\{\rho(\x, \z), \rho(\z, \y)\big\} \quad \text{for all $\x, \y, \z \in \partial \G$}.
\]
The quasi-metric $\rho$ associated to $d \in \Dc_\G$ defines a topology on $\partial \G$ that is compact,
separable and metrizable.
In fact, for arbitrary two metrics $d, d_\ast \in \Dc_\G$ the corresponding boundaries with the topologies constructed above are homeomorphic. 
We refer to $\partial \G$ the underlying topological space.

\subsection{Shadows}

For all $R \ge 0$ and $x \in \G$, we define the {\it shadow} by
\[
O(x, R):=\big\{\x \in \partial \G \ : \ (\x|x)_o \ge d(o, x)-R\big\}.
\]
Let us denote by $B(\x, r)$ the ball of radius $r \ge 0$ centered at $\x$ in $\partial \G$ relative to the quasi-metric
$\rho(\x, \y)=e^{-(\x|\y)_o}$.
The $\d$-hyperbolic inequality yields the following comparison between balls and shadows.

\begin{lemma}\label{Lem:shadow-ball}
Let $(\G, d)$ be $\d$-hyperbolic for $\d \ge 0$.
For each $\t \ge 0$, 
if $R \ge \t+3\d$, then
for all $\x \in \partial \G$ and all $x \in \G$ such that $(o|\x)_x\le \t$, we have
\[
B(\x, e^{-3\d+R-d(o, x)}) \subset O(x, R) \subset B(\x, e^{3\d+R-d(o, x)}).
\]
\end{lemma}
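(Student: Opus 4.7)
The plan is to prove both inclusions by direct application of the extended $3\delta$-hyperbolic inequality $(\a|\b)_o \ge \min\{(\a|\g)_o,(\g|\b)_o\}-3\d$ (already established in the excerpt for $\a,\b,\g \in \G \cup \partial \G$), combined with the triangle identity $(\x|x)_o + (o|\x)_x = d(o,x)$. The latter follows from the definition of the Gromov product on $\G$ and passes to the boundary by choosing a single approximating sequence $x_n \to \x$ (with the sum being exactly $d(o,x)$ for every $n$), so that the hypothesis $(o|\x)_x \le \t$ yields $(\x|x)_o \ge d(o,x)-\t$.

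For the first inclusion, I take $\y \in B(\x, e^{-3\d+R-d(o,x)})$, which unfolds to $(\y|\x)_o \ge d(o,x)-R+3\d$. Applying the $3\d$-inequality to the triple $(\y,\x,x)$ at $o$ gives
\[
(\y|x)_o \;\ge\; \min\{(\y|\x)_o,\,(\x|x)_o\}-3\d.
\]
Since $R \ge \t+3\d$, we have $d(o,x)-\t \ge d(o,x)-R+3\d$, so both Gromov products inside the minimum dominate $d(o,x)-R+3\d$. Thus $(\y|x)_o \ge d(o,x)-R$, i.e.\ $\y \in O(x,R)$.

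For the second inclusion, I take $\y \in O(x,R)$, so $(\y|x)_o \ge d(o,x)-R$. Applying the $3\d$-inequality to $(\x,x,\y)$ at $o$ yields
\[
(\x|\y)_o \;\ge\; \min\{(\x|x)_o,\,(x|\y)_o\}-3\d \;\ge\; \min\{d(o,x)-\t,\,d(o,x)-R\}-3\d.
\]
Since $R \ge \t+3\d \ge \t$, the minimum equals $d(o,x)-R$, giving $(\x|\y)_o \ge d(o,x)-R-3\d$, equivalently $\r(\x,\y) \le e^{3\d+R-d(o,x)}$, so $\y \in B(\x,e^{3\d+R-d(o,x)})$.

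The only subtlety — which is not a serious obstacle — is justifying the identity $(\x|x)_o + (o|\x)_x = d(o,x)$ for $\x \in \partial \G$, since both quantities are defined as suprema of $\liminf$'s over approximating sequences. This is handled by noting that for any fixed sequence $x_n \to \x$ in $\G$ one has $(x_n|x)_o + (o|x_n)_x = d(o,x)$ exactly, so each quantity is determined by the other up to constants controlled by $\d$, and the argument goes through (with the constant $3\d$ in the statement being sufficient to absorb any such slack). Beyond this, the proof is a purely formal manipulation of the hyperbolicity inequality.
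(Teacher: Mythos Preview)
Your proof is correct and is in fact more detailed than the paper's, which simply cites \cite{BHM11} and omits the argument entirely. Your two applications of the $3\d$-inequality are exactly the right moves.

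One small sharpening of your ``subtlety'' paragraph: the inequality you actually need, $(\x|x)_o \ge d(o,x)-\t$, follows from $(o|\x)_x \le \t$ with \emph{no} $\d$-slack. Given any sequence $x_n \to \x$, the hypothesis forces $\liminf_n (o|x_n)_x \le \t$, so a subsequence $(x_{n_k})$ satisfies $(o|x_{n_k})_x \le \t+\e$ for every $k$; then $(x_{n_k}|x)_o = d(o,x)-(o|x_{n_k})_x \ge d(o,x)-\t-\e$ for all $k$, and since $(x_{n_k})$ is itself an admissible sequence converging to $\x$, the $\sup\liminf$ definition gives $(\x|x)_o \ge d(o,x)-\t-\e$. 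Let $\e\to 0$. This replaces your remark that the $3\d$ margin ``absorbs any slack'' with the cleaner observation that there is none to absorb.
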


\proof
See e.g., \cite[Proposition 2.1]{BHM11}; we omit the details.
\qed

Note that if $d$ and $d_\ast$ are in $\Dc_\G$ and $(L, C)$-quasi-isometric, 
then for all $R \ge 0$ there exists $R'\ge 0$ depending on $L, C$ and their hyperbolicity constants 
such that
\[
O(x, R) \subset O'(x, R') \quad \text{for all $x \in \G$},
\]
where $O(x, R)$ (resp.\ $O'(x, R')$) are the shadows defined by $d$ (resp.\ $d_\ast$).
This follows from the stability of rough geodesics and the fact that every pair of points in $\G \cup \partial \G$
are connected by a $C$-rough geodesic in $(\G, d)$ for some $C$.
Therefore omitting the dependency on $d$ in the shadow $O(x, R)$
will not cause any confusion, up to changing the thickness parameter $R$.

\subsection{Hausdorff dimension}
For every $d \in \Dc_\G$,
let $\rho(\x, \y)=\exp(-(\x|\y)_o)$ be the corresponding quasi-metric in $\partial \G$.
Although it is not a metric in general,
we may define the Hausdorff dimension of sets and measures in $\partial \G$ relative to $\rho$ as in the case of metrics.
It is known that there exists a constant $\e>0$ such that 
$\rho^\e$ is bi-Lipschitz to a genuine metric $d_\e$ (e.g., \cite[Proposition 14.5]{Heinonen}),
in which case
the Hausdorff dimension relative to $d_\e$ will be $1/\e$ times the Hausdorff dimension relative to $\rho$.

For every subset $E$ in $\partial \G$,
let us denote by $\rho(E):=\sup\{\rho(\x, \y) \ : \ \x, \y \in E\}$.
For all $s \ge 0$ and $\D>0$, we define
\[
\Hc_\D^s(E, \rho):=\inf\Big\{\sum_{i=0}^\infty \rho(E_i)^s \ : \ E \subset \bigcup_{i=0}^\infty E_i \ \text{and} \ \rho(E_i) \le \D\Big\},
\]
and 
\[
\Hc^s(E, \rho):=\sup_{\D>0}\Hc_\D^s(E, \rho)=\lim_{\D \to 0}\Hc_\D^s(E, \rho).
\]
The {\it Hausdorff dimension} of a set $E$ in $(\partial \G, \rho)$ is defined by
\[
\dim_\H(E, \rho):=\inf\{s \ge 0 \ : \ \Hc^s(E, \rho)=0\}=\sup\{s \ge 0 \ : \ \Hc^s(E, \rho)>0\}.
\]
For every Borel measure $\n$ on $\partial \G$,
the (upper) {\it Hausdorff dimension} of $\n$ is defined by
\[
\dim_\H(\n, \rho):=\inf\{\dim_\H(E, \rho) \ : \ \n\(\partial \G \setminus E\)=0 \ \text{and $E$ is Borel}\}.
\]

\begin{lemma}[The Frostman-type lemma]\label{Lem:Frostman}
Let $\n$ be a Borel probability measure on $(\partial \G, \rho)$.
For $s_1, s_2 \ge 0$,
let 
\[
E(s_1, s_2):=\Big\{\x \in \partial \G \ : \ s_1 \le \liminf_{r \to 0}\frac{\log \n\(B(\x, r)\)}{\log r} \le s_2\Big\},
\]
where $B(\x, r)=\{\y \in \partial \G \ : \ \rho(\x, \y)\le r\}$.
If $\n(E(s_1, s_2))=1$, 
then
\[
s_1 \le \dim_\H(E(s_1, s_2), \rho) \le s_2 \quad \text{and} \quad s_1 \le \dim_\H(\n, \rho) \le s_2.
\]
\end{lemma}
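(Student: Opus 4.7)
The plan is to adapt the classical Frostman mass-distribution argument to the quasi-metric $\r$. We establish the two bounds $s_1\le\dim_\H(E,\r)\le s_2$ for $E:=E(s_1,s_2)$; the claims on $\dim_\H(\n,\r)$ then follow by applying the same reasoning to arbitrary full $\n$-measure Borel subsets. Throughout, we rely on the weak triangle inequality $\r(\x,\y)\le C\max\{\r(\x,\z),\r(\z,\y)\}$, which yields a Vitali-type ($5$-)covering lemma in the quasi-metric setting with explicit constants; alternatively, as noted just before the statement, $\r^\e$ is bi-Lipschitz to a genuine metric for some $\e>0$, so all covering-based results transfer from the metric case (the Hausdorff dimension rescales by $\e$, but this affects neither inequality).

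For the upper bound $\dim_\H(E,\r)\le s_2$, fix $s>s_2$. The lim inf condition gives, for every $\x\in E$, arbitrarily small $r>0$ with $\n(B(\x,r))\ge r^s$. Given $\D>0$, such balls with $r\le\D$ cover $E$; a Vitali-type extraction produces a disjoint subfamily $\{B(\x_i,r_i)\}$ whose enlargements $\{B(\x_i,C'r_i)\}$ still cover $E$, and
\[
\sum_i(C'r_i)^s\le(C')^s\sum_i\n(B(\x_i,r_i))\le(C')^s,
\]
since the $B(\x_i,r_i)$ are pairwise disjoint and $\n$ is a probability measure. Letting $\D\to 0$ yields $\Hc^s(E,\r)\le (C')^s<\infty$, so $\dim_\H(E,\r)\le s$, and $s\searrow s_2$ finishes this direction.

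For the lower bound $\dim_\H(E,\r)\ge s_1$, fix $s<s_1$. For every $\x\in E$ there is $r_0(\x)>0$ with $\n(B(\x,r))\le r^s$ for all $0<r\le r_0(\x)$. We decompose $E=\bigcup_{k\ge 1}E_k$ with $E_k:=\{\x\in E:r_0(\x)\ge 1/k\}$; since $\n(E)=1$ some $E_k$ has $\n(E_k)>0$, and by countable stability of Hausdorff dimension it suffices to bound $\dim_\H(E_k,\r)$ from below. For any cover $\{F_i\}$ of $E_k$ with $\r(F_i)$ uniformly small enough, choose $\x_i\in F_i\cap E_k$ whenever the latter is non-empty, and note that $F_i\subset B(\x_i,C\r(F_i))$ by the weak triangle inequality; then
\[
\n(E_k)\le\sum_i\n(F_i)\le\sum_i\n(B(\x_i,C\r(F_i)))\le C^s\sum_i\r(F_i)^s,
\]
so $\Hc^s(E_k,\r)\ge C^{-s}\n(E_k)>0$. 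Letting $s\nearrow s_1$ gives $\dim_\H(E_k,\r)\ge s_1$, hence $\dim_\H(E,\r)\ge s_1$.

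Finally, for the statements on $\n$: since $E$ is Borel with $\n(\partial\G\setminus E)=0$ and $\dim_\H(E,\r)\le s_2$, the definition immediately yields $\dim_\H(\n,\r)\le s_2$. Conversely, any Borel $F$ with $\n(\partial\G\setminus F)=0$ satisfies $\n(F\cap E)=1>0$; applying the mass-distribution argument to $F\cap E$ in place of $E_k$ gives $\dim_\H(F,\r)\ge\dim_\H(F\cap E,\r)\ge s_1$, and taking the infimum over such $F$ yields $\dim_\H(\n,\r)\ge s_1$. The only mildly technical point is verifying the Vitali covering and the enclosure $F_i\subset B(\x_i,C\r(F_i))$ in the quasi-metric setting, but both are standard consequences of the weak triangle inequality; no serious obstacle arises.
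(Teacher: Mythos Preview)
Your argument is correct and follows the same standard Frostman mass-distribution route that the paper invokes by citing \cite[Section 8.7]{Heinonen}; you simply write out the details whereas the paper defers to the reference. One small organizational economy the paper notes is that it suffices to prove only $\dim_\H(E,\r)\le s_2$ and $\dim_\H(\n,\r)\ge s_1$, since the other two inequalities follow immediately from the definition of $\dim_\H(\n,\r)$ and the fact that $E$ has full $\n$-measure.
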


\proof
It suffices to show that $\dim_\H(E(s_1, s_2), \rho)\le s_2$ and $s_1 \le \dim_\H(\n, \rho)$.
These follow as in the case when $\rho$ is a metric; see e.g., \cite[Section 8.7]{Heinonen}.
\qed

\subsection{Distance (Busemann) quasi-cocycles}\label{Sec:Busemann}

For $d \in \Dc_\G$, let us define
\[
\b_w(x, \x):=\sup\big\{\limsup_{n \to \infty}\(d(x, \x_n)-d(w, \x_n)\) \ : \ \{\x_n\}_{n=0}^\infty \in \x\big\}
\]
for $w, x \in \G$ and for $\x \in \partial \G$, and call $\b_w: \G \times \partial \G \to \R$ the {\it Busemann function} based at $w$.
We note that 
\[
d(x, z)-d(o, z)=d(o, x)-2(x|z)_o \quad \text{for $x, z \in \G$},
\]
and thus the $\d$-hyperbolicity implies that
\[
\big|\b_o(x, \x)-\(d(o, x)-2(x|\x)_o\)\big| \le 2 \d \quad \text{for $(x, \x) \in \G \times \partial \G$}.
\]
The Busemann function $\b_o$ satisfies the following cocycle identity with an additive error:
\[
\left|\b_o(xy, \x)-\(\b_o(y, x^{-1}\x)+\b_o(x, \x)\)\right| \le 4\d \quad \text{for $x, y \in \G$ and $\x \in \partial \G$}.
\]

Let us consider a strongly hyperbolic metric $\wh d$ in $\Dc_\G$ (Definition \ref{Def:SH}) and denote by $\langle x|y \rangle_o$ the corresponding Gromov product.
Then there exists a constant $\e>0$ such that
\[
\exp(-\e\langle x|y\rangle_w) \le \exp(-\e\langle x|z\rangle_w)+\exp(-\e \langle z|y\rangle_w) \quad \text{for all $x, y, z, w \in \G$},
\]
\cite[Lemma 6.2, Definition 4.1]{NicaSpakula}
(in fact, this property characterizes the strong hyperbolicity).
This shows that the Gromov product based at $o$ for a strongly hyperbolic metric extends to $\G \cup \partial \G$ as genuine limits.
This also shows that the corresponding Busemann function $\wh \b_o$ is defined as limits and satisfies the cocycle identity,
\[
\wh \b_o(xy, \x)=\wh \b_o(y, x^{-1}\x)+\wh \b_o(x, \x) \quad \text{for $x, y \in \G$ and $\x \in \partial \G$}.
\]
We use a strongly hyperbolic metric to construct an analogue of geodesic flow in Section \ref{Sec:top-flow}.

\subsection{Patterson-Sullivan construction}\label{Sec:PS}

For $d \in \Dc_\G$,
let us denote by
\[
B(x, r):=\big\{y \in \G \ : \ d(x, y) \le r\big\} \quad \text{for $x \in \G$ and $r \ge 0$},
\]
the ball of radius $r$ centered at $x$ relative to $d$.
We define the {\it exponential volume growth rate} relative to $d$ as
\[
v:=\limsup_{r \to \infty}\frac{1}{r}\log \# B(o, r).
\]
Since $\G$ is non-amenable, $v$ is finite and non-zero.

We recall the classical construction of Patterson-Sullivan measures for $d \in \Dc_\G$.
Consider the Dirichlet series
\[
\Pc(s):=\sum_{x \in \G}e^{-s d(o, x)},
\]
which has the divergence exponent $v$.
Suppose for a moment that the series diverges at $s=v$.
Then the sequence of probability measures on $\G$,
\[
\m_s:=\frac{1}{\Pc(s)}\sum_{x \in \G}e^{-s d(o, x)}\d_x,
\]
where $\d_x$ is the Dirac measure at $x$, considered as measures on the compactified space $\G \cup \partial \G$,
has a convergent subsequence as $s \searrow v$.
A limit point $\m$ is a probability measure supported on $\partial \G$,
and there exists a constant $C_\d>0$ such that for $x \in \G$ and for $\x \in \partial \G$,
\begin{equation}\label{Eq:PS0}
C_\d^{-1}e^{-v \b_o(x, \x)} \le \frac{d x_\ast \m}{d \m}(\x) \le C_\d e^{-v \b_o(x, \x)}.
\end{equation}
All limit points satisfy the above estimates \eqref{Eq:PS0}.
If the series $\Pc(s)$ does not diverge at $s=v$, 
then a slight modification yields a measure satisfying \eqref{Eq:PS0}.
We call a probability measure satisfying \eqref{Eq:PS0} a {\it Patterson-Sullivan measure} for $d \in \Dc_\G$.
For details, see \cite[Th\'eor\`eme 5.4]{Coornaert1993}.

The above construction applies to the following setting where the distance is replaced by a more general function.
Let us consider a function $\psi: \G \times \G \to \R$ and define
\[
\psi(x|y)_z:=\frac{\psi(x, z)+\psi(z, y)-\psi(x, y)}{2} \quad \text{for $x, y, z \in \G$},
\]
as a generalization of the Gromov product. Note that the order of $x, y, z$ matters since $\psi$ may not satisfy $\psi(x, y)=\psi(y, x)$.
We assume that $\psi(\cdot \ | \ \cdot)_o$ admits a ``quasi-extension" to $\G \times (\G \cup \partial \G)$, i.e.,
there exist a function $\psi(\cdot \ | \ \cdot)_o: \G \times (\G \cup \partial \G) \to \R$ and a constant $C\ge 0$ 
such that
\begin{equation}\label{Eq:QE}\tag{QE}
\limsup_{n \to \infty}\psi(x|\x_n)_o-C \le \psi(x|\x)_o \le \liminf_{n \to \infty}\psi(x|\x_n')_o+C
\end{equation}
for all $(x, \x) \in \G \times (\G \cup \partial \G)$ and for all $\{\x_n\}_{n=0}^\infty, \{\x_n'\}_{n=0}^\infty \in \x$.
This allows us to define the following function analogous to the Busemann function for $(x, \x) \in \G \times \partial \G$,
\[
\b^\psi_o(x, \x):=\sup\big\{\limsup_{n \to \infty}(\psi(x, \x_n)-\psi(o, \x_n)) \ : \ \{\x_n\}_{n=0}^\infty \in \x\big\}.
\]
Furthermore, if $\psi$ is $\G$-invariant, i.e., $\psi(g x, g y)=\psi(x, y)$ for all $g, x, y \in \G$,
then $\b^\psi_o$ satisfies that the quasi-cocycle relation:
\[
|\b^\psi_o(xy, \x)-(\b^\psi_o(y, x^{-1}\x)+\b^\psi_o(x, \x))| \le 4C.
\]

Recall that if $d \in \Dc_\G$, then $(\G, d)$ is a $C$-rough geodesic metric space for some $C \ge 0$.
Let us consider the following ``rough geodesic'' condition:
for all large enough $C, R \ge 0$, there exists $C_0 \ge 0$ such that for all $C$-rough geodesics $\g$ between $x$ and $y$,
and for all $z$ in the $R$-neighborhood of $\g$,
\begin{equation}\label{Eq:RG}\tag{RG}
|\psi(x, y)-\(\psi(x, z)+\psi(z, y)\)| \le C_0.
\end{equation}
If $\psi$ satisfies \eqref{Eq:RG} relative to $d \in \Dc_\G$,
then there exists a constant $C'$ such that for a large enough $R$ and for all $x \in \G$,
\begin{equation}\label{Eq:corRG}
|\b_o^\psi(x, \x)+\psi(o, x)| \le C' \quad \text{for all $\x \in O(x, R)$}.
\end{equation}

\begin{definition}\label{Def:tempered}
We say that a function $\psi: \G\times \G \time \G \to \R$ is a {\it tempered potential} relative to $d \in \Dc_\G$
if $\psi$ satisfies \eqref{Eq:QE} and \eqref{Eq:RG} relative to $d$.
\end{definition}

\begin{example}\label{Ex:tempered}
For all $d, d_\ast \in \Dc_\G$, by the Morse lemma, $d_\ast$ satisfies \eqref{Eq:RG} relative to $d$.
This implies that for every $a \in \R$,
the function $\psi_a=a d_\ast$ satisfies \eqref{Eq:RG} relative to $d$.
Moreover $\psi_a$ satisfies \eqref{Eq:QE} and is $\G$-invariant.
Therefore $\psi_a=a d_\ast$ is a $\G$-invariant tempered potential relative to $d$ for every $a \in \R$.
The same argument applies to an arbitrary triple $d, d_\ast, d_{\ast \ast} \in \Dc_\G$ and every linear combination
\[
\psi_{a, b}:=a d_\ast+b d \quad \text{for $a, b \in \R$}.
\]
For every $a, b \in \R$, the function $\psi_{a, b}$ is a $\G$-invariant tempered potential relative to $d_{\ast \ast}$.
The functions $\psi_a$ and $\psi_{a, b}$ are the main tools in Sections \ref{Sec:proofofC1} and \ref{Sec:proofofC2}
respectively.
\end{example}

For $d \in \Dc_\G$,
let $\psi$ be a $\G$-invariant tempered potential relative to $d$.
We say that a probability measure $\m$ on $\partial \G$ 
satisfies the ``quasi-conformal'' property with exponent $\th \in \R$ relative to $(\psi, d)$
if there exists a constant $C$ depending only on $\psi$ and $d$ such that
\begin{equation}\label{Eq:generalPS}\tag{QC}
C^{-1} \le \exp\(\b_o^\psi(x, \x)+\th \b_o(x, \x)\) \cdot \frac{dx_\ast \m}{d\m}(\x) \le C,
\end{equation}
for all $x \in \G$ and $\m$-almost every $\x$ in $\partial \G$,
where $\b_o$ is the Busemann function associated to $d$.
We simply say that $\m$ satisfies \eqref{Eq:generalPS} if $\th$ and $(\psi, d)$ are fixed and apparent from the context.

\begin{proposition}\label{Prop:generalPS}
For $d \in \Dc_\G$,
let $\psi$ be a $\G$-invariant tempered potential relative to $d$.
Then the abscissa of convergence $\th$ of the series in $s$,
\begin{equation}\label{Eq:gPS}
\sum_{x \in \G}\exp(- \psi(o, x)-s d(o, x)),
\end{equation}
is finite
and there exists a probability measure $\m_\psi$ on $\partial \G$ 
satisfying \eqref{Eq:generalPS} with exponent $\th$ relative to $(\psi, d)$.
Moreover, every finite Borel measure $\m$ satisfying \eqref{Eq:generalPS} has the property:
\begin{equation}\label{Eq:generalGibbs}
C'^{-1} \exp(-\psi(o, x)-\th d(o, x)) \le \m\(O(x, R)\) \le C' \exp(-\psi(o, x)-\th d(o, x)),
\end{equation}
for all $x \in \G$, where $C'$ is a constant depending on $C$, $C_0$ and $R$.
\end{proposition}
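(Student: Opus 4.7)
The plan is to follow the classical Patterson--Sullivan blueprint, adapted to the tempered potential $\psi$, in four stages.

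\textbf{Stage 1 (finiteness of $\th$ and construction of $\m_\psi$).} First I bound $|\psi(o, x)|$ linearly in $d(o, x)$: since $(\G, d)$ is roughly geodesic, pick a rough geodesic $o = z_0, \dots, z_n = x$ with bounded steps, then iterate \eqref{Eq:RG} and use $\G$-invariance to replace each increment $\psi(z_i, z_{i+1}) = \psi(o, z_i^{-1}z_{i+1})$ by a uniformly bounded constant. This yields $|\psi(o, x)| \le M\, d(o, x) + M'$ for some $M, M' > 0$, and combined with the finite exponential volume growth $v$ gives $|\th| \le v + M < \infty$. I then take $\m_\psi$ to be a weak-$\ast$ subsequential limit, as $s \searrow \th$, of the probability measures
\[
\m_s := \Pc(s)^{-1} \sum_{x \in \G} \exp(-\psi(o, x) - s\, d(o, x))\, \d_x
\]
on the compact metrizable space $\G \cup \partial \G$. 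If \eqref{Eq:gPS} is convergent at $\th$, I first apply Patterson's slowly-varying-factor trick to force divergence at the same critical $\th$. The total mass then diverges while each pointwise weight stays bounded, so $\m_\psi$ is supported on $\partial \G$.

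\textbf{Stage 2 (verifying \eqref{Eq:generalPS}).} The $\G$-invariance of $\psi$ and $d$ yields
\[
\frac{d x_\ast \m_s}{d \m_s}(y) = \exp\bigl(-(\psi(x, y) - \psi(o, y)) - s(d(x, y) - d(o, y))\bigr).
\]
Letting $y \to \x \in \partial \G$, \eqref{Eq:QE} identifies $\psi(x, y) - \psi(o, y)$ with $\b_o^\psi(x, \x) + O(1)$, while the identity $d(x, y) - d(o, y) = d(o, x) - 2(x|y)_o$ identifies its limit with $\b_o(x, \x) + O(\d)$. Passing to the weak-$\ast$ limit and then $s \searrow \th$ yields \eqref{Eq:generalPS} for $\m_\psi$ with exponent $\th$.

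\textbf{Stage 3 (upper bound of the Gibbs estimate).} Starting from any $\m$ satisfying \eqref{Eq:generalPS}, I combine \eqref{Eq:corRG} for $\psi$ (which gives $|\b_o^\psi(x, \x) + \psi(o, x)| \le C'$ on $O(x, R)$) with the $\d$-hyperbolic bound $|\b_o(x, \x) + d(o, x)| \le 2R + O(\d)$ on $O(x, R)$, and substitute into \eqref{Eq:generalPS} to conclude
\[
\m(x^{-1} O(x, R)) = x_\ast \m(O(x, R)) \asymp_R \exp(\psi(o, x) + \th\, d(o, x)) \cdot \m(O(x, R)).
\]
The upper bound in \eqref{Eq:generalGibbs} follows immediately from $\m(x^{-1} O(x, R)) \le \m(\partial \G) < \infty$.

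\textbf{Stage 4 (lower bound: the main obstacle).} The matching lower bound demands $\m(x^{-1} O(x, R)) \ge c > 0$ uniformly in $x \in \G$. Using the Gromov-product identity $(\x|o)_{x^{-1}} + (\x|x^{-1})_o = d(o, x)$, I rewrite $(x^{-1} O(x, R))^c = \{\x : (\x | x^{-1})_o > R\}$; by $\d$-hyperbolicity this set sits inside the basic shadow $O(z_R, \d)$ where $z_R$ is at distance $R$ from $o$ on a rough geodesic to $x^{-1}$, and Lemma \ref{Lem:shadow-ball} then embeds it in a quasi-metric ball $B(\x_x, Ce^{-R})$ whose radius shrinks to $0$ uniformly in $x$. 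The problem therefore reduces to showing $\sup_{\x \in \partial \G} \m(B(\x, r)) < \m(\partial \G)$ for some fixed $r > 0$. If this failed for every $r$, a compactness argument would produce $\x^\ast \in \partial \G$ with $\m(\{\x^\ast\}) = \m(\partial \G)$; but \eqref{Eq:generalPS} forces each point of the $\G$-orbit $\G \x^\ast$, which is infinite by non-elementarity of $\G$, to be a positive-mass atom of $\m$, contradicting concentration of the total mass at $\x^\ast$.
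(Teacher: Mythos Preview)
Your argument is correct and follows the same Patterson--Sullivan blueprint as the paper. Two points of execution differ and are worth noting. For finiteness of $\th$ the paper instead uses \eqref{Eq:RG} and $\G$-invariance to obtain the submultiplicative bound
\[
\sum_{x \in S(n+m, R_0)}e^{-\psi(o, x)} \le e^{C}\sum_{x \in S(n, R_0)}e^{-\psi(o, x)}\cdot \sum_{x \in S(m, R_0)}e^{-\psi(o, x)},
\]
which gives finiteness directly; your linear bound $|\psi(o,x)|\le M\,d(o,x)+M'$ is simpler and equally valid. For the lower Gibbs bound the paper simply cites \cite[Proposition 6.1]{Coornaert1993} for the statement that $\m(x^{-1}O(x,R))\ge 1-\e_0$ uniformly in $x$ once $R$ is large, whereas you supply an explicit proof via the identification of $(x^{-1}O(x,R))^c$ with a quasi-metric ball of radius $\asymp e^{-R}$ and a compactness--non-elementarity contradiction. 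One small sharpening of your Stage~4 phrasing: rather than saying each orbit point becomes a positive atom, it is cleaner to observe that \eqref{Eq:generalPS} forces $x_\ast\m$ and $\m$ to be mutually absolutely continuous for every $x$, so if $\m=\d_{\x^\ast}$ then $x\x^\ast=\x^\ast$ for all $x\in\G$, contradicting non-elementarity.
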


\proof
Note that $\th$ is given by
\[
\limsup_{n \to \infty}\frac{1}{n}\log \sum_{x \in S(n, R_0)}e^{-\psi(o, x)} \quad \text{where $S(n, R_0)=\{x \in \G \ : |d(o, x)-n| \le R_0\}$}.
\]
Since $\psi$ satisfies \eqref{Eq:RG} relative to $d$ and is $\G$-invariant, then for all $n, m \ge 0$,
\[
\sum_{x \in S(n+m, R_0)}e^{-\psi(o, x)} \le e^C\sum_{x \in S(n, R_0)}e^{-\psi(o, x)}\cdot \sum_{x \in S(m, R_0)}e^{-\psi(o, x)},
\]
which implies that $\th$ is finite.
Let us define the family of probability measures for $s>\th$ by
\begin{equation*}
\m_{\psi, s}:=\frac{\sum_{x \in \G}\exp(-\psi(o, x)-sd(o, x))\d_x}{\sum_{x \in \G}\exp(-\psi(o, x)-sd(o, x))}.
\end{equation*}
If the series \eqref{Eq:gPS} diverges at $\th$, then letting $s \searrow \th$ yields a weak limit $\m_\psi$ after passing to a subsequence.
The measure $\m_\psi$ is supported on $\partial \G$.
For $x, y \in \G$, we have that
\begin{align*}
x_\ast \m_{\psi, s}(y)	&=\frac{\exp(-\psi(o, x^{-1}y)-sd(o, x^{-1}y))}{\sum_{z \in \G}\exp(-\psi(o, z)-sd(o, z))}\\
									&=\frac{\exp(-\psi(o, x^{-1}y))}{\exp(-\psi(o, y))}e^{-s(d(o, x^{-1}y)-d(o, y))}\m_{\psi, s}(y).
\end{align*}
By the assumption, $\psi(x, y)-\psi(o, y)$ is $\b_o^\psi(x, \x)$ up to a uniform additive constant as $y$ tends to $\x$.
Further $d(x, y) -d(o, y)$ coincides with $\b_o(x, \x)$ up to a constant depending only on the hyperbolicity constant of $d$ uniformly on a neighborhood of $\x$ in $\G \cup \partial \G$.
This yields \eqref{Eq:generalPS}.
If the series \eqref{Eq:gPS} does not diverge at $\th$,
then the argument as in the classical setting provides \eqref{Eq:generalPS} (cf.\ \cite[Th\'eor\`eme 5.4]{Coornaert1993} and \cite[Theorem 3.3]{THaus} for a special case).

Further since $\psi$ satisfies \eqref{Eq:RG} relative to $d$, we have \eqref{Eq:corRG}.
Suppose that a finite measure $\m$ satisfies \eqref{Eq:generalPS},
and $\m$ is a probability measure without loss of generality.
Then for all $x \in \G$,
\[
x_\ast \m\(O(x, R)\) \asymp_{C, \th} \exp(\psi(o, x)+\th d(o, x))\m\(O(x, R)\).
\]
For all small enough $0<\e_0<1$, there exists a large enough $R$ such that 
\[
\m\(x^{-1}O(x, R)\) \ge 1-\e_0 \quad \text{for all $x \in \G$},
\]
(cf.\ \cite[Proposition 6.1]{Coornaert1993}), and thereby we obtain \eqref{Eq:generalGibbs}.
\qed

\begin{lemma}\label{Lem:Coornaert}
For $d \in \Dc_\G$, if $\psi$ is a $\G$-invariant tempered potential relative to $d$,
then there exist constants $\th \in \R$ and $C, R_0>0$ such that
for all $n \ge 0$,
\[
C^{-1}e^{\th n} \le \sum_{x \in S(n, R_0)} e^{-\psi(o, x)}\le C e^{\th n},
\]
where $S(n, R_0):=\big\{x \in \G \ : \ |d(o, x)-n| \le R_0\big\}$.
\end{lemma}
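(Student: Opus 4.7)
My plan is to deduce both bounds from the existence of a Patterson--Sullivan type measure produced by Proposition \ref{Prop:generalPS}, using the standard shadow technology in hyperbolic groups. First I would invoke Proposition \ref{Prop:generalPS} to obtain the finite abscissa of convergence $\th$ and a Borel probability measure $\m_\psi$ on $\partial \G$ satisfying the shadow estimates \eqref{Eq:generalGibbs}: for a fixed large $R$,
\[
C'^{-1}\exp(-\psi(o,x)-\th d(o,x)) \le \m_\psi\(O(x,R)\) \le C'\exp(-\psi(o,x)-\th d(o,x)),
\]
for every $x \in \G$. Since $x \in S(n,R_0)$ implies $|d(o,x)-n|\le R_0$, these estimates become $\m_\psi(O(x,R))\asymp e^{-\psi(o,x)}e^{-\th n}$ uniformly in $x$, up to a constant depending on $R_0$ and $\th$.

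For the upper bound, I would prove that the collection $\{O(x,R) : x \in S(n,R_0)\}$ has uniformly bounded overlap. Concretely, if $\x \in O(x,R)\cap O(y,R)$ with $x,y \in S(n,R_0)$, then the $\d$-hyperbolic inequality gives $(x|y)_o \ge n-R_0-R-\d$, and hence $d(x,y) \le 2(n+R_0)-2(n-R_0-R-\d) = 4R_0+2R+2\d$. Since $(\G,d)$ is quasi-isometric to a locally finite word metric, the $d$-ball of this fixed radius has uniformly bounded cardinality $K$. Integrating $\sum_x \1_{O(x,R)}$ against $\m_\psi$ yields
\[
\sum_{x \in S(n,R_0)} \m_\psi\(O(x,R)\) = \int_{\partial \G}\sum_{x \in S(n,R_0)}\1_{O(x,R)}(\x)\,d\m_\psi(\x) \le K,
\]
and combined with the lower bound in \eqref{Eq:generalGibbs} this yields $\sum_{x \in S(n,R_0)} e^{-\psi(o,x)} \le C e^{\th n}$.

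For the lower bound, I would show that for $R$ and $R_0$ taken sufficiently large, the shadows $\{O(x,R): x\in S(n,R_0)\}$ cover $\partial \G$ for every $n\ge 0$. Given $\x \in \partial \G$, choose a $C$-rough geodesic from $o$ toward $\x$ (exists because $d\in\Dc_\G$ is roughly geodesic) and pick a point $x_n$ on it with $d(o,x_n)$ within $R_0$ of $n$; the Morse lemma and Lemma \ref{Lem:shadow-ball} yield $\x \in O(x_n,R)$ for $R$ depending only on the hyperbolicity constant and $C$. Using $\m_\psi(\partial \G)=1$ together with the upper bound in \eqref{Eq:generalGibbs},
\[
1 \le \sum_{x \in S(n,R_0)}\m_\psi\(O(x,R)\) \le C'' e^{-\th n}\sum_{x \in S(n,R_0)}e^{-\psi(o,x)},
\]
which rearranges to the desired lower bound $C^{-1}e^{\th n} \le \sum_{x\in S(n,R_0)}e^{-\psi(o,x)}$.

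The only delicate point, and the step I expect to require the most care, is verifying the bounded-multiplicity and covering properties in the rough-geodesic quasi-metric setting of $\Dc_\G$: one needs Lemma \ref{Lem:shadow-ball} together with the Morse lemma to replace genuine geodesics by $C$-rough geodesics and to translate the Gromov-product inequalities into the statements about shadows. All remaining steps are routine once Proposition \ref{Prop:generalPS} is in hand.
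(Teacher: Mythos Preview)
Your proposal is correct and follows essentially the same approach as the paper: invoke Proposition~\ref{Prop:generalPS} to obtain $\m_\psi$ with the shadow estimates~\eqref{Eq:generalGibbs}, then use that the shadows $\{O(x,R):x\in S(n,R_0)\}$ cover $\partial\G$ with bounded multiplicity to convert these into the two-sided bound on the sum. The paper states the covering and bounded-overlap facts without the explicit Gromov-product computation you give, but the argument is the same.
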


We say that $\th$ is the {\it exponent} of $(\psi, d)$ abusing the notation;
the proof actually shows that if there is a finite Borel measure $\m$ satisfying \eqref{Eq:generalPS} relative to $(\psi, d)$ with some exponent,
then that exponent has to be $\th$.

\proof[Proof of Lemma \ref{Lem:Coornaert}]
For $(\G, d)$, fix large enough constants $R_0, R>0$ so that for every $n \ge 0$ the shadows $O(x, R)$ for $x \in S(n, R_0)$ cover $\partial \G$.
Since $(\G, d)$ is hyperbolic, there exists a constant $M$ such that for every $n$, each $\x \in \partial \G$ is included in at most $M$ shadows
$O(x, R)$ with $x \in S(n, R_0)$. 
By Proposition \ref{Prop:generalPS}, there exists a probability measure $\m_\psi$ which satisfies \eqref{Eq:generalGibbs}.
The first inequality in \eqref{Eq:generalGibbs} shows that for all $n \ge 0$,
\[
e^{- \th (n+R_0)}\sum_{x \in S(n, R_0)} e^{-\psi(o, x)} \le C\sum_{x \in S(n, R_0)}\m_\psi\(O(x, R)\) 
\le C M.
\]
The second inequality in \eqref{Eq:generalGibbs} shows that for all $n \ge 0$,
\[
1=\m_\psi\(\partial \G\)\le \sum_{x \in S(n, R_0)}\m_\psi\(O(x, R)\) \le C e^{-\th (n-R_0)}\sum_{x \in S(n, R_0)}e^{-\psi(o, x)},
\]
hence we obtain the claim.
\qed

We say that a (finite) Borel measure $\m$ on $\partial \G$ is {\it doubling} relative to a quasi-metric $\rho$
if $\m\(B(\x, r)\)>0$ for all $\x \in \partial \G$ and for all $r>0$, and there exists a $C$ such that for every $r \ge 0$ and $\x \in \partial \G$,
\[
\m\(B(\x, 2 r)\) \le C \m\(B(\x, r)\),
\] 
where $B(\x, r)$ is the ball defined by $\rho$ in $\partial \G$.

\begin{lemma}\label{Lem:ergodic}
For $d \in \Dc_\G$, if $\psi$ is a $\G$-invariant tempered potential relative to $d$,
then every finite Borel measure $\m$ on $\partial \G$ satisfying \eqref{Eq:generalPS} is doubling relative to a quasi-metric $\rho$.
Moreover
arbitrary two finite Borel measures on $\partial \G$ satisfying \eqref{Eq:generalPS} with the same exponent and $(\psi, d)$ are mutually absolutely continuous and their densities are uniformly bounded from above and below.
In particular,
every measure $\m_\psi$ is ergodic with respect to the $\G$-action on $\partial \G$,
i.e., every $\G$-invariant Borel set $A$ in $\partial \G$ satisfies that either $\m_\psi(A)=0$ or $\m_\psi(\partial \G \setminus A)=0$.
\end{lemma}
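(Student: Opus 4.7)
The plan is to proceed in three steps: establishing the doubling property, deducing mutual absolute continuity, and then obtaining ergodicity via a standard argument.

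First, for the doubling property I would combine the shadow-ball comparison of Lemma \ref{Lem:shadow-ball} with the Gibbs-type estimate \eqref{Eq:generalGibbs} from Proposition \ref{Prop:generalPS}. Given $\x \in \partial \G$ and $r>0$, I would select two points $x_1, x_2$ on a rough geodesic ray from $o$ to $\x$, with $d(o, x_1)$ and $d(o, x_2)$ both of order $-\log r$ and differing by a uniformly bounded amount, chosen so that $B(\x, 2r) \subset O(x_1, R)$ and $O(x_2, R) \subset B(\x, r)$. Then
\[
\frac{\m(B(\x, 2r))}{\m(B(\x, r))} \le \frac{\m(O(x_1, R))}{\m(O(x_2, R))} \asymp \exp\bigl(\psi(o, x_2) - \psi(o, x_1) + \th(d(o, x_2) - d(o, x_1))\bigr).
\]
Since $d(x_1, x_2) = O(1)$, the $\G$-invariance of $\psi$ together with the finiteness of balls in $(\G, d)$ (which holds as $d$ is quasi-isometric to a word metric) implies that $\psi$ takes uniformly bounded values on pairs $(y, z)$ with $d(y, z)$ bounded; applying \eqref{Eq:RG} to points along a rough geodesic then forces $\psi(o, x_1) - \psi(o, x_2)$ to be uniformly bounded, yielding doubling.

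Next, for mutual absolute continuity, if $\m_1$ and $\m_2$ both satisfy \eqref{Eq:generalPS} with the same exponent and potential, then both obey \eqref{Eq:generalGibbs} with constants depending only on $(\psi, d)$, so $\m_1(O(x, R)) \asymp \m_2(O(x, R))$ uniformly in $x \in \G$. Recalling that a suitable power $\rho^\e$ of $\rho$ is bi-Lipschitz to a genuine metric on $\partial \G$, I would invoke a Vitali-type covering theorem in this genuine metric (which applies since the first step gives the doubling property), combined with Lemma \ref{Lem:shadow-ball} to pass between $\rho$-balls and shadows, to show that $\m_1(U) \asymp \m_2(U)$ for every open set $U$; outer regularity then upgrades this to $\m_1(A) \asymp \m_2(A)$ for every Borel set $A$, giving mutual absolute continuity with uniformly bounded Radon-Nikodym derivatives.

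Finally, ergodicity follows by the classical argument (cf.\ \cite{Coornaert1993}). Suppose, for contradiction, $A \subset \partial \G$ is $\G$-invariant Borel with $0 < \m_\psi(A) < \m_\psi(\partial \G)$. Since $A$ is $\G$-invariant we have $x_\ast(\1_A \m_\psi) = \1_A \cdot x_\ast \m_\psi$, so both $\m_\psi|_A/\m_\psi(A)$ and $\m_\psi|_{A^c}/\m_\psi(A^c)$ satisfy \eqref{Eq:generalPS} with the same exponent and potential. By the mutual absolute continuity just proved they must be comparable, contradicting that they are supported on the disjoint sets $A$ and $A^c$. The main obstacle is the doubling step: one must carefully calibrate the distances $d(o, x_i)$ so that the shadow inclusions and the control over $\psi$ via \eqref{Eq:RG} and $\G$-invariance mesh correctly; once doubling is in hand, mutual absolute continuity and ergodicity are largely formal consequences of the Gibbs estimate.
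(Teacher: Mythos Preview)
Your proposal is correct and follows essentially the same three-step route as the paper. The only notable difference is in the doubling step: the paper observes directly from \eqref{Eq:generalGibbs} that $\m(O(x,2R))\asymp_R \m(O(x,R))$ for a fixed $x$ (since the right-hand side of \eqref{Eq:generalGibbs} does not involve $R$ in the exponent) and then iterates via Lemma~\ref{Lem:shadow-ball}, whereas you pick two nearby points $x_1,x_2$ and control $\psi(o,x_1)-\psi(o,x_2)$ through \eqref{Eq:RG}; both arguments are valid, the paper's being marginally more direct.
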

\proof
First by Proposition \ref{Prop:generalPS}, every finite Borel measure $\m$ with \eqref{Eq:generalPS} satisfies \eqref{Eq:generalGibbs}, which shows that $\m(O(x, R))>0$ for all $x \in \G$ and
\[
\m(O(x, 2R)) \asymp_R \m(O(x, R)) \quad \text{for all $x \in \G$},
\]
where $R$ is a large enough fixed constant.
Applying to this estimate for finitely many times if necessary, by Lemma \ref{Lem:shadow-ball}
we find that $\m$ is doubling relative to $\rho$.

Next \eqref{Eq:generalGibbs} implies that for arbitrary two finite Borel measures $\m, \m'$ satisfying \eqref{Eq:generalPS} with the common exponent and $(\psi, d)$, the ratio of the measures of balls relative to $\m$ and $\m'$ 
are uniformly bounded from above and below.
Since both measures are doubling relative to $\rho$, the Vitali covering theorem \cite[Theorem 1.6]{Heinonen} (adapted to a quasi-metric $\rho$) shows that $\m$ and $\m'$ are mutually absolutely continuous and their densities are uniformly bounded from above and below.

Finally for $\m$ satisfying \eqref{Eq:generalPS}, if $A$ is an arbitrary $\G$-invariant Borel set in $\partial \G$ such that $\m(A)>0$,
then the restriction $\m|_A$ also satisfies \eqref{Eq:generalPS} with the same exponent and $(\psi, d)$.
Therefore what we have shown implies that $\m|_A \asymp \m$ and thus $\m(\partial \G \setminus A)=0$.
This in particular applies to $\m_\psi$.
\qed

A central example of the construction is a family of measures $\m_{a, b}$ for $(a, b) \in \Cc_M$ associated to a pair $(d, d_\ast)$.
Let us single out the following corollary which we use in Section \ref{Sec:top-flow}.

\begin{corollary}\label{Cor:ab}
Let us consider a pair $d, d_\ast \in \Dc_\G$.
\begin{itemize}
\item[(1)] 
For each $(a, b) \in \Cc_M$, 
there exists a probability measure $\m_{a, b}$ on $\partial \G$ such that
for all $x \in \G$,
\begin{equation*}
C_{a, b}^{-1}e^{-a\b_{\ast o}(x, \x)-b\b_o(x, \x)} \le \frac{d x_\ast \m_{a, b}}{d \m_{a, b}}(\x) \le C_{a, b} e^{-a\b_{\ast o}(x, \x)-b\b_o(x, \x)},
\end{equation*}
where $\b_{\ast o}$ and $\b_o$ are Busemann functions for $d_\ast$ and $d$, respectively, and $C_{a, b}$ is a constant of the form $C_{a, b}=C_{d_\ast}^{|a|}C_d^{|b|}$.
Moreover, we have that
\[
C'^{-1} \exp(-a d_\ast(o, x)-b d(o, x)) \le \m_{a, b}\(O(x, R)\) \le C' \exp(-a d_\ast(o, x)-b d(o, x)),
\]
for all $x \in \G$, where $C'$ is a constant depending on $C_{a, b}$ and $R$.
\item[(2)]
For every $a \in \R$,
\begin{equation*}
\th(a) =\lim_{n \to \infty}\frac{1}{n}\log \sum_{x \in S(n, R_0)}e^{-ad_\ast(o, x)},
\end{equation*}
where $S(n, R_0):=\big\{x \in \G \ : \ |d(o, x)-n| \le R_0\big\}$ for some constant $R_0$,
and the function $\th$ is convex and continuous on $\R$.
\item[(3)]
For each $(a, b) \in \Cc_M$,
every probability measure $\m_{a, b}$ is ergodic with respect to the $\G$-action on $\partial \G$.
\end{itemize}
\end{corollary}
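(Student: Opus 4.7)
The plan is to deduce all three assertions as special cases of the general Patterson--Sullivan apparatus developed in Proposition \ref{Prop:generalPS}, Lemma \ref{Lem:Coornaert}, and Lemma \ref{Lem:ergodic}, applied to the tempered potential $\psi_a:=a d_\ast$ relative to $d$. By Example \ref{Ex:tempered}, $\psi_a$ is a $\G$-invariant tempered potential relative to $d$ for every $a\in\R$, and the abscissa of convergence in $s$ of $\sum_x \exp(-a d_\ast(o,x)-sd(o,x))$ is $\th(a)$, so the identification $(a,b)=(a,\th(a))\in\Cc_M$ is built into the setup.

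For part (1), I would feed $\psi_a$ into Proposition \ref{Prop:generalPS}: this yields a probability measure $\m_{a,b}$ on $\partial\G$ satisfying \eqref{Eq:generalPS} with exponent $\th(a)=b$ relative to $(\psi_a,d)$. The Busemann-type function $\b_o^{\psi_a}$ is by definition $a\b_{\ast o}$, so the inequality \eqref{Eq:generalPS} becomes the announced quasi-conformal estimate with densities controlled by $\exp(-a\b_{\ast o}(x,\x)-b\b_o(x,\x))$. To obtain the explicit shape $C_{a,b}=C_{d_\ast}^{|a|}C_d^{|b|}$ of the constant, I would track how the quasi-extension and rough-geodesic errors in \eqref{Eq:QE} and \eqref{Eq:RG} for $\psi_a=a d_\ast$ scale: each of these constants scales linearly in $|a|$ (since $\psi_a$ is just $|a|$ times a fixed Busemann-type function for $d_\ast$), while the hyperbolicity contribution of $d$ in the Patterson--Sullivan construction scales linearly in $|b|$. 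Exponentiating these linear bounds gives the product form. The shadow estimate then follows immediately from \eqref{Eq:generalGibbs} in Proposition \ref{Prop:generalPS}.

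For part (2), I would apply Lemma \ref{Lem:Coornaert} to $\psi=\psi_a$: this provides constants $C,R_0>0$ with $C^{-1}e^{\th(a)n}\le\sum_{x\in S(n,R_0)}e^{-a d_\ast(o,x)}\le Ce^{\th(a)n}$, so that the quotient $(1/n)\log\sum_{x\in S(n,R_0)}e^{-a d_\ast(o,x)}$ converges to $\th(a)$ (upgrading the $\limsup$ to a genuine limit). Convexity of $\th$ on $\R$ then follows from a direct H\"older inequality on the sphere sums: for $t\in[0,1]$ and $a_1,a_2\in\R$,
\[
\sum_{x\in S(n,R_0)}e^{-(ta_1+(1-t)a_2)d_\ast(o,x)}\le\Bigl(\sum_x e^{-a_1 d_\ast(o,x)}\Bigr)^t\Bigl(\sum_x e^{-a_2 d_\ast(o,x)}\Bigr)^{1-t},
\]
which combined with the limit formula gives $\th(ta_1+(1-t)a_2)\le t\th(a_1)+(1-t)\th(a_2)$. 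Continuity on $\R$ is automatic for a finite convex function on an open interval.

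Part (3) is the cleanest: since $\m_{a,b}$ satisfies \eqref{Eq:generalPS} for $(\psi_a,d)$, Lemma \ref{Lem:ergodic} directly supplies both the doubling property and ergodicity of the $\G$-action. The main technical nuisance is really just bookkeeping the dependence of constants in (1) on $a$ and $b$; the convergence to a limit in (2) and the ergodicity in (3) fall out essentially for free from the tools already built in Section \ref{Sec:PS}.
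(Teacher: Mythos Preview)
Your proposal is correct and follows essentially the same route as the paper: set $\psi=\psi_a=a d_\ast$, note it is a $\G$-invariant tempered potential relative to $d$ (Example \ref{Ex:tempered}) with exponent $\th(a)=b$, and then read off (1) from Proposition \ref{Prop:generalPS}, (2) from Lemma \ref{Lem:Coornaert} together with the H\"older inequality, and (3) from Lemma \ref{Lem:ergodic}. Your write-up actually supplies more detail than the paper's terse proof (the explicit H\"older step and the tracking of the constant $C_{a,b}$), but the underlying argument is identical.
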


\proof
For each $a \in \R$,
if we let $\psi(x, y)=a d_\ast(x, y)$, then $\psi$ is a $\G$-invariant tempered potential (Example \ref{Ex:tempered})
and $\th=b$ for $(a, b) \in \Cc_M$.
Therefore Proposition \ref{Prop:generalPS} implies (1), where
the constant $C_{a, b}=C_{d_\ast}^{|a|}C_d^{|b|}$ is obtained from the proof of Proposition \ref{Prop:generalPS}.
Lemma \ref{Lem:Coornaert} and the H\"older inequality imply that $\th(a)$ is finite and convex in $a \in \R$, hence $\th$ is continuous on $\R$, showing (2),
and Lemma \ref{Lem:ergodic} shows (3).
\qed

Note that letting $v$ and $v_\ast$ be the exponential volume growth rates for $d$ and $d_\ast$ respectively,
we have that $(0, v), (v_\ast, 0) \in \Cc_M$, and
$\m_{0, v}$ and $\m_{v_\ast, 0}$ are (classical) Patterson-Sullivan measures for $d$ and $d_\ast$, respectively.

\subsection{Topological flow}\label{Sec:top-flow}
\def\ev{{\bf ev}}

In this section, we follow the discussion in \cite[Section 3]{TopFlows}.
Let $\partial^2 \G:=(\partial \G)^2 \setminus\{\rm diagonal\}$,
where $\G$ acts on $\partial^2 \G$ by $x\cdot(\x, \y):=(x\x, x\y)$ for $x \in \G$ and $(\x, \y) \in \partial^2 \G$.
Consider the space $\partial^2 \G \times \R$ and fix a strongly hyperbolic metric $\wh d \in \Dc_\G$.
There exists a constant $C\ge 0$
such that for each $(\x, \y) \in \partial^2 \G$
there is a $C$-rough geodesic $\g_{\x, \y}: \R \to (\G, \wh d\,)$ satisfying that $\g_{\x, \y}(-t)\to \x$ and $\g_{\x, \y}(t) \to \y$ as $t \to \infty$, respectively \cite[Proposition 5.2 (3)]{BonkSchramm}.
Shifting the parameter $t \mapsto t+T$ by some $T$ if necessary,
we parametrize $\g_{\x, \y}$ in such a way that
\[
\wh d(\g_{\x, \y}(0), o)=\min_{t \in \R}\wh d(\g_{\x, \y}(t), o).
\]
We define
\[
\ev: \partial^2 \G \times \R \to \G \ \ \text{ by } \ \ \ev(\x, \y, t):=\g_{\x, \y}(t).
\]
Note that the map $\ev$ depends on the choice of $C$-rough geodesics,
however, every other choice yields the map whose image lies in a uniformly bounded distance:
if $\g_{\x, \y}$ and $\g'_{\x, \y}$ are two $C$-rough geodesics with the same pair of extreme points,
then
\[
\max_{t \in \R}\wh d(\g_{\x, \y}(t), \g'_{\x, \y}(t)) < C',
\]
for some positive constant $C'$ depending only on the metric.
Let us endow the space of $C$-rough geodesics on $(\G, \wh d\,)$ with the point-wise convergence topology.
We define $\ev: \partial^2 \G \times \R \to \G$ as a measurable map by assigning $\g_{\x, \y}$ to $(\x, \y) \in \partial^2 \G$ in a Borel measurable way:
first fix a set of generators $S$ in $\G$ and an order on it,
second consider $C$-rough geodesics evaluated on the set of integers as sequences of group elements and choose lexicographically minimal ones $\g^0_{\x, \y}$ for each $(\x, \y) \in \partial^2 \G$,
and finally define $\g_{\x, \y}(t):=\g^0_{\x, \y}(\lfloor t\rfloor)$ for $t \in \R$ where $\lfloor t\rfloor$ stands for the largest integer at most $t$.

Letting $\wh \b_o: \G \times \partial \G \to \R$ be the Busemann function based at $o$ associated with $\wh d$,
we define the cocycle 
\[
\k:\G \times \partial^2 \G \to \R, \quad \k(x, \x, \y):=\frac{1}{2}\(\wh \b_o(x^{-1}, \x)-\wh \b_o(x^{-1}, \y)\),
\]
where the cocycle identity for $\k$ follows from that of $\wh \b_o$ (Section \ref{Sec:Busemann}).
Then, $\G$ acts on $\partial^2 \G \times \R$ through $\k$ by
\[
x\cdot(\x, \y, t):=(x \x, x\y, t-\k(\x, \y, t)).
\]
Let us call this $\G$-action the {\it $(\G, \k)$-action} on $\partial^2 \G \times \R$.
It is shown that the $(\G, \k)$-action on $\partial^2 \G \times \R$ is properly discontinuous and cocompact,
namely, the quotient topological space $\G \backslash (\partial^2 \G \times \R)$ is compact \cite[Lemma 3.2]{TopFlows}.
Let
\[
\Fc_\k:=\G \backslash (\partial^2 \G \times \R),
\]
where we define a continuous $\R$-action as in the following.
The $\R$-action $\wt \F$ on $\partial^2 \G \times \R$ is defined by
the translation in the $\R$-component:
\[
\wt \F_s(\x, \y, t):=(\x, \y, t+s).
\]
This action and the $(\G, \k)$-action commute, and thus
the $\R$-action $\wt \F$ descends to the quotient
\[
\F_s[\x, \y, t]:=[\x, \y, t+s] \quad \text{for $[\x, \y, t] \in \Fc_\k$}.
\]
Then $\R$ acts on $\Fc_\k$ via $\F$ continuously.
We call the $\R$-action $\F$ on $\Fc_\k$ the {\it topological flow} (or, simply the {\it flow}) on $\Fc_\k$.

Let us consider finite measures invariant under the flow on $\Fc_\k$.
Let $\L$ be a $\G$-invariant Radon measure on $\partial^2 \G$,
i.e., $x_\ast \L=\L$ for all $x \in \G$ and $\L$ is Borel regular and finite on every compact set.
Then every measure of the form $\L \otimes dt$ where $dt$ is the (normalized) Lebesgue measure on $\R$ yields
a flow invariant finite measure on $\Fc_\k$.
Namely,
for every $\G$-invariant Radon measure $\L$ on $\partial^2 \G$,
there exists a unique finite Radon measure $m$ invariant under the flow on $\Fc_\k$
such that
\begin{equation}\label{Eq:top-flow}
\int_{\partial^2 \G\times \R}f\,d\L\otimes dt=\int_{\Fc_\k}\wbar f\,dm
\end{equation}
for all compactly supported continuous functions $f$ on $\partial^2 \G \times \R$,
where $\wbar f$ is the $\G$-invariant function
\[
\wbar f(\x, \y, t):=\sum_{x \in \G}f(x\cdot(\x, \y, t)),
\]
considered as a function on $\Fc_\k$ \cite[Lemma 3.4]{TopFlows} (and we further note that every continuous function $\f$ on $\Fc_\k$ is of the form $\f=\wbar f$ by invoking Urysohn's lemma).
If we take a Borel fundamental domain $D$ in $\partial^2 \G \times \R$ with respect to the $(\G, \k)$-action 
and a measurable section $\i: \Fc_\k \to D$,
then 
\[
\L \otimes dt=\sum_{x \in \G}x_\ast(\i_\ast m).
\]
Note that it is not necessarily the case that the restriction $\L \otimes dt|_D$ coincides with $\i_\ast m$ unless the $(\G, \k)$-action is free.
We always normalize $\L$ in such a way that the corresponding flow invariant measure $m$ has total measure $1$ (and so is a probability measure on $\Fc_\k$).

For all $d \in \Dc_\G$, an associated Patterson-Sullivan measure $\m$ on $\partial \G$
yields a $\G$-invariant Radon measure $\L_d$ on $\partial^2 \G$ equivalent to
\[
\exp\(2 v(\x|\y)_o\)\m\otimes \m,
\]
with the Radon-Nikodym density uniformly bounded from above and from below by positive constants, 
and the corresponding flow invariant probability measure $m_d$ on $\Fc_\k$
is ergodic with respect to the flow, i.e., for every Borel set $A$ such that $\F_{-t}(A)=A$ for all $t \in \R$,
either $m_d(A)=0$ or $1$ 
\cite[Proposition 2.11 and Theorem 3.6]{TopFlows}.
The same construction applies to measures $\m_{a, b}$ for all $(a, b) \in \Cc_M$.
\begin{proposition}\label{Prop:Hopf}
For each $(a, b) \in \Cc_M$, 
there exists a $\G$-invariant Radon measure $\L_{a, b}$ on $\partial^2 \G$ equivalent to
\begin{equation}\label{Eq:n}
\exp\(2a(\x|\y)_{\ast o}+2b(\x|\y)_o\)\m_{a, b}\otimes \m_{a, b},
\end{equation}
with the Radon-Nikodym density uniformly bounded from above and below by positive constants of the form $C_{d_\ast}^{|a|}C_d^{|b|}$.
Moreover, $\L_{a, b}$ is ergodic with respect to the $\G$-action on $\partial^2 \G$, i.e., for every $\G$-invariant Borel set $A$ in $\partial^2 \G$, either the set $A$ or the complement has zero $\L_{a, b}$-measure,
and the corresponding flow invariant probability measure $m_{a, b}$
is ergodic with respect to the flow on $\Fc_\k$.
\end{proposition}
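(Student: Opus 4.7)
The plan is to construct $\L_{a,b}$ by the classical Patterson-Sullivan recipe applied to the generalized measures $\m_{a,b}$ from Corollary \ref{Cor:ab}, and then to invoke the framework of \cite{TopFlows} to obtain both the flow-invariant measure $m_{a,b}$ and its ergodicity. First I would form the tentative Radon measure
\[
d\wt\L_{a,b}(\x,\y) := \exp\bigl(2a(\x|\y)_{\ast o} + 2b(\x|\y)_o\bigr)\, d(\m_{a,b} \otimes \m_{a,b})(\x,\y)
\]
on $\partial^2 \G$, using any fixed Borel representatives of the quasi-extended Gromov products from Section \ref{Sec:Busemann}. A direct computation gives
\[
(g^{-1}\x|g^{-1}\y)_o - (\x|\y)_o = \tfrac12\bigl(\b_o(g,\x)+\b_o(g,\y)\bigr) + O(\d),
\]
and similarly for $d_\ast$; combining this with the quasi-conformality rule for $\m_{a,b}$ from Corollary \ref{Cor:ab}(1) shows that the Radon-Nikodym cocycle $d g_\ast \wt\L_{a,b}/d\wt\L_{a,b}$ is pinched between reciprocal constants of the form $C_{d_\ast}^{|a|}C_d^{|b|}$.

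To upgrade $\wt\L_{a,b}$ to a genuinely $\G$-invariant Radon measure $\L_{a,b}$ in the same measure class, I would exploit the correspondence \eqref{Eq:top-flow} together with a Borel fundamental domain for the $(\G,\k)$-action: a quasi-invariant Radon measure on $\partial^2 \G$ with bounded multiplicative cocycle descends to a flow-quasi-invariant Radon measure on the compact quotient $\Fc_\k$ of bounded density, and time-averaging this density along the flow $\F$ produces an invariant density in the same measure class. Tracking the constants through this procedure preserves the bound $C_{d_\ast}^{|a|}C_d^{|b|}$ asserted in the statement, and the resulting $\L_{a,b}$ corresponds via \eqref{Eq:top-flow} to the desired flow-invariant probability measure $m_{a,b}$ on $\Fc_\k$.

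For ergodicity I would adapt verbatim the Hopf-style proof of \cite[Theorem 3.6]{TopFlows} with $\m_{a,b}$ in place of the classical Patterson-Sullivan measure. The two inputs needed are that $\m_{a,b}$ is doubling (which follows from the shadow estimate in Corollary \ref{Cor:ab}(1) together with Lemma \ref{Lem:ergodic}) and that $\m_{a,b}$ is $\G$-ergodic on $\partial \G$ (Corollary \ref{Cor:ab}(3)); both are already in place. Since $\R$ acts transitively on the $\R$-fibers of $\partial^2 \G \times \R$, the $\G$-ergodicity of $\L_{a,b}$ on $\partial^2 \G$ is equivalent to the flow-ergodicity of $m_{a,b}$ on $\Fc_\k$, so the two ergodicity statements are proved simultaneously.

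The main obstacle is this final ergodicity step. In the $\CAT(-1)$ or negatively curved manifold setting it is the classical Hopf argument using geodesic asymptotics, but here one must run the argument inside Mineyev's purely topological flow, where the role of geodesics is played by the $C$-rough representatives $\g_{\x,\y}$ and where all asymptotic identities hold only up to bounded Busemann errors. The bookkeeping of these bounded errors, together with the substitution of Lemma \ref{Lem:ergodic} for the classical shadow lemma in the reference-measure part of the argument, constitutes the technical heart of the proof; the remainder is formally identical to the classical case.
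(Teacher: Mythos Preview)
Your proposal is correct and follows the same overall strategy as the paper: verify that the candidate measure $\wt\L_{a,b}$ has bounded $\G$-cocycle, replace it by a genuinely $\G$-invariant measure in the same class, and then cite \cite[Theorem 3.6 and Corollary 3.7]{TopFlows} for the Hopf-type ergodicity argument. The cocycle computation and the ergodicity step are handled exactly as in the paper.

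The one place where you diverge is the construction of the $\G$-invariant $\L_{a,b}$. You propose to pass through the flow space: restrict $\wt\L_{a,b}\otimes dt$ to a fundamental domain, push to $\Fc_\k$, time-average along $\F$ to obtain a flow-invariant probability measure, and then lift back via \eqref{Eq:top-flow}. This works, but it requires some care (the ``descent'' of a merely $\G$-quasi-invariant measure to $\Fc_\k$ is not canonical, and the time-averaging step needs a weak-$\ast$ compactness argument to produce a limit with density still pinched by the same constants). The paper bypasses all of this with a one-line trick performed directly on $\partial^2\G$: setting
\[
\f(\x,\y):=\sup_{x\in\G}\frac{dx_\ast\wt\L_{a,b}}{d\wt\L_{a,b}}(\x,\y),
\]
one checks immediately that $\L_{a,b}:=\f\cdot\wt\L_{a,b}$ is $\G$-invariant, and the bound $C_{d_\ast}^{|a|}C_d^{|b|}$ on the cocycle translates directly into the same bound on $\f$. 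Your route has the conceptual advantage of making the flow picture explicit from the start, but the supremum trick is shorter and avoids any auxiliary limiting procedure.
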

\proof
If we denote the measure \eqref{Eq:n} by $\n$,
then we have that 
$C^{-1}\le \frac{d x_\ast \n}{d\n}(\x, \y) \le C$ for all $x \in \G$ and for $\n$-almost all $(\x, \y) \in \partial^2 \G$,
where $C$ is a positive constant of the form $C_{d_\ast}^{|a|}C_d^{|b|}$.
If we define 
\[
\f(\x, \y):=\sup_{x \in \G}\frac{d x_\ast \n}{d\n}(\x, \y),
\]
then $\L_{a, b}:=\f(\x, \y)\n$ is a $\G$-invariant Radon measure, which is desired.
The details follow as in Proposition 2.11, Theorem 3.6 and Corollary 3.7 in \cite{TopFlows}.
\qed

\begin{lemma}\label{Lem:pure-type}
If $\L$ and $\L'$ are $\G$-invariant ergodic Radon measures on $\partial^2 \G$, 
then either $\L$ and $\L'$ are mutually singular, or there exists a positive constant $c>0$
such that $\L=c \L'$.
\end{lemma}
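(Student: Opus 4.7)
The plan is to run a standard Lebesgue-decomposition argument, using $\G$-invariance to transport ergodicity through each piece of the decomposition. First I would form the Lebesgue decomposition $\L' = \L'_{ac} + \L'_s$ with respect to $\L$, where $\L'_{ac} \ll \L$ and $\L'_s \perp \L$. Since both $\L$ and $\L'$ are $\G$-invariant Radon measures on the locally compact space $\partial^2 \G$, for every $x \in \G$ the identity $x_\ast \L' = x_\ast \L'_{ac} + x_\ast \L'_s$ is another Lebesgue decomposition of $\L'$ with respect to $\L$. By the uniqueness of that decomposition, $x_\ast \L'_{ac} = \L'_{ac}$ and $x_\ast \L'_s = \L'_s$, so both summands are themselves $\G$-invariant Radon measures.

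Next I would set $f := d\L'_{ac}/d\L$, which is locally $\L$-integrable because $\L'_{ac}$ is Radon. The $\G$-invariance of $\L'_{ac}$ and of $\L$, together with countability of $\G$, allows $f$ to be chosen so that $f \circ x = f$ everywhere on $\partial^2 \G$ for every $x \in \G$. Applying ergodicity of $\L$ to the $\G$-invariant level sets $\{f \le a\}$ forces $f$ to equal a single constant $c \in [0, \infty)$ $\L$-almost everywhere; finiteness of $c$ follows from local integrability combined with the fact that $\L$ is not the zero measure. Thus $\L'_{ac} = c \L$.

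The dichotomy is now immediate. If $c = 0$ then $\L' = \L'_s$ is singular with respect to $\L$, giving the first alternative. If $c > 0$ then $\L \ll \L'$, and running the symmetric Radon--Nikodym argument with the roles of $\L$ and $\L'$ reversed produces a $\G$-invariant density $d\L/d\L'$ which, by ergodicity of $\L'$, equals some constant $c' > 0$ almost everywhere, yielding $\L = c' \L'$. The only mildly delicate point I anticipate is checking that ergodicity of a possibly infinite Radon measure still forces a $\G$-invariant locally integrable function to be a constant, but this follows from the standard sublevel-set argument combined with local finiteness; no real obstacle arises.
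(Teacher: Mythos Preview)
Your proposal is correct and follows essentially the same Lebesgue-decomposition-plus-ergodicity argument as the paper. The only minor difference is in the endgame: after obtaining $\L'_{ac}=c\L$ with $c>0$, you run the symmetric Radon--Nikodym argument using ergodicity of $\L'$, whereas the paper instead uses ergodicity of the measure being decomposed to kill the singular part directly (since $\L'_{ac}$ and $\L'_s$ are mutually singular $\G$-invariant pieces of an ergodic measure, one of them must vanish). Both routes are standard and equally short.
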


\proof
Let us decompose $\L$ as a sum of two measures $\L=\L_{\ac}+\L_{\sing}$
where $\L_{\ac}$ (resp.\ $\L_{\sing}$) is the absolutely continuous (resp.\ singular) part with respect to $\L'$.
Note that $\L_{\ac}$ and $\L_{\sing}$ are $\G$-invariant Radon measures since $\L$ and $\L'$ are so.
Suppose that $\L_{\ac}\neq 0$.
Then the Radon-Nikodym density $d\L_{\ac}/d\L'$ is locally integrable and $\G$-invariant,
and thus constant since $\L'$ is ergodic with respect to the $\G$-action.
Hence there exists a positive constant $c>0$ such that $\L_{\ac}=c \L'$,
and since $\L$ is ergodic with respect to the $\G$-action,
$\L_{\sing}=0$ and $\L=c\L'$, as desired.
\qed

\begin{corollary}\label{Cor:limit}
For each $(a, b) \in \Cc_M$, let $m_{a, b}$ be the flow invariant probability measure on $\Fc_\k$ corresponding to the (normalized) $\G$-invariant Radon measure $\L_{a, b}$ on $\partial^2\G$.
If $(a, b) \to (a_0, b_0)$ in $\Cc_M$,
then $m_{a, b}$ weakly converges to $m_{a_0, b_0}$.
\end{corollary}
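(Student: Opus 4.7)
The strategy is to combine the weak compactness of Borel probability measures on the compact space $\Fc_\k$ with the ergodicity of $\L_{a_0,b_0}$ established in Proposition \ref{Prop:Hopf}. It suffices to prove that every weak limit point $m_\infty$ of a subsequence $\{m_{a_n,b_n}\}_{n\ge 0}$ with $(a_n,b_n)\to(a_0,b_0)$ equals $m_{a_0,b_0}$. The flow-invariance of $m_\infty$ is immediate from the continuity of $\F$ on $\Fc_\k$ and the weak convergence.

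Via the correspondence \eqref{Eq:top-flow}, $m_\infty$ lifts to a $\G$-invariant Radon measure on $\partial^2\G\times\R$ which, by flow-invariance, has the product form $\L_\infty\otimes dt$ for some $\G$-invariant Radon measure $\L_\infty$ on $\partial^2\G$. Taking test functions of the form $f(\x,\y,t)=\f(\x,\y)\chi(t)$ with $\f$ compactly supported continuous on $\partial^2\G$ and $\chi$ a compactly supported continuous bump on $\R$ with $\int\chi\,dt=1$, the function $\wbar f$ is continuous on $\Fc_\k$, and the weak convergence $m_{a_n,b_n}\to m_\infty$ translates into the vague convergence $\L_{a_n,b_n}\to\L_\infty$ on $\partial^2\G$.

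The main step is to prove $\L_\infty \ll \L_{a_0,b_0}$. Passing to a further subsequence, compactness of $\partial\G$ yields a weak limit $\m_\infty$ of $\m_{a_n,b_n}$ on $\partial\G$. The shadow-form Gibbs estimate in Corollary \ref{Cor:ab}(1), whose constants $C_{a,b}=C_{d_\ast}^{|a|}C_d^{|b|}$ are uniformly bounded along the sequence, passes to the weak limit (up to harmless modifications of the shadow radius $R$) and shows that $\m_\infty$ satisfies the same estimate at $(a_0,b_0)$; by Lemma \ref{Lem:ergodic}, $\m_\infty$ is comparable to $\m_{a_0,b_0}$ with uniformly bounded densities. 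On each compact subset of $\partial^2\G$, the factor $\exp(2a_n(\x|\y)_{\ast o}+2b_n(\x|\y)_o)$ is uniformly bounded and $\L_{a_n,b_n}$ is sandwiched between two uniform multiples of this factor times $\m_{a_n,b_n}\otimes\m_{a_n,b_n}$. Taking vague limits of both inequalities yields that $\L_\infty$ is comparable to $\exp(2a_0(\x|\y)_{\ast o}+2b_0(\x|\y)_o)\m_\infty\otimes\m_\infty$; combined with the comparability of $\m_\infty$ and $\m_{a_0,b_0}$, this gives $\L_\infty\ll\L_{a_0,b_0}$, indeed mutually absolutely continuous.

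Since $\L_\infty$ is $\G$-invariant, $\L_{a_0,b_0}$ is $\G$-invariant and ergodic, and $\L_\infty\ll\L_{a_0,b_0}$, the Radon--Nikodym derivative $d\L_\infty/d\L_{a_0,b_0}$ is locally integrable and $\G$-invariant, hence constant by ergodicity. Thus $\L_\infty=c\L_{a_0,b_0}$ for some $c>0$, and the normalization that $m_\infty$ and $m_{a_0,b_0}$ are both probability measures on $\Fc_\k$ forces $c=1$. The main technical obstacle is ensuring the uniform Gibbs-type bounds for $\m_{a_n,b_n}$ pass cleanly to the weak limit $\m_\infty$: although the Busemann-type functions on $\partial\G$ are not continuous in general, working with the shadow version \eqref{Eq:generalGibbs} together with the $\d$-hyperbolic stability of shadows under small perturbations of the thickness parameter resolves this, at the cost only of absorbing the slack into the uniform multiplicative constants.
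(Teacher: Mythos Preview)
Your proof is correct and follows essentially the same route as the paper's: pass to subsequential limits, show that any limiting $\G$-invariant Radon measure on $\partial^2\G$ is absolutely continuous with respect to $\L_{a_0,b_0}$ (via the comparability of the weak limit $\m_\infty$ with $\m_{a_0,b_0}$ coming from the uniform shadow estimates), and then invoke ergodicity to conclude proportionality. The only cosmetic differences are that the paper starts by taking vague limits of $\L_{a,b}$ on $\partial^2\G$ directly (rather than first passing to $\Fc_\k$ and lifting back), and packages the final ``ergodicity implies proportionality'' step as the separate Lemma \ref{Lem:pure-type}, whose argument you reproduce inline.
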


\proof
If $(a, b) \to (a_0, b_0)$, then 
up to taking a subsequence,
there exists a normalized $\G$-invariant Radon measure $\L_\ast$ on $\partial^2 \G$ such that
$\int_{\partial^2 \G}f\,d\L_{a, b}$ converges to 
$\int_{\partial^2 \G}f\,d\L_\ast$
for each compactly supported continuous function $f$ on $\partial^2 \G$ (where we use the fact that $\partial^2 \G$ is $\s$-compact).
Let $\L_\ast$ be an arbitrary such limit point.
Taking a further subsequence, we have that $\m_{a, b}$ weakly converges to some probability measure $\m_\ast$,
which is comparable to $\m_{a_0, b_0}$ by Proposition \ref{Prop:generalPS} (in the form of Corollary \ref{Cor:ab}) and Lemma \ref{Lem:ergodic}.
This together with Proposition \ref{Prop:Hopf} shows that $\L_\ast$ is equivalent to $\L_{a_0, b_0}$.
Lemma \ref{Lem:pure-type} implies that $\L_\ast$ coincides with $\L_{a_0, b_0}$ up to a multiplicative constant,
and if they are normalized, then $\L_\ast=\L_{a_0, b_0}$. 
Therefore by \eqref{Eq:top-flow} for every limit point $m_\ast$ of $m_{a, b}$ as $(a, b) \to (a_0, b_0)$, we have that $m_\ast=m_{a_0, b_0}$, hence $m_{a, b}$ weakly converges to $m_{a_0, b_0}$.
\qed


\section{The Manhattan Curve for general hyperbolic metrics}\label{Sec:general}

\subsection{Fundamental properties of the Manhattan curve}

For $d \in \Dc_\G$,
we recall that the stable translation length of $x \in \G$ with respect to $d$ is given by
$\ell[x]=\lim_{n \to \infty}d(o, x^n)/n$,
where $\ell$ defines a function on the set of conjugacy classes $\conj$
and $[x]$ denotes the conjugacy class of $x \in \G$.
For $d_\ast \in \Dc_\G$, we denote the corresponding function by $\ell_\ast$.
For $a, b \in \R$,
let
\[
\Qc(a, b):=\sum_{[x] \in \conj}\exp\(-a\ell_\ast[x]-b\ell[x]\),
\]
and for each fixed $a \in \R$, we define $\Theta(a)$ as the abscissa of convergence of $\Qc(a, b)$ in $b$.
Recall that for $a\in \R$,
we have defined $\th(a)$ as the abscissa of convergence of $\Pc(a, b)$ in $b$, 
where
\[
\Pc(a, b)=\sum_{x \in \G}\exp\(-ad_\ast(o, x)-bd(o, x)\).
\]

\begin{proposition}\label{Prop:conjugacy}
For all $a \in \R$, we have $\th(a)=\Theta(a)$.
\end{proposition}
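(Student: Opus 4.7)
The plan is to prove $\theta(a) = \Theta(a)$ by establishing both inequalities via direct comparison of the series $\Pc(a,b)$ and $\Qc(a,b)$, hinging on the classical fact that in any left-invariant hyperbolic metric the stable translation length and the minimum conjugacy displacement differ by a bounded additive constant.

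The central input is the approximation inequality: for any $d \in \Dc_\G$ there is a constant $C_d$ depending only on the hyperbolicity of $d$ such that for every $[x] \in \conj$,
\[
\ell[x] \le |[x]|_d := \min_{y \in [x]} d(o,y) \le \ell[x] + C_d,
\]
and analogously for $d_\ast$. The lower bound is immediate from subadditivity of $n \mapsto d(o, y^n)$ applied to a minimizer; the upper bound is the standard ``short conjugacy representative'' lemma for hyperbolic groups.

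For $\Theta(a) \le \theta(a)$, I would fix for each class $[x]$ a $d$-minimal representative $y_{[x]} \in [x]$, so the assignment $[x] \mapsto y_{[x]}$ is an injection into $\G$. The approximation inequalities give $|a\ell_\ast[x] + b\ell[x] - (a|[x]|_{d_\ast} + b|[x]|_d)| \le |a| C_{d_\ast} + |b| C_d$. Since $d$ and $d_\ast$ are quasi-isometric, $d_\ast(o, y_{[x]})$ is controlled by $L\,|[x]|_d + C'$, and after grouping terms by the annuli $\{|[x]|_d \in [n, n+1)\}$ and applying Lemma \ref{Lem:Coornaert} one bounds $\Qc(a,b)$ by $\Pc(a,b)$ up to a constant factor. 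For the reverse inequality $\theta(a) \le \Theta(a)$, the key fact is that in a non-elementary hyperbolic group the centralizer $Z(x)$ of any infinite-order element is virtually cyclic, so $[x] \cong \G/Z(x)$ has the same exponential growth rate as $\G$. Decomposing $\Pc(a,b) = \sum_{[x]} \sum_{y \in [x]} \exp(-a d_\ast(o,y) - b d(o,y))$, one estimates the inner sum in terms of $\exp(-a \ell_\ast[x] - b \ell[x])$ times an orbit-counting factor whose subexponential growth is controlled by the centralizer structure, yielding the comparable bound on abscissae.

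The main obstacle I expect is that the two metrics minimize displacement on a conjugacy class at (in general) different representatives, so the naive comparison $a\ell_\ast[x] + b\ell[x] \approx a|[x]|_{d_\ast} + b|[x]|_d$ cannot be upgraded to a bound using a single representative without invoking quasi-isometry and annular averaging; additionally, for $a$ of either sign the sign-dependent triangle manipulations must be handled with care. A conceptually cleaner alternative, which I would keep in reserve, is to realize $\theta(a)$ as the exponent of the $\G$-invariant tempered potential $\psi_a = a d_\ast$ and to interpret $\Theta(a)$ via periodic orbits of the topological flow $\Fc_\k$ (which are naturally parametrized by $\conj$ with period $\approx \ell[x]$ and integrated potential $\approx \ell_\ast[x]$); the equality then becomes a standard pressure-versus-periodic-orbit counting identity.
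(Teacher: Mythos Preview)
Your easy direction $\Theta(a)\le\theta(a)$ is essentially the paper's: the one subtlety you flag---that the $d$- and $d_\ast$-minimizers on a class may differ---is resolved exactly as you would hope. Lemma~\ref{Lem:translation} together with the Morse lemma (the quasi-axes of $x$ for $d$ and for $d_\ast$ lie within bounded Hausdorff distance) gives a single $p\in\G$ with $|\ell[x]-d(p,xp)|\le C$ and $|\ell_\ast[x]-d_\ast(p,xp)|\le C$ simultaneously, so the injection $[x]\mapsto p^{-1}xp$ works regardless of the sign of $a$.

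The real gap is in your reverse inequality $\theta(a)\le\Theta(a)$. The sentence ``an orbit-counting factor whose subexponential growth is controlled by the centralizer structure'' is not right as stated. That $Z(x)$ is virtually cyclic tells you $[x]\cong\G/Z(x)$ has the \emph{same} exponential growth as $\G$, which is the opposite of what you need: $\#([x]\cap S_n)$ is genuinely exponential in $n$. Concretely, $d(o,gxg^{-1})\approx\ell[x]+2\,d(g^{-1},\text{axis}(x))$, so conjugates at $d$-distance $n$ correspond (modulo $Z(x)$) to a fundamental domain of the axis thickened to radius $(n-\ell[x])/2$, hence number $\approx\ell[x]\,e^{v(n-\ell[x])/2}$. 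The inner sum $\sum_{y\in[x]}e^{-a|y|_\ast-b|y|}$ is therefore a geometric-type series whose convergence threshold in $b$ depends on $a$, and you would have to show that threshold always lies below $\Theta(a)$---a nontrivial fact that does not follow from $Z(x)$ being small. With two metrics the displacement formulas involve two different (Morse-close) axes and the transverse distances are related only up to multiplicative constants, which makes this route delicate.

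The paper avoids summing over conjugates and instead follows the Coornaert--Knieper argument. One fixes a hyperbolic pair $x,y$ with distinct endpoints and builds sector-type sets $U,V,U',V'\subset\G\cup\partial\G$. Using the properly discontinuous action on ordered distinct triples, every element of the $n$-sphere (outside a fixed finite set) can be pushed by a \emph{bounded} conjugation into the set $U_{n,L}(x,x^{-1},R)$ of elements whose forward and backward directions lie in prescribed shadows; on that set Lemma~\ref{Lem:translation} gives $\ell[z]=|z|+O(1)$ and $\ell_\ast[z]=|z|_\ast+O(1)$ simultaneously. The key count is then that each conjugacy class contributes at most $O(n)$ elements to $U_{n,L}(x,x^{-1},R)$, because the corresponding quasi-axes must all pass within bounded distance of $o$. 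This yields directly
\[
\sum_{|z|_S\approx n}e^{-a|z|_\ast}\le Cn\sum_{[z]:\,|\ell[z]-n|\le L'}e^{-a\ell_\ast[z]}.
\]
Your reserve idea---periodic orbits of $\Fc_\k$ versus pressure---is the right heuristic, but in the generality of arbitrary $d\in\Dc_\G$ no Markov coding of the flow is available, so the ``standard'' identity must be proved geometrically; that is precisely what the argument above does.
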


We will also call the functions $\th$ as well as $\Theta$ the Manhattan curve for the pair $(d, d_\ast)$.
The proof follows the ideas from \cite[Section 5]{CoornaertKnieper} and Knieper \cite[Section II]{Knieper} (the latter is indicated in \cite[Section 4.1]{BurgerManhattan}); we provide the main argument adapted to our setting for the sake of completeness.
We use the following lemma in the proof.

\begin{lemma}\label{Lem:translation}
For $d \in \Dc_\G$,
there exists a constant $C_0$ such that 
for all $x \in \G$,
if $d(o, x)-2(x|x^{-1})_o>C_0$,
then 
\[
\left|\ell[x]-\(d(o,x)-2(x|x^{-1})_o\)\right| \le C_0,
\]
and there exists $p \in \G$ such that
$|\ell[x] - d(p, xp)| \le C_0$.
\end{lemma}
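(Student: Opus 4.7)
The plan is to show that when $\tau := d(o,x) - 2(x|x^{-1})_o$ is sufficiently large relative to the hyperbolicity constant $\d$, the orbit $(x^n)_{n \in \Z}$ traces out a quasi-geodesic in $(\G, d)$, and then to extract both assertions of the lemma from this. The starting point is the identity
\[
d(o, x^{n+1}) = d(o, x^n) + d(o, x) - 2(x^{-n}|x)_o,
\]
which reduces the task of estimating $d(o, x^n)$ to keeping $(x^{-n}|x)_o$ uniformly close to $K := (x|x^{-1})_o$.

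Setting $d(o, x) = \tau + 2K$ and noting that left-invariance of $d$ immediately gives $d(o, x^2) = 2\tau + 2K$, the first step is to establish by simultaneous induction on $n$ that
\[
d(o, x^n) = n\tau + 2K + O(\d) \quad \text{and} \quad (x|x^{-n})_o = K + O(\d),
\]
with implied constants depending only on $\d$. The inductive step applies the $\d$-hyperbolic four-point condition to the vertices $o, x, x^{-1}, x^{-n}$. Using left-invariance to rewrite $d(x^{-1}, x^{-n}) = d(o, x^{n-1})$, the inductive hypothesis yields
\[
(x^{-1}|x^{-n})_o = \tfrac{1}{2}\bigl(d(o,x) + d(o, x^n) - d(o, x^{n-1})\bigr) = \tau + K + O(\d).
\]
Once $\tau$ exceeds a constant depending on $\d$, this strictly dominates $K$, so in the inequality $K = (x|x^{-1})_o \ge \min\{(x|x^{-n})_o, (x^{-n}|x^{-1})_o\} - \d$ the minimum must be attained at the first term, forcing $(x|x^{-n})_o \le K + \d$. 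The matching lower bound $(x|x^{-n})_o \ge K - \d$ follows from the dual inequality $(x|x^{-n})_o \ge \min\{(x|x^{-1})_o, (x^{-1}|x^{-n})_o\} - \d$. Dividing the distance estimate by $n$ and letting $n \to \infty$ gives $\ell[x] = \tau + O(1)$, which proves the first assertion when $\tau > C_0$; for $\tau \le C_0$ the inequality is trivial after enlarging $C_0$.

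For the existence of $p$, fix a large $N$ and take a $C_0$-rough geodesic $\g:[0, L] \to \G$ from $o$ to $x^N$, where $L = d(o, x^N) = N\tau + 2K + O(\d)$ by the estimate above (using that $d \in \Dc_\G$ is roughly geodesic). Let $p$ be a point on $\g$ with parameter nearest $L/2$. The left translate $x\g$ is a $C_0$-rough geodesic from $x$ to $x^{N+1}$, and the concatenated path $o \to x \to \cdots \to x^N \to x^{N+1}$ is a quasi-geodesic by the same estimate. The Morse lemma then forces $\g$ and $x\g$ to lie within bounded Hausdorff distance of a common rough geodesic in their middle portions, with a longitudinal shift of $\ell[x]$; evaluating at $p$ yields $d(p, xp) \le \ell[x] + C_0$ after enlarging $C_0$. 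Equivalently, $p^{-1}xp$ is a conjugate of $x$ with $d(o, p^{-1}xp) \le \ell[x] + C_0$.

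The principal difficulty is the accounting of additive constants in the coupled induction: the errors must remain bounded as $n \to \infty$ rather than accumulating linearly, and this hinges precisely on $(x^{-1}|x^{-n})_o$ being \emph{far above} $K$ by an amount of order $\tau$, so that the min in the four-point inequality is unambiguously realized by the correct term at every stage.
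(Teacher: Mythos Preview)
Your argument for the first assertion is correct in spirit and takes a genuinely different route from the paper's. The paper works geometrically: it takes $p$ as a midpoint of $[o,x]$, uses the triangle on $o, x, x^2$ to get $d(p, xp) \le \tau + \d$, and then proves $\max_k d(x^k, [o, x^n]) \le K + 3\d$ via a quadrangle argument on $x_{k-1}, p_{k-1}, p_{k+1}, x_{k+1}$ (projections to $[o, x^n]$). Your approach is more algebraic: you control the Gromov products $(x|x^{-n})_o$ directly via the four-point inequality and read off the increments $d(o, x^{n+1}) - d(o, x^n)$. Each approach has its merits: yours avoids any appeal to rough geodesics for the first assertion and works purely from the $\d$-inequality; the paper's makes the quasi-axis picture explicit and immediately yields $p$.

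One inaccuracy to flag: the claim $d(o, x^n) = n\tau + 2K + O(\d)$ with a \emph{uniform} implied constant is not what your induction proves; the additive error in $d(o, x^n)$ is $O(n\d)$. What your induction actually establishes (and what you use) is that each \emph{increment} satisfies $d(o, x^n) - d(o, x^{n-1}) = \tau + O(\d)$, because $(x|x^{-(n-1)})_o = K \pm \d$ with the constant $\d$ refreshed at every step by the four-point inequality. That is enough for $\ell[x] = \tau + O(\d)$, so the conclusion stands; just rephrase the inductive hypothesis to track only $(x|x^{-n})_o$.

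For the second assertion your detour through a long segment $[o, x^N]$ and the Morse lemma works but is heavier than necessary. The paper simply takes $p$ to be a midpoint of $[o, x]$: since $d(p, x) > K$ when $\tau$ is large, the thin triangle on $o, x, x^2$ gives $d(p, xp) \le \tau + \d$ directly, and combined with your estimate $\ell[x] \ge \tau - O(\d)$ this yields $\ell[x] \ge d(p, xp) - C_0$ with no further machinery.
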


\proof[Sketch of proof]
Recall that if $d \in \Dc_\G$,
then $(\G, d)$ is a $C$-rough geodesic metric space, i.e.,
for all $x, y \in \G$, there exists a $C$-rough geodesic $\g:[a, b] \to \G$ such that $\g(a)=x$ and $\g(b)=y$.
We provide an outline of the proof when $d$ is geodesic for the sake of convenience (a detailed proof is found in \cite[Proposition 5.8]{MT}); the same argument applies to $C$-rough geodesic metrics with slight modifications.
For all $x, y \in \G$, let us denote by $[x, y]$ the image of a geodesic between $x$ and $y$.
On the one hand,
for each $x \in \G$,
let us consider a geodesic triangle on $o, x$ and $x^2$,
and
take $p$ as a midpoint of $[o, x]$.
If $d(o, x)-2(x|x^{-1})_o$ is large enough,
then
$d(p, x) >(x|x^{-1})_o$, and thus
\begin{equation}\label{Eq:R-1}
d(p, xp) \le d(o, x)-2(x|x^{-1})_o+\d.
\end{equation}
On the other hand, 
for an arbitrary positive integer $n > 0$,
let us consider a geodesic $\g:=[o, x^n]$.
It holds that 
if $d(o, x)-2(x|x^{-1})_o$ is large enough,
then
\begin{equation}\label{Eq:R0}
\max_{0 \le k \le n}d(x^k, \g) \le (x|x^{-1})_o+3\d.
\end{equation}
Indeed,
let us write $x_k:=x^k$ for $0 \le k \le n$ and use $p_k$ to denote a nearest point from $x_k$ on $\g$.
Suppose that $x_k$ is one of the furthest points among $x_0, \dots, x_n$ from $\g$.
Consider a geodesic quadrangle on $x_{k-1}$, $p_{k-1}$, $p_{k+1}$ and $x_{k+1}$ in this order.
Let $q$ be a nearest point from $x_k$ on $[x_{k-1}, x_{k+1}]$.
By $\d$-hyperbolicity there is a point $r$ with $d(q, r) \le 2 \d$ on 
$[x_{k-1}, p_{k-1}]\cup [p_{k-1}, p_{k+1}]\cup [p_{k+1}, x_{k+1}]$,
and we see that $r$ is in fact on $[p_{k-1}, p_{k+1}]$; this shows \eqref{Eq:R0}.

Finally, \eqref{Eq:R0} together with the triangle inequality implies that 
for all $n >0$,
\begin{equation*}\label{Eq:R4}
d(o, x_n) \ge d(o, x_{n-1})+d(o, x)-2(x|x^{-1})_o-6\d,
\end{equation*}
which yields
$\ell[x] \ge d(o, x)-2(x|x^{-1})_o-6\d$.
Combining this with \eqref{Eq:R-1} shows the claim since $\ell[x] \le d(p, xp)$.
\qed

\proof[Proof of Proposition \ref{Prop:conjugacy}]
To simplify the notations we write 
$|x|:=d(o, x)$ and $|x|_\ast:=d_\ast(o, x)$.
Note that for all large enough $L \ge 0$ and for each $a \in \R$,
\[
\th(a)=\limsup_{n \to \infty}\frac{1}{n}\log \sum_{||x|-n| \le L}e^{- a|x|_\ast} 
\quad \text{and} \quad 
\Theta(a)=\limsup_{n \to \infty}\frac{1}{n}\log\sum_{[x]: |\ell[x]-n| \le L}e^{-a \ell_\ast[x]}.
\]
Also note that for each $x \in \G$ there exists $p \in \G$ such that
\[
|\ell_\ast[x]-d_\ast(p, x p)| \le C_1 \quad \text{and} \quad |\ell[x]-d(p, x p)| \le C_2,
\]
by the proof of Lemma \ref{Lem:translation},
where $C_1$, $C_2$ are constants depending only on the hyperbolicity constants of $d_\ast$ and $d$.
This yields $\th(a) \ge \Theta(a)$ for each $a \in \R$.

For a large enough $R>0$ and all $z, w \in \G$, 
let
\[
O(z, w, R):=\{g \in \G\cup \partial \G \ : \ (z|g)_w \le R\}.
\]
Let us take a pair of hyperbolic elements $x, y$ such that $n \mapsto x^n$ and $n \mapsto y^n$ for $n \in \Z$ yield quasi-geodesics and their extremes points are distinct; there exists such a pair since $\G$ is non-elementary (cf.\ \cite[37.-Th\'eorème in Section 3, Chapitre 8]{GhysdelaHarpe}).
Taking large enough powers of $x$ if necessary,
we define for a large enough $R>0$,
\[
U:=O(o, x^{-1}, R), \quad V:=O(o, x, R), \quad \wt V:=O(x^{-3}, x^{-2}, R) \quad \text{and} \quad \wt U:=O(x^3, x^2, R),
\]
such that 
\[
U \cap V = \emptyset, \quad (\G \cup\partial\G)\setminus U \subset \wt V \quad \text{and} \quad (\G \cup\partial\G) \setminus V \subset \wt U.
\] 
Further, taking large enough powers of $y$ if necessary, we assume that $U':=yU$, $V':=yV$, $U$ and $V$ are disjoint.
For a fixed positive constant $L>0$ and every positive integer $n$,
let
\[
S_{n, L}:=\{z \in \G \ : \ ||z|-n| \le L\} \quad \text{and} \quad
S_{n, L}(U, V):=\{z \in S_{n, L} \ : \  U \cap z V=\emptyset\},
\]
and similarly, $S_{n, L}(V, U)$ and $S_{n, L}(U', V')$.

First we note that if $z \in S_{n, L}(U, V)$, then
\[
(x^3 z x^3)\wt V \subset V \quad \text{and} \quad (x^3 z x^3)^{-1} \wt U \subset U,
\]
since $(x^3 z x^3)\wt V = (x^3 z) V \subset x^3 (\G\cup\partial \G \setminus U) \subset x^3 \wt V= V$ and the latter is analogous.
Therefore
if we define
\[
U_{n, L}(x, x^{-1}, R):=\{z \in S_{n, L} \ : \ z^{-1} \in O(o, x^{-1}, R), \ z \in O(o, x, R)\},
\]
then
\begin{equation}\label{Eq:UV}
x^3 S_{n, L}(U, V) x^3 \subset U_{n, L+6|x|}(x, x^{-1}, R),
\end{equation}
since $o \in \wt U$ and $o \in \wt V$.
Moreover, we have that
\begin{equation}\label{Eq:VUU'V'}
x^{3}S_{n, L}(V, U)^{-1}x^{3} \subset U_{n, L+6|x|}(x, x^{-1}, R) \quad \text{and} \quad y^{-1}S_{n, L}(U', V')y\subset S_{n, L+2|y|}(U, V),
\end{equation}
where the former follows from $S_{n, L}(V, U)^{-1}=S_{n, L}(U, V)$ and \eqref{Eq:UV}, and the latter holds by the definition of $U'$ and $V'$.
Note that the map $z \mapsto x^3 z x^3$ yields an injection from $S_{n, L}(U, V)$ into $U_{n, L+6|x|}(x, x^{-1}, R)$.
Similarly the map $z \mapsto x^{3}z^{-1} x^{3}$ yields an injection from $S_{n, L}(V, U)$ into $U_{n, L+6|x|}(x, x^{-1}, R)$, and the map $z \mapsto y^{-1}z y$ yields an injection from $S_{n, L}(U', V')$ into $U_{n, L+6|x|+2|y|}(x, x^{-1}, R)$.

Second let us show that there exists a finite set $F_{U, V}$ in $\G$ independent of $n$ such that
\begin{equation}\label{Eq:triple}
S_{n, L}\setminus F_{U, V} \subset S_{n, L}(U, V) \cup S_{n, L}(V, U) \cup S_{n, L}(U', V').
\end{equation}
Indeed, if $z \in S_{n, L}$ and $z$ is not included in any one of $S_{n, L}(U, V)$, $S_{n, L}(V, U)$, or $S_{n, L}(U', V')$,
then one has 
\[
U \cap z V \neq \emptyset, \quad V \cap z U \neq \emptyset, \quad \text{and} \quad U'\cap z V' \neq \emptyset.
\]
Note that those elements $z$ for which $U \times V \times U'$ and $z (V \times U \times V')$ intersect are finite;
this follows since $U \times V \times U'$ and $V \times U \times V'$ are in $(\G \cup \partial \G)^{(3)}$,
where 
\[
(\G \cup \partial \G)^{(3)}:=\{(\x, \y, \z) \in (\G \cup\partial \G)^3 \ : \ \text{$\x$, $\y$ and $\z$ are distinct}\},
\]
and the diagonal action of $\G$ on $(\G \cup \partial \G)^{(3)}$ is properly discontinuous.
(Note that it is more standard to state that the diagonal action of $\G$ on the space of distinct ordered triples of points in the boundary $\partial \G$ is properly discontinuous; the same proof works for the case of $(\G \cup \partial \G)^{(3)}$ where we endow $\G \cup \partial \G$ with the compactified topology, cf.\ \cite[8.2.M]{GromovHyperbolic} and \cite[Lemma 1.2 and Proposition 1.12]{BowditchConvergence}.)
Hence \eqref{Eq:triple} holds for some finite set $F_{U, V}$ in $\G$ independent of $n$.

\begin{figure}
\centering
\includegraphics[width=150mm, bb=0 0 842 595]{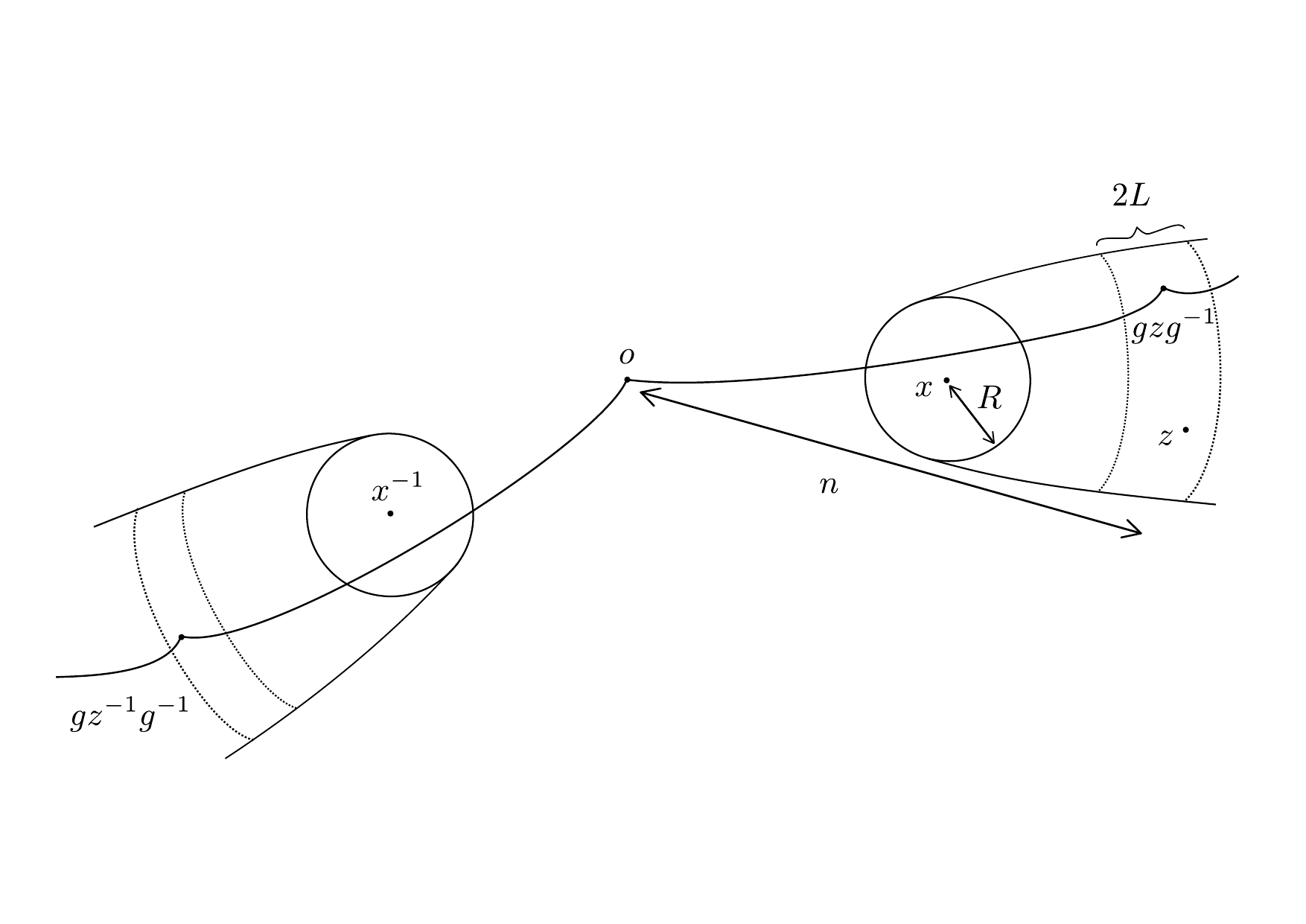}
\caption{}
\label{Fig:Knieper}
\end{figure}

Finally if $z \in U_{n, L}(x, x^{-1}, R)$,
then since $z \in V=O(o, x, R)$ and $z^{-1} \in U=O(o, x^{-1}, R)$,
we have that by the $\d$-hyperbolicity, 
\[
|(z|z^{-1})_o-(x|x^{-1})_o| \le C_{R,\d}.
\]
Lemma \ref{Lem:translation} implies that for all such $z$,
\[
\ell[z]=|z|-2(z|z^{-1})_o+O_{R, \d}(1)=|z|+O_{x, R, \d}(1).
\]
The analogous relations hold for $\ell_\ast[z]$.
Given $z \in U_{n, L}(x, x^{-1}, R)$,
let us count the number of elements in the set
\[
C_n(z; x, L, R):=\{g\langle z\rangle  \in \G/\langle z\rangle \ : \ g z g^{-1} \in U_{n, L}(x, x^{-1}, R)\},
\]
i.e., the number of $g \in \G$ modulo powers of $z$ such that $g z g^{-1} \in U_{n, L}(x, x^{-1}, R)$ (see Figure \ref{Fig:Knieper}).
Let $[o, z]$ denote a $C$-rough geodesic segment between $o$ and $z$ (with respect to $d$),
and define $\g(z):=\bigcup_{k \in \Z} z^k[o, z]$,
which is (the image of) a $(A, B)$-quasi geodesic line invariant under $z$ for some $A, B>0$.
Similarly, $\g(gzg^{-1})$ is a $(A, B)$-quasi geodesic line invariant under $g z g^{-1}$.
Note that $g \g(z)$ is also a $(A, B)$-quasi geodesic line invariant under $g z g^{-1}$,
and thus $g \g(z)$ and $\g(g z g^{-1})$ lie within a bounded Hausdorff distance.
This shows that if $g\langle z\rangle \in C_n(z; x, L, R)$,
then $g\g(z)$ passes through near $o$ within a bounded distance $C_{A, B, \d}$ of $o$.
Crucially, $A$ and $B$ depend only on the hyperbolicity constant and the $x, R$ used in $C_n(z; x, L, R)$.
Since for all such $g$ the inverse $g^{-1}$ lies in a neighborhood of $[o, z]$ up to translation by $z$,
and $\g(z)$ is $z$-invariant, counting such $g^{-1}$ modulo the $\langle z\rangle$-action on $\gamma(z)$ yields $\# C_n(z; x, L, R) \le C_{A, B, \d}' n$.
Now we obtain
\[
\sum_{z \in U_{n, L}(x, x^{-1}, R)}e^{-a |z|_\ast} \le C'n \sum_{[z]: |\ell[z]-n|\le L'}e^{-a\ell_\ast[z]},
\]
where $C'$ and $L'$ are constants depending only on $a, x, R, L$ and the hyperbolicity constants of $d$ and $d_\ast$.
Therefore,
noting that
\[
||x^{3}zx^{3}|_\ast - |z|_\ast| \le 6|x|_\ast, \quad ||x^{3} z^{-1} x^{3}|_\ast - |z|_\ast| \le 6|x|_\ast \quad \text{and} \quad ||y^{-1} z y|_\ast-|z|_\ast| \le 2|y|_\ast,
\]
we obtain by \eqref{Eq:UV} and \eqref{Eq:VUU'V'} together with \eqref{Eq:triple}, for all large enough $n$,
\[
\sum_{z \in S_{n, L}\setminus F_{U, V}}e^{-a |z|_\ast} \le Cn \sum_{[z]: |\ell[z]-n| \le L'} e^{-a \ell_\ast[z]},
\]
where $L$ and $L'$ are large enough fixed constants depending only on $x$ and $y$.
Since $F_{U, V}$ is a finite set of elements independent of $n$,
we have that $\th(a) \le \Theta(a)$, as required.
\qed

Let 
\[
\a_{\min}:=-\lim_{a \to \infty}\frac{\th(a)}{a} \quad \text{and} \quad \a_{\max}:=-\lim_{a \to -\infty}\frac{\th(a)}{a},
\]
where $\a_{\min}$ and $\a_{\max}$ are positive and finite since $d$ and $d_\ast$ are quasi-isometric.
Recall that
\[
\Dil_-:=\inf_{[x] \in \conj_{>0}}\frac{\ell_\ast[x]}{\ell[x]} \quad \text{and} \quad \Dil_+:=\sup_{[x] \in \conj_{>0}}\frac{\ell_\ast[x]}{\ell[x]}
\]
where $\conj_{>0}$ is the set of elements $[x] \in \conj$ such that $\ell[x]$ (and hence $\ell_\ast[x]$) is non-zero.
\begin{corollary}\label{Cor:Dil}
For all $d, d_\ast \in \Dc_\G$, we have
\[
\Dil_-=\a_{\min} \quad \text{and} \quad \Dil_+=\a_{\max}.
\]
\end{corollary}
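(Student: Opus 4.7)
The plan is to combine Proposition~\ref{Prop:conjugacy}, which identifies $\theta = \Theta$, with the characterization
\[
\Theta(a) = \limsup_{n \to \infty}\frac{1}{n}\log\sum_{[x]\in\conj:\, |\ell[x]-n|\le L}e^{-a\ell_\ast[x]}
\]
valid for any sufficiently large $L$ (this appears inside the proof of Proposition~\ref{Prop:conjugacy}), together with the two-sided bound $\Dil_-\ell[x] \le \ell_\ast[x] \le \Dil_+\ell[x]$ which holds for every $[x] \in \conj_{>0}$, in order to sandwich $\Theta(a)$ between affine functions of $a$ whose slopes encode $\Dil_\pm$. I will focus on $\alpha_{\min} = \Dil_-$; the identity $\alpha_{\max} = \Dil_+$ follows by an entirely symmetric argument with $a \to -\infty$ and with infimum and supremum interchanged.

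For the upper bound $\alpha_{\min} \ge \Dil_-$, I fix $a > 0$ and observe that once $n > L$, every $[x]$ contributing to the sum above has $\ell_\ast[x] \ge \Dil_-\ell[x] \ge \Dil_-(n-L)$, whence
\[
\sum_{[x]:\, |\ell[x]-n|\le L}e^{-a\ell_\ast[x]} \le e^{-a\Dil_-(n-L)}\cdot\#\{[x]: |\ell[x]-n|\le L\}.
\]
Applying Proposition~\ref{Prop:conjugacy} at $a=0$ shows the cardinality factor grows at exponential rate $\Theta(0) = \theta(0) = v$, so $\Theta(a) \le v - a\Dil_-$ for $a > 0$; dividing by $a$ and letting $a \to \infty$ yields $\alpha_{\min} \ge \Dil_-$.

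For the reverse inequality $\alpha_{\min} \le \Dil_-$, I fix any $[x_0] \in \conj_{>0}$ and set $\kappa := \ell_\ast[x_0]/\ell[x_0]$. Homogeneity of the stable translation length gives $\ell[x_0^n] = n\ell[x_0]$ and $\ell_\ast[x_0^n] = n\ell_\ast[x_0]$ for every $n \ge 1$; since $n \mapsto n\ell[x_0]$ is strictly increasing, the conjugacy classes $[x_0^n]$ are pairwise distinct. Hence
\[
\Qc(a,b) \ge \sum_{n\ge 1}e^{-n\ell[x_0](a\kappa+b)},
\]
which diverges whenever $a\kappa + b \le 0$. Monotonicity of $\Qc(a,\cdot)$ together with $\Theta = \theta$ forces $\Theta(a) \ge -a\kappa$ for every such $[x_0]$, and taking the infimum of $\kappa$ over $\conj_{>0}$ yields $\Theta(a) \ge -a\Dil_-$ for $a > 0$. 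Dividing by $a$ and letting $a \to \infty$ completes the proof that $\alpha_{\min} = \Dil_-$.

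The only conceptual point beyond bookkeeping is the observation that powers of a single hyperbolic element provide infinitely many distinct conjugacy classes, which is immediate from the strict monotonicity of $n \mapsto n\ell[x_0]$ when $\ell[x_0] > 0$; once Proposition~\ref{Prop:conjugacy} is available, everything else is routine.
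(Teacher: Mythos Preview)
Your proof is correct and follows essentially the same approach as the paper: both directions sandwich $\Theta(a)$ between affine functions using Proposition~\ref{Prop:conjugacy} and the inequalities $\Dil_-\ell[x]\le \ell_\ast[x]\le \Dil_+\ell[x]$. The only cosmetic difference is that for the lower bound the paper phrases the argument via an $\varepsilon$-approximation to $\Dil_-$ and the $\limsup$ formulation of $\Theta$, whereas you bound $\Qc(a,b)$ directly using powers of a single element---the paper's claim that infinitely many conjugacy classes satisfy $\ell_\ast[x]/\ell[x]\le \Dil_-+\varepsilon$ is, implicitly, exactly your observation about powers.
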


\proof
Fix a large enough $L>0$.
For all $a>0$ and all integers $n \ge 0$, we have that
\[
\sum_{[x]:\ |\ell[x]-n| \le L}e^{-a\ell_\ast[x]} \le \sum_{[x]:\ |\ell[x]-n|\le L}e^{-a \Dil_-\, \ell[x]},
\]
which together with Proposition \ref{Prop:conjugacy} implies that $\th(a) \le -a \Dil_-+O(1)$ and thus $\a_{\min} \ge \Dil_-$.
Further for all $\e>0$, there exist infinitely many $[x] \in \conj_{>0}$ such that
\begin{equation*}\label{Eq:dil-}
\frac{\ell_\ast[x]}{\ell[x]} \le \Dil_-+\e.
\end{equation*}
Hence for all $a>0$ and infinitely many integers $n \ge 0$, 
\[
\sum_{[x]:\ |\ell[x]-n|\le L}e^{-a\ell_\ast[x]} \ge c e^{-a(\Dil_-+\e)n},
\]
where $c$ is a positive constant independent of $n$,
and thus by Proposition \ref{Prop:conjugacy},
\[
\th(a) \ge -a(\Dil_-+\e).
\]
Therefore $\a_{\min} \le \Dil_-$.
We obtain $\a_{\min}=\Dil_-$.
Showing that $\a_{\max}=\Dil_+$ is analogous.
\qed

\begin{lemma} \label{Lem:asymprigidity}
Let $\Gamma$ be a non-elementary hyperbolic group and $d, d_\ast \in \Dc_\Gamma$. 
Then the following are equivalent:
\begin{enumerate}
\item[\upshape(1)] The Manhattan curve $\Cc_M$ for the pair $d$ and $d_\ast$ is a straight line, and
\item[\upshape(2)] there exists a constant $c>0$ such that $\ell[x] = c \ell_\ast[x]$ for all $[x] \in \conj$.
\end{enumerate}
\end{lemma}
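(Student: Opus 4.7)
The plan is to prove the two implications separately, with most of the work already packaged in Corollary \ref{Cor:Dil}.

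For the implication (2) $\Rightarrow$ (1), I would argue by direct substitution. Assuming $\ell[x] = c\,\ell_\ast[x]$ for all $[x] \in \conj$ with $c>0$ (note that $c=0$ is excluded since $\G$ is non-elementary and therefore contains infinite-order elements with positive translation length), the series defining $\Qc$ collapses to
\[
\Qc(a,b) = \sum_{[x] \in \conj} \exp\bigl(-(a+bc)\ell_\ast[x]\bigr).
\]
Letting $s_\ast$ denote the abscissa of convergence of the one-parameter series $s \mapsto \sum_{[x]} \exp(-s\ell_\ast[x])$, it follows that $\Qc(a,b)<\infty$ precisely when $a+bc>s_\ast$. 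Hence the Manhattan curve $\Cc_M$ coincides with the affine line $\{(a,b) : a+bc=s_\ast\}$, which is evidently a straight line.

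For the implication (1) $\Rightarrow$ (2), suppose $\Cc_M$ is a straight line. Using the parametrization $b = \th(a)$ from Proposition \ref{Prop:conjugacy} and the continuity of $\th$ on $\R$ from Corollary \ref{Cor:ab}(2), this forces $\th(a)=Aa+B$ for constants $A,B \in \R$ and all $a \in \R$. Consequently the two asymptotic slopes agree: $\lim_{a\to +\infty}\th(a)/a=\lim_{a\to-\infty}\th(a)/a=A$, so that $\a_{\min}=\a_{\max}=-A$. Applying Corollary \ref{Cor:Dil} gives $\Dil_- = \Dil_+ = -A$, i.e., the ratio $\ell_\ast[x]/\ell[x]$ is the constant $-A$ for every $[x] \in \conj_{>0}$. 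In particular $-A>0$, so setting $c:=-1/A$ yields $\ell[x]=c\,\ell_\ast[x]$ on $\conj_{>0}$.

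It remains to handle $[x] \in \conj \setminus \conj_{>0}$, i.e., those with $\ell[x]=0$. Since $d$ and $d_\ast$ are quasi-isometric, with $d_\ast(o,x^n) \le L\,d(o,x^n)+C$, dividing by $n$ and letting $n\to\infty$ forces $\ell_\ast[x]=0$ as well, so the relation $\ell[x]=c\,\ell_\ast[x]$ holds trivially. I do not foresee a serious obstacle here: once Corollary \ref{Cor:Dil} identifies the asymptotic slopes of $\th$ with $\Dil_\pm$, the rigidity statement reduces to the elementary observation that $\Dil_-=\Dil_+$ forces the translation-length ratio to be constant. The mild subtlety, namely extending the proportionality to classes $[x]$ with $\ell[x]=0$, is dispatched directly by quasi-isometry.
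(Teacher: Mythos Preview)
Your proof is correct and follows essentially the same route as the paper: both directions hinge on Corollary~\ref{Cor:Dil}, using that a straight-line $\Cc_M$ forces $\a_{\min}=\a_{\max}$ and hence $\Dil_-=\Dil_+$, which pins the ratio $\ell_\ast[x]/\ell[x]$ to a single value on $\conj_{>0}$. The paper additionally identifies the proportionality constant as $v_\ast/v$ by noting $\Cc_M$ passes through $(0,v)$ and $(v_\ast,0)$, and it leaves the converse as ``follows from the definition,'' whereas you spell out the substitution into $\Qc(a,b)$ and the $\ell[x]=0$ case explicitly; these are cosmetic differences, not a different argument.
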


\begin{proof}
If the Manhattan curve $\Cc_M$ is a straight line, then ${\rm Dil}_- = {\rm Dil}_+$. Furthermore $-{\rm Dil}_-$ and $ -{\rm Dil}_+$ are equal to the gradient of the Manhattan curve. Since $\Cc_M$ goes through the points $(0,v)$ and $(v_\ast,0)$ this gradient is $-v/v_\ast$, where $v, v_\ast>0$ if $\G$ is non-elementary. Hence for all $[x] \in \conj_{>0}$
\[
\frac{v}{v_\ast} = {\rm Dil}_- = \inf_{[g] \in \conj_{>0}} \frac{\ell_\ast[g]}{\ell[g]} \le \frac{\ell_\ast[x]}{\ell[x]} \le \sup_{[g] \in \conj_{>0}} \frac{\ell_\ast[g]}{\ell[g]} = {\rm Dil}_+ = \frac{v}{v_\ast},
\]
implying that $v \ell[x]=v_\ast \ell_\ast[x]$ for all $[x] \in \conj$.
The converse follows from the definition of the Manhattan curve.
\end{proof}

\subsection{Proof of the $C^1$-regularity}\label{Sec:proofofC1}

Fix a pair of metrics $d, d_\ast \in \Dc_\G$.
For each $\x \in \partial \G$
and quasi-geodesic $\g_\x:[0, \infty) \to (\G, d)$ with $\g_\x(t) \to \x$ as $t \to \infty$,
we define
\[
\t_{\inf}(\x):=\liminf_{t \to \infty}\frac{d_\ast(\g_\x(0), \g_\x(t))}{d(\g_\x(0), \g_\x(t))}
\quad \text{and} \quad
\t_{\sup}(\x):=\limsup_{t \to \infty}\frac{d_\ast(\g_\x(0), \g_\x(t))}{d(\g_\x(0), \g_\x(t))}.
\]
The Morse Lemma (applied to $(\G, d)$) implies that $\t_{\inf}(\x)$ and $\t_{\sup}(\x)$ are independent of the choice of quasi-geodesics converging to $\x$,
or of their starting points, respectively.
If $\t_{\inf}(\x)=\t_{\sup}(\x)$, then we denote the common value by $\t(\x)$ and call it the {\it local intersection number} at $\x$ for the pair $(d, d_\ast)$.

\begin{lemma}\label{Lem:tau}
For each $(a, b) \in \Cc_M$, we have that $\t_{\inf}(\x)=\t_{\sup}(\x)$ for $\m_{a, b}$-almost every $\x \in \partial \G$ and further there exists a constant $\t_{a, b}$ such that
\[
\t(\x)=\t_{a, b} \quad \text{for $\m_{a, b}$-almost every $\x \in \partial \G$}.
\]
Moreover, $\t_{a, b}$ is continuous in $(a, b) \in \Cc_M$.
\end{lemma}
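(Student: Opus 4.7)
The plan is to combine the ergodicity of the $\G$-action on $(\partial\G, \m_{a,b})$ (Corollary \ref{Cor:ab}(3)) with Kingman's subadditive ergodic theorem applied to the ergodic topological flow on $\Fc_\k$ (Proposition \ref{Prop:Hopf}), and to deduce the continuous dependence on $(a,b)$ from the weak-$\ast$ convergence supplied by Corollary \ref{Cor:limit}. Both $\t_{\inf}$ and $\t_{\sup}$ are $\G$-invariant Borel functions on $\partial\G$: the Morse lemma in $(\G, d)$ ensures that the defining $\liminf$ and $\limsup$ depend neither on the basepoint nor on the choice of quasi-geodesic ray $\g_\x$, and replacing $\g_\x$ by $g\cdot\g_\x$ for $g\in\G$ does not change the ratio in the limit. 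By $\G$-ergodicity of $\m_{a,b}$ there are constants $\t_{\inf}^{a,b}\le \t_{\sup}^{a,b}$ with $\t_{\inf}(\x)=\t_{\inf}^{a,b}$ and $\t_{\sup}(\x)=\t_{\sup}^{a,b}$ for $\m_{a,b}$-a.e.\ $\x$, so that everything reduces to (i) showing these two constants coincide and (ii) establishing their continuity in $(a,b)$.

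To show they coincide, I would use the evaluation map $\ev:\partial^2\G\times\R\to\G$ of Section \ref{Sec:top-flow} to introduce the cocycles
\[
c(n,\o):=d\big(\ev(\o),\ev(\F_n\o)\big),\qquad c_\ast(n,\o):=d_\ast\big(\ev(\o),\ev(\F_n\o)\big),
\]
viewed (up to uniformly bounded additive error, since $\ev$ is $\G$-equivariant only modulo bounded $\wh d$-distance, and $d, d_\ast$ are quasi-isometric to $\wh d$) as subadditive cocycles on $\Fc_\k$: subadditivity $c(n+m,\o)\le c(n,\o)+c(m,\F_n\o)$ is immediate from the triangle inequality. Kingman's subadditive ergodic theorem applied to the ergodic system $(\F,m_{a,b})$ then yields constants $M,M_\ast>0$ (positivity following from the quasi-isometry estimate $c(n,\o)\ge n/L-C$) such that $c(n,\o)/n\to M$ and $c_\ast(n,\o)/n\to M_\ast$ for $m_{a,b}$-a.e.\ $\o$. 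Since $t\mapsto \ev(\F_t\o)$ is a $\wh d$-rough geodesic, hence a quasi-geodesic in $(\G,d)$ converging to a forward endpoint $\y(\o)\in\partial\G$, the ratio converges along this specific quasi-geodesic ray to $\t_{a,b}:=M_\ast/M$, forcing $\t_{\inf}(\y(\o))=\t_{\sup}(\y(\o))=\t_{a,b}$. By Proposition \ref{Prop:Hopf}, $m_{a,b}$ corresponds to $\L_{a,b}$ on $\partial^2\G$, which is equivalent to $\exp\big(2a(\cdot|\cdot)_{\ast o}+2b(\cdot|\cdot)_o\big)\m_{a,b}\otimes\m_{a,b}$ with uniformly bounded density; a Fubini argument then pushes the full-$m_{a,b}$-measure statement to the forward-endpoint factor, yielding $\t_{\inf}^{a,b}=\t_{\sup}^{a,b}=\t_{a,b}$.

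For continuity in $(a,b)\in\Cc_M$, hyperbolicity of $d$ gives approximate superadditivity $c(n+m,\o)\ge c(n,\o)+c(m,\F_n\o)-C$ with $C$ depending only on the fixed hyperbolicity data (independent of $(a,b)$); combined with subadditivity this forces $|a_n(a,b)/n-M(a,b)|\le C/n$ uniformly in $(a,b)$, where $a_n(a,b):=\int c(n,\cdot)\,dm_{a,b}$. For each fixed $n$, $a_n$ depends continuously on $(a,b)$ by the weak-$\ast$ continuity of $m_{a,b}$ (Corollary \ref{Cor:limit}), after replacing $c(n,\cdot)$ with a continuous bounded approximation on $\Fc_\k$ (for instance by a small mollification along the flow coordinate). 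Hence $M(a,b)$ is a uniform limit of continuous functions, itself continuous; the same applies to $M_\ast(a,b)$, and $\t_{a,b}=M_\ast/M$ depends continuously on $(a,b)$ because $M$ is bounded away from $0$ uniformly. The main technical obstacle is the passage from the merely Borel evaluation map $\ev$ to genuinely continuous bounded integrands on $\Fc_\k$: both the descent of $c(n,\cdot)$ to a well-defined $\G$-invariant function and the weak-$\ast$ convergence step require care, and the $(a,b)$-uniformity of the bounded error $C$ is essential for promoting Kingman's pointwise convergence into continuous dependence of $\t_{a,b}$.
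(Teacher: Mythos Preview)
Your proposal is correct and follows essentially the same approach as the paper: both arguments pass to the topological flow $(\Fc_\k,\F,m_{a,b})$, apply Kingman's subadditive ergodic theorem to the (approximately sub- and superadditive) distance cocycles $c$ and $c_\ast$ to obtain limits $\chi(a,b)$ and $\chi_\ast(a,b)$, set $\t_{a,b}=\chi_\ast/\chi$, push the a.e.\ statement back to $\m_{a,b}$ via the equivalence $\L_{a,b}\sim\m_{a,b}\otimes\m_{a,b}$, and derive continuity from the weak-$\ast$ convergence of $m_{a,b}$ (Corollary \ref{Cor:limit}) combined with the uniform (in $(a,b)$) sub/superadditivity bounds. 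Your preliminary reduction via $\G$-invariance of $\t_{\inf},\t_{\sup}$ and your explicit flagging of the Borel-versus-continuous issue for $\ev$ are not in the paper's write-up, but the core strategy is identical.
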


\proof
Let $\wh d$ be a strongly hyperbolic metric in $\Dc_\G$.
We consider the map $\ev: \partial^2 \G \times \R \to (\G, \wh d\,)$ and the flow space $\Fc_\k$ where $\k$ is the cocycle associated with $\wh d$ defined in Section \ref{Sec:top-flow}.
We write $w_t:=\F_t(w)$ for $w \in \Fc_\k$.
Taking a measurable section $\i: \Fc_\k \to D$ for a Borel fundamental domain $D$ in $\partial^2 \G \times \R$ for the $(\G, \k)$-action,
we define $\wt w:=\i(w)$ and set
$\wt w_t:=\wt \F_t(\wt w)$.
Let
\[
c_\ast(w_s, w_t):=d_\ast\(\ev(\wt w_s), \ev(\wt w_t)\).
\]
Then $c_\ast(w_s, w_t)$
is subadditive,
i.e., for all $t, s \in [0, \infty)$,
\begin{equation}\label{Eq:subadditive}
c_\ast(w_0, w_{s+t}) \le c_\ast(w_0, w_s)+c_\ast(w_s, w_{s+t}),
\end{equation}
and superadditive up to an additive constant, i.e.,
there exists a $C \ge 0$ such that for all $t, s \in [0, \infty)$,
\begin{equation}\label{Eq:superadditive}
c_\ast(w_0, w_{s+t}) \ge c_\ast(w_0, w_s)+c_\ast(w_s, w_{s+t})-C,
\end{equation}
by the Morse lemma on $(\G, d_\ast)$.
Since $m_{a, b}$ is ergodic with respect to the flow on $\Fc_\k$,
the Kingman subadditive ergodic theorem implies that there exists a constant $\chi_\ast(a, b)$ such that
\[
\lim_{t\to \infty}\frac{1}{t} c_\ast(w_0, w_t) = \chi_\ast(a, b) \quad \text{for $m_{a, b}$-almost every $w$ in $\Fc_\k$},
\]
and 
\[
\lim_{t \to \infty}\frac{1}{t}\int_{\Fc_\k}c_\ast(w_0, w_t)\,dm_{a, b}=\chi_\ast(a, b).
\]
Let us show that $\chi_\ast(a, b)$ is continuous in $(a, b) \in \Cc_M$.
For each $(a_0, b_0) \in \Cc_M$,
if $(a, b) \to (a_0, b_0)$,
then $m_{a, b}$ weakly converges to $m_0:=m_{a_0, b_0}$
by Corollary \ref{Cor:limit}.
Then the subadditivity \eqref{Eq:subadditive} yields for all $(a, b) \in \Cc_M$ and all $t > 0$,
\[
\frac{1}{t}\int_{\Fc_\k}c_\ast(w_0, w_t)\,dm_{a, b} \ge \inf_{t>0}\frac{1}{t}\int_{\Fc_\k}c_\ast(w_0, w_t)\,dm_{a, b}=\chi_\ast(a, b),
\]
we have that for each $t \ge 0$,
\[
\frac{1}{t}\int_{\Fc_\k}c_\ast(w_0, w_t)\,dm_0 \ge \limsup_{a \to a_0}\chi_\ast(a, b),
\]
and similarly, \eqref{Eq:superadditive} implies that for each $t > 0$
\[
\frac{1}{t}\int_{\Fc_\k}\(c_\ast(w_0, w_t)-C\)\,dm_0 \le \liminf_{a \to a_0}\chi_\ast(a, b).
\]
Therefore letting $t\to \infty$, we obtain $\lim_{a \to a_0}\chi_\ast(a, b)=\chi_\ast(a_0, b_0)$, i.e., $\chi_\ast(a, b)$ is continuous in $(a, b) \in \Cc_M$.

We apply the same discussion to $d$:
letting 
\[
c(w_s, w_t):=d\(\ev(\wt w_s), \ev(\wt w_t)\),
\] 
we have that there exists a constant $\chi(a, b)$ such that
\[
\lim_{t\to \infty}\frac{1}{t} c(w_0, w_t) = \chi(a, b) \quad \text{for $m_{a, b}$-almost every $w$ in $\Fc_\k$},
\]
and $\chi(a, b)$ is continuous in $(a, b) \in \Cc_M$.

Therefore for $m_{a, b}$-almost every $w=[\x_-, \x_+, t_0] \in \Fc_\k$,
\[
\lim_{t \to \infty}\frac{c_\ast(w_0, w_t)}{c(w_0, w_t)}=\frac{\chi_\ast(a, b)}{\chi(a, b)}.
\]
Recall that $\L_{a, b}\otimes dt=\sum_{x \in \G}x_\ast(\i_\ast m_{a, b})$, $\L_{a, b}$ is equivalent to $\m_{a, b}\otimes \m_{a, b}$
and that $d_\ast$ and $d$ are left-invariant.
Hence if we define $\t_{a, b}:=\chi_\ast(a, b)/\chi(a, b)$, 
then $\t_{\inf}(\x)=\t_{\sup}(\x)$ for $\m_{a, b}$-almost every $\x \in \partial \G$ and 
\[
\t(\x)=\t_{a, b} \quad \text{for $\m_{a, b}$-almost every $\x \in \partial \G$}.
\]
Since $\chi(a, b)$ and $\chi_\ast(a, b)$ are positive and continuous, $\t_{a, b}$ is continuous in $(a, b) \in \Cc_M$, as required.
\qed

For every real value $r \in \R$,
let
\[
E_r:=\big\{\x \in \partial \G \ : \ \t_{\inf}(\x)=\t_{\sup}(\x)=r\big\}.
\]
The set $E_r$ is possibly empty for some $r$.
Note that a point $\x$ is in $E_r$ if and only if for some (equivalently, every) quasi-geodesic $\g_\x$ converging to $\x$,
we have
\[
\lim_{t \to \infty}\frac{d_\ast(\g_\x(0), \g_\x(t))}{d(\g_\x(0), \g_\x(t))}=r.
\]
Recall that the Manhattan curve $\Cc_M$ is the graph of the function $\th$, i.e., $(a, b) \in \Cc_M$ if and only if $b=\th(a)$,
and since $\th$ is convex, it is differentiable except for at most countably many points.

\begin{lemma}\label{Lem:full}
Fix a pair $d, d_\ast \in \Dc_\G$.
For each $(a, b) \in \Cc_M$, if $\th$ is differentiable at $a$ and $r=-\th'(a)$, then 
$\m_{a, b}(E_r)=1$.
\end{lemma}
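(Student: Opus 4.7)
The plan is to combine Lemma \ref{Lem:tau} with a comparison between the measures $\m_{a,b}$ and the nearby measures $\m_{a+h, \th(a+h)}$ to show that $\t_{a,b}=-\th'(a)=r$ whenever $\th$ is differentiable at $a$. Since Lemma \ref{Lem:tau} already asserts $\m_{a,b}(E_{\t_{a,b}})=1$, this identification is precisely what is needed to conclude $\m_{a,b}(E_r)=1$.

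First, I would fix a $\m_{a,b}$-typical point $\x \in E_{\t_{a,b}}$ and a quasi-geodesic $\g_\x$ in $(\G, d)$ converging to $\x$, and take points $x_n=\g_\x(t_n)$ with $t_n \to \infty$. Applying the shadow estimate of Corollary \ref{Cor:ab}(1) to both $\m_{a,b}$ and $\m_{a+h, \th(a+h)}$ yields
\[
\frac{\m_{a+h, \th(a+h)}(O(x_n, R))}{\m_{a,b}(O(x_n, R))} \asymp \exp\bigl(-h\, d_\ast(o, x_n) - (\th(a+h) - \th(a))\, d(o, x_n)\bigr).
\]
Since $\x \in E_{\t_{a,b}}$, the ratio $d_\ast(o, x_n)/d(o, x_n) \to \t_{a,b}$, and so
\[
\lim_{n \to \infty}\frac{1}{d(o, x_n)}\log \frac{\m_{a+h, \th(a+h)}(O(x_n, R))}{\m_{a,b}(O(x_n, R))} = -h\,\t_{a,b} - (\th(a+h) - \th(a)).
\]

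The crucial step is then to show that this exponential growth rate is nonpositive. Here I would invoke the Lebesgue differentiation lemma (Lemma \ref{Lem:diff} in the appendix): since $\m_{a,b}$ is doubling with respect to $\r$ by Lemma \ref{Lem:ergodic} and $\m_{a+h, \th(a+h)}$ is a finite Borel measure, the Radon-Nikodym-type limit
\[
\lim_{r \to 0^+}\frac{\m_{a+h, \th(a+h)}(B(\x, r))}{\m_{a,b}(B(\x, r))}
\]
exists and is finite for $\m_{a,b}$-a.e.\ $\x$. Using Lemma \ref{Lem:shadow-ball} to sandwich $O(x_n, R)$ between $\r$-balls around $\x$ of comparable radii, together with the doubling property of both measures, the analogous boundedness transfers to the shadow ratios. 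Consequently
\[
-h\,\t_{a,b} - (\th(a+h) - \th(a)) \le 0 \quad \text{for every } h \ne 0.
\]

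Dividing by $h>0$ and letting $h \searrow 0$ gives $\t_{a,b} \ge -\th'(a^+)$, while dividing by $h<0$ (with the inequality flipped) and letting $h \nearrow 0$ gives $\t_{a,b} \le -\th'(a^-)$. When $\th$ is differentiable at $a$, the two one-sided derivatives coincide, forcing $\t_{a,b}=-\th'(a)=r$, which combined with Lemma \ref{Lem:tau} yields $\m_{a,b}(E_r)=1$. The main obstacle is making the Lebesgue-differentiation step rigorous in the quasi-metric setting of $(\partial\G, \r)$: one needs both $\m_{a,b}$ and $\m_{a+h, \th(a+h)}$ to be doubling with uniform control as $h$ varies (which follows from the explicit form $C_{d_\ast}^{|a|}C_d^{|b|}$ of the constants in Corollary \ref{Cor:ab}(1)), together with a careful interplay between shadows in $\G$ and $\r$-balls in $\partial \G$ via Lemma \ref{Lem:shadow-ball}.
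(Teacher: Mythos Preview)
Your argument is correct and gives a genuinely different route from the paper's. The paper proves Lemma \ref{Lem:full} by a direct Borel--Cantelli argument: for each $\e>0$ and each small $s>0$, the Markov inequality together with the shadow estimate $\m_\ast(O(\x_n,R))\asymp e^{-v_\ast|\x_n|_\ast}$ and the counting estimate $\sum_{x\in S(n,R)}e^{-(a+sv_\ast)|x|_\ast}\asymp e^{\th(a+sv_\ast)n}$ from Lemma \ref{Lem:Coornaert} shows that
\[
\m_{a,b}\bigl(\{\x: \m_\ast(O(\x_n,R))\ge e^{-(r-\e)v_\ast|\x_n|}\}\bigr)\le C_R\,e^{-c(\e,s)n},
\]
and likewise for the upper tail; summing over $n$ and using differentiability of $\th$ at $a$ yields $|\x_n|_\ast/|\x_n|\to r$ for $\m_{a,b}$-a.e.\ $\x$ directly, without invoking Lemma \ref{Lem:tau} or Lemma \ref{Lem:diff}. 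Your approach instead feeds the existence of $\t_{a,b}$ from Lemma \ref{Lem:tau} into a comparison of $\m_{a,b}$ with its neighbors $\m_{a+h,\th(a+h)}$ along the curve, and then Lemma \ref{Lem:diff} forces the exponential rate of the shadow ratio to be nonpositive. What you gain is the clean subgradient inequality $\th(a+h)-\th(a)\ge -h\,\t_{a,b}$ for \emph{every} $h$, which in particular shows that $-\t_{a,b}$ lies in the subdifferential of the convex function $\th$ at $a$ even at non-differentiable points. What the paper's approach buys is independence from Lemma \ref{Lem:tau}: it establishes $\m_{a,b}(E_r)=1$ from scratch, so that Lemmas \ref{Lem:tau} and \ref{Lem:full} are two separate computations feeding into Theorem \ref{Thm:C1}.

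One clarification on your stated ``main obstacle'': you do \emph{not} need any uniformity in $h$ for the doubling constants. For each fixed $h$, Lemma \ref{Lem:diff} (which is already stated for shadows and only requires the base measure $\m_{a,b}$ to be doubling, by Lemma \ref{Lem:ergodic}) gives a $\m_{a,b}$-full-measure set on which the shadow ratio is bounded; intersecting with $E_{\t_{a,b}}$ yields a single $\x$ for which both conclusions hold, and since $\t_{a,b}$ is a constant this already forces the numerical inequality $-h\,\t_{a,b}-(\th(a+h)-\th(a))\le 0$. You then vary $h$ afterwards, and no measurable dependence on $h$ is needed.
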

\proof
Fix a large enough constant $C \ge 0$.
Let us endow the space of $C$-rough geodesic rays from $o$ in $(\G, d)$ with the point-wise convergence topology.
For each $\x \in \partial \G$, we associate a $C$-rough geodesic $\g_\x$ from $o$ to $\x$,
and define this correspondence in a Borel measurable way as in Section \ref{Sec:top-flow} where we have done the same but for rough geodesics.
For each non-negative integer $n$ and $\x \in \partial \G$, 
we abbreviate notation by writing $\x_n:=\g_\x(n)$ and $|\x_n|:=d(o, \x_n)$ (resp.\ $|\x_n|_\ast:=d_\ast(o, \x_n)$).
We use $O(x, R)$ to denote the shadows associated to the metric $d$, for a large enough thickness parameter $R$.

For $(a, b) \in \Cc_M$, let us suppose that $r=-\th'(a)$.
For a Patterson-Sullivan measure $\m_\ast$ for $d_\ast$,
we show that
\begin{equation}\label{Eq:lemfullinf}
\liminf_{n \to \infty}-\frac{1}{v_\ast|\x_n|}\log \m_\ast\(O(\x_n, R)\) \ge r \quad \text{for $\m_{a, b}$-almost every $\x \in \partial \G$}.
\end{equation}
For every $\e>0$, the Markov inequality shows the following: For every $s>0$, integrating
\[
 \1_{\big\{\x \in \partial \G \ : \ \m_\ast\(O(\x_n, R)\) \ge e^{-(r-\e)v_\ast|\x_n|}\big\}} \le \m_\ast\(O(\x_n, R)\)^s \cdot e^{s(r-\e)v_\ast|\x_n|},
\]
over $\x \in \partial \G$ with respect to $\m_{a, b}$ yields
\begin{align*}
&\m_{a, b}\(\big\{\x \in \partial \G \ : \ \m_\ast\(O(\x_n, R)\) \ge e^{-(r-\e)v_\ast|\x_n|}\big\}\)\\
&\qquad \qquad \qquad \qquad \qquad \qquad \qquad \le \int_{\partial \G}\m_\ast\(O(\x_n, R)\)^s e^{s(r-\e)v_\ast|\x_n|}\,d\m_{a, b}(\x).
\end{align*}
Since
\[
\m_\ast\(O(\x_n, R)\) \asymp_R \exp\(-v_\ast|\x_n|_\ast\) \quad \text{and} \quad \m_{a, b}\(O(\x_n, R)\) \asymp_R \exp\(-a|\x_n|_\ast-b|\x_n|\),
\]
the integral in the right hand side is at most
\begin{equation}\label{Eq:lemfull}
C_R\sum_{x \in S(n, R)}\exp\(-sv_\ast|x|_\ast+s(r-\e)v_\ast|x|-a|x|_\ast-b|x|\),
\end{equation}
where we recall that $S(n, R)=\{x \in \G \ : \ |d(o, x)-n| \le R\}$, 
up to a multiplicative constant depending only on $R$ and the $\d$-hyperbolicity constant of $d$.
Moreover, \eqref{Eq:lemfull} is at most
\begin{align*}
&C'_R \exp\(s(r-\e)v_\ast n-b n\) \sum_{x \in S(n, R)}\exp\(-sv_\ast|x|_\ast-a|x|_\ast\)\\
&\qquad \qquad \qquad \qquad \le C''_R \exp\(s(r-\e)v_\ast n-b n+\th(sv_\ast+a)n\),
\end{align*}
where we have used Lemma \ref{Lem:Coornaert}:
\[
\sum_{x \in S(n, R)}\exp\(-sv_\ast|x|_\ast-a|x|_\ast\) \asymp \exp\(\th(s v_\ast+a) n\).
\]
Since $b=\th(a)$ and $r=-\th'(a)$,
\[
\th(sv_\ast+a)-\th(a)=-r s v_\ast+o(s) \quad \text{as $s \searrow 0$},
\]
we obtain
\begin{align*}
&\m_{a, b}\(\big\{\x \in \partial \G \ : \ \m_\ast\(O(\x_n, R)\) \ge e^{-(r-\e)v_\ast|\x_n|}\big\}\)\\
&\qquad \qquad \qquad \qquad \le C_R'' \exp\(s(r-\e)v_\ast n+(\th(sv_\ast+a)-\th(a))n\) \\
&\qquad \qquad \qquad \qquad \le C_R'' \exp\(-s\e v_\ast n+o(s)n\)
\le C_R'' \exp\(-c(\e, s)n\),
\end{align*}
for some constant $c(\e, s)>0$ for all $n \ge 0$.
Hence the Borel-Cantelli lemma shows that 
\[
\liminf_{n \to \infty}-\frac{1}{v_\ast|\x_n|}\log \m_\ast\(O(\x_n, R)\) \ge r-\e \quad \text{for $\m_{a, b}$-almost every $\x \in \partial \G$},
\]
and since this holds for every $\e>0$, we obtain \eqref{Eq:lemfullinf}.

Similarly, it holds that
\begin{equation}\label{Eq:lemfullsup}
\limsup_{n \to \infty}-\frac{1}{v_\ast|\x_n|}\log \m_\ast\(O(\x_n, R)\) \le r \quad \text{for $\m_{a, b}$-almost every $\x \in \partial \G$}.
\end{equation}
Indeed, 
for every $\e>0$ and every $s>0$,
we have that
\begin{align*}
&\m_{a, b}\(\big\{\x \in \partial \G \ : \ \m_\ast\(O(\x_n, R)\) \le e^{-(r+\e)v_\ast|\x_n|}\big\}\)\\
&\qquad \qquad \qquad \qquad \qquad \qquad \qquad \le \int_{\partial \G}\m_\ast\(O(\x_n, R)\)^{-s} e^{-s(r+\e)v_\ast|\x_n|}\,d\m_{a, b}(\x),
\end{align*}
and the rest follows as in the same way above; we omit the details.

Combining \eqref{Eq:lemfullinf} and \eqref{Eq:lemfullsup},
we obtain
\[
\lim_{n \to \infty}\frac{\ |\x_n|_\ast}{|\x_n|}=\lim_{n \to \infty}-\frac{1}{v_\ast|\x_n|}\log \m_\ast\(O(\x_n, R)\)=r,
\]
for $\m_{a, b}$-almost every $\x \in \partial \G$.
Therefore we have that $\t_{\inf}(\x)=\t_{\sup}(\x)$ for $\m_{a, b}$-almost every $\x \in \partial \G$,
and
$\m_{a, b}\(E_r\)=1$
if $b=\th(a)$ and $r=-\th'(a)$, as required.
\qed

\begin{theorem}\label{Thm:C1}
For every pair $d, d_\ast \in \Dc_\G$, the Manhattan curve $\Cc_M$ is $C^1$, i.e.,
the function $\th$ is continuously differentiable on $\R$.
Moreover, $\th'(a)=-\t_{a, b}$ for all $(a, b) \in \Cc_M$.
\end{theorem}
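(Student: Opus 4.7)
The plan is to synthesize Lemmas \ref{Lem:tau} and \ref{Lem:full} at each point of differentiability of $\th$, and then promote pointwise differentiability on a dense set to full $C^1$-regularity by invoking convexity of $\th$.

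First I would recall from Corollary \ref{Cor:ab}(2) that $\th$ is convex on $\R$, hence differentiable at all but at most countably many points. Fix $a_0 \in \R$ where $\th'(a_0)$ exists, and set $b_0 = \th(a_0)$ and $r_0 = -\th'(a_0)$. Lemma \ref{Lem:full} then yields $\m_{a_0, b_0}(E_{r_0}) = 1$, that is, the local intersection number exists and equals $r_0$ on a set of full $\m_{a_0, b_0}$-measure. On the other hand, Lemma \ref{Lem:tau} asserts that $\t(\x) = \t_{a_0, b_0}$ on a set of full $\m_{a_0, b_0}$-measure. Intersecting these two full-measure sets is nonempty, forcing $\t_{a_0, b_0} = r_0$; equivalently,
\[
\th'(a_0) = -\t_{a_0, \th(a_0)} \qquad \text{at every differentiability point } a_0.
\]

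Next I would exploit the continuity of $(a, b) \mapsto \t_{a, b}$ along $\Cc_M$ provided by Lemma \ref{Lem:tau}, combined with the continuity of $a \mapsto \th(a)$ from Corollary \ref{Cor:ab}(2), to conclude that the map $\f(a) := -\t_{a, \th(a)}$ is continuous on $\R$. Since $\th$ is convex, its one-sided derivatives $\th'_-(a)$ and $\th'_+(a)$ exist at every $a \in \R$ and are obtained as the left, respectively right, limits of $\th'$ along differentiability points approaching $a$. By the pointwise identification above, $\th'$ coincides with the continuous function $\f$ on a dense subset, so both one-sided derivatives at every $a$ must equal $\f(a)$. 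Hence $\th'_-(a) = \th'_+(a) = \f(a)$, giving differentiability everywhere with a continuous derivative, as required.

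The genuinely nontrivial work — identifying the slope $-\th'(a)$ with the intrinsic geometric quantity $\t_{a, b}$ — has already been carried out in Lemma \ref{Lem:full} (a Patterson--Sullivan/Frostman argument combined with Lemma \ref{Lem:Coornaert}) and Lemma \ref{Lem:tau} (Kingman's subadditive theorem applied to the flow $\F_t$ on $\Fc_\k$, together with the weak-$\ast$ continuity of $m_{a, b}$ from Corollary \ref{Cor:limit}); the present theorem is essentially the clean synthesis. The only point warranting attention is the passage from differentiability on a dense set to continuous differentiability, but this is immediate from convexity, so I do not anticipate any further obstacle.
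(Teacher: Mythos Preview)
Your proposal is correct and follows essentially the same approach as the paper's own proof: identify $\th'(a)=-\t_{a,\th(a)}$ at each differentiability point by intersecting the full-measure sets from Lemmas \ref{Lem:tau} and \ref{Lem:full}, then use the continuity of $(a,b)\mapsto\t_{a,b}$ together with convexity of $\th$ to upgrade to everywhere $C^1$. Your added detail on one-sided derivatives of convex functions makes explicit what the paper leaves implicit in its final sentence.
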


\proof
Recall that since $\th$ is convex, $\th$ is differentiable except for at most countably many points.
For each $(a, b) \in \Cc_M$,
if $r=-\th'(a)$, then Lemma \ref{Lem:full} implies that
$\t(\x)=r$ for $\m_{a, b}$-almost every $\x \in \partial \G$;
on the other hand, Lemma \ref{Lem:tau} implies that
$\t(\x)=\t_{a, b}$ for $\m_{a, b}$-almost every $\x \in \partial \G$.
Therefore if $b=\th(a)$ and $r=-\th'(a)$, then
\[
\th'(a)=-\t_{a, b}.
\]
Since this holds for Lebesgue almost every $a$ in $\R$ and $\t_{a, b}$ is continuous in $(a, b) \in \Cc_M$ by Lemma \ref{Lem:tau},
$\th$ is differentiable everywhere and the derivative coincides with $-\t_{a, b}$ which is continuous.
\qed

The above proof yields the multifractal spectrum of every Patterson-Sullivan measure $\m_\ast$ with respect to $\rho(\x,\y)=\exp(-(\x|\y)_o)$ in $\partial \G$ and the profile is the Legendre transform of the Manhattan curve.

\begin{theorem}[The multifractal spectrum]\label{Thm:MFS}
For every pair $d, d_\ast \in \Dc_\G$,
let $\m_\ast$ be an arbitrary Patterson-Sullivan measure relative to $d_\ast$
and $\rho(\x, \y)=\exp(-(\x|\y)_o)$ be the quasi-metric relative to $d$ on $\partial \G$.
For $\a \in \R$ we define
\[
E(\a):=\left\{\x \in \partial \G \ : \ \lim_{r \to 0}\frac{\log \m_\ast\(B(\x, r)\)}{\log r}=\a\right\},
\]
where $B(\x, r)=\{\y \in \partial \G \ : \ \rho(\x, \y)<r\}$.
Then we have 
\begin{equation}\label{Eq:MFS}
\dim_\H \(E(v_\ast \a), \rho\)=\inf_{a \in \R}\big\{a \a+\theta(a)\big\} \quad \text{for $\a \in (\a_{\min}, \a_{\max})$},
\end{equation}
where 
\[
\a_{\min}=-\lim_{a \to \infty}\frac{\theta(a)}{a} \quad \text{and} \quad \a_{\max}=-\lim_{a \to -\infty}\frac{\theta(a)}{a}.
\]
\end{theorem}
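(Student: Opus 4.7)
The strategy is to identify the level set $E(v_\ast \a)$ with the set $E_\a=\{\x\in\partial\G:d_\ast(o,\g_\x(n))/d(o,\g_\x(n))\to\a\}$ from Lemma \ref{Lem:full}, and then apply the Frostman-type lemma (Lemma \ref{Lem:Frostman}) to the one-parameter family $\m_{a,b}$ at the Legendre-dual parameter.

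First, fix for each $\x\in\partial\G$ a $C$-rough geodesic $\g_\x$ from $o$ to $\x$ in a Borel-measurable way. Combining the shadow-ball comparison (Lemma \ref{Lem:shadow-ball}) with the Patterson-Sullivan shadow estimate \eqref{Eq:generalGibbs} applied to $\m_\ast$ with exponent $v_\ast$, I would show
\[
\m_\ast\bigl(B(\x, e^{-d(o,\g_\x(n))})\bigr)\asymp_R e^{-v_\ast d_\ast(o,\g_\x(n))}.
\]
Since consecutive values of $d(o,\g_\x(n))$ differ by a bounded amount, for arbitrary $r>0$ the ball $B(\x,r)$ is sandwiched between two such shadows and hence
\[
\lim_{r\to 0}\frac{\log\m_\ast(B(\x,r))}{\log r}=v_\ast\a\iff\lim_{n\to\infty}\frac{d_\ast(o,\g_\x(n))}{d(o,\g_\x(n))}=\a,
\]
whenever either limit exists. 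This identifies $E(v_\ast\a)=E_\a$. A parallel computation using Corollary \ref{Cor:ab}(1) for the Gibbs estimate of $\m_{a,b}$ on shadows shows that on $E(v_\ast\a)$ one has
\[
\lim_{r\to 0}\frac{\log\m_{a,b}(B(\x,r))}{\log r}=a\a+b.
\]

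Second, I would exploit Legendre duality. By Theorem \ref{Thm:C1}, $\th$ is convex and $C^1$, so $-\th'$ is continuous and non-increasing with limits $\a_{\min}$ and $\a_{\max}$ at $\pm\infty$ (the latter identification coming from the definition of $\a_{\min},\a_{\max}$ as asymptotic slopes). For any $\a\in(\a_{\min},\a_{\max})$, continuity yields $a=a(\a)\in\R$ with $-\th'(a)=\a$; set $b=\th(a)$, so $(a,b)\in\Cc_M$. By convexity of $\th$,
\[
\inf_{a'\in\R}\bigl\{a'\a+\th(a')\bigr\}=a\a+b.
\]
Lemma \ref{Lem:full} gives $\m_{a,b}(E_\a)=\m_{a,b}(E(v_\ast\a))=1$.

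Third, Lemma \ref{Lem:Frostman} applied to $\m=\m_{a,b}$ with $s_1=s_2=a\a+b$ immediately yields the upper bound $\dim_\H(E(v_\ast\a),\rho)\le a\a+b$ (from the full-measure statement on the level set where $\lim\log\m_{a,b}(B(\x,r))/\log r=a\a+b$) and the lower bound $\dim_\H(\m_{a,b},\rho)\ge a\a+b$. Since $\m_{a,b}$ is concentrated on $E(v_\ast\a)$, the definition of $\dim_\H(\m_{a,b},\rho)$ gives $\dim_\H(\m_{a,b},\rho)\le\dim_\H(E(v_\ast\a),\rho)$, which closes the argument and establishes \eqref{Eq:MFS}.

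The main technical step is the passage from shadow estimates at the discrete scales $r_n=e^{-d(o,\g_\x(n))}$, where Lemma \ref{Lem:full} naturally operates, to honest limits of $\log\m(B(\x,r))/\log r$ as $r\to 0$ continuously; this requires uniform comparability of the shadows $O(\g_\x(n),R)$ with genuine $\rho$-balls, which uses the shadow lemma together with the fact that $(\G,d)$ is roughly geodesic so that $d(o,\g_\x(n+1))-d(o,\g_\x(n))$ is bounded. A subsidiary point is confirming that every $\a\in(\a_{\min},\a_{\max})$ lies in the range of $-\th'$, which follows from continuity of $\th'$ (Theorem \ref{Thm:C1}) and the identification of its asymptotic values with $-\a_{\min}$ and $-\a_{\max}$.
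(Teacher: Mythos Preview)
Your proposal is correct and follows essentially the same approach as the paper's own proof: both arguments pick the Legendre-dual parameter $a$ with $-\th'(a)=\a$ (available by Theorem~\ref{Thm:C1} for $\a\in(\a_{\min},\a_{\max})$), use Lemma~\ref{Lem:full} to place full $\m_{a,b}$-mass on the level set, convert shadow estimates to local dimensions via Lemma~\ref{Lem:shadow-ball}, and then invoke Lemma~\ref{Lem:Frostman} for both bounds. Your write-up is in fact somewhat more explicit than the paper's (which compresses the shadow/ball comparison and the two Frostman inequalities into a single sentence), particularly in spelling out the pointwise identification $E(v_\ast\a)=E_\a$ and the discrete-to-continuous radius passage.
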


\proof
Note that the function $\th$ is $C^1$
and $\th'(a)=-\t_{a, b}$ for all $(a, b) \in \Cc_M$ by Theorem \ref{Thm:C1}.
Hence Lemmas \ref{Lem:full} and \ref{Lem:tau} together with Lemma \ref{Lem:shadow-ball} imply that
for all $(a, b) \in \Cc_M$,
\[
\lim_{r \to 0}\frac{\log \m_\ast\(B(\x, r)\)}{\log r}=v_\ast \t_{a, b} \quad \text{for $\m_{a, b}$-almost every $\x \in \partial \G$},
\]
and
\[
\lim_{r \to 0}\frac{\log \m_{a, b}\(B(\x, r)\)}{\log r}=a \t_{a, b}+b \quad \text{for $\m_{a, b}$-almost every $\x \in \partial \G$},
\]
where we have used $\m_{a, b}(O(x, R))\asymp_R \exp(-ad_\ast(o, x)-bd(o, x))$ for $x \in \G$.
The Frostman-type lemma (Lemma \ref{Lem:Frostman}) shows that
\[
\dim_\H(E(v_\ast\t_{a, b}), \rho)=a \t_{a, b}+b.
\]
Since $\th$ is continuously differentiable and convex, for each $\a \in (\a_{\min}, \a_{\max})$ there exists $a \in \R$ such that $\a=-\th'(a)$, and
\[
\dim_\H\(E(v_\ast \a), \rho\)=-a \th'(a)+\th(a),
\]
where the right hand side is the Legendre transform of $\theta$.
Therefore we conclude the claim.
\qed

\begin{remark}\label{Rem:MFS}
If we have
\[
\th(a)=-a \a_{\min}+O(1) \quad \text{as $a \to \infty$} 
\quad \text{and} \quad 
\th(a)=-a \a_{\max}+O(1) \quad \text{as $a \to -\infty$},
\]
then the formula \eqref{Eq:MFS} is valid for all $\a \in [\a_{\min}, \a_{\max}]$ including two extreme points $\a_{\min}$ and $\a_{\max}$.
This is the case for example when both $d_\ast$ and $d$ are word metrics, see Proposition \ref{Prop:extreme} in the following Section \ref{Sec:C2}.
\end{remark}

\subsection{Rough similarity rigidity}
In this section we prove Theorem \ref{Thm:thm2} and the rigidity statement in Theorem \ref{Thm:thm1}. We begin with the following.
\begin{theorem}\label{Thm:rigidity}
For every pair $d, d_\ast \in \Dc_\G$, the following are equivalent:
\begin{itemize}
\item[(i)] The Manhattan curve $\Cc_M$ is a straight line between $(0, v)$ and $(v_\ast, 0)$ where $v$ and $v_\ast$ are the exponential volume growth rates of $(\G, d)$ and $(\G, d_\ast)$ respectively; and,
\item[(ii)] $d_\ast$ and $d$ are roughly similar.

\end{itemize}
\end{theorem}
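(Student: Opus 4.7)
The implication (ii) $\Rightarrow$ (i) is a direct computation on Poincar\'e series: if $|d_\ast(x,y) - \tau d(x,y)| \le C$ for some $\tau > 0$, then $\Pc(a,b) \asymp \sum_{x \in \G} e^{-(a\tau + b) d(o,x)}$, so $\Cc_M$ is the line $a\tau + b = v$. This line passes through $(0,v)$ and $(v/\tau, 0)$, so $\tau = v/v_\ast$, confirming that $\Cc_M$ is the straight segment through $(0,v)$ and $(v_\ast, 0)$.

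For the harder implication (i) $\Rightarrow$ (ii), suppose $\Cc_M$ is the straight line through $(0,v)$ and $(v_\ast, 0)$, which has slope $-v/v_\ast$. Corollary \ref{Cor:Dil} yields $\Dil_- = \Dil_+ = v/v_\ast$, and Lemma \ref{Lem:asymprigidity} then gives $\ell_\ast[x] = (v/v_\ast)\ell[x]$ for every $[x] \in \conj$. The task is to upgrade this proportionality of stable translation lengths (a statement on conjugacy classes) to the uniform estimate $|v d(o,x) - v_\ast d_\ast(o,x)| \le C$ for every $x \in \G$, from which rough similarity with constant $v/v_\ast$ follows by left-invariance.

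The strategy is to reduce this uniform estimate to the comparability of the two Patterson-Sullivan measures $\m_{0,v}$ and $\m_\ast := \m_{v_\ast, 0}$. Indeed, Proposition \ref{Prop:generalPS} gives $\m_{0,v}(O(x,R)) \asymp e^{-v d(o,x)}$ and $\m_\ast(O(x,R)) \asymp e^{-v_\ast d_\ast(o,x)}$ for every $x \in \G$, so the desired bound follows at once from $\m_{0,v} \asymp \m_\ast$ as measures on $\partial \G$. To establish this comparability I exploit the degenerate multifractal spectrum of Theorem \ref{Thm:MFS}: since $\theta$ is linear with slope $-v/v_\ast$, Theorem \ref{Thm:C1} forces $\t_{a,b} = v/v_\ast$ at every $(a,b) \in \Cc_M$, and Lemma \ref{Lem:full} then gives both $\m_{0,v}$ and $\m_\ast$ full mass on $E_{v/v_\ast}$ with exact local dimension $v$ relative to $\rho$ almost everywhere. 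Combining this collapse of the spectrum with the doubling property of both measures (Lemma \ref{Lem:ergodic}) and the Lebesgue-type differentiation Lemma \ref{Lem:diff} from the appendix, I would rule out mutual singularity and bound the Radon-Nikodym derivative $d\m_\ast/d\m_{0,v}$ above and below by positive constants.

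The main obstacle is precisely this last step: the a.e. ratio statement $d_\ast(o, \g_\x(t))/d(o, \g_\x(t)) \to v/v_\ast$, which is all Lemma \ref{Lem:full} provides, a priori permits unbounded sublinear errors, whereas $\m_{0,v} \asymp \m_\ast$ requires bounded differences in the exponents. The role of Lemma \ref{Lem:diff} is to promote soft almost-everywhere convergence of these ratios into a uniform shadow-level bound, and it is here that the real content of the rigidity argument lies. Once the comparability $\m_{0,v} \asymp \m_\ast$ is secured, the shadow estimates yield the desired uniform bound on $v d(o,x) - v_\ast d_\ast(o,x)$ at every $x \in \G$, finishing the proof.
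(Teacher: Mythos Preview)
Your direction (ii) $\Rightarrow$ (i) is fine. The gap is in (i) $\Rightarrow$ (ii), precisely at the point you yourself flag as ``the main obstacle''.

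You propose to apply Lemma \ref{Lem:diff} directly to the pair $(\m_\ast, \m_{0,v})$ to rule out mutual singularity. But Lemma \ref{Lem:diff} only says that, $\m_{0,v}$-almost everywhere, the singular part of $\m_\ast$ satisfies $\m_{\ast,\sing}(O(\x_n,R))/\m_{0,v}(O(\x_n,R)) \to 0$. For this to be a contradiction you would need a \emph{lower} bound on $\m_\ast(O(\x_n,R))/\m_{0,v}(O(\x_n,R))$, and nothing you have written provides one: the only input is $|\x_n|_\ast/|\x_n| \to v/v_\ast$, which gives $\m_\ast(O(\x_n,R))/\m_{0,v}(O(\x_n,R)) \asymp e^{o(n)}$ and is perfectly compatible with that ratio tending to $0$ along a subsequence. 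The collapse of the multifractal spectrum (equal local dimensions almost everywhere) does not force absolute continuity, let alone comparability.

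The paper closes this gap by inserting the midpoint measure $\m_{a,b}$ with $(a,b)=(v_\ast/2, v/2)$. The shadow estimates then give the exact identity
\[
\frac{\m_\ast(O(x,R))}{\m_{a,b}(O(x,R))}\cdot \frac{\m(O(x,R))}{\m_{a,b}(O(x,R))}\asymp 1 \quad \text{for every $x\in\G$},
\]
so the two factors are reciprocal up to constants. Now Lemma \ref{Lem:diff}, applied twice with $\m_{a,b}$ as reference, bounds each factor from above $\m_{a,b}$-a.e.; the product relation then bounds each from below, ruling out singularity of either $\m_\ast$ or $\m$ relative to $\m_{a,b}$. One then passes to the invariant Radon measures $\L_\ast,\L,\L_{a,b}$ on $\partial^2\G$ and invokes Lemma \ref{Lem:pure-type} to conclude $\L_\ast=c\L$, hence $\m_\ast$ and $\m$ are mutually absolutely continuous. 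Finally the density $\f=d\m_\ast/d\m$ is shown to be bounded using the relation $\f(\x)\f(\y)e^{2v_\ast(\x|\y)_{\ast o}}\asymp e^{2v(\x|\y)_o}$ on $\partial^2\G$, which comes from the geodesic-current structure --- a step your outline does not address either. The midpoint trick and the passage through $\partial^2\G$ are the substantive ideas you are missing.
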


We use the following lemma in the proof.
Recall that $d \in \Dc_\G$ is a roughly geodesic metric and there exists a constant $C \ge 0$ such that for all $\x \in \partial \G$,
one may take a $C$-rough geodesic ray $\g_\x$ from $o$ converging to $\x$ on $(\G, d)$.

\begin{lemma}\label{Lem:diff}
Let $\n$ be a finite Borel regular measure on $\partial \G$ and $\m$ be a doubling measure on $\partial \G$ relative to a quasi-metric $\rho$ for $d \in \Dc_\G$.
If we decompose $\n=\n_\ac+\n_\sing$ where $\n_\ac$ is the absolutely continuous part of $\n$ and $\n_\sing$ is the singular part of $\n$ relative to $\m$, respectively,
then for a large enough $R>0$,
\[
\limsup_{n \to \infty}\frac{\n_\ac\(O(\x_n, R)\)}{\m\(O(\x_n, R)\)} <\infty 
\quad \text{and} \quad
\limsup_{n \to \infty}\frac{\n_\sing\(O(\x_n, R)\)}{\m\(O(\x_n, R)\)}=0
\]
for $\m$-almost every $\x \in \partial \G$
where $\x_n:=\g_\x(n)$ for $n \ge 1$. 
\end{lemma}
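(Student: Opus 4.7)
The plan is to reduce the statement to the classical Lebesgue--Radon--Nikodym differentiation theorem on balls, by comparing shadows $O(\x_n, R)$ with balls $B(\x, r)$ in the quasi-metric $\r$ and then absorbing the resulting mismatch of radii via the doubling property of $\m$. I would carry this out in three steps.

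First I would control the shadow--ball comparison. Since $\g_\x$ is a $C$-rough geodesic from $o$ to $\x$ and $\x_n = \g_\x(n)$, a short triangle-inequality computation on triples $(o, \x_n, \g_\x(t))$ with $t\to\infty$ shows that the Gromov product $(o|\x)_{\x_n}$ stays bounded uniformly in $n$ by a constant depending only on $C$ and $\d$. Lemma \ref{Lem:shadow-ball} then produces constants $c_1, c_2 > 0$ (depending on $R$, $C$, $\d$) such that
\[
B(\x, c_1 e^{-d(o,\x_n)}) \subset O(\x_n, R) \subset B(\x, c_2 e^{-d(o,\x_n)})
\]
for every $n$, and the enclosing radii tend to zero since $d(o,\x_n)\to\infty$.

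Second I would invoke the Lebesgue--Radon--Nikodym differentiation theorem in the doubling quasi-metric setting. Writing $f = d\n_\ac/d\m$, the conclusion is that for $\m$-almost every $\x$,
\[
\lim_{r\to 0} \frac{\n_\ac(B(\x, r))}{\m(B(\x, r))} = f(\x) < \infty \quad\text{and}\quad \lim_{r\to 0}\frac{\n_\sing(B(\x, r))}{\m(B(\x, r))} = 0.
\]
This is standard for doubling measures on genuine metric spaces; since $\r^\e$ is bi-Lipschitz to an actual metric $d_\e$ on $\partial\G$ for some $\e>0$ (as recalled in Section \ref{Sec:hyperbolic}) and the doubling property transfers to $d_\e$, the statement for balls defined by $\r$ follows at once. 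Alternatively, one can run a Vitali-type covering argument directly with $\r$, as already used in the proof of Lemma \ref{Lem:ergodic}.

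Finally I combine the two steps. For either component $\n_\bullet \in \{\n_\ac, \n_\sing\}$, monotonicity and the inclusions above give
\[
\frac{\n_\bullet(O(\x_n, R))}{\m(O(\x_n, R))} \le \frac{\n_\bullet(B(\x, c_2 e^{-d(o,\x_n)}))}{\m(B(\x, c_1 e^{-d(o,\x_n)}))},
\]
and the doubling property of $\m$ applied finitely many times yields $\m(B(\x, c_1 r)) \ge K^{-1} \m(B(\x, c_2 r))$ uniformly in $\x$ and $r$, where $K$ depends only on $c_1/c_2$ and the doubling constant of $\m$. Consequently the right-hand side is bounded by $K \,\n_\bullet(B(\x, r_n))/\m(B(\x, r_n))$ with $r_n = c_2 e^{-d(o,\x_n)} \to 0$, which by Step two tends to $K f(\x) < \infty$ for the absolutely continuous part and to $0$ for the singular part. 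The main subtlety is that $\n_\sing$ is not itself doubling, so we cannot symmetrize the inclusions for $\n_\bullet$; the argument relies on being able to enlarge the shadow in the numerator while only shrinking it (modulo a fixed doubling factor) in the denominator, which is exactly what the doubling hypothesis on $\m$ delivers.
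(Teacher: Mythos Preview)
Your proposal is correct and follows essentially the same approach as the paper: both reduce to the Lebesgue--Radon--Nikodym differentiation theorem on balls in the doubling quasi-metric space, invoke the shadow--ball comparison of Lemma~\ref{Lem:shadow-ball}, and absorb the radius mismatch via the doubling property of $\m$. The paper spells out the singular-part differentiation (Lemma~\ref{Lem:diff-general}) via inner regularity and the weak maximal inequality rather than citing it, but the structure is the same.
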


The proof of Lemma \ref{Lem:diff} follows from the classical Lebesgue differentiation theorem and the weak maximal inequality---we include a proof for the sake of completeness in Appendix \ref{Sec:Lebesgue}.

\proof[Proof of Theorem \ref{Thm:rigidity}]
If (ii) holds, then the Manhattan curve $\Cc_M$ is actually a straight line on $\R$ since $\t_{a, b}=\t$ for a constant $\t>0$ for all $(a, b) \in \Cc_M$
by Lemma \ref{Lem:tau} and $\th'(a)=-\t$ for all $a \in \R$ by Theorem \ref{Thm:C1} (or by Lemma \ref{Lem:asymprigidity}).

Suppose that (i) holds.
Then $(a, b):=(v_\ast/2, v/2) \in \Cc_M$.
By Corollary \ref{Cor:ab} (1), we have that for all $x \in \G$,
\[
\m_{a, b}\(O(x, R)\) \asymp \exp\(-\frac{v_\ast}{2}|x|_\ast-\frac{v}{2}|x|\),
\]
\begin{equation}\label{Eq:m-shadow}
\m_\ast(O(x, R))\asymp \exp\(-v_\ast |x|_\ast\) \quad \text{and} \quad \m(O(x, R)) \asymp \exp\(-v|x|\).
\end{equation}
This implies that
\begin{equation}\label{Eq:compare}
\frac{\m_\ast(O(x, R))}{\m_{a, b}(O(x, R))}\cdot \frac{\m(O(x, R))}{\m_{a, b}(O(x, R))} \asymp 1 \quad \text{for all $x \in \G$}.
\end{equation}
Fix a large enough $R>0$.
Letting $\x_n:=\g_\x(n)$ for integers $n \ge 0$,
we have that
\[
\limsup_{n \to \infty}\frac{\m_\ast(O(\x_n, R))}{\m_{a, b}(O(\x_n, R))}<\infty
\quad \text{and} \quad
\limsup_{n \to \infty}\frac{\m(O(\x_n, R))}{\m_{a, b}(O(\x_n, R))}<\infty,
\]
for $\m_{a, b}$-almost every $\x \in \partial \G$ by Lemma \ref{Lem:diff}.
Here we are using that $\m_\ast$ and $\m$ are finite Borel regular measures and that $\m_{a, b}$ is doubling relative to a quasi-metric $\rho$ in $\partial \G$.
Hence if either $\m_\ast$ and $\m_{a, b}$, or $\m$ and $\m_{a, b}$ are mutually singular,
then Lemma \ref{Lem:diff} together with \eqref{Eq:compare} leads to a contradiction. 
Therefore both $\m_\ast$ and $\m$ have non-zero absolutely continuous parts relative to $\m_{a, b}$, and thus
for the corresponding $\G$-invariant Radon measures $\L_\ast$, $\L$ and $\L_{a, b}$ for $\m_\ast$, $\m$ and $\m_{a, b}$, respectively, 
both $\L_\ast$ and $\L$ have non-zero absolutely continuous parts relative to $\L_{a, b}$.
By Lemma \ref{Lem:pure-type}, there exist positive constants $c, c'>0$ such that
$\L_\ast=c\L_{a, b}$ and $\L=c'\L_{a, b}$.
In particular, $\L_\ast=(c/c')\L$ and this implies that $\m_\ast$ and $\m$ are mutually absolutely continuous.
Letting $\f:=d\m_\ast/d\m$, we shall show that $\f$ is uniformly bounded away from $0$ and from above.
We have that
\[
\f(\x)\f(\y)e^{2 v_\ast(\x|\y)_{\ast o}} \asymp e^{2 v(\x|\y)_o} \quad \text{for $(\x, \y) \in \partial^2 \G$}.
\]
If $\f$ is unbounded on $B(\x, \e)$ for all $\e>0$, then for a fixed $\y \neq \x$ such that $\f(\y)>0$,
it is possible that  for $\x' \in B(\x, \e)$, the value $\f(\x')\f(\y)$ is arbitrarily large,
however, $(\x'|\y)_{\ast o}$ and $(\x'|\y)_o$ are uniformly bounded;
this is a contradiction.
This shows that $\m_\ast \asymp \m$ and thus by the above estimates \eqref{Eq:m-shadow},
there exists a constant $C \ge 0$ such that
\[
|v_\ast |x|_\ast-v |x||  \le C \quad \text{for all 
$x \in \G$},
\]
i.e., $d_\ast$ and $d$ are roughly similar; we conclude the claim.
\qed

We can now conclude the proof of our first main result.

\begin{proof}[Proof of Theorem \ref{Thm:thm1}]
Combining Theorems \ref{Thm:C1} and \ref{Thm:rigidity} concludes the proof of Theorem \ref{Thm:thm1}. 
\end{proof}
\bigskip

Let us now move on to the proof of Theorem \ref{Thm:thm2}. We will break the proof into two parts.
For every pair $d, d_\ast \in \Dc_\G$ define
\[
\t(d_\ast/d):=\limsup_{r \to \infty}\frac{1}{\#B(o, r)}\sum_{x \in B(o, r)}\frac{d_\ast(o, x)}{r},
\]
where $B(o, r):=\{x \in \G \ : \ d(o, x) \le r\}$ for a real $r>0$. 
We begin by proving the following.

\begin{theorem}\label{Thm:distortion}
For every pair $d, d_\ast \in \Dc_\G$,
the following limit exists:
\[
\t(d_\ast/d)=\lim_{r \to \infty}\frac{1}{\#B(o, r)}\sum_{x \in B(o, r)}\frac{d_\ast(o, x)}{r},
\]
and $\t_{0, v}=\t(d_\ast/d)$.
Moreover, we have that
\[
\t(d_\ast/d) \ge \frac{v}{v_\ast},
\]
where $v$ and $v_\ast$ are the exponential volume growth rates of $(\G, d)$ and $(\G, d_\ast)$ respectively.
\end{theorem}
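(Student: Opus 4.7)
The plan is to deduce everything from Theorem~\ref{Thm:C1} (which identifies $\t_{a,b}$ with $-\th'(a)$ on the whole Manhattan curve) together with the sphere-level equidistribution consequences of Lemmas~\ref{Lem:tau} and \ref{Lem:full}. The distortion inequality will be a one-line convexity consequence once the equality $\t(d_\ast/d) = \t_{0,v}$ is known: since $\Cc_M$ passes through $(0,v)$ and $(v_\ast, 0)$ and $\th$ is convex on $\R$,
\[
\th'(0) \le \frac{\th(v_\ast) - \th(0)}{v_\ast} = -\frac{v}{v_\ast},
\]
and combining with $\th'(0) = -\t_{0,v}$ gives $\t_{0,v} \ge v/v_\ast$.

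For existence of the limit and the identification $\t(d_\ast/d) = \t_{0,v}$, I would first transfer the boundary information to sphere-level equidistribution. Apply Lemma~\ref{Lem:full} at $(a,b)=(0,v)$ with $r=-\th'(0)=\t_{0,v}$: one obtains $\m_{0,v}(E_{\t_{0,v}})=1$, so for $\m_{0,v}$-a.e.\ $\x$, $d_\ast(o,\g_\x(n))/n \to \t_{0,v}$, and consequently
\[
\m_{0,v}\big(A_n(\e)\big) \to 0, \quad A_n(\e) := \{\x \in \partial \G : |d_\ast(o,\g_\x(n))/n - \t_{0,v}| \ge \e\},
\]
for every $\e>0$. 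Fix a thickness $R$ large enough that the shadows $O(x,R)$ for $x \in S(n,R_0)$ cover $\partial \G$ with multiplicity bounded by some $M$. By Corollary~\ref{Cor:ab} we have $\m_{0,v}(O(x,R)) \asymp e^{-vn}$ for $x \in S(n,R_0)$, while the Morse lemma in $(\G,d)$ together with the $(L,C)$-quasi-isometry between $d$ and $d_\ast$ yields $|d_\ast(o,x) - d_\ast(o,\g_\x(n))| \le C'$ uniformly when $\x \in O(x,R)$. Hence for $\e' < \e$ and all large $n$, $|d_\ast(o,x)/n - \t_{0,v}| \ge \e$ forces $O(x,R) \subset A_n(\e')$, so
\[
c\, e^{-vn} \cdot \#\big\{x \in S(n,R_0) : |d_\ast(o,x)/n - \t_{0,v}| \ge \e\big\} \le M \m_{0,v}(A_n(\e')) = o(1),
\]
and since $\#S(n,R_0) \asymp e^{vn}$ by Lemma~\ref{Lem:Coornaert}, the fraction of such $x$ tends to $0$.

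Finally, I would pass from spheres to balls. Partition $B(o,r)$ into annuli $\{x : n \le d(o,x) < n+1\}$ (adjusted to lie in some $S(n,R_0)$). Lemma~\ref{Lem:Coornaert} gives $\#B(o,r) \asymp e^{vr}$ and the annular mass is concentrated at scales $n \in [(1-\d)r, r]$ (the contribution from $n \le (1-\d)r$ is $O(e^{-\d v r})$ of the total, and $d_\ast(o,x)/r \le L + C/r$ is bounded). On each remaining annulus, the sphere equidistribution from the previous paragraph gives
\[
\frac{1}{\#S(n,R_0)}\sum_{x \in S(n,R_0)} \frac{d_\ast(o,x)}{n} \to \t_{0,v},
\]
while $n/r \to 1$. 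Summing the annular contributions yields the existence of the limit and $\t(d_\ast/d) = \t_{0,v}$.

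\textbf{Main obstacle.} The only real work is the sphere-level equidistribution step above, that is, converting the $\m_{0,v}$-almost-everywhere convergence on $\partial \G$ into a counting statement on $S(n,R_0)$. This is a standard but delicate interplay between the shadow lemma, the bounded-multiplicity property of the shadow cover, and quasi-geodesic stability; all the remaining pieces (convexity for the inequality, an annular decomposition for the ball average) are formal.
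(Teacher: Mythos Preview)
Your proposal is correct, and the core of the argument---converting the $\m_{0,v}$-almost-everywhere convergence of $d_\ast(o,\g_\x(n))/n$ to sphere-level equidistribution via the shadow lemma and the bounded-multiplicity cover, then passing from spheres to ball averages by concentrating mass near the outer shell---is exactly what the paper does. A minor difference: the paper invokes Lemma~\ref{Lem:tau} directly to obtain $\t(\x)=\t_{0,v}$ for $\m_{0,v}$-a.e.\ $\x$, whereas you reach the same conclusion through Lemma~\ref{Lem:full} together with Theorem~\ref{Thm:C1}; both are fine and the outputs coincide.

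The one genuinely different step is the inequality $\t(d_\ast/d) \ge v/v_\ast$. You obtain it from convexity of $\th$ and the identification $\th'(0)=-\t_{0,v}$ (Theorem~\ref{Thm:C1}), a clean one-line argument. The paper instead argues by counting: from the sphere equidistribution one gets that all but a vanishing fraction of $x \in B(o,n)$ satisfy $d_\ast(o,x)\le (\t_{0,v}+\e)n$, hence $\#B_\ast(o,(\t_{0,v}+\e)n) \gtrsim \#B(o,n)$, and comparing exponential growth rates yields $v \le v_\ast(\t_{0,v}+\e)$ for every $\e>0$. Your route is slicker and makes the role of convexity explicit; the paper's route is more elementary in that it does not invoke the $C^1$ regularity of $\th$ for this particular step.
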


\proof
Fix a pair $d$, $d_\ast \in \Dc_\G$ and consider the point $(0, v)$ on the associated Manhattan curve $\Cc_M$.
By Lemma \ref{Lem:tau}, there exists a constant $\t_{0, v}$ such that 
\[
\t(\x)=\t_{0, v} \quad \text{for $\m_{0, v}$-almost every $\x \in \partial \G$},
\]
where we note that $\m_{0, v}$ is a Patterson-Sullivan measure for the metric $d$.
In particular, for $\m_{0, v}$-almost every $\x \in \partial \G$,
$d_\ast(o, \g_\x(n))/n \to \t_{0, v}$ as $n \to \infty$  where $\g_\x$ is an arbitrary rough geodesic ray (with respect to $d$) starting from $o$. 
Let us define
\[
A_{n, \e}:=\left\{x \in \G \ : \ \frac{|d_\ast(o, x)-n \t_{0, v}|}{n}>\e\right\} \quad \text{for $n \ge 0$ and $\e>0$}.
\]
Consider $S(n, R):=\{x \in \G \ : \ |d(o, x)-n| \le R\}$ and fix a sufficiently large $R_0>0$.
Since the shadows $O(x, R_0)$ for $x \in S(n, R)$ cover the boundary $\partial \G$ with a bounded multiplicity,
we have 
\[
\frac{\# \(A_{n, \e}\cap S(n, R)\)}{\# S(n, R)} \le C \sum_{x \in A_{n, \e}\cap S(n, R)}\m_{0, v}(O(x, R_0))\le C'\m_{0, v}\big(\bigcup_{x \in A_{n, \e}\cap S(n, R)}O(x, R_0)\big).
\]
Note that the last term tends to $0$ as $n \to \infty$ since
if $\x$ belongs to $O(x, R_0)$ for some $x \in A_{n, \e}\cap S(n, R)$,
then $|d_\ast(o, \g_\x(n))-n \t_{0, v}| \ge \e n-R_0 L_\ast$, 
where 
\[
L_\ast:=\sup\{d_\ast(o, x) \ : \ d(o, x)\le R_0\}.
\]
This shows that if $x$ is sampled uniformly at random from $S(n, R)$,
then for all $\e>0$ and for all large enough $n$, we have
$|d_\ast(o, x) -n \t_{0, v}| \le \e n$ with probability at least $1-\e$,
implying that
\[
\t_{0, v}=\lim_{n \to \infty}\frac{1}{\#S(n, R)}\sum_{x \in S(n, R)}\frac{d_\ast(o, x)}{n},
\]
for all large enough $R$.
For all real $r>0$, let us take $n:=\lfloor r\rfloor$ the largest integer at most $r$.
Note that if $x_n$ is sampled uniformly at random from the ball $B(o, r)$,
then we have $x_n \in A_{n, \e}$ with probability at most $O(e^{-v R})$ for all large enough $n$
since the probability that $x$ is not in $S(n, R)$ is at most $O(e^{-vR})$ (following from Lemma \ref{Lem:Coornaert}: $\# S(n, R) \asymp_{R} e^{v n}$).
Therefore first letting $r \to \infty$ and then $R \to \infty$, we obtain
\[
\t_{0, v}=\lim_{r \to \infty}\frac{1}{\# B(o, r)}\sum_{x \in B(o, r)}\frac{d_\ast(o, x)}{r},
\]
and thus $\t(d_\ast/d)=\t_{0, v}$.
Furthermore this reasoning shows that for each fixed, sufficiently large $R$,
\[
\#B_\ast(o, (\t_{0, v}+\e) r) \ge (1-O(e^{-v R}))\cdot \# B(o, r) \quad \text{as $r \to \infty$},
\]
where $B_\ast(o, R)$ stands for the ball of radius $R$ centered at $o$ with respect to $d_\ast$.
Therefore $\t(d_\ast/d) \ge v/v_\ast$ where $v$ and $v_\ast$ are exponential volume growth rate relative to $d$ and $d_\ast$ respectively.
\qed

We can now conclude the proof of Theorem \ref{Thm:thm2}.

\begin{proof}[Proof of Theorem \ref{Thm:thm2}]
We have already proven the first part of the theorem in Theorem \ref{Thm:distortion}. 
Let us show the equivalence of statements (1), (2) and (3). 
Note that the equivalence of (2) and (3) is a consequence of Lemma \ref{Lem:asymprigidity} and Theorem \ref{Thm:rigidity}. 
(If two metrics $d, d_\ast$ are roughly similar, then the corresponding Manhattan curve is a straight line on the entire part, not just on the part connecting $(0, v)$ and $(v_\ast, 0)$.)
We therefore just need to prove the equivalence of (1) and (3) which we prove below:

Consider the Manhattan curve $\Cc_M$ and the function $\theta(a)$ for the pair $(d, d_\ast)$ and recall that, by Theorem \ref{Thm:C1},
we have that 
$\theta'(0) = - \tau(d_\ast/d)$. 
It follows, since the curve $\Cc_M$ passes through $(0, v)$ and $(v_\ast, 0)$,
that $\t(d_\ast/d)=v/v_\ast$ if and only if $\theta$ is a straight line on $[0,v_\ast]$. 
By Theorem \ref{Thm:rigidity} this is the case if and only if $d$ and $d_\ast$ are roughly similar. This concludes the proof.
\end{proof}

Let us record the following result on the asymptotics of a typical ratio between two stable translation lengths as it is of interest in its own right.

\begin{corollary}
For all $d, d_\ast \in \Dc_\G$, we have that
\[
\frac{\ell_\ast[\g_\x(t)]}{\ell[\g_\x(t)]} \to \t(d_\ast/d) \quad \text{as $t \to \infty$ for $\m$-almost every $\x \in \partial \G$},
\]
where $\m$ is a Patterson-Sullivan measure relative to $d$ 
and $\g_\x$ is a quasi-geodesic ray $\g_\x$ converging to $\x$.
\end{corollary}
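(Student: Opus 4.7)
The plan is to combine Lemma \ref{Lem:tau}, Theorem \ref{Thm:distortion}, and Lemma \ref{Lem:translation} with a Borel--Cantelli/shadow-counting argument. Since any Patterson--Sullivan measure $\m$ for $d$ is comparable to $\m_{0,v}$ by Corollary \ref{Cor:ab} and Lemma \ref{Lem:ergodic}, I may take $\m=\m_{0,v}$. Lemma \ref{Lem:tau} applied at $(a,b)=(0,v)\in \Cc_M$ together with Theorem \ref{Thm:distortion} gives, for $\m$-almost every $\x$,
\[
\lim_{t\to\infty}\frac{d_\ast(o,\g_\x(t))}{d(o,\g_\x(t))}=\t_{0,v}=\t(d_\ast/d).
\]
Writing
\[
\frac{\ell_\ast[\g_\x(t)]}{\ell[\g_\x(t)]}=\frac{\ell_\ast[\g_\x(t)]}{d_\ast(o,\g_\x(t))}\cdot\frac{d_\ast(o,\g_\x(t))}{d(o,\g_\x(t))}\cdot\frac{d(o,\g_\x(t))}{\ell[\g_\x(t)]},
\]
the task reduces to proving $\ell[\g_\x(t)]/d(o,\g_\x(t))\to 1$ and its $d_\ast$-analogue for $\m$-almost every $\x$. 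By Lemma \ref{Lem:translation}, these are equivalent respectively to
\[
(\g_\x(t)\,|\,\g_\x(t)^{-1})_o=o(t) \quad\text{and}\quad (\g_\x(t)\,|\,\g_\x(t)^{-1})_{\ast o}=o(t).
\]

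The main step is to establish these Gromov product decays via Borel--Cantelli. For $\e>0$ and sufficiently large $R$, set
\[
A_{n,\e}:=\{x\in S(n,R) : (x|x^{-1})_o\ge \e n\}.
\]
If $x\in A_{n,\e}$, then $x$ and $x^{-1}$ share a common shadow $O(w,R')$ with $|w|\ge \e n - O(1)$. Using the standard sphere-in-shadow count $\#\{x\in S(n,R):x\in O(w,R')\}\asymp e^{v(n-|w|)}$ in a hyperbolic group, applied to both $x$ and $x^{-1}$ and summed over $w$, we obtain
\[
\#A_{n,\e}\le C_\e\, e^{(v-c_\e)n}
\]
for some $c_\e>0$. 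Combined with the shadow estimate $\m(O(x,R'))\asymp e^{-vn}$ from Corollary \ref{Cor:ab}(1), and the observation that $\{\x:\g_\x(n)\in A_{n,\e}\}\subset\bigcup_{x\in A_{n,\e}}O(x,R')$, this yields
\[
\m\big(\{\x:\g_\x(n)\in A_{n,\e}\}\big)\le C'_\e\, e^{-c_\e n},
\]
which is summable in $n$. Borel--Cantelli and taking $\e=1/k$ for $k\in\N$ then give $(\g_\x(n)\,|\,\g_\x(n)^{-1})_o=o(n)$ for $\m$-a.e.\ $\x$. The $d_\ast$-analogue follows by the same scheme carried out in the $d_\ast$-metric, using the quasi-isometry $d_\ast \asymp d$ to transfer $\m$-measure estimates of $d_\ast$-shadows to $d$-shadows.

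The main obstacle is the counting bound $\#A_{n,\e}\le C_\e e^{(v-c_\e)n}$, which requires a careful double-shadow overlap estimate in a general hyperbolic group, in the spirit of the sphere and conjugacy-class counting carried out in the proof of Proposition \ref{Prop:conjugacy}. The $d_\ast$-case adds the complication of working with spheres and shadows in two quasi-isometric but not comparable metrics simultaneously.
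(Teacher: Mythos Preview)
Your decomposition of the ratio is correct, but you are taking a much longer route than the paper, and your counting step has a real gap.

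The paper's proof is a single line: it invokes Lemma~\ref{Lem:translation} and Lemma~\ref{Lem:tau} together with $\t_{0,v}=\t(d_\ast/d)$ from Theorem~\ref{Thm:distortion}. The key observation you are missing is that the two Gromov products $(x_t\mid x_t^{-1})_o$ and $(x_t\mid x_t^{-1})_{\ast o}$ (with $x_t=\g_\x(t)$) do not need to be controlled \emph{separately}. By $\delta$-hyperbolicity there is a branching point $p_t$ on a $d$-rough geodesic $[o,x_t]$ with $d(o,p_t)=(x_t\mid x_t^{-1})_o+O(1)$; by the Morse lemma the \emph{same} $p_t$ lies within bounded distance of a $d_\ast$-rough geodesic $[o,x_t]$ and satisfies $d_\ast(o,p_t)=(x_t\mid x_t^{-1})_{\ast o}+O(1)$. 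Since $p_t$ sits on $\g_\x$ up to bounded error, say $p_t\approx\g_\x(s_t)$, Lemma~\ref{Lem:translation} gives
\[
\ell[x_t]=d(o,\g_\x(t))-2d(o,\g_\x(s_t))+O(1),\qquad
\ell_\ast[x_t]=d_\ast(o,\g_\x(t))-2d_\ast(o,\g_\x(s_t))+O(1).
\]
Lemma~\ref{Lem:tau} at $(0,v)$ says $d_\ast(o,\g_\x(u))=\t\,d(o,\g_\x(u))+o(u)$ for $\m$-a.e.\ $\x$, and since $s_t\le t$ this $o$-term is uniform over $u\le t$. Subtracting yields $\ell_\ast[x_t]=\t\,\ell[x_t]+o(t)$ directly. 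So the two ``$o(t)$'' claims you isolate are not independent problems---they are a single application of Lemma~\ref{Lem:tau} to the same ray at two different times. No Borel--Cantelli or shadow counting is required for this step.

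Your counting argument, by contrast, is not justified as written. When you say the sphere-in-shadow count is ``applied to both $x$ and $x^{-1}$'', you are tacitly treating the constraints $x\in O(w,R')$ and $x^{-1}\in O(w,R')$ as independent. The estimate $\#\{x\in S(n,R):x\in O(w,R')\}\asymp e^{v(n-|w|)}$ tells you nothing about the size of the \emph{intersection} $\{x:x,x^{-1}\in O(w,R')\}$; a priori it could be as large as $e^{v(n-|w|)}$, which after summing over $w$ gives no exponential gain. To obtain $\#A_{n,\e}\le C_\e e^{(v-c_\e)n}$ you would need to show the intersection has size $\lesssim e^{v(n-2|w|)}$, which amounts to counting $a$ with $|a|\approx(1-\e)n$ and $|waw|=|a|+O(1)$---a genuine combinatorial estimate in the spirit of the conjugacy-class counting in the proof of Proposition~\ref{Prop:conjugacy}, not a consequence of the shadow lemma alone.
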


\proof
This follows from Lemmas \ref{Lem:translation} and \ref{Lem:tau} since
$\t_{0, v}=\t(d_\ast/d)$ by Theorem \ref{Thm:distortion}.
\qed


\section{The $C^2$ regularity for strongly hyperbolic metrics}\label{Sec:C2}

The aim of this section is to deduce better regularity (i.e. higher order differentiability) for the Manhattan curve under the additional assumption that $d,d_\ast$ are strongly hyperbolic metrics 
(see Definition \ref{Def:SH} in Section \ref{Sec:hyperbolic}). 
The method we use also applies to word metrics,
in which case it is (in principle) possible to compute explicit examples; we provide some in the subsequent section (Section \ref{Sec:example}).
We will use automatic structures to introduce a symbolic coding for our group $\Gamma$. This will allow us to use techniques from thermodynamic formalism. We begin with some introductory material on these techniques.
For the thermodynamic formalism on non-topologically transitive systems, we follow \cite[Sections 3.2 and 3.3]{GouezelLocalLimit}.

\subsection{Automatic structures}
Fix a finite (symmetric) set of generators $S$ for $\G$.
An automaton $\Ac=(\Gc, \pi, S)$ is a triple consisting of a finite directed graph $\Gc=(V, E, s_\ast)$ where $s_\ast$ is a distinguished vertex called the initial state, a labeling $\pi:E \to S$ on edges by $S$ and a finite (symmetric) set of generators $S$.
Associated to every directed path $\o=(e_0, e_1, \dots, e_{n-1})$ in the graph $\Gc$
where the terminus of $e_i$ is the origin of $e_{i+1}$,
there is a path $\pi(\o)$ in the Cayley graph $\Cay(\G, S)$ issuing from the identity $\id$, $\pi(e_0)$, $\pi(e_0)\pi(e_1)$, $\dots$, $\pi(e_0)\cdots \pi(e_{n-1})$.
Let us denote by $\pi_\ast(\o)$ the terminus of the path $\pi(\o)$, i.e., $\pi_\ast(\o):=\pi(e_0)\cdots \pi(e_{n-1})$.

\begin{definition}
An automaton $\Ac=(\Gc, \pi, S)$ where $\Gc=(V, E, s_\ast)$ and a labeling $\pi: E \to S$
is called a {\it strongly Markov automatic structure} if
\begin{itemize}
\item[(1)] for every vertex $v \in V$ there is a directed path from the initial state $s_\ast$ to $v$,
\item[(2)] for every directed path $\o$ in $\Gc$ the associated path $\pi(\o)$ is a geodesic in the Cayley graph $\Cay(\G, S)$, and
\item[(3)] the map $\pi_\ast$ evaluating the terminus of a path yields a bijection from the set of directed paths from $s_\ast$ in $\Gc$ to $\G$.
\end{itemize}
\end{definition}

We sometimes abuse notation by identifying $\Ac$ with the underlying finite directed graph $\Gc$.
By a theorem of Cannon \cite{Cannon} every hyperbolic group admits a strongly Markov automatic structure for every finite symmetric set of generators $S$ (cf.\ \cite{Calegari}). 
Given an automatic structure $\Ac=(\Gc, \pi, S)$ for $(\G, S)$,
we write $\SS^\ast$ for the set of finite directed paths in $\Gc$ (not necessarily starting from $s_\ast$)
and $\SS$ for the set of semi-infinite directed paths $\o=(\o_i)_{i=0, 1, \dots}$ in $\Gc$.
Let $\wbar \SS:=\SS^\ast \cup \SS$. The function
$\pi_\ast: \SS^\ast \to \G$ naturally extends to
\[
\pi_\ast: \wbar \SS \to \G\cup \partial \G, \quad \o \mapsto \pi_\ast(\o),
\]
by mapping a sequence to the terminus of the geodesic segment or ray $\pi(\o)$ starting at $\id$ in $\Cay(\G, S)$.
We define a metric $d_{\wbar \SS}$ on $\wbar \SS$ by $d_{\wbar \SS}(\o, \o')=2^{-n}$ if $\o \neq \o'$ and $\o$ and $\o'$ coincide up to the $n$-th entry, and $d_{\wbar \SS}(\o,\o')=0$ if $\o=\o'$.

\subsection{Thermodynamic formalism}

\def\Pr{{\rm Pr}}

The shift map $\s:\wbar \SS \to \wbar \SS$ takes a (possibly finite) sequence $\o = (\o_o)_{i=0,1,\ldots}$ and maps it to $\s(\o) = (\o_{i+1})_{i=0,1,\ldots}$.  To ensure that $\s$ is well-defined, we include the empty path in $\wbar \SS$.
For every real-valued H\"older continuous function $\f: \wbar \SS \to \R$ (which we call a {\it potential}),
the transfer operator $\Lc_\f$ acting on the space of continuous functions $f$ on $\wbar \SS$ is defined by
\[
\Lc_\f f(\o)=\sum_{\s(\o')=\o}e^{\f(\o')}f(\o'),
\]
where for the empty path $\o=\emptyset$ the preimages of $\s$ are defined only by nonempty paths.
We say that the directed graph $\Gc$ is {\it recurrent} if there is a directed path between arbitrary two vertices.
We say that $\Gc$ is {\it topologically mixing} if there exists $n$  such that every pair of vertices is connected by a path of length $n$.
If $\Gc$ is recurrent but not topologically mixing, then there is an integer $p>1$ such that every loop (i.e. path starting and ending at the same vertex) in $\Gc$ has length divisible by $p$. Furthermore
 the set of vertices of $\Gc$ decomposes into $p$ subsets $V=\bigsqcup_{j \in \Z/p\Z}V_j$
where every edge with the origin in $V_j$ has the terminus in $V_{j+1}$.
We call this decomposition a {\it cyclic decomposition} of $V$.
Restricting $\s^p$ to $V_j$, we obtain a topological mixing subshift of finite type.
If $\Gc$ is not recurrent, then we decompose $\Gc$ into components --- these are the maximal induced subgraphs which are recurrent.
For each component $\Cc$, we define the transfer operator $\Lc_\Cc$ by restricting $\f$ to the paths staying in $\Cc$.
The spectral radius of $\Lc_\Cc$ is given by $e^{\Pr_\Cc(\f)}$ for some real value $\Pr_\Cc(\f)$.
This constant is obtained from the limit
\begin{equation}\label{Eq:Pr}
\Pr_\Cc(\f, \s)=\lim_{n \to \infty}\frac{1}{n}\log \sum_{[\o_0, \dots, \o_{n-1}]}\exp\(S_{[\o_0, \dots, \o_{n-1}]}(\f)\),
\end{equation}
where the summation is taken over all the cylinder sets of length $n$,
\[
S_{[\o_0, \dots, \o_{n-1}]}(\f):=\sup\{S_n \f(\o) \ : \ \o \in [\o_0, \dots, \o_{n-1}]\}
\quad \text{and} \quad S_n \f:=\sum_{i=0}^{n-1}\f\circ \s^i,
\]
(see \cite[Theorem 2.2]{ParryPollicott}; this follows from the Gibbs property of an eigenmeasure for each component in the cyclic decomposition).

Let
\[
\Pr(\f):=\max_\Cc \Pr_\Cc(\f),
\]
where the maximum is taken over all components $\Cc$ of $\Gc$.
We call a component $\Cc$ {\it maximal} if $\Pr_\Cc(\f)=\Pr(\f)$.
Note that the set of maximal components depends on $\f$.
We are interested in potentials that satisfy the following condition.

\begin{definition}
A potential $\f$ is called {\it semisimple} if there are no directed paths from any maximal component to any other maximal components.
\end{definition}

We denote by $\Hc$ the space of H\"older continuous functions on $\wbar \SS$ with some fixed exponent, whose explicit value is not used, and by $\|\cdot\|_\Hc$ the corresponding H\"older norm.

\begin{theorem}[Theorem 3.8 in \cite{GouezelLocalLimit}]\label{Thm:Gouezel}
Suppose that $\f$ is a semisimple potential and $\Cc_1, \dots, \Cc_I$ are the corresponding maximal components, each with period $p_i$ and cyclic decomposition $\Cc_i=\bigsqcup_{j \in \Z/p_i\Z}\Cc_{i, j}$.
Then there exist H\"older continuous functions $h_{i, j}$ and measures $\lambda_{i, j}$ with $\int_{\wbar \SS}h_{i, j}d\lambda_{i, j}=1$ such that 
\[
\left\|\Lc_\f^n f -e^{\Pr(\f) n}\sum_{i=1}^I\sum_{j \in \Z/p_i \Z}\(\int_{\wbar \SS}f\,d\lambda_{i, (j-n\, {\rm mod\,} p_i)}\)h_{i, j}\right\|_\Hc
\le C \|f\|_\Hc e^{(\Pr(\f)-\e_0)n},
\]
for every H\"older continuous function $f$,
where positive constants $C$ and $\e_0>0$ are independent of $f$,
and the probability measures
$\m_i=(1/p_i)\sum_{j \in \Z/p_i \Z}h_{i, j}\lambda_{i, j}$ 
are invariant under the shift $\s$. The measures $\mu_i$ are also ergodic.
\end{theorem}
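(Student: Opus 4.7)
The approach is to interpret this as a spectral-gap statement for the transfer operator $\Lc_\f$ acting on $\Hc$. I would show that $\Lc_\f$ has peripheral spectrum $\{e^{\Pr(\f)}\zeta \ : \ \zeta^{p_i}=1, \ 1 \le i \le I\}$, each eigenvalue being simple, with the remainder of the spectrum contained in a disk of radius $e^{\Pr(\f)-\e_0}$ for some $\e_0>0$. The claimed estimate is then the standard functional-calculus consequence: projecting $\Lc_\f^n$ onto the peripheral eigenspace yields the leading term, and the $p_i$-cyclic structure of the peripheral eigenvalues $e^{\Pr(\f)}\zeta$ is precisely what produces the index $j-n\,{\rm mod\,} p_i$ on the right-hand side.

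To produce the peripheral eigendata, I would work on each maximal component separately. For each $i$ and $j \in \Z/p_i\Z$, the restriction $\s^{p_i}:\Cc_{i,j}\to \Cc_{i,j}$ is a topologically mixing subshift of finite type, so classical Ruelle--Perron--Frobenius theory applied to the H\"older potential $S_{p_i}\f$ (cf.\ \cite{ParryPollicott}) produces a strictly positive H\"older eigenfunction $\wt h_{i,j}$ and a probability eigenmeasure $\wt \lambda_{i,j}$ on $\Cc_{i,j}$ for the simple eigenvalue $e^{p_i\Pr(\f)}$, together with a uniform spectral gap. The operator $\Lc_\f$ itself rotates the family $\{\wt h_{i,j}\}_{j}$ cyclically through $\Z/p_i\Z$. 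To globalize, I take $\lambda_{i,j}$ to be $\wt \lambda_{i,j}$ viewed (via zero extension) as a measure on $\wbar \SS$, and extend $\wt h_{i,j}$ to a function $h_{i,j}$ on all of $\wbar \SS$ by summing $e^{S_k\f(\o')}\wt h_{i,j}(\s^k \o')$ over admissible backward trajectories $\o'$ of $\o$ that land in $\Cc_{i,j}$. Semisimplicity is crucial here: because no directed path connects distinct maximal components, each such backward trajectory can only traverse non-maximal components before entering $\Cc_i$, and on those components the pressure is strictly less than $\Pr(\f)$, giving absolute convergence of the defining series. Shift-invariance and ergodicity of $\m_i$ then transport directly from the Gibbs measure of $S_{p_i}\f$ on $\Cc_{i,0}$ via the cyclic identification.

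The main obstacle is establishing the quantitative error bound $C\|f\|_\Hc e^{(\Pr(\f)-\e_0)n}$. I would decompose $\Lc_\f^n f(\o)$ according to which components the preimage orbits traverse: orbits contained entirely in some maximal $\Cc_i$ produce the leading sum by the RPF spectral gap on that component; orbits confined to a non-maximal $\Cc$ decay at rate $e^{\Pr_\Cc(\f)n}$ with $\Pr_\Cc(\f) < \Pr(\f)$; mixed orbits require the most care, as they alternate transient segments with a single maximal segment (semisimplicity forbidding more than one maximal segment), and each transient excursion is controlled by Gibbs-type estimates, summed geometrically in excursion length. To upgrade these bounds from sup-norm to H\"older-norm control, one applies a Lasota--Yorke type inequality to $\Lc_\f^n$ so that H\"older seminorms are bounded uniformly in $n$, with
\[
\e_0 := \min\Big\{\text{RPF gap on each }\Cc_i,\ \Pr(\f)-\max_{\Cc\text{ non-maximal}}\Pr_\Cc(\f)\Big\}.
\]
The delicate part is ensuring that the gluing of the local spectral decompositions on different components produces a genuine global spectral gap on $\Hc$ and not merely weak-$\ast$ convergence, which is where semisimplicity (ruling out Jordan blocks at the peripheral eigenvalues caused by chains of communicating maximal components) plays its most essential role.
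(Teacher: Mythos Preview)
The paper does not prove this statement: it is quoted as Theorem~3.8 of \cite{GouezelLocalLimit} and used as a black box throughout Section~\ref{Sec:C2}, so there is no in-paper proof to compare against.

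Your sketch is a reasonable outline of how the result is established and correctly identifies the two essential mechanisms: classical Ruelle--Perron--Frobenius on each topologically mixing piece $(\Cc_{i,j},\s^{p_i})$, and the use of semisimplicity to rule out polynomial growth (Jordan blocks) coming from chains of maximal components. One point needs correction, however. You propose to take $\lambda_{i,j}$ to be the RPF eigenmeasure $\wt\lambda_{i,j}$ on $\Cc_{i,j}$, zero-extended to $\wbar\SS$. This cannot work: as the paper records in Remark~\ref{Rem:Thm:Gouezel}, the measure $\lambda_{i,j}$ is supported on the set of infinite paths that start with an edge in $\Cc_{\to,i,j}$ and eventually stay in $\Cc_i$, not only on paths already inside $\Cc_i$. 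If $\lambda_{i,j}$ were supported only on $\Cc_{i,j}$, then for any $f$ supported on transient paths feeding into $\Cc_i$ the integral $\int f\,d\lambda_{i,(j-n\,{\rm mod}\,p_i)}$ would vanish, whereas $\Lc_\f^n f$ has genuine leading-order growth $e^{\Pr(\f)n}$ once the orbit enters $\Cc_i$. The correct construction of $\lambda_{i,j}$ is dual to your (correct) extension of $h_{i,j}$: one pushes $\wt\lambda_{i,j}$ backward along the transient edges leading into $\Cc_i$, the resulting series converging because the pressure on non-maximal components is strictly below $\Pr(\f)$. With this fix your outline is sound.
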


\begin{remark}\label{Rem:Thm:Gouezel}
In the statement of Theorem \ref{Thm:Gouezel},
if we define $\Cc_{i, j, \to}$ to be the set of edges which can be reached by a path from $\Cc_{i, j}$ of length divisible by $p_i$,
and $\Cc_{\to, i, j}$ to be the set of edges which we can reach $\Cc_{i, j}$ with a path of length divisible by $p_i$,
then the function $h_{i, j}$ is bounded from below on the paths starting with edges in $\Cc_{i, j, \to}$ and the empty path,
and takes $0$ elsewhere. Furthermore the measure $\lambda_{i, j}$ is supported on the set of infinite paths starting with edges in $\Cc_{\to, i, j}$ and eventually staying in $\Cc_i$.
\end{remark}

We will use both of the  measures $\m_i$ and $\lambda_i:=\sum_{j=0}^{p_i-1}\lambda_{i, j}$ for each $i=1, \dots, I$.
They have different supports on the space of paths $\wbar \SS$, and $\m_i$ is $\s$-invariant while $\lambda_i$ is not.

\begin{lemma}[Lemma 3.9 in 
\cite{GouezelLocalLimit}]\label{Lem:Gouezel_lambda+}
In the notation in Theorem \ref{Thm:Gouezel}, let $\lambda_i:=\sum_{j=0}^{p_i-1}\lambda_{i, j}$.
Then $\s_\ast\lambda_i$ is absolutely continuous with respect to $\lambda_i$.
\end{lemma}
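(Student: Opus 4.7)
The plan is to reduce the claim to an eigenmeasure-type identity for $\lambda_i$ under the dual transfer operator, and then deduce absolute continuity by a short computation exploiting the multiplicative identity $\Lc_\f(g \circ \s)(\o) = g(\o)\, \Lc_\f 1(\o)$, which follows at once from the definition of $\Lc_\f$. Concretely, I aim to show that
\[
\Lc_\f^{\,*}\,\lambda_i = e^{\Pr(\f)}\,\lambda_i.
\]
Granting this, for any bounded Borel function $g$ one has
\[
\int g \, d(\s_*\lambda_i) = \int (g \circ \s)\, d\lambda_i = e^{-\Pr(\f)} \int \Lc_\f(g \circ \s)\, d\lambda_i = e^{-\Pr(\f)} \int g \cdot (\Lc_\f 1)\, d\lambda_i.
\]
Hence $\s_*\lambda_i = e^{-\Pr(\f)}(\Lc_\f 1)\,\lambda_i$, and since $\Lc_\f 1$ is H\"older continuous, hence bounded on the compact space $\wbar \SS$, the Radon--Nikodym derivative is bounded and $\s_* \lambda_i \ll \lambda_i$.

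To establish the eigenmeasure identity, I would apply Theorem \ref{Thm:Gouezel} in two ways to $\Lc_\f^{n+1} f$: once as the direct expansion of $\Lc_\f^{n+1}$, and once as $\Lc_\f^{n}(\Lc_\f f)$. Equating the leading-order $e^{\Pr(\f)n}$ contributions and relabeling the cyclic index should yield, for every maximal component $i$, every $k \in \Z/p_i\Z$, and every H\"older continuous $f$,
\[
\int \Lc_\f f \, d\lambda_{i, k} = e^{\Pr(\f)} \int f \, d\lambda_{i, (k-1)\bmod p_i}.
\]
Summing over $k$ collapses the cyclic shift, giving $\Lc_\f^{\,*}\lambda_i = e^{\Pr(\f)}\lambda_i$ on H\"older functions; standard approximation extends this to bounded Borel functions, which is what the computation above requires.

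The main technical obstacle is the coefficient-matching step, since the spectral expansion of Theorem \ref{Thm:Gouezel} combines contributions from potentially several maximal components and several cyclic classes. One must separate the $(i,j)$-terms in the leading asymptotic, which amounts to knowing that the pairings $\lambda_{i,k}\otimes h_{i,j}$ really behave as spectral projectors. For this I would invoke the support description in Remark \ref{Rem:Thm:Gouezel}: $h_{i,j}$ is supported on paths beginning with edges in $\Cc_{i,j,\to}$ (together with the empty path), while $\lambda_{i,j}$ is supported on paths beginning with edges in $\Cc_{\to, i, j}$ that eventually remain in $\Cc_i$; testing the asymptotic against measures or functions localized in the appropriate starting-edge regions isolates each $(i,j)$-term. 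An alternative route, likely cleaner in practice, is to work one maximal component at a time: restrict $\s^{p_i}$ to the topologically mixing sub-shift on $\Cc_{i,0}$, obtain the eigenmeasure from the classical Ruelle--Perron--Frobenius theorem, and then distribute it to the other $\Cc_{i,j}$ via $\Lc_\f^{\,*}$; summing these over $j$ recovers $\lambda_i$ and the identity $\Lc_\f^{\,*}\lambda_i=e^{\Pr(\f)}\lambda_i$ directly. Either route, combined with the reduction above, delivers the lemma.
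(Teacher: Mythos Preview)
The paper does not supply its own proof of this lemma; it is quoted as Lemma 3.9 of \cite{GouezelLocalLimit} and used as a black box. Your proposal is correct and is essentially the standard argument. The measures $\lambda_{i,j}$ in Gou\"ezel's framework are constructed precisely so that the cyclic eigenmeasure relation $\Lc_\f^{\,*}\lambda_{i,k}=e^{\Pr(\f)}\lambda_{i,(k-1)\bmod p_i}$ holds (this is how they arise from Ruelle--Perron--Frobenius applied to $\s^{p_i}$ on each topologically mixing piece and then transported by $\Lc_\f^{\,*}$), so the identity $\Lc_\f^{\,*}\lambda_i=e^{\Pr(\f)}\lambda_i$ is immediate from the construction rather than something one needs to extract by coefficient-matching. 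Your reduction from the eigenmeasure identity to $\s_*\lambda_i=e^{-\Pr(\f)}(\Lc_\f 1)\,\lambda_i$ via $\Lc_\f(g\circ\s)=g\cdot\Lc_\f 1$ is exactly right and gives a bounded Radon--Nikodym derivative. The coefficient-matching route you sketch first would also work, since by Remark \ref{Rem:Thm:Gouezel} the functions $h_{i,j}$ have pairwise distinguishable supports, but as you correctly anticipate the direct RPF construction is the cleaner path.
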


\begin{proposition}[Proposition 3.10 in \cite{GouezelLocalLimit}]\label{Prop:perturbation}
Suppose that $\f$ is a semisimple potential in $\Hc$ and $\Cc_1, \dots, \Cc_I$ are the corresponding maximal components.
Then there exist positive constants $C, \e_0>0$ such that
for all small enough $\p \in \Hc$,
there exist H\"older continuous functions $h_{i, j}^\p$ and measures $\lambda_{i, j}^\p$ with the same support as $h_{i, j}$ and $\lambda_{i, j}$ respectively, such that 
\[
\left\|\Lc_{\f+\p}^n f-\sum_{i=1}^I e^{\Pr_{\Cc_i}(\f+\p)n}\sum_{j \in \Z/p_i\Z}\(\int_{\wbar \SS}f\,d\lambda^\p_{i, (j-n\, {\rm mod\,} p_i)}\)h_{i, j}^\p\right\|_\Hc \le C\|f\|_\Hc e^{(\Pr(\f)-\e_0)n},
\]
for all $f \in \Hc$.
Moreover, the maps $\p \mapsto \Pr_{\Cc_i}(\f+\p)$, $\p \mapsto h_{i, j}^\p$ and $\p \mapsto \lambda_{i, j}^\p$ (from $\Hc$ to $\R$, $\Hc$ and the dual of $\Hc$ respectively)
are each real analytic in a small neighborhood of $0$ in $\Hc$.
\end{proposition}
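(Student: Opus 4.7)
The plan is to combine Theorem \ref{Thm:Gouezel} with Dunford-Kato analytic perturbation theory for isolated eigenvalues of transfer operators, making essential use of the semisimple block structure. A preliminary observation is that semisimplicity is preserved under small H\"older perturbations: continuity of $\p \mapsto \Pr_\Cc(\f+\p)$ for every component $\Cc$ (immediate from the definition \eqref{Eq:Pr}) implies that for $\|\p\|_\Hc$ sufficiently small, only components already maximal for $\f$ can be maximal for $\f+\p$; since the underlying graph $\Gc$ is fixed, no new directed paths between maximal components can appear. We may thus assume throughout that the maximal components of $\f+\p$ form a subset of $\{\Cc_1,\dots,\Cc_I\}$.

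For each $i$, I would restrict $\Lc_{\f+\p}$ to the closed invariant subspace $\Hc_i \subset \Hc$ of H\"older functions supported on paths eventually entering $\Cc_i$. By Theorem \ref{Thm:Gouezel} applied to the recurrent sub-automaton $\Cc_i$ with its $p_i$-cyclic decomposition, the unperturbed restricted operator has peripheral spectrum $\{e^{\Pr_{\Cc_i}(\f)}\o : \o^{p_i}=1\}$ consisting of $p_i$ simple eigenvalues, with the rest of its spectrum contained in a disk of strictly smaller radius. Each peripheral eigenvalue being isolated, standard analytic perturbation theory (via the resolvent contour integral $P_{i,j}^\p = (2\pi i)^{-1}\oint(z-\Lc_{\f+\p})^{-1}\,dz$ around it) produces an isolated eigenvalue of $\Lc_{\f+\p}|_{\Hc_i}$ depending analytically on $\p \in \Hc$, together with a rank-one spectral projector $P_{i,j}^\p$ analytic in operator norm. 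Writing $P_{i,j}^\p = h_{i,j}^\p \otimes \lambda_{i,j}^\p$ with a suitable normalisation produces the analytic families $h_{i,j}^\p, \lambda_{i,j}^\p$; the $p_i$ perturbed peripheral eigenvalues are the $p_i$-th roots of $e^{p_i \Pr_{\Cc_i}(\f+\p)}$, so $\p \mapsto \Pr_{\Cc_i}(\f+\p)$ is analytic. The supports of $h_{i,j}^\p$ and $\lambda_{i,j}^\p$ are dictated by the combinatorics of $\Cc_i$ in $\Gc$ described in Remark \ref{Rem:Thm:Gouezel}, unchanged by the perturbation.

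To globalise, the semisimple hypothesis gives $\Lc_{\f+\p}$ a block upper-triangular action with respect to the decomposition generated by $\Hc_1,\dots,\Hc_I$ together with a complementary ``transient'' block, whose spectral radius is bounded by $\max\{e^{\Pr_\Cc(\f+\p)}: \Cc\ \text{non-maximal}\}$. Decomposing $\Lc_{\f+\p}^n f$ accordingly, the leading contribution from each $\Hc_i$ is exactly the rank-$p_i$ peripheral part $e^{\Pr_{\Cc_i}(\f+\p)n}\sum_{j}(\int f\,d\lambda_{i,(j-n\,{\rm mod}\,p_i)}^\p)h_{i,j}^\p$, and the claimed bound follows once (i) the non-peripheral part of each $\Lc_{\f+\p}|_{\Hc_i}$ has spectral radius at most $e^{\Pr(\f)-\e_0}$ and (ii) the off-diagonal transient contributions are bounded by the same quantity.

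The main obstacle is uniformity of $C$ and $\e_0$ across a H\"older neighbourhood of $\p=0$. This requires upper semicontinuity of the non-peripheral spectrum of each $\Lc_{\f+\p}|_{\Hc_i}$ in operator norm, together with a uniform strict inequality $\Pr_\Cc(\f+\p) \le \Pr(\f)-\e_0$ for every non-maximal $\Cc$. Both hold provided the neighbourhood is chosen smaller than the gap between $\Pr(\f)$ and $\max\{\Pr_\Cc(\f) : \Cc\ \text{non-maximal}\}$, using finiteness of the set of components and real analyticity of $\p \mapsto \Lc_{\f+\p}$ as a map from $\Hc$ into bounded operators on $\Hc$.
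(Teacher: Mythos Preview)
The paper does not prove this proposition; it is quoted verbatim as Proposition 3.10 from Gou\"ezel \cite{GouezelLocalLimit}, and the authors simply use it as a black box in the proofs of Proposition \ref{Prop:2nd-d} and Theorem \ref{thmwords}. So there is no ``paper's own proof'' to compare against.

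That said, your sketch is a reasonable outline of how such a result is actually established, and it is in the spirit of Gou\"ezel's argument: analytic perturbation of the peripheral eigenprojectors on each maximal component, combined with the block upper-triangular structure coming from the partial order on components. One point deserves more care. Your subspaces $\Hc_i$ of ``functions supported on paths eventually entering $\Cc_i$'' are not obviously $\Lc_{\f+\p}$-invariant in the direction you need: the transfer operator sums over \emph{preimages} under $\s$, so it propagates supports \emph{backward} along the graph, not forward. The clean way to set this up (and what is done in \cite{GouezelLocalLimit}) is to filter $\Hc$ by the partial order on components, obtaining a genuine block upper-triangular representation of $\Lc_{\f+\p}$ whose diagonal blocks are the restricted operators $\Lc_{\Cc}$ on each component; then the peripheral spectrum is exactly the union of the peripheral spectra of the maximal diagonal blocks, and the off-diagonal nilpotent part contributes only polynomial corrections absorbed into the exponential gap. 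Once this is in place, your Dunford--Kato argument and the uniformity discussion go through essentially as written.
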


Let $[E_\ast]$ denote the set of paths in $\wbar \SS$ starting at $s_\ast$. If $\1_{[E_\ast]}$ denotes the corresponding indicator function then
\[
\Lc_\f^n \1_{[E_\ast]}(\emptyset)=\sum e^{S_n \f(\o)}, \quad \text{where $S_n \f(\o)=\sum_{k=0}^{n-1} \f(\s^{k}(\o))$},
\]
and the summation is taken over all paths $\o$ of length $n$ starting from $s_\ast$.

\begin{lemma}\label{Lem:semisimple}
For every H\"older continuous potential $\f$ and for every integer $k \ge 1$,
if there exists a path from $s_\ast$ in $\Ac$ containing edges successively from $k$ different maximal components for $\f$,
then there exists a constant $C>0$ such that for all $n \ge 1$,
\[
\Lc_\f^n \1_{[E_\ast]}(\emptyset) \ge C n^{k-1}e^{n \Pr(\f)}.
\]
On the other hand, if there are $L$ components in $\Ac$, 
then there exists a constant $C>0$ such that for all $n \ge 1$,
\[
\Lc_\f^n \1_{[E_\ast]}(\emptyset) \le C n^L e^{n \Pr(\f)}.
\]
\end{lemma}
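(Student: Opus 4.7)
The plan is to count paths in the automaton graph $\Gc$ via the pressure formula \eqref{Eq:Pr} applied inside each recurrent component: the lower bound will exploit a skeleton path to generate many variations inside maximal components, while the upper bound will come from decomposing an arbitrary path by the sequence of components it traverses.

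For the lower bound, fix a directed path $\eta$ from $s_\ast$ whose edges meet $k$ distinct maximal components $\Cc_{i_1}, \ldots, \Cc_{i_k}$ in succession. Let $u_j$ (resp.\ $v_j$) be the first (resp.\ last) vertex of $\eta$ lying in $\Cc_{i_j}$, and let $T$ denote the total length of the portions of $\eta$ lying outside $\bigcup_j \Cc_{i_j}$ (the prefix $s_\ast \to u_1$ plus each transition $v_j \to u_{j+1}$). I will construct paths of length $n$ from $s_\ast$ by following $\eta$ on the fixed transitions while replacing each segment $u_j \to v_j$ by an arbitrary walk of length $n_j$ inside $\Cc_{i_j}$ from $u_j$ to $v_j$, subject to $n_1 + \cdots + n_k = n - T$. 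Since each $\Pr_{\Cc_{i_j}}(\f) = \Pr(\f)$, an endpoint-fixed version of \eqref{Eq:Pr} supplies
\[
\sum_{\substack{\omega \subset \Cc_{i_j},\ |\omega|=n_j \\ \omega : u_j \to v_j}} e^{S_{n_j}\f(\omega)} \ge c_j\, e^{n_j \Pr(\f)}
\]
for $n_j$ sufficiently large in an appropriate residue class modulo the period $p_{i_j}$. Multiplying these over $j$ and summing over admissible compositions of $n-T$ into $k$ parts (of which there are $\asymp n^{k-1}$) yields $\Lc_\f^n \1_{[E_\ast]}(\emptyset) \ge C n^{k-1} e^{n \Pr(\f)}$.

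For the upper bound, the $L$ components of $\Gc$ together with the transient (acyclic) vertices form a DAG: any directed path visits each component at most once, and spends only $O(1)$ steps on the transient part (since there are finitely many transient vertices and no cycles among them). Decomposing a path of length $n$ from $s_\ast$ by the ordered component sequence $\Cc_{j_1}, \ldots, \Cc_{j_l}$ with $l \le L$ and the lengths $n_1, \ldots, n_l$ spent in each, we have $n_1 + \cdots + n_l = n - O(1)$. By \eqref{Eq:Pr} the weighted sum over walks of length $n_i$ in $\Cc_{j_i}$ between fixed endpoints is at most $C\, e^{n_i \Pr_{\Cc_{j_i}}(\f)} \le C\, e^{n_i \Pr(\f)}$. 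Summing over the finitely many choices of component sequences and entry/exit vertex pairs, and over compositions of $n - O(1)$ into at most $L$ parts (contributing $O(n^{L-1})$ terms), produces $\Lc_\f^n \1_{[E_\ast]}(\emptyset) \le C n^L e^{n \Pr(\f)}$.

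The main technical subtlety lies in the lower bound, since the endpoint-fixed pressure estimate only applies to lengths $n_j$ in a prescribed residue class modulo $p_{i_j}$. I plan to restrict each $n_j$ to its correct residue class and check that the number of such compositions of $n-T$ remains $\gg n^{k-1}$ for every sufficiently large $n$; should a global residue obstruction arise for certain $n \bmod \mathrm{lcm}(p_{i_1}, \ldots, p_{i_k})$, the skeleton $\eta$ can be modified by inserting a short walk inside one of the $\Cc_{i_j}$ to absorb it, affecting only the implicit constant.
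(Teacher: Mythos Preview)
The paper does not prove this lemma; it simply cites \cite[Lemma~3.7]{GouezelLocalLimit} and \cite[Lemma~4.7]{THaus}. Your combinatorial decomposition is the standard argument behind those references and is essentially correct. Two routine points to make explicit: the ``endpoint-fixed version of \eqref{Eq:Pr}'' is not a formal consequence of the limit \eqref{Eq:Pr} alone but requires the Gibbs property (equivalently the spectral picture in Theorem~\ref{Thm:Gouezel}) to get uniform two-sided bounds between fixed vertices; and when you concatenate pieces, $S_n\f$ of the concatenation differs from the sum of the pieces' Birkhoff sums only by $O(1)$ thanks to the H\"older condition, which is what keeps the implicit constants uniform.

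Your proposed fix for the periodicity obstruction does not work as written: any walk inside $\Cc_{i_j}$ from $u_j$ to $v_j$ has length in a single residue class modulo $p_{i_j}$, so inserting extra walks there cannot shift that residue. A clean repair is to observe that every path you construct terminates at the vertex $v_k$ inside the recurrent component $\Cc_{i_k}$; for each residue $0 \le r < \gcd(p_{i_1},\dots,p_{i_k})$ append one fixed walk of length $r$ starting at $v_k$ within $\Cc_{i_k}$ (with no endpoint constraint), which alters the weight by at most a bounded factor and yields the lower bound for every sufficiently large $n$.
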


\proof
This lemma is a special case of \cite[Lemma 3.7]{GouezelLocalLimit} (the proof of the second part is found in \cite[Lemma 4.7]{THaus}).
\qed

\subsection{Semisimple potentials}
In this section we use thermodynamic formalism to link the geometric measures constructed in Section \ref{Sec:PS} to certain measures on $\wbar \SS$.
The key result that allows us to do this
is the following.

\begin{lemma}\label{Lem:pr-ss}
Let $\psi$ be a $\G$-invariant tempered potential relative to $d_S$ on $\G$ with exponent $\th$ (see Definition \ref{Def:tempered}).
If for a strongly Markov automatic structure $\Ac=(\Gc, \pi, S)$ 
the corresponding shift space $(\wbar \SS, \s)$ admits a H\"older continuous potential $\Psi$
such that
\begin{equation}\label{Eq:combable}
S_n \Psi(\o)=\sum_{i=0}^{n-1}\Psi(\s^i(\o))=-\psi(o, \pi_\ast (\o)) \quad \text{for all $\o=(\o_0, \dots, \o_{n-1}) \in \SS^\ast$},
\end{equation}
then $\Psi$ is semisimple and $\Pr(\Psi)=\th$.
Moreover, for each $a \in \R$, the potential $a \Psi$ is semisimple.
\end{lemma}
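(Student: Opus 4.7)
The plan is to translate the cocycle hypothesis on $\Psi$ into an identity expressing iterates of the transfer operator at the empty path as a $\psi$-weighted sum over spheres in $(\G, d_S)$, then confront the growth estimate of Lemma \ref{Lem:Coornaert} with the polynomial-corrected asymptotics of Lemma \ref{Lem:semisimple} to pin down both $\Pr(\Psi)$ and the semisimplicity of $\Psi$.

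First I will prove the identity
\[
a_n := \Lc_\Psi^n \1_{[E_\ast]}(\emptyset) = \sum_{x \in \G:\, |x|_S = n} e^{-\psi(o, x)}.
\]
Unrolling the transfer operator iterate at $\emptyset$ writes $a_n$ as a sum of $e^{S_n \Psi(\o)}$ over finite paths $\o$ of length $n$ based at $s_\ast$. By property~(2) of the strongly Markov structure each such $\o$ spells out a $d_S$-geodesic representing some $x \in \G$ with $|x|_S = n$, by property~(3) the assignment $\o \mapsto \pi_\ast(\o)$ is a bijection onto the sphere of radius $n$, and hypothesis \eqref{Eq:combable} replaces $S_n\Psi(\o)$ by $-\psi(o, \pi_\ast(\o))$.

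Next I will bound $a_n$ using Lemma \ref{Lem:Coornaert} applied with $d = d_S$. Its upper half yields $a_n \le \sum_{x \in S(n, R_0)} e^{-\psi(o,x)} \le C e^{\th n}$ for every $n$; its lower half, combined with pigeonholing over the $2R_0 + 1$ integer word lengths lying in $[n - R_0, n + R_0]$, supplies some $k(n)$ in this interval with $a_{k(n)} \ge c\, e^{\th n}$. Together these force $\limsup_n n^{-1}\log a_n = \th$. On the other hand Lemma \ref{Lem:semisimple} applied with $k = 1$---which is permissible because condition~(1) of the strongly Markov structure lets one reach any maximal component from $s_\ast$---gives $a_n \ge C\, e^{n\Pr(\Psi)}$ for all $n$, while its upper bound gives $a_n \le C\, n^L e^{n\Pr(\Psi)}$. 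Dividing by $n$ and passing to logarithmic limits forces $\lim_n n^{-1}\log a_n = \Pr(\Psi)$, and comparison with the previous display yields $\Pr(\Psi) = \th$.

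For the semisimplicity I will argue by contradiction. If $\Psi$ were not semisimple, there would be a directed path from some maximal component $\Cc_1$ to another maximal component $\Cc_2$; concatenating this with a path from $s_\ast$ into $\Cc_1$ (available by condition~(1)) and with loops inside each component produces a path from $s_\ast$ that contains edges successively from two distinct maximal components. Lemma \ref{Lem:semisimple} with $k = 2$ then forces $a_n \ge C' n\, e^{n\Pr(\Psi)} = C' n\, e^{\th n}$ for every $n \ge 1$, in direct contradiction to the uniform upper bound $a_n \le C\, e^{\th n}$ established above. The hard part will be less technical than conceptual: the tempered potential $\psi$ delivers exactly exponential growth with no polynomial tolerance, and it is precisely this rigidity---the absence of any $n$-factor room in the upper bound from Lemma \ref{Lem:Coornaert}---that rules out any nontrivial chain of maximal components.
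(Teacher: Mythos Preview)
Your proof is correct and follows essentially the same approach as the paper: establish the identity $\Lc_\Psi^n \1_{[E_\ast]}(\emptyset)=\sum_{|x|_S=n}e^{-\psi(o,x)}$, then confront the pure exponential growth from Lemma~\ref{Lem:Coornaert} with the polynomial-corrected bounds of Lemma~\ref{Lem:semisimple} to force $\Pr(\Psi)=\th$ and rule out a chain of maximal components. The paper's version is terser (it does not spell out the pigeonhole step or the use of condition~(1) to reach a maximal component), and it appends the observation that the same argument applies to $a\Psi$ for every $a\in\R$, but the core reasoning is identical.
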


\proof
Note that for the potential $\Psi$, we have that for all $n$,
\[
\Lc_{\Psi}^n\1_{[E_\ast]}(\emptyset)=\sum_{|x|_S=n}e^{-\psi(o,x)}.
\]
Hence Lemmas \ref{Lem:Coornaert} and \ref{Lem:semisimple} show that $\Pr(\Psi)=\th$.
If the potential $\Psi$ is not semisimple,
then there is a directed path in the automatic structure $\Ac=(\Gc, \pi, S)$ starting from $s_\ast$ passing through $k$ distinct maximal components for $\Psi$ for $k>1$.
The first part of Lemma \ref{Lem:semisimple} implies that 
$\Lc_{\Psi}^n\1_{[E_\ast]}(\emptyset) \ge C n^{k-1}e^{n \Pr(\Psi)}$ for all $n \ge 0$.
This however contradicts Lemma \ref{Lem:Coornaert}.
Therefore $\Psi$ is semisimple.
Furthermore for every $a \in \R$, the same proof applies to $a \psi$, and the potential $a \Psi$ is semisimple.
\qed

For each semisimple potential $\Psi$ on $\wbar \SS$,
let $\Cc_i$ for $i=1, \dots, I$ be the maximal components for $\Psi$.
Let $\lambda_{i, j}$ for $i =1, \dots, I$ and $j=0, \dots, p_i$ be the measures obtained in Theorem \ref{Thm:Gouezel} applied to the potential $\Psi$.
We define $\lambda_i:=\sum_{j=0}^{p_i-1}\lambda_{i, j}$ and $\lambda_\Psi:=\sum_{i=1}^I\lambda_i$.
Let us denote by $\m_\psi$ a finite Borel measure on $\partial \G$ satisfying \eqref{Eq:generalPS} with exponent $\th$ relative to $(\psi, d)$ (which has been constructed in Proposition \ref{Prop:generalPS}).

\begin{lemma}\label{Lem:lambda+}
Assume that $\psi$ and $\Psi$ are as in Lemma \ref{Lem:pr-ss}.
Then, the push-forward of $\lambda_\Psi(\,\cdot\, \cap [E_\ast])$ by $\pi_\ast$ is comparable to $\m_\psi$.
\end{lemma}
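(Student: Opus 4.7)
The plan is to compare the two measures on the family of shadows $\{O(x,R)\}_{x\in\Gamma}$ for a fixed large $R$, using matching Gibbs-type bounds on both sides. The strongly Markov property gives a bijection between finite directed paths from $s_\ast$ of combinatorial length $n$ and group elements $x$ with $|x|_S=n$; under $\pi_\ast$ the infinite-path cylinder $[\omega_0,\ldots,\omega_{n-1}]\cap[E_\ast]$ is sent into $O(x,R)$, where $x=\pi_\ast(\omega_0,\ldots,\omega_{n-1})$, for a large enough $R$ depending only on the hyperbolicity constant of $d_S$ (by the Morse lemma, since geodesics from $o$ to $x$ are uniformly close to extensions of $\pi(\omega)$).

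The first step is to establish the Gibbs bound
\[
\lambda_\Psi\bigl([\omega_0,\ldots,\omega_{n-1}]\cap[E_\ast]\bigr) \asymp \exp\bigl(-\psi(o,\pi_\ast(\omega_0,\ldots,\omega_{n-1}))-n\theta\bigr),
\]
using Lemma \ref{Lem:pr-ss} (so that $\Psi$ is semisimple with $\Pr(\Psi)=\theta$) and the relation \eqref{Eq:combable} to rewrite $S_n\Psi(\omega)$. The estimate is extracted by applying $\Lc_\Psi^n$ to a suitable test function concentrated on the cylinder and using the spectral decomposition in Theorem \ref{Thm:Gouezel}: the dominating term gives $e^{n\theta}$ times an integral against $\lambda_{i,j}$, and the eigenmeasure-type property of $\lambda_{i,j}$ (Lemma \ref{Lem:Gouezel_lambda+}, combined with the support information in Remark \ref{Rem:Thm:Gouezel}) produces a value comparable to $e^{S_n\Psi(\omega)}$. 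The upper bound follows directly; for the lower bound one uses that, since $\Gc$ is a finite graph in which every vertex is accessible from $s_\ast$, every finite cylinder can be extended by a uniformly bounded number of steps into some maximal component and then continued to an infinite path on which $\lambda_{i,j}$ is supported.

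The second step is to compare this with the Gibbs bound for $\mu_\psi$ furnished by Proposition \ref{Prop:generalPS}, namely
\[
\mu_\psi\bigl(O(x,R)\bigr) \asymp \exp\bigl(-\psi(o,x)-\theta|x|_S\bigr).
\]
Both estimates have the same functional form. Since the shadows $\{O(x,R)\}_{|x|_S=n}$ cover $\partial\Gamma$ with multiplicity uniformly bounded in $n$, while the length-$n$ cylinders from $s_\ast$ partition the infinite paths in $[E_\ast]$, summing the matching Gibbs bounds and applying a Vitali-type covering argument in the doubling quasi-metric $\rho$ (as in the proof of Lemma \ref{Lem:ergodic}) shows that $(\pi_\ast)_\ast\bigl(\lambda_\Psi|_{[E_\ast]}\bigr)$ and $\mu_\psi$ are comparable on shadows and hence on all Borel subsets of $\partial\Gamma$.

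The main obstacle is the symbolic Gibbs lower bound. Because each $\lambda_{i,j}$ is supported only on paths that eventually enter and stay in a single maximal component, and because there may be several such components, one must rule out any cylinder $[\omega]\cap[E_\ast]$ being undercounted — in particular when $\omega$ ends in a non-maximal component. Semisimplicity of $\Psi$ is essential here: it prevents a path from traversing two distinct maximal components and thus prevents the polynomial correction of Lemma \ref{Lem:semisimple} that would otherwise spoil the exponential Gibbs estimate. Combined with the finite-graph accessibility described above, this yields the required uniform lower bound on $\lambda_\Psi([\omega]\cap[E_\ast])$ and completes the comparison.
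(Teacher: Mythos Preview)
Your approach is genuinely different from the paper's, and it contains a real gap in the symbolic Gibbs lower bound.

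You claim that ``since $\Gc$ is a finite graph in which every vertex is accessible from $s_\ast$, every finite cylinder can be extended by a uniformly bounded number of steps into some maximal component.'' Accessibility \emph{from} $s_\ast$ does not imply accessibility \emph{to} a maximal component. A strongly Markov automatic structure may well have vertices (even entire non-maximal recurrent components) that act as sinks: once a directed path enters them it can never reach any maximal component for $\Psi$. For any cylinder $[\omega_0,\dots,\omega_{n-1}]\cap[E_\ast]$ ending at such a vertex, every infinite extension stays outside the support of every $\lambda_{i,j}$ (Remark~\ref{Rem:Thm:Gouezel}), so $\lambda_\Psi([\omega]\cap[E_\ast])=0$ while $e^{S_n\Psi(\omega)-n\theta}>0$. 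Hence the uniform Gibbs lower bound you need simply fails on those cylinders. Semisimplicity rules out paths crossing two maximal components, but it says nothing about paths that never reach a maximal component at all. To salvage your strategy you would need an additional argument showing that, for every shadow $O(x,R)$, there is some \emph{other} path from $s_\ast$ of length $\approx |x|_S$ landing in $O(x,R)$ and extendable into a maximal component, with the right Birkhoff sum; you have not supplied this.

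The paper sidesteps the issue entirely by working at the level of measures rather than cylinders. It observes that for any continuous $f$ on $\Gamma\cup\partial\Gamma$,
\[
e^{-n\theta}\,\Lc_\Psi^n(\1_{[E_\ast]}\cdot f\circ\pi_\ast)(\emptyset)=e^{-n\theta}\sum_{|x|_S=n}e^{-\psi(o,x)}f(x),
\]
which is precisely the $n$-th stage of the Patterson--Sullivan construction for $(\psi,d_S)$. Theorem~\ref{Thm:Gouezel} shows that the measures $f\mapsto e^{-n\theta}\Lc_\Psi^n f(\emptyset)$ converge (along arithmetic progressions modulo the periods) to measures uniformly comparable to $\lambda_\Psi$, because $h_{i,j}(\emptyset)$ is bounded above and below by positive constants. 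Pushing forward by $\pi_\ast$ and passing to the limit therefore identifies $\pi_\ast\bigl(\lambda_\Psi(\,\cdot\,\cap[E_\ast])\bigr)$, up to multiplicative constants, with a measure obtained by the Patterson--Sullivan procedure, hence comparable to $\mu_\psi$ by Proposition~\ref{Prop:generalPS} and Lemma~\ref{Lem:ergodic}. No cylinder-by-cylinder Gibbs estimate is needed, and the sink-component obstruction never arises.
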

\proof
For all $n$, let $\wt m_n$ be the finite measure on $\wbar \SS$ defined by the positive linear functional $f \mapsto e^{-n \Pr(\Psi)}\cdot \Lc_{\Psi}^n f(\emptyset)$.
If the maximal components for the potential $\Psi$ have periods $p_i$ for $i=1, \dots, I$,
then let $p$ be the least common multiple of these periods.
Theorem \ref{Thm:Gouezel} shows that for every H\"older continuous function $f$ on $\wbar \SS$,
we have that for each $q=0, \dots, p-1$,
\[
e^{-(np+q) \Pr(\Psi)}\cdot \Lc_{\Psi}^{np+q}f(\emptyset) \to \sum_{i=1}^I \sum_{j=0}^{p_i}\int_{\wbar \SS}f\,d\lambda_{i, (j-q \ {\rm mod\,} p_i)}h_{i, j}(\emptyset) 
\quad \text{as $n \to \infty$}.
\]
This convergence holds for all continuous functions $f$ on $\wbar \SS$;
indeed, we approximate $f$ by H\"older continuous functions and use 
$|e^{-n \Pr(\Psi)}\cdot \Lc_{\Psi}^n f(\emptyset)| \le C\|f\|_\infty$ for all $n$, where $\|\cdot\|_\infty$ stands for the supremum norm.
This shows that $\wt m_{np+q}$ weakly converges to a measure $\wt m_q$ for each $q=0, \dots, p-1$.
Since $c_1 \le h_{i, j}(\emptyset) \le c_2$ for some $c_1, c_2>0$ (see Theorem \ref{Thm:Gouezel} and Remark \ref{Rem:Thm:Gouezel}),
all $\wt m_q$ are comparable with $\sum_{i, j}\lambda_{i, j}$.
If we denote by $\wt m_\infty$ the weak limit of $\sum_{k=0}^n\wt m_k(\,\cdot\, \cap[E_\ast])/\sum_{k=0}^n\wt m_k([E_\ast])$,
then the measure $\pi_\ast \wt m_\infty$ is actually $\m_\psi$.
Indeed, for every continuous function $f$ on $\G \cup \partial \G$,
we have 
\begin{align*}
e^{-n\Pr(\Psi)}\Lc_{\Psi}^n (\1_{[E_\ast]}\cdot f \circ \pi_\ast)(\emptyset)
&=e^{-n\Pr(\Psi)}\sum_{\o \ \text{of length $n$ from $s_\ast$}} e^{S_n \Psi(\o)}f(\pi_\ast(\o))\\
&=e^{-n\Pr(\Psi)}\sum_{|x|_S=n}e^{-\psi(o, x)}f(x),
\end{align*}
where the last line follows since the map $\pi_\ast$ induces a bijection from the set of paths of length $n$ starting at $s_\ast$ to the set of $x \in \G$ with $|x|_{S}=n$ and \eqref{Eq:combable}.
Since $\Pr(\Psi)=\th$ by Lemma \ref{Lem:pr-ss},
this shows that the measure $\pi_\ast \wt m_\infty$ is comparable with $\m_\psi$ obtained by the Patterson-Sullivan procedure,
and
for $\lambda_\Psi:=\sum_{i, j}\lambda_{i, j}$, the measure $\pi_\ast\lambda(\,\cdot \,\cap [E_\ast])$ is comparable with $\m_\psi$.
\qed

\begin{example}\label{Ex:combable1}
For every pair of finite symmetric sets of generators $S$ and $S_\ast$,
there exist a strongly Markov automatic structure $\Ac=(\Gc, \pi, S)$ and a function $d\phi_{S_\ast}: E(\Gc) \to \Z$
such that
\[
|\pi_\ast(\o)|_{S_\ast}=\sum_{i=0}^{n-1}d\phi_{S_\ast}(\o_i) \quad \text{for every path $\o=(\o_0, \dots, \o_{n-1})$ from $s_\ast$ on $\Gc$},
\]
where $\Gc=(V(\Gc), E(\Gc))$ is the underlying directed graph of $\Ac$.
This is proved in \cite[Lemma 3.8]{CalegariFujiwara2010} (see also \cite[Theorem 6.39]{Calegariscl}).
Let us define a function $\Psi_{S_\ast}: \overline{\Sigma} \to \mathbb{R}$ by setting 
\[
\Psi_{S_\ast}(\omega) = -d\phi_{S_\ast}(\omega_0) \quad \text{for $\omega \in \wbar \SS$}. 
\]
This function depends only on the first coordinate of $\o$ and is H\"older continuous with respect to the metric $d_{\wbar \SS}$. Further, by construction, for $\omega = (\omega_0, \ldots, \omega_{n-1})  \in \Sigma^\ast$ we have that
\[
S_n \Psi_{S_\ast}(\omega) = -\sum_{i=0}^{n-1}d\phi_{S_\ast}(\o_i) = -d_{S_\ast}(o, \pi_\ast(\o)).
\]
This shows that, on $\Sigma^\ast$, the Birkhoff sums of $\Psi_{S_\ast}$ encode information about the metric $d_{S_\ast}$. 
\end{example}

\begin{example}\label{Ex:combable2}
Let $d \in \Dc_\G$ be a strongly hyperbolic metric.
For every finite (symmetric) set of generators $S$, 
we consider a subshift $\overline{\Sigma}$ arising from a strongly Markov structure $\Ac=(\Gc, \pi, S)$. 
Since the Busemann function for a strongly hyperbolic metric is defined as limits (Section \ref{Sec:PS}),
if we define
\[
\b_o(x, y):=d(x, y)-d(o, y) \quad \text{for $(x, y) \in \G \times (\G \cup \partial \G)$},
\]
then its restriction on $\G \times \partial \G$ is the original Busemann function (based at $o$) for $d$.
Let 
\[
\Psi(\o):=\b_o(\pi_\ast(\o_0), \pi_\ast(\o)) \quad \text{for $\o \in \wbar \SS$}.
\]
Note that $\Psi$ is H\"older continuous with respect to $d_{\wbar \SS}$ by the definition of strong hyperbolicity (see Section \ref{Sec:PS}).
Furthermore for $\omega \in \Sigma^\ast$,
if $n = |\pi_\ast(\o)|_S$,
then we have that
\[
S_n \Psi(\omega) = -d(o,\pi_\ast(\o)).
\]
\end{example}

\begin{remark}
It is important to note that for \textit{any} strongly Markov structure $\Ac=(\Gc, \pi, S)$ and \textit{any} strongly hyperbolic metric $d \in \Dc_\Gamma$ we can find a function $\Psi$ on $\wbar \SS$ encoding $d$. 
It is not clear if we can do the same when $d$ is a word metric; in which case we only know the existence of \textit{some} strongly Markov structure and a function on $\wbar \SS$ encoding $d$ (see Example \ref{Ex:combable1}).
We will exploit this freedom of choice for strongly hyperbolic metrics in our proof of Theorem \ref{Thm:thm3}.
\end{remark}

\subsection{Proof of the $C^2$-regularity}\label{Sec:proofofC2}

In general,
restricting to each component $\Cc_i$, 
the pressure function $\Pr_{\Cc_i}(\Psi)$ is real analytic in $\Psi$. Furthermore, for every $\Psi_0, \Psi \in \Hc$,
\[
\Pr_{\Cc_i}(\Psi_0+s\Psi)=\Pr_{\Cc_i}(\Psi_0)+s \t_i+\frac{1}{2}s^2\s_i^2+O(s^3) \quad \text{as $s \to 0$},
\]
where
\[
\t_i:=\int_{\SS}\Psi\,d\m_i \quad \text{and} \quad \s_i^2:=\lim_{n \to \infty}\frac{1}{n}\int_{\SS}\(S_n \Psi-n \t_i\)^2\,d\m_i.
\]
We will prove that $\t_i$ respectively $\s_i^2$ coincide on all maximal components for $\Psi_0$.

\begin{proposition}\label{Prop:2nd-d}
Let $\psi$ be a $\G$-invariant tempered potential relative to $d_S$ on $\G$ with exponent $\th$, and
\[
\th(a):=\lim_{n \to \infty}\frac{1}{n}\log \sum_{|x|_S=n}e^{-a \psi(o, x)} \quad \text{for $a \in \R$}.
\]
Suppose that the shift space $(\wbar \SS, \s)$ corresponding to a strongly Markov automatic structure $\Ac=(\Gc, \pi, S)$  admits a H\"older continuous potential $\Psi$
satisfying \eqref{Eq:combable}.
Then the function $\th(a)$ is twice continuously differentiable in $a \in \R$.
\end{proposition}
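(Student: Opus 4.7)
The plan is to represent $\th(a)$ locally near any $a_0 \in \R$ as a maximum of finitely many real-analytic functions coming from the maximal components of the automatic structure, and then show these functions agree at $a_0$ through order two. Since a finite maximum of $C^\infty$ functions agreeing at a point through order $k$ is $C^k$ there, this will give $\th \in C^2(\R)$. Concretely, by Lemma \ref{Lem:pr-ss} each $a\Psi$ is semisimple with $\Pr(a\Psi) = \th(a)$. Fix $a_0 \in \R$, let $\Cc_1, \ldots, \Cc_I$ be the maximal components for $a_0\Psi$, and apply Proposition \ref{Prop:perturbation} (with base $a_0\Psi$ and perturbation $(a - a_0)\Psi$): each map $a \mapsto \Pr_{\Cc_i}(a\Psi)$ is real-analytic in a neighborhood of $a_0$. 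Continuity of the subdominant component pressures yields a neighborhood $U$ of $a_0$ on which
\[
\th(a) = \max_{1 \le i \le I}\Pr_{\Cc_i}(a\Psi),
\]
so it suffices to verify $\frac{d^k}{da^k}\Pr_{\Cc_i}(a\Psi)|_{a_0} = \frac{d^k}{da^k}\Pr_{\Cc_j}(a\Psi)|_{a_0}$ for all $i, j \in \{1, \ldots, I\}$ and $k = 1, 2$.

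For first-order matching, $\frac{d}{da}\Pr_{\Cc_i}(a\Psi)|_{a_0} = \int \Psi\,d\m_i$, where $\m_i$ is the $\s$-invariant equilibrium state on $\Cc_i$ from Theorem \ref{Thm:Gouezel}. Combining $S_n\Psi(\o) = -\psi(o, \pi_\ast(\o_0\cdots\o_{n-1}))$ (from \eqref{Eq:combable}) with the Birkhoff ergodic theorem shows that this integral equals minus the $\m_i$-a.e.\ asymptotic growth rate of $\psi(o, \cdot)$ along the coded geodesic rays. By Lemma \ref{Lem:lambda+}, the boundary pushforward of each $\lambda_i$ is controlled by a common Patterson--Sullivan-type measure $\m_{a_0\psi}$ on $\partial \G$; an argument in the spirit of Lemma \ref{Lem:tau} applied to the potential $a_0\psi$ then forces these growth rates to coincide with the intrinsic local intersection number governed by $\m_{a_0\psi}$, a quantity that is independent of $i$.

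The second derivative of $\Pr_{\Cc_i}(a\Psi)$ at $a_0$ is the asymptotic variance
\[
\s_i^2 = \lim_{n \to \infty} \frac{1}{n}\int (S_n\Psi - n\t)^2\,d\m_i,
\]
and showing the $\s_i^2$ coincide across maximal components is the main obstacle. I plan to adapt the argument of Calegari--Fujiwara \cite[Section 4.5]{CalegariFujiwara2010}, identifying each $\s_i^2$ with the variance of a central limit theorem for $\psi(o, x)/\sqrt{|x|_S}$ as $x$ ranges over spheres of $(\G, d_S)$ --- an intrinsic geometric quantity controlled by the common boundary measure $\m_{a_0\psi}$ and hence independent of the chosen maximal component. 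Unlike the first-order case, where equality of means follows almost directly from mutual comparability of boundary pushforwards, variance matching requires comparing contributions from cyclic decompositions with possibly distinct periods $p_i$, and this is where the bulk of the technical work lies. Once both equalities are in place, the local representation above gives $\th \in C^2(U)$, and since $a_0 \in \R$ is arbitrary, $\th \in C^2(\R)$.
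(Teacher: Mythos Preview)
Your overall strategy and your first-derivative argument match the paper's proof. The substantive gap is in the second-derivative step: routing through a CLT for $\psi(o,x)/\sqrt{|x|_S}$ over spheres $\bS_n$ risks circularity, since a single well-defined limiting variance for that CLT already presupposes that the contributions from the different maximal components (which together parametrize $\bS_n$ via the automaton) have a common variance---precisely what you are trying to prove. The period comparison you anticipate is a symptom of this circularity, not a step in a valid proof.

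The paper instead reruns the \emph{same} boundary-ergodicity argument used for the means, with a more elaborate $\G$-invariant level set. For each $i$ one defines $B(\s_i)\subset\partial\G$ as the set of $\xi$ admitting a $d_S$-geodesic ray $(\xi_n)$ along which the double limit
\[
\s_i^2=\lim_{n\to\infty}\frac{1}{n}\,\lim_{m\to\infty}\frac{1}{m}\sum_{k=0}^{m-1}\bigl(-\psi(\xi_k,\xi_{n+k})-n\t\bigr)^2
\]
holds; this set is $\G$-invariant because $\psi$ is. The inner limit is a Birkhoff time average: for $\m_i$-a.e.\ $\o\in\SS_i$ the empirical measures $\frac{1}{m}\sum_{k<m}\d_{\s^k\o}$ converge weakly to $\m_i$ itself, so the inner limit recovers $\int_{\SS_i}(S_n\Psi-n\t)^2\,d\m_i$, and the outer limit then gives $\s_i^2$. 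Hence $\pi_\ast(V_i)\subset B(\s_i)$ for a set $V_i$ of full $\m_i$-measure; pushing forward via Lemma~\ref{Lem:lambda+} gives $\m_\psi(B(\s_i))>0$, and ergodicity of $\m_\psi$ under the $\G$-action (Lemma~\ref{Lem:ergodic}) forces each $B(\s_i)$ to have full $\m_\psi$-measure. Thus all $\s_i^2$ coincide. No comparison of cyclic periods is needed anywhere in this step.
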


\proof
By Lemma \ref{Lem:pr-ss}, we have that $\th(a)=\max_\Cc \Pr_\Cc(a \Psi)$ for every $a \in \R$.
Proposition \ref{Prop:perturbation} shows that for each maximal component $\Cc_i$, $i=1, \dots, I$ for $a \Psi$,
we have $\Pr'_{\Cc_i}(a \Psi)=\int_{\SS_i} \Psi\,d\m_i$,
where $\SS_i$ is the set of paths staying in $\Cc_i$ for all time.
Let us prove that $\int_{\SS_i}\Psi\,d\m_i=\int_{\SS_j}\Psi\,d\m_j$ for all $i, j \in \{1, \dots, I\}$.
For every $\t \in \R$, let $A(\t)$ be the set of boundary points $\x$ in $\partial \G$ for which 
there exists a unit speed geodesic ray $\x_n$ in $\Cay(\G, S)$ converging to $\x$ such that
\[
\lim_{n \to \infty}\frac{1}{n}\psi(o, \x_n)=-\t.
\]
Note that if this convergence holds for some geodesic ray toward $\x$, then in fact this holds for every geodesic ray toward $\x$ since arbitrary two geodesic rays converging to the same extreme point are eventually within bounded distance up to shifting the parameterizations
(where all geodesic rays are parameterized with unit speed). 
This shows that the set $A(\t)$ is $\G$-invariant.
Let $\t_i:=\int_{\SS_i}\Psi\,d\m_i$, where $\m_i=(1/p_i)\sum_{j=0}^{p_i-1}h_{i, j}\lambda_{i, j}$.
If we define 
\[
U_i:=\Big\{\o \in \SS_i \ : \ \lim_{n \to \infty}\frac{1}{n}S_n \Psi(\o)=\t_i\Big\},
\]
then $\pi_\ast(U_i)\subset A(\t_i)$ and the Birkhoff ergodic theorem implies that $\m_i\(U_i\)=1$ since $\m_i$ is ergodic by Theorem \ref{Thm:Gouezel}.
We shall show that $\t_i=\t_j$ for all $i, j \in \{1, \dots, I\}$.
Let $U_i^c:=\wbar \SS\setminus U_i$.
Since $\lambda_i$ and $\m_i$ are equivalent on $\SS_i$,
we have $\lambda_i\(\SS_i\cap U_i^c\)=0$.
This implies that $\lambda_i(U_i^c)=0$.
Indeed, note that $\s^k U_i^c \subset U_i^c$, and
$\s^{-k}\SS_i \cap U_i^c=\s^{-k}(\SS_i\cap \s^k U_i^c) \subset \s^{-k}(\SS_i \cap U_i^c)$.
Since $\lambda_i\circ \s^{-1}$ is absolutely continuous with respect to $\lambda_i$ by Lemma \ref{Lem:Gouezel_lambda+},
we have $\lambda_i(\s^{-k}\SS_i \cap U_i^c)=0$, 
and
since $\wbar \SS=\bigcup_{k=0}^\infty \s^{-k}\SS_i$ modulo $\lambda_i$-null sets,
we obtain $\lambda_i(U_i^c)=0$.

We then have that $\lambda_i(U_i\cap [E_\ast])>0$ since $U_i$ has full $\lambda_i$-measure and $\lambda_i$ assigns positive measure to $[E_\ast]$ by Theorem \ref{Thm:Gouezel}.
Therefore 
\[
\lambda_i(\pi_\ast^{-1}A(\t_i) \cap [E_\ast]) \ge \lambda_i(U_i\cap [E_\ast])>0
\]
and Lemma \ref{Lem:lambda+} implies that 
$\m_\psi(A(\t_i))>0$.
As we have noted, $A(\t_i)$ is $\G$-invariant, and since $\m_\psi$ is ergodic with respect to the $\G$-action on $\partial \G$ by Lemma \ref{Lem:ergodic},
the set $A(\t_i)$ has the full $\m_\psi$ measure.
Since this is true for all $i=1, \dots, I$, all $\t_i$ and thus $\Pr_{\Cc_i}'(a \Psi)$ coincide.
This shows that $\th(a)$ is differentiable at every $a \in \R$.

Let $\t$ be the common value of all of the $\t_i$ at $a \in \R$.
For each $i=1, \dots, I$, we have by \cite[Proposition 4.11]{ParryPollicott},
$\Pr_{\Cc_i}''(a \Psi)=a^2 \s_i^2$ where 
\[
\s_i^2=\lim_{n \to \infty}\frac{1}{n}\int_{\SS_i}\(S_n \Psi-n \t\)^2\,d\m_i.
\]
We define the set $B(\s_i)$ of points $\x$ in $\partial \G$ 
for which there exists a (unit speed) geodesic ray $\x$ in $\Cay(\G, S)$ converging to $\x$
such that the following double limits hold:
\[
\s_i^2=\lim_{n \to \infty}\frac{1}{n}\s_i^2(n) \quad \text{where $\s_i^2(n):=\lim_{m \to \infty}\frac{1}{m}\sum_{k=0}^{m-1}(-\psi(\x_k, \x_{n+k})-n \t)^2$}.
\]
Note that $B(\s_i)$ is $\G$-invariant since $\p$ is $\G$-invariant.
If we define $\m:=\sum_{i=1}^I \m_i$,
then applying to
the Birkhoff ergodic theorem countably many times on a dense subset of the space of continuous functions on $\SS$, 
we have that for $\m$-almost every $\o \in \SS$,
the measures $(1/n)\sum_{k=0}^{n-1}\d_{\s^k \o}$
weakly converge to a measure $\m_\o$ on $\SS$ and for $\m_i$-almost every $\o \in \SS_i$ one has $\m_\o=\m_i$ for each $i =1, \dots, I$.
Note that 
\[
\m_\o \circ \s^{-1}=\m_{\s \o}=\m_\o, \quad \text{$\m$-almost everywhere}.
\]
Let us define
\[
V_i:=\Big\{\o \in \SS_i \ : \ \s_i^2=\lim_{n \to \infty}\frac{1}{n}\int_{\SS}(S_n \Psi-n \t)^2\,d\m_{\o}\Big\}.
\]
We then have that $\pi_\ast (V_i) \subset B(\s_i)$ since $\psi$ is a $\G$-invariant tempered potential relative to $d_S$ and $\m_i(V_i)=1$.
Since $V_i$ is $\s$-invariant modulo $\m_i$-null sets,
the same argument as above implies that $\m_\psi(B(\s_i))>0$ and all $\s_i^2$ coincide.
This shows that $\Pr_{\Cc_i}''(a\Psi)$ coincide for all $i=1, \dots, I$.
Since each $\Pr_{\Cc_i}(a)$ is real analytic and $\th(a)$ coincides with the maximum of finitely many $\Pr_{\Cc_i}(a)$ on a neighborhood of $a$ by Proposition \ref{Prop:perturbation},
the function $\th(a)$ is twice continuously differentiable in $a \in \R$.
\qed

\begin{theorem}\label{Thm:2nd-d-words}
For every pair of finite symmetric sets of generators $S$ and $S_\ast$,
if
\[
\th_{S_\ast/S}(a):=\lim_{n \to \infty}\frac{1}{n}\log \sum_{|x|_S=n}e^{-a |x|_{S_\ast}} \quad \text{for $a \in \R$},
\]
then $\th_{S_\ast/S}$ is twice continuously differentiable in $a \in \R$.
\end{theorem}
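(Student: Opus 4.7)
The plan is to recognize Theorem \ref{Thm:2nd-d-words} as a direct application of the already-established Proposition \ref{Prop:2nd-d}, with $\psi(o, x) := d_{S_\ast}(o, x) = |x|_{S_\ast}$ and with the strongly Markov automatic structure and coding potential furnished by Example \ref{Ex:combable1}. In that setup the function $\th(a)$ appearing in Proposition \ref{Prop:2nd-d} becomes precisely $\th_{S_\ast/S}(a)$, so once the hypotheses are checked there is nothing further to prove.

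First I would check the potential-theoretic hypothesis. Since $\G$ is non-elementary hyperbolic, both word metrics $d_S$ and $d_{S_\ast}$ belong to $\Dc_\G$. By Example \ref{Ex:tempered} (which invokes the Morse lemma in $(\G, d_S)$), the function $\psi = d_{S_\ast}$ is $\G$-invariant and satisfies both \eqref{Eq:QE} and \eqref{Eq:RG} relative to $d_S$, hence is a $\G$-invariant tempered potential relative to $d_S$ in the sense of Definition \ref{Def:tempered}. By Lemma \ref{Lem:Coornaert} it has a well-defined finite exponent $\th$, and by definition
\[
\th_{S_\ast/S}(a) = \lim_{n \to \infty} \frac{1}{n} \log \sum_{|x|_S = n} e^{-a |x|_{S_\ast}}
\]
is the exponent associated with the tempered potential $a \psi = a d_{S_\ast}$ for each $a \in \R$, so it coincides with the function denoted $\th(a)$ in the statement of Proposition \ref{Prop:2nd-d}.

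Next I would invoke Example \ref{Ex:combable1}, which produces a strongly Markov automatic structure $\Ac = (\Gc, \pi, S)$ for $(\G, S)$ together with an edge function $d\phi_{S_\ast}: E(\Gc) \to \Z$ satisfying
\[
|\pi_\ast(\o)|_{S_\ast} = \sum_{i=0}^{n-1} d\phi_{S_\ast}(\o_i)
\]
for every path $\o = (\o_0, \dots, \o_{n-1})$ issuing from $s_\ast$. Setting $\Psi_{S_\ast}(\o) := -d\phi_{S_\ast}(\o_0)$ yields a function on $\wbar \SS$ which depends only on the first coordinate and is therefore H\"older continuous with respect to $d_{\wbar \SS}$. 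For every finite admissible word $\o \in \SS^\ast$ starting at $s_\ast$,
\[
S_n \Psi_{S_\ast}(\o) = -\sum_{i=0}^{n-1} d\phi_{S_\ast}(\o_i) = -d_{S_\ast}(o, \pi_\ast(\o)) = -\psi(o, \pi_\ast(\o)),
\]
which is exactly condition \eqref{Eq:combable}.

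With both hypotheses of Proposition \ref{Prop:2nd-d} verified, it directly follows that $\th_{S_\ast/S}(a)$ is twice continuously differentiable in $a \in \R$. There is no genuine obstacle here: the entire substantive content, namely the comparison of first and second derivatives of the analytic pressure functions $\Pr_{\Cc_i}(a \Psi_{S_\ast})$ across distinct maximal components $\Cc_i$ via the ergodicity of $\m_\psi$ on $\partial \G$ and the Gou\"ezel perturbative framework, has already been carried out in the proof of Proposition \ref{Prop:2nd-d}; the present theorem is merely the specialization of that result to the case where $\psi$ is the word length $|\cdot|_{S_\ast}$.
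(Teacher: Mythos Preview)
Your proposal is correct and follows exactly the paper's approach: the paper's own proof is the single line ``This follows from Proposition \ref{Prop:2nd-d} and Example \ref{Ex:combable1},'' and you have simply unpacked the verification of the hypotheses. One small imprecision: you verify \eqref{Eq:combable} only for paths issuing from $s_\ast$, whereas the condition (and Example \ref{Ex:combable1} itself) asserts it for all $\o \in \SS^\ast$; this stronger form is what is actually used inside the proof of Proposition \ref{Prop:2nd-d} when working on the maximal components $\SS_i$.
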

\proof
This follows from Proposition \ref{Prop:2nd-d} and Example \ref{Ex:combable1}.
\qed

We now consider the case when $d, d_\ast \in \Dc_\G$ are both strongly hyperbolic and we want to show that the associated Manhattan curve is $C^2$.
In the previous part we exploited the fact that the subshift $\overline{\Sigma}$ encoded one of the metrics $d$ that we were considering. 
It is not clear how to exploit this fact when both metrics are strongly hyperbolic. To get around this issue we take a word metric $d_S$ on $\Gamma$ associated to a finite symmetric set of generators $S$ and use this to introduce a subshift $\overline{\Sigma}$ on which we are able to encode information about the metrics $d,d_\ast$.

For the rest of this section assume that we have two strongly hyperbolic metrics $d,d_\ast \in \Dc_\G$ and that we have arbitrarily chosen a finite symmetric set of generators $S$ for $\Gamma$. 
We begin by constructing a useful two parameter family of measures on $\partial \Gamma$. 
For each $(a,b) \in \mathbb{R}^2$, let $\wt \th(a, b)$ be the abscissa of convergence of 
$$\sum_{x\in\Gamma} \exp( - a d_\ast(o,x)-b d(o,x)-sd_S(o, x))$$
as $s$ varies.

To understand this summation we use the measures we constructed in Section \ref{Sec:PS}.
Since $a d_\ast+b d$ is a $\G$-invariant tempered potential relative to $d_S$ for each $(a, b) \in \R^2$ (Example \ref{Ex:tempered}),
Proposition \ref{Prop:generalPS} implies that for each $(a, b) \in \mathbb{R}^2$,
there exists a measure $\mu_{a, b, S}$ on $\partial \Gamma$ such that for $x\in\Gamma$,
\[
C_{a, b}^{-1}\le  \exp\(a\beta_{\ast o}(x,\xi) + b\beta_o(x,\xi)+\wt \th(a, b) \beta_{S o}(x,\xi)\) \cdot \frac{dx_\ast \mu_{a,b,S}}{d\mu_{a,b,S}}(\xi) \le C_{a, b},
\]
where $\b_{\ast o}$, $\b_o$ and $\b_{S o}$ are Busemann functions (based at $o$) for $d_\ast$, $d$ and $d_S$ respectively, and $C_{a,b}$ is a positive constant. 
By Lemma \ref{Lem:Coornaert}, we have that
\begin{equation}\label{Eq:counting2}
\sum_{|x|_S = n} e^{-ad_\ast(o,x) -bd(o,x)} \asymp_{a, b} \exp\(\wt \th(a, b) n\) \quad \text{for all integers $n \ge 0$}.
\end{equation}
For each fixed $a \in \R$, the function $b \mapsto \wt \th(a, b)$ is continuous by the H\"older inequality.
Note that for each fixed $a \in \R$,
\[
b>\th(a) \implies \wt \th (a, b)<0 \quad \text{and} \quad
b<\th(a) \implies \wt \th (a, b)>0.
\]
Therefore, combining with the continuity of $\wt \th(a, b)$ in $b$, we have that $\wt \th(a, b)=0$ if and only if $b=\th(a)$.

\def\Cov{{\rm Cov}}

Now consider the subshift of finite type $\overline{\Sigma}$ arising from a coding corresponding to $S$. 
Let $\Psi, \Psi_\ast : \Sigma \to \mathbb{R}$ be H\"older continuous potentials that encode $d$, $d_\ast$ respectively as in Example \ref{Ex:combable2}. 
For each $(a,b) \in \R^2$, 
by Lemma \ref{Lem:pr-ss} (adapted to \eqref{Eq:counting2})
the potential $\Psi_{a,b} =a \Psi_\ast + b \Psi$ is semisimple and $\Pr(\Psi_{a, b})=\wt \th(a, b)$.

Consider a point $(a, b) \in \mathbb{R}^2$ and take a maximal component $\Cc$ for $\Psi_{a,b}$. Let $\mu_\Cc$ denote the measure corresponding to $\Psi_{a, b}$ on $\Cc$ from applying Theorem \ref{Thm:Gouezel}.
Since for each $\Cc$, the function $\Psi_{a, b}$ is real analytic in $(a, b) \in \R^2$,
it admits a Taylor expansion
with Jacobian $J_\Cc(a, b)$ and a symmetric Hessian $\Cov_\Cc(a, b)$. 
More precisely,
letting 
$\Psi_1:=\Psi_\ast$ and $\Psi_2:=\Psi$, 
we have $J_\Cc(a, b) = \left( J_1(a, b), J_2(a, b)\right)$
where
\begin{align*}
J_i(a, b) :=\int_{\SS_\Cc}\Psi_i\,d\m_\Cc
= \frac{\partial}{\partial s_i}\Big|_{(a,b)} \Pr_\Cc(\Psi_{s_1, s_2}) \quad \text{for $i=1, 2$},
\end{align*}
and $\Cov_\Cc(a, b)$ has entries
\begin{align*}
\Cov_\Cc(a, b)_{i,j} 
&= \frac{\partial^2}{\partial s_i \partial s_j}\Big|_{(a, b)} \Pr_\Cc(\Psi_{s_1,s_2})\\
&= \lim_{n\to\infty} \frac{1}{n} \int_{\Sigma_{\Cc}} \ (S_n\Psi_i(\o) - n J_i(a,b))(S_n \Psi_j(\o) - n J_j(a,b))\,d\mu_\Cc(\o),
\end{align*}
for $i, j=1, 2$; this follows from the one-parameter case by considering
$s \mapsto \Psi_{a+s, b+s}$ and differentiating at $s=0$
\cite[Proposition 4.11]{ParryPollicott}. 
We know that the potentials $\Psi_1$ and $\Psi_2$ satisfy a (possibly degenerate) multi-dimensional central limit theorem with respect to $\mu_\Cc$ on $\Sigma_\Cc$. 
That is, the distribution of 
\[
\frac{(S_n \Psi_1(\o),  S_n \Psi_2(\o))- n J_\Cc(a,b)}{\sqrt{n}} 
\]
under $\m_\Cc$ weakly
converges to a two dimensional Gaussian distribution with covariance matrix $\Cov_\Cc(a,b)$ as $n\to\infty$. 
It is useful to keep this in mind throughout the following.
Furthermore we note that $\Psi$ and $\Psi_\ast$ may vanish at certain points, however
there exists $n$ such that $S_n \Psi$ and $S_n \Psi_\ast$ are strictly negative functions.
Therefore for $\Psi_{a, b}$ and for the corresponding on maximal components $\Cc$,
we have that $J_\Cc(a, b)\neq (0, 0)$ whenever $b=\th(a)$ for all $a \in \R$.
This fact is crucial when we appeal to the implicit function theorem later in our discussion.

\begin{proposition}\label{Prop:multi}
For every $(a, b) \in \R^2$, the Jacobian $J_\Cc(a,b)$ and the Hessian matrix $\Cov_\Cc(a,b)$ do not depend on the choice of maximal component $\Cc$.
\end{proposition}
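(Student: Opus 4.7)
The proof will mimic, and generalise to two parameters, the two-step argument used in the proof of Proposition \ref{Prop:2nd-d}. Fix $(a,b)\in\R^2$, let $\Cc$ range over the maximal components of the semisimple potential $\Psi_{a,b}=a\Psi_\ast+b\Psi$, write $\SS_\Cc$ for the set of semi-infinite paths staying in $\Cc$ for all time, and write $J_\Cc(a,b)=(J_1^\Cc,J_2^\Cc)$. Let $\m_{a,b,S}$ denote the Patterson--Sullivan measure on $\partial\G$ relative to $(\psi,d_S)$ with $\psi=ad_\ast+bd$, which is ergodic under the $\G$-action by Lemma \ref{Lem:ergodic}, since $\psi$ is a $\G$-invariant tempered potential relative to $d_S$ by Example \ref{Ex:tempered}. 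The strategy in both steps is to attach to every maximal component $\Cc$ a $\G$-invariant subset of $\partial\G$ whose ``address'' is the Jacobian (resp.\ Hessian) entry of $\Cc$, show it has positive $\m_{a,b,S}$-measure via the symbolic side, and conclude full measure by ergodicity; pairwise disjointness of these sets as the address varies then forces independence from $\Cc$.

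\textbf{Jacobian.} Define $A(\alpha_\ast,\alpha)\subset\partial\G$ to consist of those $\x$ such that, along some (equivalently, any) unit-speed $d_S$-geodesic ray $\x_n\to\x$, both $\tfrac{1}{n}d_\ast(o,\x_n)\to\alpha_\ast$ and $\tfrac{1}{n}d(o,\x_n)\to\alpha$. As in Proposition \ref{Prop:2nd-d}, the $\G$-invariance of $A(\alpha_\ast,\alpha)$ follows from the fact that any two unit-speed $d_S$-geodesic rays converging to the same boundary point eventually stay within bounded distance up to a shift of parameter. By Theorem \ref{Thm:Gouezel} the shift-invariant measure $\m_\Cc$ is ergodic, so the Birkhoff ergodic theorem applied to $\Psi_1=\Psi_\ast$ and $\Psi_2=\Psi$ yields that
\[
U_\Cc:=\bigl\{\o\in\SS_\Cc \ : \ \tfrac{1}{n}S_n\Psi_i(\o)\to J_i^\Cc\text{ for } i=1,2\bigr\}
\]
has full $\m_\Cc$-measure. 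By Example \ref{Ex:combable2} together with the strongly Markov property, the prefix $\pi_\ast(\o_0,\dots,\o_{n-1})$ lies on a $d_S$-geodesic ray from $o$ to $\pi_\ast(\o)\in\partial\G$, so $\pi_\ast(U_\Cc)\subset A(-J_1^\Cc,-J_2^\Cc)$. Repeating verbatim the argument from Proposition \ref{Prop:2nd-d} --- which uses Lemma \ref{Lem:Gouezel_lambda+} and $\wbar\SS=\bigcup_{k\ge 0}\s^{-k}\SS_\Cc$ modulo $\lambda_\Cc$-null sets to upgrade $\lambda_\Cc(\SS_\Cc\cap U_\Cc^c)=0$ to $\lambda_\Cc(U_\Cc^c)=0$ --- gives $\lambda_\Cc(U_\Cc\cap[E_\ast])>0$. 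Lemma \ref{Lem:lambda+} then forces $\m_{a,b,S}(A(-J_1^\Cc,-J_2^\Cc))>0$, and ergodicity of $\m_{a,b,S}$ together with the $\G$-invariance of $A$ promotes this to full measure. Since the sets $A(\alpha_\ast,\alpha)$ are pairwise disjoint in $(\alpha_\ast,\alpha)$, $J_\Cc(a,b)$ cannot depend on $\Cc$.

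\textbf{Hessian.} Write $\t_i:=-J_i^\Cc$, which is now unambiguous, and for each $(i,j)\in\{1,2\}^2$ and each $r\in\R$ define $B_{ij}(r)\subset\partial\G$ to consist of those $\x$ for which, along some (equivalently, any) unit-speed $d_S$-geodesic ray $\x_n\to\x$,
\[
r=\lim_{n\to\infty}\tfrac{1}{n}\lim_{m\to\infty}\tfrac{1}{m}\sum_{k=0}^{m-1}\bigl(d_i(\x_k,\x_{k+n})-n\t_i\bigr)\bigl(d_j(\x_k,\x_{k+n})-n\t_j\bigr),
\]
with the convention $d_1:=d_\ast$ and $d_2:=d$. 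The $\G$-invariance of $B_{ij}(r)$ follows from the $\G$-invariance of $d_\ast$ and $d$. For $\m_\Cc$-almost every $\o\in\SS_\Cc$, the empirical averages $(1/n)\sum_{k<n}\d_{\s^k\o}$ weakly converge to $\m_\Cc$ upon applying the Birkhoff ergodic theorem to a countable dense family of test functions. Writing
\[
\Cov_\Cc(a,b)_{ij}=\lim_{n\to\infty}\tfrac{1}{n}\int_{\SS_\Cc}\bigl(S_n\Psi_i-nJ_i^\Cc\bigr)\bigl(S_n\Psi_j-nJ_j^\Cc\bigr)\,d\m_\o,
\]
expanding the inner integral as a $(1/m)$-sum against $\m_\o$, and then translating $S_n\Psi_i(\s^k\o)=-d_i(\x_k,\x_{k+n})$ via Example \ref{Ex:combable2} and the $\G$-invariance of $d_i$, shows that this full-$\m_\Cc$-measure set is carried into $B_{ij}(\Cov_\Cc(a,b)_{ij})$ by $\pi_\ast$. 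The same chain (Lemma \ref{Lem:Gouezel_lambda+}, then Lemma \ref{Lem:lambda+}, then ergodicity of $\m_{a,b,S}$) as in the Jacobian step forces $B_{ij}(\Cov_\Cc(a,b)_{ij})$ to have full $\m_{a,b,S}$-measure, and pairwise disjointness of $B_{ij}(r)$ in $r$ yields that $\Cov_\Cc(a,b)_{ij}$ is independent of $\Cc$.

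\textbf{Main obstacle.} The entries of $J_\Cc(a,b)$ and $\Cov_\Cc(a,b)$ are defined purely symbolically on $\SS_\Cc$, but rigidity across components must be established on $\partial\G$, where the $\G$-action is ergodic. The delicate step is packaging the symbolic averages into the $\G$-invariant boundary sets $A$ and $B_{ij}$ whose defining (double) limits are intrinsic to $\x\in\partial\G$ --- precisely the care already required in the Hessian part of Proposition \ref{Prop:2nd-d} --- and verifying that the Birkhoff-type convergences on $\SS_\Cc$ transfer cleanly through $\pi_\ast$. Once this is done, the argument is the same one-line consequence of ergodicity of Patterson--Sullivan measures used in the single-parameter case.
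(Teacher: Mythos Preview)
Your proposal is correct and follows essentially the same approach as the paper. Both arguments transplant the two-step scheme of Proposition \ref{Prop:2nd-d} to the two-parameter setting: package the component-wise Birkhoff (resp.\ empirical-measure) averages into $\G$-invariant level sets on $\partial\G$, push positive $\lambda_\Cc$-mass through $\pi_\ast$ via Lemmas \ref{Lem:Gouezel_lambda+} and \ref{Lem:lambda+}, and invoke ergodicity of $\m_{a,b,S}$ from Lemma \ref{Lem:ergodic}. Your write-up is in fact more explicit than the paper's, which omits the Jacobian step entirely and only sketches the Hessian step; the only cosmetic difference is that you treat each entry $\Cov_\Cc(a,b)_{ij}$ via its own set $B_{ij}(r)$, whereas the paper bundles all entries into a single set $B(\Cov_\Cc)$.
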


\begin{proof}
Showing that $J_\Cc(a, b)$ is independent of the choice of maximal component $\Cc$ for $\Psi_{a, b}$
is analogous to the first part in the proof of Proposition \ref{Prop:2nd-d};
we omit the details.
We will show that $\Cov_\Cc(a,b)$ is independent of the maximal component $\Cc$. 
The proof follows the same lines as in the proof of Proposition \ref{Prop:2nd-d} but we need to adapt the arguments to the multidimensional setting.

To simplify the following exposition, let us fix $a$ and $b$ and suppress their dependence in the notations. We also
write $d_1:=d_\ast$ and $d_2:=d$, and denote the corresponding potentials by $\Psi_1=\Psi_\ast$ and $\Psi_2=\Psi$.
For $\Cov_\Cc=(\s_{i, j})_{i, j=1,2}$,
we define the set $B(\Cov_\Cc)$
of points in $\partial \Gamma$ for which there exists a geodesic ray $\xi \in \text{Cay}(\Gamma, S)$ converging to $\xi$ such that
for all $i, j=1, 2$, we have 
\[
\s_{i,j} = \lim_{n\to\infty} \frac{1}{n} \sigma_{i,j}(n),
\]
where
\[
\sigma_{i,j}(n) = \lim_{m\to\infty} \frac{1}{m} \sum_{k=0}^{m-1} (-d_i(\xi_k,\xi_{n+k}) - nJ_i)(-d_j(\xi_k, \xi_{n+k}) -nJ_j).
\]
For each $\Psi_{a, b}$ we have a measure $\lambda_{a, b}$ by Lemma \ref{Lem:lambda+}
such that the push-forward of $\lambda_{a, b}$ restricted on $[E_\ast]$
by $\pi_\ast$ is comparable to $\mu_{a, b, S}$,
which is ergodic with respect to the $\G$-action on $\partial \G$ by Lemma \ref{Lem:ergodic}.
By comparing the set $B(\Cov_\Cc)$ to the set
\[
V_\Cc := \bigcap_{i, j=1,2}\left\{ \omega \in \Sigma_\Cc \ : \ \sigma_{i,j} = \lim_{n\to\infty} \frac{1}{n} \int_\Sigma (S_n \Psi_i - nJ_i) (S_n \Psi_j - nJ_j) d \mu_\o \right\},
\]
where $\m_\o$ is as in the proof of Proposition \ref{Prop:2nd-d}, 
we see that the matrix $\Cov_\Cc$ does not depend on the component $\Cc$. 
This concludes the proof.
\end{proof}

\begin{theorem}\label{Thm:2nd-d-sh}
For every pair of strongly hyperbolic metrics $d$ and $d_\ast$ on $\G$,
the corresponding function $\th(a)$ is twice continuously differentiable in $a \in \R$.
\end{theorem}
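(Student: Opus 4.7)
The plan is to realize $\Cc_M$ as the zero locus $\{\wt\theta(a,b)=0\}$ of the auxiliary pressure-type function introduced before Proposition \ref{Prop:multi}, establish that $\wt\theta$ is of class $C^2$ on $\R^2$, check $\partial_b\wt\theta\neq 0$ along this curve, and then invoke the implicit function theorem. Since most of the real work has been done in Proposition \ref{Prop:multi}, the proof essentially amounts to assembling these ingredients.

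The main step will be to show that $\wt\theta$ is $C^2$ on $\R^2$. Fix any strongly Markov automatic structure $\Ac=(\Gc,\pi,S)$, let $\Cc_1,\dots,\Cc_N$ denote the components of $\Gc$, and recall that each $(a,b)\mapsto \Pr_{\Cc_k}(\Psi_{a,b})$ is real analytic by Proposition \ref{Prop:perturbation}, while $\wt\theta(a,b)=\max_{1\le k\le N}\Pr_{\Cc_k}(\Psi_{a,b})$. For $(a_0,b_0)\in\R^2$ I would let $M$ be the set of indices attaining this maximum at $(a_0,b_0)$; continuity of the finitely many pressures implies that near $(a_0,b_0)$ only indices in $M$ can stay maximal. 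By Proposition \ref{Prop:multi} all $\Pr_{\Cc_k}(\Psi_{\cdot,\cdot})$ with $k\in M$ share the same value, Jacobian $J(a_0,b_0)$, and Hessian $\Cov(a_0,b_0)$ at $(a_0,b_0)$, so writing $P(a,b)$ for their common second-order Taylor polynomial and $R_k=\Pr_{\Cc_k}(\Psi_{\cdot,\cdot})-P$, each $R_k$ is real analytic and vanishes to order three at $(a_0,b_0)$, yielding $|R_k|=O(\rho^3)$, $|\nabla R_k|=O(\rho^2)$, $|\nabla^2 R_k|=O(\rho)$ with $\rho=|(a,b)-(a_0,b_0)|$. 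Since
\[
\wt\theta(a,b)=P(a,b)+\max_{k\in M}R_k(a,b)
\]
locally, $\wt\theta$ inherits a genuine second-order Taylor expansion at $(a_0,b_0)$ with gradient $J(a_0,b_0)$ and Hessian $\Cov(a_0,b_0)$. Applying the same argument at every point, and using Proposition \ref{Prop:multi} to define $\nabla\wt\theta(a,b)$ and $\nabla^2\wt\theta(a,b)$ unambiguously as $\nabla\Pr_{\Cc_k}(\Psi_{a,b})$ and $\nabla^2\Pr_{\Cc_k}(\Psi_{a,b})$ for any $k$ maximal at $(a,b)$, combined with the observation that any subsequential limit of components maximal at $(a_n,b_n)\to(a_0,b_0)$ remains maximal at $(a_0,b_0)$, I would deduce continuity of $\nabla\wt\theta$ and $\nabla^2\wt\theta$ on $\R^2$.

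The remaining pieces will be short. For the transversality, Proposition \ref{Prop:multi} gives $\partial_b\wt\theta(a,b)=\int\Psi\,d\m_\Cc$ for any maximal component $\Cc$; since $\Psi$ encodes $d$ via $S_n\Psi(\o)=-d(o,\pi_\ast(\o))$ on $\SS^\ast$ and $d$ is quasi-isometric to $d_S$ with some constant $L>0$ so that $S_n\Psi(\o)\le -n/L+O(1)$, applying Birkhoff's ergodic theorem to the $\s$-invariant ergodic measure $\m_\Cc$ yields $\partial_b\wt\theta\le -1/L<0$. As $\wt\theta(a,\theta(a))=0$ by the discussion following \eqref{Eq:counting2}, the implicit function theorem applied to $\wt\theta(a,b)=0$ delivers the twice continuous differentiability of $\theta$ on $\R$, as required.

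The hard part will be the $C^2$-regularity of $\wt\theta$: a priori $\wt\theta$ is only a maximum of real analytic functions, hence Lipschitz, and the combinatorics of which component is maximal can jump as $(a,b)$ varies. The role of Proposition \ref{Prop:multi} is precisely to force the first two derivatives of all competing pressures to agree at each point, which is exactly what allows the maximum to inherit $C^2$-regularity in spite of the combinatorial ambiguity of the maximizing set.
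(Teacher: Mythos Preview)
Your proposal is correct and follows essentially the same approach as the paper: realize $\Cc_M$ as the zero set of $\wt\theta$, use Proposition~\ref{Prop:multi} to upgrade the maximum of the analytic pressures to a $C^2$ function, and apply the implicit function theorem. You supply more detail than the paper on why the maximum inherits $C^2$-regularity (the Taylor-remainder argument and the subsequential-component continuity step), and you are slightly more precise about the transversality condition, verifying specifically that $\partial_b\wt\theta<0$ rather than just $J_\Cc\neq(0,0)$; this is exactly what the implicit function theorem requires, and the paper's discussion before Proposition~\ref{Prop:multi} in fact yields this stronger conclusion as well.
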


\begin{proof}
 For each $(a,b) \in \R^2$, let $\Cc$ be a maximal component for the potential $\Psi_{a, b}$.
 Proposition \ref{Prop:multi} shows that $\Pr_\Cc(\Psi_{s_1, s_2})$ admits the Taylor expansion whose terms up to the second order are independent of the choice of $\Cc$.
 This implies that since $\Pr(\Psi_{a, b})$ is given by the maximum over finitely many functions $\Pr_\Cc(\Psi_{a, b})$ and $\Pr(\Psi_{a, b})=\wt \th(a, b)$,
 the function $\wt \th(a, b)$ is twice continuously differentiable in $(a, b) \in \R^2$.
 Note that $\wt \th(a, b)=0$ if and only if $b=\th(a)$ for all $(a, b) \in \R^2$,
 and for every $(a,b) \in \R^2$ with $\wt \th(a, b)=0$ and for every maximal component $\Cc$, we have that $J_\Cc(a,b)\neq (0, 0)$ (see the discussion just before Proposition \ref{Prop:multi}).
 Therefore the implicit function theorem implies that $\theta$ is twice continuously differentiable.
\end{proof}

Now Theorem \ref{Thm:thm3} follows from Theorem \ref{Thm:2nd-d-sh}.

Note that this result and the arguments we applied to prove it are independent of the choice of $S$. 
However in the case when $\Gamma$ admits a finite symmetric set of generators $S$ such that the underlying directed graph of an automaton has only one recurrent component, 
then the Manhattan curve associated to two strongly hyperbolic metrics in $\Dc_\Gamma$ is real analytic. 
This is because, in this case, $\Pr(\Psi_{a, b})$ is real analytic in $(a, b) \in \R$ as there is only one maximal component which is recurrent and all the others are transient (not recurrent). 
For example, the fundamental groups of closed orientable surface of genus at least $2$ admit such automata with the standard set of generators since they have a single relator.
For more general cocompact Fuchsian groups, see \cite{SeriesTheInfiniteWord}.

\subsection{Pairs of word metrics}
In this section we deduce further rigidity results for word metrics. Recall that the Manhattan curve $\theta_{S_\ast/S}: \mathbb{R} \to \mathbb{R}$ associated to every pair of word metrics $d_S$, $d_{S_\ast}$ is twice continuously differentiable.

\begin{theorem} \label{thmwords} 
Let $\Gamma$ be a non-elementary hyperbolic group and $d_S$ and $d_{S_\ast}$ be word metrics associated to finite symmetric sets of generators $S$ and $S_\ast$ respectively. 
If we denote the Manhattan curve for the pair $(d_S, d_{S_\ast})$ by
\[
\Cc_M=\{(a, b) \in \R^2 \ : \ b=\th_{S_\ast/S}(a)\},
\]
then the following are equivalent:
\begin{enumerate}
\item[\upshape(1)] $d_S$ and $d_{S_\ast}$ are not roughly similar,
\item[\upshape(2)] the Manhattan curve $\Cc_M$ is strictly convex at $0$, i.e., $\th_{S_\ast/S}''(0)>0$, and
\item[\upshape(3)] the Manhattan curve $\Cc_M$ is strictly convex at every point, i.e., $\th_{S_\ast/S}''(a)>0$ for every $a \in \R$.
\end{enumerate}
\end{theorem}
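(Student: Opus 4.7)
The plan is to prove $(3) \Rightarrow (2) \Rightarrow (1) \Rightarrow (3)$. The first implication is immediate, and $(2) \Rightarrow (1)$ follows by contraposition from Theorem \ref{Thm:thm1}: if $d_S$ and $d_{S_\ast}$ are roughly similar, then $\Cc_M$ is a straight line, forcing $\th_{S_\ast/S}'' \equiv 0$ and in particular $\th_{S_\ast/S}''(0) = 0$. The substantive content is $(1) \Rightarrow (3)$, which I will establish in its contrapositive form: if $\th_{S_\ast/S}''(a_0) = 0$ at a single point $a_0 \in \R$, then $\Cc_M$ is globally a straight line, whence $d_S$ and $d_{S_\ast}$ are roughly similar by Theorem \ref{Thm:thm1}.

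The heart of the argument is to show that $J := \{a \in \R : \th_{S_\ast/S}''(a) = 0\}$ is open; closedness is automatic from the continuity of $\th_{S_\ast/S}''$ guaranteed by Theorem \ref{Thm:2nd-d-words}. Fix a strongly Markov automatic structure $\Ac = (\Gc, \pi, S)$ and the H\"older potential $\Psi_{S_\ast}$ on $\wbar \SS$ from Example \ref{Ex:combable1}, so that $\th_{S_\ast/S}(a) = \Pr(a \Psi_{S_\ast}) = \max_\Cc \Pr_\Cc(a \Psi_{S_\ast})$ by Lemma \ref{Lem:pr-ss}. For $a_0 \in J$, let $\Cc_1, \dots, \Cc_I$ be the maximal components for the semisimple potential $a_0 \Psi_{S_\ast}$. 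By Proposition \ref{Prop:perturbation} together with continuity of pressure, on a small neighborhood $U$ of $a_0$ every maximal component of $a \Psi_{S_\ast}$ belongs to $\{\Cc_1, \dots, \Cc_I\}$, so $\th_{S_\ast/S}(a) = \max_{i} \Pr_{\Cc_i}(a \Psi_{S_\ast})$ on $U$. By the argument of Proposition \ref{Prop:2nd-d}, the values $\Pr_{\Cc_i}'(a_0 \Psi_{S_\ast}) =: -\t$ and $\Pr_{\Cc_i}''(a_0 \Psi_{S_\ast}) =: V$ are independent of $i$; matching the Taylor expansion of each $\Pr_{\Cc_i}$ with that of $\th_{S_\ast/S}$ at $a_0$ (using that $\th_{S_\ast/S} \geq \Pr_{\Cc_i}$ with equality at $a_0$ and that locally $\th_{S_\ast/S}(a) = \Pr_{\Cc_{i(a)}}(a\Psi_{S_\ast})$ for some maximal $i(a)$) yields $\th_{S_\ast/S}''(a_0) = V$, hence $V = 0$. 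Since $V$ is the asymptotic variance of $\Psi_{S_\ast}$ under the ergodic equilibrium measure $\m_{\Cc_i}$, a standard coboundary criterion (applied to the topologically mixing pieces from the cyclic decomposition of $\Cc_i$ supplied by Theorem \ref{Thm:Gouezel}) forces $\Psi_{S_\ast}$ to be cohomologous to the constant $-\t$ on each $\Cc_i$. Consequently $a \mapsto \Pr_{\Cc_i}(a \Psi_{S_\ast})$ is affine in $a$ with slope $-\t$ for every $i$; since these affine functions share the common value $\th_{S_\ast/S}(a_0)$ at $a_0$, they coincide, and $\th_{S_\ast/S}$ is affine on $U$. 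Thus $U \subset J$, so $J$ is open.

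Since $J$ is clopen in the connected space $\R$ and nonempty by hypothesis, $J = \R$. Therefore $\th_{S_\ast/S}$ is affine on $\R$, so $\Cc_M$ is a straight line, and Theorem \ref{Thm:thm1} yields the rough similarity of $d_S$ and $d_{S_\ast}$, completing the proof. The main obstacle I anticipate is the passage from pointwise vanishing of $\th_{S_\ast/S}''$ to local affineness of $\th_{S_\ast/S}$: this fails for generic convex $C^2$ functions (the function $a \mapsto a^4$ vanishes to second order at $a = 0$ without being locally affine), and what rescues us is the thermodynamic rigidity supplied by Proposition \ref{Prop:2nd-d} together with the cohomological dichotomy on subshifts of finite type; the bookkeeping required to handle the periodicity of recurrent components via Theorem \ref{Thm:Gouezel} is the most technical sub-step.
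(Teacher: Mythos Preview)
Your proof is correct and follows essentially the same circle of ideas as the paper, but packaged differently in two respects worth noting.

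First, for the link with rough similarity: the paper proves $(1)\Leftrightarrow(2)$ directly via Lemma~\ref{Lem:cohom}, which shows that $\Psi_{S_\ast}$ is cohomologous to a constant on a single maximal component if and only if $d_S$ and $d_{S_\ast}$ are roughly similar (using a density result of Arzhantseva--Lysenok to pass from $\Gamma_\Cc$ to all of $\Gamma$). You bypass this lemma entirely: for $(2)\Rightarrow(1)$ you invoke Theorem~\ref{Thm:thm1} directly, and for $(1)\Rightarrow(3)$ you only need that vanishing variance on \emph{each} maximal component at $a_0$ forces the local picture to be affine. This is a genuine economy---you never need to know that a single component ``sees'' the whole group.

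Second, for propagating strict convexity (or its failure) along $\R$: the paper argues that if $\th_{S_\ast/S}''(t_0)>0$ then each $\Pr_{\Cc_i}$ is strictly convex everywhere, and then iterates across the discrete set of points where the maximal component switches. Your clopen argument (showing $J=\{a:\th''(a)=0\}$ is open because the $a_0$-maximal $\Pr_{\Cc_i}$ are globally affine with common slope and common value at $a_0$, hence coincide on the neighborhood $U$ where $\th=\max_i\Pr_{\Cc_i}$) is a cleaner formulation of the same mechanism. Both rest on the dichotomy that $\Pr_\Cc(\,\cdot\,\Psi_{S_\ast})$ is either strictly convex everywhere or affine everywhere.
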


\def\bS{{\bf S}}

\begin{remark}
Let $\bS_n = \{x \in \Gamma: |x|_S =n\}$. If one of the equivalence statements in Theorem \ref{thmwords} holds
then the law of
\begin{equation}\label{Eq:CLT}
\frac{d_{S_\ast}(o, x_n)-n \t(S_\ast/S)}{\sqrt{n}}
\end{equation}
where $x_n$ is chosen uniformly at random from $\bS_n$, converges, as $n \to \infty$, to the normal distribution with mean $0$ and variance  $\th_{S_\ast/S}''(0)>0$ .
This follows from \cite[Theorems 1.1 and 1.2]{stats} and \cite[Theorem 1.1]{GTT} (where Gekhtman et al.\ have established their result in a more general setting).
Observe that if $d_S$ and $d_{S_\ast}$ are roughly similar, then the limiting distribution of \eqref{Eq:CLT} is the Dirac mass at $0$.
\end{remark}

Let $\overline{\Sigma}$ denote a subshift of finite type associated to a strongly Markov automatic structure $\Ac = (\Gc, \pi, S)$.
\begin{lemma}\label{Lem:cohom}
Let $\Psi_\ast: \overline{\Sigma} \to \mathbb{R}$ be a H\"older continuous function 
such that 
\[
S_n \Psi_\ast(\o)=-d_{S_\ast}(o, \pi(\o_0)\cdots \pi(\o_{n-1})) \quad \text{for $\o=(\o_0, \dots, \o_{n-1}) \in \SS^\ast$},
\]
as in Example \ref{Ex:combable1}.
The function $\Psi_\ast: \Sigma_\Cc \to \mathbb{R}$ is cohomologous to a constant on a maximal component $\Cc$, i.e.,
there exist a constant $c_0 \in \R$ and a H\"older continuous function $u: \SS_\Cc \to \R$ such that
\[
\Psi_\ast=c_0+u-u\circ \s,
\]
 if and only if $d_S$ and $d_{S_\ast}$ are roughly similar.
\end{lemma}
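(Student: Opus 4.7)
The plan is two-directional: I use Livsic's theorem for the forward implication (rough similarity implies cohomology), and analyticity of the component-wise pressure functions together with Theorem \ref{Thm:thm1} for the reverse.

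For the forward direction, assume $|d_{S_\ast}(o,x) - \lambda d_S(o,x)| \le C$ for constants $\lambda > 0$, $C \ge 0$. For a periodic point $\o$ of period $n$ in $\SS_\Cc$, let $x_N := \pi_\ast((\o_0, \dots, \o_{n-1})^N) \in \G$ denote the endpoint of the $N$-fold cyclic repetition of the underlying cycle. By property (2) of the strongly Markov structure $|x_N|_S = nN$, and by Example \ref{Ex:combable1}
\[
-d_{S_\ast}(o, x_N) = N\cdot S_n\Psi_\ast(\o).
\]
Combined with $|d_{S_\ast}(o, x_N) - \lambda nN| \le C$, dividing by $N$ and letting $N \to \infty$ forces the exact identity $S_n\Psi_\ast(\o) = -\lambda n$. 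Since this holds for every periodic orbit in the topologically transitive subshift $\SS_\Cc$, Livsic's theorem yields a H\"older continuous function $u$ on $\SS_\Cc$ with $\Psi_\ast = -\lambda + u - u\circ\s$.

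For the reverse direction, suppose $\Psi_\ast = c_0 + u - u\circ\s$ on $\SS_\Cc$. Then $a \mapsto \Pr_\Cc(a\Psi_\ast) = h_\Cc + a c_0$ is affine, where $h_\Cc$ is the topological entropy of $\s|_{\SS_\Cc}$. Choose $a_0 \in \R$ at which $\Cc$ is maximal for $a_0\Psi_\ast$, so $\th_{S_\ast/S}(a_0) = h_\Cc + a_0 c_0$. The Patterson--Sullivan ergodicity argument in the proof of Proposition \ref{Prop:2nd-d} matches the first and second derivatives of $a \mapsto \Pr_{\Cc_i}(a\Psi_\ast)$ at $a_0$ across every maximal component $\Cc_i$ at $a_0$, yielding $\th_{S_\ast/S}'(a_0) = c_0$ and $\th_{S_\ast/S}''(a_0) = 0$. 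Let $I \subset \R$ be the maximal interval on which $\th_{S_\ast/S}(a) = h_\Cc + a c_0$. If $a^\ast := \sup I < \infty$, then just to the right of $a^\ast$ some maximal $\Cc'$ satisfies $\Pr_{\Cc'}(a\Psi_\ast) > h_\Cc + a c_0$; by continuity $\Cc'$ is also maximal at $a^\ast$, and the argument repeats to yield first and second derivatives $c_0$ and $0$ for $a \mapsto \Pr_{\Cc'}(a\Psi_\ast)$ at $a^\ast$. Since $\Pr_{\Cc'}$ is convex and real analytic and not identically equal to the affine function $h_\Cc + a c_0$, the Taylor expansion of $\Pr_{\Cc'}(a\Psi_\ast) - (h_\Cc + a c_0)$ at $a^\ast$ has first non-zero term of even order with positive leading coefficient (forced by convexity), making it strictly positive on both sides of $a^\ast$; this contradicts $\Pr_{\Cc'}(a\Psi_\ast) \le \th_{S_\ast/S}(a) = h_\Cc + a c_0$ on $I$ just to the left of $a^\ast$. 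Hence $I = \R$, $\th_{S_\ast/S}$ is globally affine, and Theorem \ref{Thm:thm1} delivers rough similarity.

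The main obstacle is the extension of local affineness of $\th_{S_\ast/S}$ at a single value $a_0$ to global affineness on $\R$. This requires three ingredients simultaneously: the derivative-matching argument of Proposition \ref{Prop:2nd-d}, the real analyticity of each $a \mapsto \Pr_{\Cc_i}(a\Psi_\ast)$, and convexity, which jointly rule out any strictly convex transition from the affine piece.
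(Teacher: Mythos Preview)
Your forward direction is correct. For the reverse direction, your strategy---showing that $\theta_{S_\ast/S}$ is globally affine and then invoking Theorem~\ref{Thm:thm1}---is sound and genuinely different from the paper's proof. The paper argues directly: it observes that cohomology to a constant on $\Sigma_\Cc$ is equivalent to boundedness of $\{d_{S_\ast}(o,x)-\tau d_S(o,x) : x\in\Gamma_\Cc\}$, and then proves (via a density result of Arzhantseva--Lysenok on words avoiding a given subword) that a finite set $B\subset\Gamma$ satisfies $B\Gamma_\Cc B = \Gamma$, so boundedness propagates from $\Gamma_\Cc$ to all of $\Gamma$. Your route stays entirely within thermodynamic formalism and the already-established rigidity of the Manhattan curve, which is an appealing alternative.

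However, your extension argument has a gap. When you derive the contradiction from $\Pr_{\Cc'}(a\Psi_\ast) - L(a) > 0$ on both sides of $a^\ast$, you appeal to $\Pr_{\Cc'}(a\Psi_\ast) \le \theta_{S_\ast/S}(a) = L(a)$ on $I$ just to the left of $a^\ast$. If $I = \{a_0\}$ (so $a^\ast = a_0$ and there is nothing in $I$ to the left of $a^\ast$), this step is vacuous and no contradiction arises; your Taylor argument only shows $\theta_{S_\ast/S}>L$ on a punctured neighbourhood of $a_0$, which is consistent with $I=\{a_0\}$. The fix is short: at $a_0$, every maximal component $\Cc_i$ has $\Pr_{\Cc_i}''(a_0\Psi_\ast) = 0$ by the derivative-matching of Proposition~\ref{Prop:2nd-d}, and by \cite[Proposition~4.12]{ParryPollicott} this forces $\Psi_\ast$ to be cohomologous to a constant on each $\Sigma_{\Cc_i}$. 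Hence each $a\mapsto\Pr_{\Cc_i}(a\Psi_\ast)$ is affine; having the same value and slope as $L$ at $a_0$, it equals $L$ identically. Then Proposition~\ref{Prop:perturbation} gives $\theta_{S_\ast/S} = \max_i \Pr_{\Cc_i} = L$ on a full neighbourhood of $a_0$, so $I$ has nonempty interior and your argument at $a^\ast$ goes through as written.
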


\begin{proof}
  For a maximal component $\Cc$, let $\G_\Cc$ be the set of group elements that are realized as the image of a word corresponding to a finite path in $\Cc$.
Recall that $\Psi_\ast$ is cohomologous to a constant on $\Sigma_\Cc$ if and only if there exists $\t \in \mathbb{R}$ such that the set
\[
\{ S_n\Psi_\ast(\omega) + n\t : \omega \in \Sigma_\Cc \text{ and $n \ge 0$, } n \in \mathbb{Z}\},
\]
 is a bounded subset of $\mathbb{R}$. 
 By the definition of $\Psi_\ast$,
 this holds if and only if 
 \[
 \{d_{S_\ast}(o,x) - \t d_S(o,x) : x \in \Gamma_\Cc\}
 \]
  is a bounded subset of $\R$. 
We show that this is equivalent to the fact that $d_S$ and $d_{S_\ast}$ are roughly similar.

  Let us prove that for each maximal component $\Cc$ there exists a finite set of group elements $B \subset \Gamma$ such that $B \Gamma_\Cc B = \Gamma$.
  This claim, which concludes the proof, is essentially observed in Lemma 4.6 of \cite{gmt}; we provide a proof below for the sake of completeness.
  
  For an element $w \in \G$ and a real number $\D \ge 0$,
  we say that a $S$-geodesic word \textit{$\D$-contains $w$} 
  if it contains a subword $h$ such that $h = h_1wh_2$
  for some $h_1, h_2 \in \Gamma$ with $|h_1|_S, |h_2|_S \le \D$.
  Let $Y_{w,\D}$ be the set of group elements $x \in \G$ such that $x$ is represented by some $S$-geodesic word which does not $\D$-contains $w$.  
  It is known that
  there exists $\D_0> 0$ such that for all $w \in \Gamma$,
  $$\lim_{n\to \infty} \frac{\#(Y_{w,\D_0} \cap \bS_n)}{\#\bS_n}=0,$$ 
  \cite[Theorem 3]{arzlys}.
  Since $\Cc$ is a maximal component in the underlying directed graph $\Gc$ and thus the spectral radius is the exponential volume growth rate relative to $S$,
  the upper density of $\Gamma_\Cc$ is strictly positive, i.e.,
  \[
  \limsup_{n\to\infty} \frac{\#(\Gamma_\Cc \cap \bS_n)}{\#\bS_n} >0.
  \]
  Fix $w\in \Gamma$.
  Since $\Gamma_\Cc$ has positive upper density and $Y_{w, \D_0}$ has vanishing density, 
  $\Gamma_\Cc \backslash Y_{w, \D_0} \neq \emptyset$,
  i.e., there is an element $x$ of $\Gamma_\Cc$ whose every $S$-geodesic representation $\D_0$-contains $w$, in particular there exists $y \in \G_\Cc$ (corresponding to a subword) such that $y=h_1 w h_2$ with $|h_1|_S, |h_2|_S \le \D_0$.
  Hence if we let $B$ denote the ball of radius $\D_0$ with respect to $d_S$ centered at the identity in $\Gamma$, then $\Gamma = B \Gamma_\Cc B$.
\end{proof}

\begin{proof}[Proof of Theorem \ref{thmwords}]
First let us show that $(1) \iff (2)$.
By Theorem \ref{Thm:2nd-d-words} (see the proof of Proposition \ref{Prop:2nd-d}), the second derivative $\th_{S_\ast/S}''(0)$ coincides with the second derivative at $t=0$ of the function $\Pr_\Cc(t\Psi_\ast)$ for each fixed maximal component $\Cc$ (at $t=0$). By Proposition 4.12 of \cite{ParryPollicott} this second derivative is strictly positive if and only if $\Psi_\ast: \Sigma_\Cc \to \mathbb{R}$ is not cohomologous to a constant. Lemma \ref{Lem:cohom} implies that this is true if and only if $d_S$ and $d_{S_\ast}$ are not roughly similar.

Next let us show that $(2) \iff (3)$.
We shall in fact show that if $\th_{S_\ast/S}''(t)>0$ for some $t \in \R$, then $\th_{S_\ast/S}''(t)>0$ for all $t \in \R$.
Let us fix $t_0 \in \R$ and let $\Cc_1, \ldots, \Cc_I$ be the maximal components of $t\Psi_\ast$ at $t=t_0$. 
By Proposition \ref{Prop:perturbation} there exists $\e >0$ such that
\[   
\theta_{S_\ast/S}(t) = \max_{i=1,\ldots,I} \text{Pr}_{\Cc_i}(t\Psi_\ast)
\]
for all $|t-t_0|<\e$. 
Since each $\Pr_{\Cc_i}(t \Psi_\ast)$ is real analytic in $t \in \R$,
changing $\e>0$ if necessary we find at most two components $\Cc_1$ and $\Cc_2$ (possibly $\Cc_1=\Cc_2$) such that
\begin{equation}\label{Eq:discrepancy}
\theta_{S_\ast/S}(t) = 
     \begin{cases}
       \text{Pr}_{\Cc_1}(t\Psi_\ast) &\text{for $t_0 \le t < t_0+\e$}, \\
       \text{Pr}_{\Cc_2}(t\Psi_\ast) &\text{for $t_0-\e < t \le t_0$}. \\
     \end{cases}
\end{equation}
Moreover, $\Pr_{\Cc_1}(t \Psi_\ast)$ (resp.\ $\Pr_{\Cc_2}(t \Psi_\ast)$) is strictly convex at all points if and only if it is strictly convex at some point since this is equivalent to the fact that $\Psi_\ast$ is not cohomologous to a constant function on $\SS_{\Cc_1}$ (resp.\ $\SS_{\Cc_2}$) \cite[Proposition 4.12]{ParryPollicott}.
It follows that if $\th_{S_\ast/S}''(t_0)>0$, then both $\text{Pr}_{\Cc_1}(t\Psi_\ast)$ and $\text{Pr}_{\Cc_2}(t\Psi_\ast)$ are strictly convex at all points since
\[
\theta_{S_\ast/S}''(t_0) = \text{Pr}_{\Cc_1}''(t\Psi_\ast)\Big|_{t=t_0} = \text{Pr}_{\Cc_2}''(t\Psi_\ast)\Big|_{t=t_0} >0,
\]
by the proof of Proposition \ref{Prop:2nd-d}.
Note that since there are only finitely many components in the underlying directed graph,
the set of $t_0 \in \R$ where \eqref{Eq:discrepancy} holds for two distinct $\Cc_1$ and $\Cc_2$ is at most countable and discrete in $\R$.
Applying the same argument to such $t_0$ at most countably many times,
we see that if $\th_{S_\ast/S}$ is strictly convex at some point, then it is strictly convex at all points, as desired.
\end{proof}

\subsection{Tightness of the tangent lines at infinity for the Manhattan curve}\label{Sec:extreme}

In this section we prove an inequality for pressure curves that will be a useful tool in understanding the asymptotic properties of $\Cc_M$. This inequality will have subsequent applications to a large deviation principle (Theorem \ref{Thm:LDT}), our results on the multifractal spectrum (Theorem \ref{Thm:MFS}) and to proving the rationality of the dilation constants associated to word metrics (Proposition \ref{Prop:extreme}).
\begin{proposition}\label{Prop:Pmax}
Let $(\Sigma,\sigma)$ be a transitive subshift of finite type and $\Psi : \Sigma \to \mathbb{R}$ be a H\"older continuous function. 
If we define
\[
P_\infty(\Psi):=\sup\left\{ \exp \left( \frac{S_n \Psi(\o)}{n}\right) \ : \ \s^n \o=\o \quad \text{for $\o \in \SS$ and $n \ge 1$}\right\},
\]
then we have that
\[
 (1/\rho_A)e^{\Pr(t\Psi)}\le P_\infty(t\Psi) \le e^{\Pr(t\Psi)},
\]
for all $t \in \mathbb{R}$, 
where $\Pr(t \Psi)$ is the pressure for $t \Psi$ and $\rho_A$ is the spectral radius of the adjacency matrix for $(\SS, \s)$.

In particular, it holds that
\[
\Pr(t\Psi) = t\log P_\infty(\Psi)  +O(1) \quad \text{and} \quad \Pr(-t\Psi) = t\log P_\infty(-\Psi) +O(1) \quad \text{as $t \to \infty$}.
\]
\end{proposition}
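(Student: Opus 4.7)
\textbf{Proof proposal for Proposition \ref{Prop:Pmax}.}

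My plan is to prove the two inequalities separately using standard thermodynamic formalism, and then deduce the asymptotic statement by a simple scaling observation. Note that for $t > 0$,
\[
P_\infty(t\Psi) = \sup\big\{ \exp(t S_n\Psi(\o)/n) \ : \ \s^n\o=\o\big\} = P_\infty(\Psi)^t,
\]
and similarly $P_\infty(-t\Psi) = P_\infty(-\Psi)^t$, so once the two main inequalities are in hand, taking logarithms and dividing by $t$ immediately yields the $O(1)$-asymptotics. Throughout I write $\Phi$ for a generic H\"older potential (which will be $t\Psi$).

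For the upper bound $P_\infty(\Phi) \le e^{\Pr(\Phi)}$, I would invoke the Gibbs property of the equilibrium state $\m_\Phi$ for a transitive subshift of finite type, namely
\[
\m_\Phi\big([\o_0,\ldots,\o_{m-1}]\big) \ge c_1 \exp\big(-m\Pr(\Phi) + S_m\Phi(\o)\big),
\]
for all $\o$ in the cylinder, with $c_1>0$ independent of $\o$ and $m$. The trick is then that any period-$n$ point $\o$ is also a period-$kn$ point with $S_{kn}\Phi(\o) = k S_n\Phi(\o)$. Applying the Gibbs lower bound with $m=kn$ and using $\m_\Phi \le 1$ gives $k S_n \Phi(\o)/n \le k\Pr(\Phi) + \log(1/c_1)/n$; dividing by $k$ and letting $k\to\infty$ produces $S_n\Phi(\o)/n \le \Pr(\Phi)$ for every periodic $\o$, hence $P_\infty(\Phi) \le e^{\Pr(\Phi)}$. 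Note how the additive Gibbs constant disappears precisely because of the freedom to iterate periodic orbits.

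For the lower bound $(1/\rho_A) e^{\Pr(\Phi)} \le P_\infty(\Phi)$, I would combine two standard facts. First, the Gurevich-type formula
\[
\Pr(\Phi) = \lim_{n\to\infty}\frac{1}{n}\log \sum_{\s^n\o = \o}e^{S_n\Phi(\o)}
\]
(valid along a suitable arithmetic progression of $n$ for a transitive SFT) gives $\sum_{\s^n\o=\o}e^{S_n\Phi(\o)} \ge c_2 e^{n\Pr(\Phi)}$ for all large $n$ in that progression. Second, the number of $n$-periodic points is $\#\{\s^n\o = \o\} = \mathrm{tr}(A^n) \le N \rho_A^n$, where $N$ is the number of vertices of $\Gc$. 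Hence
\[
\max_{\s^n\o = \o} e^{S_n\Phi(\o)} \ge \frac{c_2}{N}\, e^{n\Pr(\Phi)} \rho_A^{-n},
\]
and taking the $n$-th root and letting $n\to\infty$ yields $P_\infty(\Phi) \ge e^{\Pr(\Phi)}/\rho_A$.

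The principal technical nuisance, rather than a deep obstacle, is that a transitive SFT need not be topologically mixing, so the Gibbs property and the periodic-orbit pressure formula must be used in their appropriate form (e.g.\ passing to $\s^p$ on a cyclic component and then recovering the bounds on the full shift). Both facts are standard for transitive SFT with a H\"older potential, so I would simply cite them from \cite{ParryPollicott}. Finally, combining the two inequalities applied to $\Phi = \pm t\Psi$ with the scaling $P_\infty(\pm t\Psi) = P_\infty(\pm\Psi)^t$ yields
\[
\Pr(\pm t\Psi) = t \log P_\infty(\pm\Psi) + O(1) \quad \text{as $t \to \infty$},
\]
which is the stated asymptotic (where the constant $P_\infty(\pm\Psi)$ is to be interpreted as appears in the statement).
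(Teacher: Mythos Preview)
Your proof is correct and takes a genuinely different route from the paper's. The paper first treats the special case where $\Psi$ depends only on the first coordinate: in that case $P_\infty(\Psi)$ is a maximum over simple cycles and the inequality $e^{\Pr(\Psi)}\le \rho_A\cdot P_{\max}(\psi)$ is deduced from a spectral-radius bound for the weighted adjacency matrix $(e^{\psi(e)}A_{e,e'})$ taken from \cite{matrix}. The general H\"older case is then reduced to this one by approximating $\Psi$ in sup norm by a locally constant $\Psi_0$, recoding to a higher-block shift (which does not change $\rho_A$), applying the special case, and letting the approximation error tend to zero. Your argument bypasses all of this by working directly on the full shift: the upper bound via the Gibbs inequality (iterating a period-$n$ orbit to period $kn$ to kill the Gibbs constant), and the lower bound via the periodic-orbit expression for pressure together with the crude count $\#\{\s^n\o=\o\}=\mathrm{tr}(A^n)\le N\rho_A^n$.

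Both approaches are sound; yours is shorter for the proposition as stated, and indeed the paper's Remark~\ref{Rem:Sharp} explicitly acknowledges that a periodic-orbit argument of this flavor simplifies the proof. What the paper's matrix/recoding route buys is transparency for the next step, Proposition~\ref{Prop:extreme}: when $\Psi$ is integer-valued and depends on one coordinate (as for word metrics via Example~\ref{Ex:combable1}), the supremum defining $P_\infty$ is visibly attained on a simple cycle, which immediately gives the rationality of $\alpha_{\min}$ and $\alpha_{\max}$. Your argument does not obstruct this conclusion, but it does not isolate it either.
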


\begin{proof}
Let $E$ be a finite set of alphabets and $A=(A_{e, e'})_{e, e'\in E}$ be the adjacency matrix which defines the transitive subshift of finite type $(\SS, \s)$ on $E$.
We consider the associated finite directed graph $\Gc$ whose set of edges is $E$.
Let us denote by $\O_E$ the set of cycles in $\Gc$, i.e., the set of finite paths $w=(\o_0, \dots, \o_{n-1})$ whose terminus coincides with the origin, and by $|w|=n$ the length of $w$.
Recall that the transfer operator
\[
\Lc_\Psi f(\o)=\sum_{\s(\o')=\o}e^{\Psi(\o')}f(\o') \quad \text{for $f: \SS \to \R$},
\]
has spectral radius $e^{\Pr(\Psi)}$.

First we consider a special case (from which the general case will be reduced); $\Psi:\SS \to \R$ depends only on the first coordinate, i.e.,
$\Psi(\o)=\psi(\o_0)$ for some function $\psi: E \to \R$.
Let
\begin{equation}\label{Eq:Pmax}
P_{\max}(\psi):=\max\left\{\prod_{e \in w} e^{\psi(e)/|w|} \ : \ w \in \O_E \right\},
\end{equation}
where we note that the maximum is attained by some simple cycle (i.e., a cycle consisting of pairwise distinct vertices).
Then 
we have that
\[
P_{\max}(\psi) \le e^{\Pr(\Psi)},
\]
by the definition of spectral radius, and moreover
\begin{equation*}
e^{\Pr(\Psi)} \le \rho_A \cdot  P_{\max}(\psi)
\end{equation*}
where $\rho_A$ stands for the spectral radius of the adjacency matrix $A$
by \cite[Theorem 2]{matrix}---we have applied to the non-negative matrix $\(e^{\psi(e)}A_{e, e'}\)_{e, e' \in E}$
(note that we have $\rho_A=e^{\Pr(0)}$).
Therefore we obtain
\begin{equation}\label{eq:1}
P_{\max}(t \psi) \le e^{\Pr(t\Psi)} \le \rho_A \cdot P_{\max}(t \psi) \quad \text{for all $t \in \R$}.
\end{equation}

Next we consider the general case.
Since $\Psi$ is H\"older continuous, for all $\e >0$ sufficiently small 
there exists a function $\Psi_0 : \Sigma \to \mathbb{R}$ such that $\Psi_0$ depends on finitely many coordinates and satisfies that $\|\Psi_0-\Psi\|_\infty \le \e$. 
Let us re-code the subshift $\Sigma$ where an induced potential depends only on the first coordinate: 
if $\Psi_0$ depends only on at most the first $K$-coordinates for some $K>0$,
then we replace all $K$-length strings allowed by $A$ with a new symbol. 
Thereby we obtain a new transitive subshift $\Sigma_B$ with an adjacency matrix $B$. 
Note that the natural bijective map $\SS \to \SS_B$, $\o=(\o_i)_{i=0}^\infty \mapsto \wt \o=(\o_i \o_{i+1} \cdots \o_{i+K-1})_{i=0}^\infty$ defined by concatenation of subsequent $K$-alphabets yields an isomorphism between the shifts.
If we define $\Psi_0': \Sigma_B \to \mathbb{R}$ by $\Psi_0'(\wt \o)=\Psi_0(\o)$,
then $\Psi_0'(\wt \o)=\psi(\wt \o_0)$ for some function $\psi$ on $\SS_B$,
and the spectral radii of transfer operators for $\Psi_0$ and $\Psi_0'$ coincide so do those of the adjacency matrices $A$ and $B$.
Applying \eqref{eq:1}, we obtain
\begin{align*}
e^{\Pr(t\Psi_0)} \le \rho_B \cdot P_{\max}(t \psi).
\end{align*}
Further since $\rho_A=\rho_B$ and the inequality 
\begin{equation}\label{Eq:Pr-approx}
|\Pr(t\Psi)-\Pr(t\Psi_0)|\le \|t\Psi-t\Psi_0\|_\infty \quad \text{for all $t \in \R$},
\end{equation}
which follows from \eqref{Eq:Pr}, 
we have that
\begin{align*}
e^{\Pr(t\Psi)} \le  e^{\e |t|}\rho_A \cdot P_{\max}(t \psi).
\end{align*}
Combining with
\[
P_{\max}(t \psi) \le e^{\e|t|}\sup\left\{ \exp \left( \frac{t S_n \Psi(\o)}{n}\right) \ : \ \s^n \o=\o \quad \text{for $\o \in \SS$ and $n \ge 1$}\right\},
\]
which follows from the definition \eqref{Eq:Pmax} and \eqref{Eq:Pr-approx}, 
we obtain $e^{\Pr(t \Psi)} \le e^{2 \e |t|}\rho_A \cdot P_\infty(t \Psi)$.
Noting that this estimate is uniform in $t \in \R$ as $\e \to 0$, we have that $e^{\Pr(t \Psi)} \le \rho_A \cdot P_\infty(t \Psi)$ for all $t \in \R$.
Similarly, by using \eqref{eq:1} we have $P_\infty(t \Psi) \le e^{\Pr(t \Psi)}$ for all $t \in \R$, concluding the first claim.
Further noting that $P_\infty(t \Psi)=P_\infty(\Psi)^t$ for all $t \ge 0$, we obtain the second claim.
\end{proof}

\begin{remark}\label{Rem:Sharp}
Richard Sharp has suggested that the above proof can be simplified by appealing to the fact that 
the set of uniform measures on periodic orbits is dense in the set of all $\s$-invariant Borel probability measures in the weak topology  \cite[Theorem 1]{Sigmund}.
We have provided a more elementary approach which gives a clearer insight into the following proposition.
\end{remark}

\begin{proposition}\label{Prop:extreme}
For every pair of word metrics $d_S, d_{S_\ast}$ associated to finite symmetric sets of generators $S, S_\ast$,
the corresponding Manhattan curve satisfies that
\[
\th_{S_\ast/S}(t)=-\a_{\min}t+O(1) \quad \text{as $t \to \infty$} \quad
\text{and}
\quad
\th_{S_\ast/S}(t)=-\a_{\max}t+O(1) \quad \text{as $t \to -\infty$}.
\]
Moreover $\a_{\min}$ and $\a_{\max}$ are rational.
\end{proposition}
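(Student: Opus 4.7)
The plan is to combine the variational control at infinity provided by Proposition \ref{Prop:Pmax} with the combinatorial encoding of $d_{S_\ast}$ supplied by Example \ref{Ex:combable1}. I fix a strongly Markov automatic structure $\Ac = (\Gc, \pi, S)$ for $(\G, S)$ together with the associated edge labelling $d\phi_{S_\ast}: E(\Gc) \to \Z$, and introduce the H\"older continuous potential $\Psi_\ast : \wbar \SS \to \R$ depending only on the first coordinate by $\Psi_\ast(\o) := -d\phi_{S_\ast}(\o_0)$; by construction $S_n\Psi_\ast(\o) = -d_{S_\ast}(o, \pi_\ast(\o))$ for every $\o \in \SS^\ast$ starting at $s_\ast$. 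Lemma \ref{Lem:pr-ss} then gives that $t\Psi_\ast$ is semisimple and $\Pr(t\Psi_\ast) = \th_{S_\ast/S}(t)$ for every $t \in \R$.

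Next I decompose $\Gc$ into its recurrent components $\Cc_1, \dots, \Cc_I$. Recurrence is precisely strong connectivity, so each $(\SS_{\Cc_i}, \s)$ is a transitive subshift of finite type and Proposition \ref{Prop:Pmax} applies on $\SS_{\Cc_i}$. Unpacking its conclusion in terms of periodic orbits of $\s$ in $\SS_{\Cc_i}$, which are in bijection with closed walks in $\Cc_i$ via $\o = www\cdots$, yields
\[
\Pr_{\Cc_i}(t\Psi_\ast) = -t\,m_i + O(1) \text{ as } t \to +\infty, \qquad \Pr_{\Cc_i}(t\Psi_\ast) = -t\,M_i + O(1) \text{ as } t \to -\infty,
\]
where
\[
m_i := \inf\left\{\frac{\ell_{S_\ast}(w)}{|w|} : w \text{ a closed walk in } \Cc_i\right\}, \quad M_i := \sup\left\{\frac{\ell_{S_\ast}(w)}{|w|} : w \text{ a closed walk in } \Cc_i\right\},
\]
and $\ell_{S_\ast}(w) := \sum_{j=0}^{|w|-1} d\phi_{S_\ast}(w_j)$. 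Combining $\Pr(t\Psi_\ast) = \max_i \Pr_{\Cc_i}(t\Psi_\ast)$ with the finiteness of $\{\Cc_i\}$ gives
\[
\th_{S_\ast/S}(t) = -t \min_i m_i + O(1) \text{ as } t \to +\infty, \qquad \th_{S_\ast/S}(t) = -t \max_i M_i + O(1) \text{ as } t \to -\infty,
\]
and Corollary \ref{Cor:Dil} identifies $\a_{\min} = \Dil_- = \min_i m_i$ and $\a_{\max} = \Dil_+ = \max_i M_i$, which proves the two asymptotic formulae.

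For the rationality, any closed walk in the finite graph $\Cc_i$ decomposes into edge-disjoint simple cycles, and since both $\ell_{S_\ast}$ and the walk-length are additive along this decomposition, $\ell_{S_\ast}(w)/|w|$ is a convex combination of the averages over its simple constituents. Hence $m_i$ and $M_i$ are attained on the finitely many simple cycles of $\Cc_i$, each giving a rational of the form $p/q$ with $p \in \Z_{\ge 0}$ and $q \in \Z_{>0}$; taking a min or max over the finite family $\{\Cc_i\}$ preserves rationality, so $\Dil_-, \Dil_+ \in \Q$. The main subtlety is carrying the single-component asymptotics of Proposition \ref{Prop:Pmax} through the possibly non-connected semisimple decomposition of $\Psi_\ast$; apart from this, every step reduces to routine bookkeeping with Birkhoff sums of a locally constant potential.
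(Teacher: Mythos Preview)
Your proof is correct and follows essentially the same route as the paper: both apply Proposition~\ref{Prop:Pmax} component-wise to the potential $\Psi_{S_\ast}$ of Example~\ref{Ex:combable1}, invoke Lemma~\ref{Lem:pr-ss} to write $\th_{S_\ast/S}(t)=\max_\Cc\Pr_\Cc(t\Psi_{S_\ast})$, and deduce rationality from the fact that the extremal cycle average is realized by a simple cycle with integer weights. Your convex-combination argument for the simple-cycle reduction and your appeal to Corollary~\ref{Cor:Dil} are slightly more explicit than the paper's treatment, but the substance is the same.
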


\proof
We apply Proposition \ref{Prop:Pmax} to each transitive component in the subshift in Example \ref{Ex:combable1};
for each $t \in \R$, we have that
\[
\th_{S_\ast/S}(t)=\max_{\Cc}\Pr_\Cc(t \Psi_{S_\ast}),
\]
where $\Cc$ is a component in the underlying directed graph by Lemma \ref{Lem:pr-ss}.
Note that although the components which attain the maximum can depend on the $t$,
since we have $\a_{\min}=-\lim_{t \to \infty}\th_{S_\ast/S}(t)/t$ and there are only finitely many components,
there exists $\Cc$ such that $\a_{\min}=-\log P_{\infty}(\Psi_{S_\ast|\Cc})$ where $\Psi_{S_\ast|\Cc}$ is the restriction of $\Psi_{S_\ast}$ to $\SS_\Cc$.
Therefore $\th_{S_\ast/S}(t)=-\a_{\min}t+O(1)$ as $t \to \infty$.
Similarly we have
$\a_{\max}=\log P_{\infty}(-\Psi_{S_\ast|\Cc})$ for a possibly different $\Cc$, 
implying that $\th_{S_\ast/S}(t)=-\a_{\max}t+O(1)$ as $t \to -\infty$.

Furthermore since $\Psi_{S_\ast}(\o)=-d\phi_{S_\ast}(\o_0)$ for $\o \in \SS$ and $d\phi_{S_\ast}$ takes the value in integers  (Example \ref{Ex:combable1}),
by \eqref{Eq:Pmax} in the proof of Proposition \ref{Prop:Pmax},
we have that
\[
\a_{\min}=\sum_{e \in w}\frac{d\phi_{S_\ast}(e)}{|w|} \quad \text{for a cycle $w=(e_0, \dots, e_{k-1})$ and $k=|w|$},
\]
and $\a_{\max}$ has a similar form. 
Hence $\a_{\min}$ and $\a_{\max}$ are rational.
\qed


\subsection{An application to large deviations}
In Section \ref{Sec:general} we compared the typical growth rates of two metrics $d,d_\ast \in \Dc_\Gamma$ by studying two related limits.
In Lemma \ref{Lem:tau} we studied the limiting ratio of the metrics as we travel along ``typical'' quasi-geodesic rays. We then, in Theorem \ref{Thm:distortion}, 
considered 
the limiting average of the ratio of two metrics where we average over $n$-balls in one of the metrics.
In this section we investigate a finer statistical result that compares a metric $d \in \Dc_\G$ with  a word metric $d_S$.
More precisely we study the distribution of $d(o, x)/n$ when $x$ is sampled uniformly at random from the set of all words of length $n$ in $d_S$.

It has been shown that if $d\in \Dc_\Gamma$ is a word metric or is strongly hyperbolic,
then there exists a positive real number $\t$ such that
\begin{equation}\label{eq:LDP0}
\frac{1}{\#\bS_n} \sum_{|x|_S = n} \frac{d(o,x)}{n}  \to  \tau \quad \text{as $n \to \infty$},
\end{equation}
where $\bS_n = \{x \in\Gamma \ : \ |x|_S =n\}$ \cite[Theorem 1.1]{stats}.
(In fact, a stronger result was shown: the left hand side of \eqref{eq:LDP0} is $\tau+O(1/n)$ as $n \to \infty$.) 
Furthermore, the values $d(o, x)/n$ concentrate exponentially near $\t$,
i.e., for all $\epsilon >0$,
\begin{equation*}\label{eq:LDP1}
\limsup_{n\to\infty} \frac{1}{n} \log \left( \frac{1}{\#\bS_n} \#\left\{x\in \bS_n: \left|\frac{d(o,x)}{n} - \tau\right|>\epsilon  \right\}\right) < 0.
\end{equation*}

A detailed analysis of the Manhattan curve allows us to establish a precise global large deviation result that is valid for every metric $d \in \Dc_\G$.

\begin{theorem}\label{Thm:LDT}
Let $\Gamma$ be a non-elementary hyperbolic group equipped with a finite symmetric set of generators $S$. 
Let $\bS_n := \{x \in \Gamma : |x|_S = n\}$ for non-negative integers $n \ge 0$. 
If $d \in \Dc_\Gamma$,
then for every open set $U$ and every closed set $V$ in $\mathbb{R}$ such that $U \subset V$, we have that
\begin{align*}
-\inf_{s\in U} I(s) &\le \liminf_{n\to\infty} \frac{1}{n} \log \left( \frac{1}{\#\bS_n} \#\left\{x\in \bS_n: \frac{d(o,x)}{n} \in U  \right\}\right)\\
&\le  \limsup_{n\to\infty} \frac{1}{n} \log \left( \frac{1}{\#\bS_n} \#\left\{x\in \bS_n: \frac{d(o,x)}{n} \in V  \right\}\right) \le - \inf_{s\in V} I(s),
\end{align*}
where
\[
I(s) = \theta_{d/d_S}(0) + \sup_{t \in \mathbb{R}} \{t s - \theta_{d/d_S}(-t)\}
\]
and $\theta_{d/d_S}$ is the Manhattan curve for the pair of metrics $d$ and $d_S$.
Furthermore, 
$I$ has a unique zero at the mean distortion $\t(d/d_S)$,
is finite on $(\alpha_{\min},\alpha_{\max})$ and infinite outside of $[\alpha_{\min},\alpha_{\max}]$,
where
\[
\a_{\min}=-\lim_{a \to \infty}\frac{\th_{d/d_S}(a)}{a} \quad \text{and} \quad
\a_{\max}=-\lim_{a \to -\infty}\frac{\th_{d/d_S}(a)}{a}.
\]
\end{theorem}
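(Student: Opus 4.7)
The plan is to deduce the large deviation principle from the G\"artner-Ellis theorem applied to $Y_n := d(o, x_n)/n$ with $x_n$ uniform on $\bS_n$; the scaled cumulant generating function $\Lambda(t) := \lim_n (1/n) \log \Bigl(\frac{1}{\#\bS_n}\sum_{x \in \bS_n}e^{t d(o,x)}\Bigr)$ will be identified with $\th_{d/d_S}(-t) - \th_{d/d_S}(0)$ for every $t \in \R$. Setting $A_n(a) := \sum_{x \in \bS_n} e^{-a d(o,x)}$, I would first prove $A_n(a) \asymp e^{\th_{d/d_S}(a) n}$ for every $a \in \R$. Since $\psi := a d$ is a $\G$-invariant tempered potential relative to $d_S$ (Example \ref{Ex:tempered}), Lemma \ref{Lem:Coornaert} gives $\sum_{||x|_S - n|\le R_0} e^{-a d(o,x)} \asymp e^{\th_{d/d_S}(a) n}$, and because $d_S$ is integer-valued this immediately yields the upper bound $A_n(a) \le C e^{\th_{d/d_S}(a) n}$. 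For the matching lower bound I would use the measure $\m := \m_{a, \th_{d/d_S}(a)}$ produced by Proposition \ref{Prop:generalPS}, which satisfies the shadow estimate $\m(O(x, R)) \asymp e^{-a d(o,x) - \th_{d/d_S}(a) |x|_S}$; for $R$ large the shadows $\{O(x, R)\}_{x \in \bS_n}$ cover $\partial \G$ (every boundary point is reached by some $d_S$-geodesic ray passing through $\bS_n$), and summing the shadow estimate over $\bS_n$ yields $1 \le \sum_{x \in \bS_n} \m(O(x, R)) \asymp e^{-\th_{d/d_S}(a) n} A_n(a)$, whence $A_n(a) \ge c e^{\th_{d/d_S}(a) n}$. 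This establishes existence of $\Lambda(t)$ for every $t \in \R$ and identifies its form.

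I would then invoke the G\"artner-Ellis theorem. By Theorem \ref{Thm:C1}, $\th_{d/d_S}$ is continuously differentiable on $\R$, so $\Lambda$ is finite and $C^1$ on $\R$ and hence lower semicontinuous and essentially smooth with effective domain all of $\R$. The theorem then delivers the full LDP for $Y_n$ with rate function
\[
\Lambda^\ast(s) = \sup_{t \in \R} \{ ts - \Lambda(t)\} = \th_{d/d_S}(0) + \sup_{t \in \R} \{ ts - \th_{d/d_S}(-t) \} = I(s),
\]
which is precisely the expression for $I$ given in the theorem, thereby yielding the two-sided inequalities.

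It remains to verify the qualitative properties of $I$; write $\th := \th_{d/d_S}$. Convexity and $C^1$-regularity of $\th$ give $\th'(a) \to -\a_{\min}$ (resp.\ $-\a_{\max}$) as $a \to +\infty$ (resp.\ $-\infty$), with $\th'(a) \in (-\a_{\max}, -\a_{\min})$ for all $a$. For $s \in (\a_{\min}, \a_{\max})$ the intermediate value theorem furnishes $a_s \in \R$ with $\th'(a_s) = -s$, and a critical-point analysis of $a \mapsto -a s - \th(a)$ gives the finite value $I(s) = \th(0) - a_s s - \th(a_s) < \infty$. For $s > \a_{\max}$, the asymptotic $\th(a)/a \to -\a_{\max}$ as $a \to -\infty$ forces $-a s - \th(a) \sim a(\a_{\max} - s) \to +\infty$ as $a \to -\infty$, so $I(s) = +\infty$; the case $s < \a_{\min}$ is symmetric. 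For uniqueness of the zero, any $s$ with $I(s) = 0$ must satisfy $\Lambda(t) \ge ts$ for all $t$, i.e.\ $s$ lies in the subdifferential of $\Lambda$ at $0$, which is the singleton $\{\Lambda'(0)\} = \{-\th'(0)\}$ by $C^1$-regularity; Theorem \ref{Thm:distortion} identifies this value with $\t(d/d_S)$.

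The hard part is the first step: Lemma \ref{Lem:Coornaert} only yields a shell estimate, whereas the G\"artner-Ellis theorem requires the sharper sphere asymptotic on $\bS_n$, valid for every $t \in \R$, in order to deliver the LDP globally with the correct effective domain $[\a_{\min}, \a_{\max}]$. The Patterson-Sullivan shadow estimate of Proposition \ref{Prop:generalPS}, combined with the bounded-multiplicity cover of $\partial \G$ by $\{O(x, R)\}_{x \in \bS_n}$, is what closes the gap between the shell and sphere estimates and thereby permits the Legendre inversion to reach every $s \in [\a_{\min}, \a_{\max}]$.
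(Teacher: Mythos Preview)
Your proposal is correct and follows the same approach as the paper: identify the scaled cumulant generating function via Lemma~\ref{Lem:Coornaert}, appeal to the $C^1$-regularity of $\theta_{d/d_S}$ from Theorem~\ref{Thm:C1}, and invoke the G\"artner--Ellis theorem. Your verification of the qualitative properties of $I$ (finiteness on $(\alpha_{\min},\alpha_{\max})$, infiniteness outside $[\alpha_{\min},\alpha_{\max}]$, unique zero at $\tau(d/d_S)$ via Theorem~\ref{Thm:distortion}) is more explicit than the paper's, which simply points to ``the definitions of $I$ and $\theta_{d/d_S}$'', but the content is the same.

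One remark on what you flag as ``the hard part'': the extra Patterson--Sullivan covering argument you give for the lower bound on $A_n(a)$ is correct but not actually needed. Since $d_S$ is a genuine geodesic word metric, every boundary point lies on a $d_S$-geodesic ray from $o$, so the shadows $\{O(x,R)\}_{x\in\bS_n}$ already cover $\partial\Gamma$ for large enough $R$; this is exactly the covering hypothesis used in the \emph{proof} of Lemma~\ref{Lem:Coornaert}, and for $d_S$ one may take $R_0=0$ there. Thus Lemma~\ref{Lem:Coornaert}, applied with the reference metric $d_S$, already yields the two-sided sphere estimate $A_n(a)\asymp e^{\theta_{d/d_S}(a)n}$ directly, which is why the paper's proof is so brief.
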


\begin{remark}
 In a recent work, Gekhtman, Taylor and Tiozzo have established the corresponding central limit theorem in the case when $d$ is a (not necessarily proper) hyperbolic metric \cite[Theorem 1.1]{GTT}; 
 we are not aware of the corresponding large deviation principle in the non-proper setting, e.g., see \cite[Theorem 7.3]{GTT2} (for a recent analogous result on random walks, see \cite{BMSS}). 
\end{remark}

\begin{proof}[Proof of Theorem \ref{Thm:LDT}]
For every $t\in\mathbb{R}$, we have that
\[
\lim_{n\to\infty} \frac{1}{n} \log \left( \frac{1}{\#\bS_n} \sum_{|x|_S=n} e^{t d(o,x)} \right)=\theta_{d/d_S}(-t) - \theta_{d/d_S}(0),
\]
by Lemma \ref{Lem:Coornaert},
and furthermore the right hand side is continuously differentiable by Theorem \ref{Thm:C1}.
The theorem then follows from the G\"artner-Ellis theorem (e.g., \cite[Theorem 2.3.6]{ldb})
and the definitions of $I$ and $\th_{d/d_S}$.
\end{proof}

\begin{corollary}\label{Cor:ldt}
Suppose $d,d_S \in \Dc_\Gamma$ are as in Theorem \ref{Thm:LDT}. Further assume that $d$ is a word metric associated to a finite symmetric set of generators $S_\ast$, i.e. $d=d_{S_\ast}$.  
Then,
$I(\a_{\min}), I(\a_{\max})<\infty$ and further $\alpha_{\min}$ and $\alpha_{\max}$ are rational.
\end{corollary}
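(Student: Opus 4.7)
The plan is to deduce both assertions directly from Proposition \ref{Prop:extreme} applied to the pair of word metrics $d_S, d_{S_\ast}$. The second statement of that proposition already asserts rationality of $\alpha_{\min}$ and $\alpha_{\max}$ (these are the same extremal slopes governing $\theta_{d/d_S}$ since $d = d_{S_\ast}$), so nothing more is needed for the second assertion. What remains is to show that $I(\alpha_{\min})$ and $I(\alpha_{\max})$ are finite, and this I will obtain from the linear asymptotics of $\theta_{d/d_S}$ at $\pm\infty$, which is the first statement of the same proposition.

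Write $\theta = \theta_{d/d_S}$ for brevity and recall from Proposition \ref{Prop:extreme} that
\[
\theta(a) = -\alpha_{\min} a + O(1) \quad (a \to +\infty), \qquad \theta(a) = -\alpha_{\max} a + O(1) \quad (a \to -\infty).
\]
Substituting $a = -t$, I get $\theta(-t) = \alpha_{\min} t + O(1)$ as $t \to -\infty$ and $\theta(-t) = \alpha_{\max} t + O(1)$ as $t \to +\infty$. To bound $I(\alpha_{\min}) = \theta(0) + \sup_{t\in\R}\{t\alpha_{\min} - \theta(-t)\}$, I examine the two tails of the supremum: as $t\to -\infty$,
\[
t\alpha_{\min} - \theta(-t) = t\alpha_{\min} - \alpha_{\min}t + O(1) = O(1),
\]
while as $t \to +\infty$,
\[
t\alpha_{\min} - \theta(-t) = t(\alpha_{\min} - \alpha_{\max}) + O(1),
\]
which tends to $-\infty$ (or is $O(1)$ in the degenerate case $\alpha_{\min}=\alpha_{\max}$), since convexity of $\theta$ forces $\alpha_{\min}\le\alpha_{\max}$. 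Combined with the continuity of $\theta$, the supremum is attained (or at worst bounded) on a bounded interval of $t$'s, so $I(\alpha_{\min})<\infty$.

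The estimate for $I(\alpha_{\max})$ is completely symmetric: the tail $t\to +\infty$ is $O(1)$ and the tail $t\to -\infty$ tends to $-\infty$ (or is $O(1)$), again yielding a finite supremum. The only subtle point I want to flag is the use of $\alpha_{\min}\le\alpha_{\max}$, which is immediate from convexity but crucial; everything else is a routine Legendre-transform computation built on top of Proposition \ref{Prop:extreme}. No new machinery is required beyond the asymptotic linearity of the Manhattan curve for word metrics, which is the main substantive ingredient and is already in hand.
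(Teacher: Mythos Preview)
Your proof is correct and takes the same approach as the paper, which simply writes ``This follows from Proposition \ref{Prop:extreme}.'' You have faithfully unpacked that one-line reference: rationality is the second assertion of the proposition, and the linear asymptotics $\theta(a)=-\alpha_{\min}a+O(1)$ and $\theta(a)=-\alpha_{\max}a+O(1)$ give the boundedness of the Legendre transform at the endpoints exactly as you compute.
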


\begin{proof}
This follows from Proposition \ref{Prop:extreme}.
\end{proof}

\begin{remark}
If we take a strongly hyperbolic metric $d$ in Theorem \ref{Thm:LDT},
then it holds that $I(\a_{\min}), I(\a_{\max})<\infty$ by Proposition \ref{Prop:Pmax} and by the first part in the proof of Proposition \ref{Prop:extreme}. 
We are unsure whether both $I(\alpha_{\min})$ and $I(\alpha_{\max})$ are finite for all $d \in \Dc_\G$,
which we leave open.
\end{remark}


\section{Examples}\label{Sec:example}
In this section we compute the Manhattan curve for two examples, focusing on a pair of word metrics. 
In the first we provide an exact formula for a Manhattan curve associated to a free group. 
In the second we analyze a hyperbolic triangle group, which is another explicit class of hyperbolic groups other than free groups.
We obtain a Manhattan curve as an implicit function for some pair of word metrics in the $(3,3,4)$-triangle group.
For both cases, we use the GAP package \cite{GAP4} to produce explicit forms of automatic structures.

\subsection{The free group of rank $2$}\label{Sec:free}
The following example is considered in \cite[Example 6.5.5]{Calegariscl}.
We extend the discussion found in \cite{Calegariscl} by commenting on the Manhattan curve. 
Let $F = \langle a, b \mid\rangle$ be the free group of rank $2$. 
We consider the standard set of generators $S=\{a, b, a^{-1}, b^{-1}\}$ and another symmetric set of generators 
\[
S_\ast:=\{a,b,c,a^{-1},b^{-1},c^{-1}\} \quad \text{where $c:=ab$}. 
\]
Our aim is to compute the Manhattan curve $\th_{S/S_\ast}$.
Note that a word on $S_\ast$ is reduced if and only if it contains no subword of the form $a^{-1}c, cb^{-1}, c^{-1}a, bc^{-1}$,
and furthermore each element in $F$ has a unique reduced word representative on $S_\ast$. 
Henceforth we use $A, B, C$ to denote $a^{-1}, b^{-1}, c^{-1}$ respectively.
An automatic structure for $(F, S)$ (resp.\ for $(F, S_\ast)$) is given in Figure \ref{Fig:auto_standard_free} (resp.\ Figure \ref{Fig:auto_another_free}).
(In the figures the arrows are solid if they are in the strongly connected component, and dotted otherwise. 
The initial state is denoted by ``1''.)

\begin{figure}
\begin{center}
\begin{tabular}{c}

\begin{minipage}{1\hsize}
\begin{center}
\includegraphics[clip, width=80mm, bb=0 0 528 295]{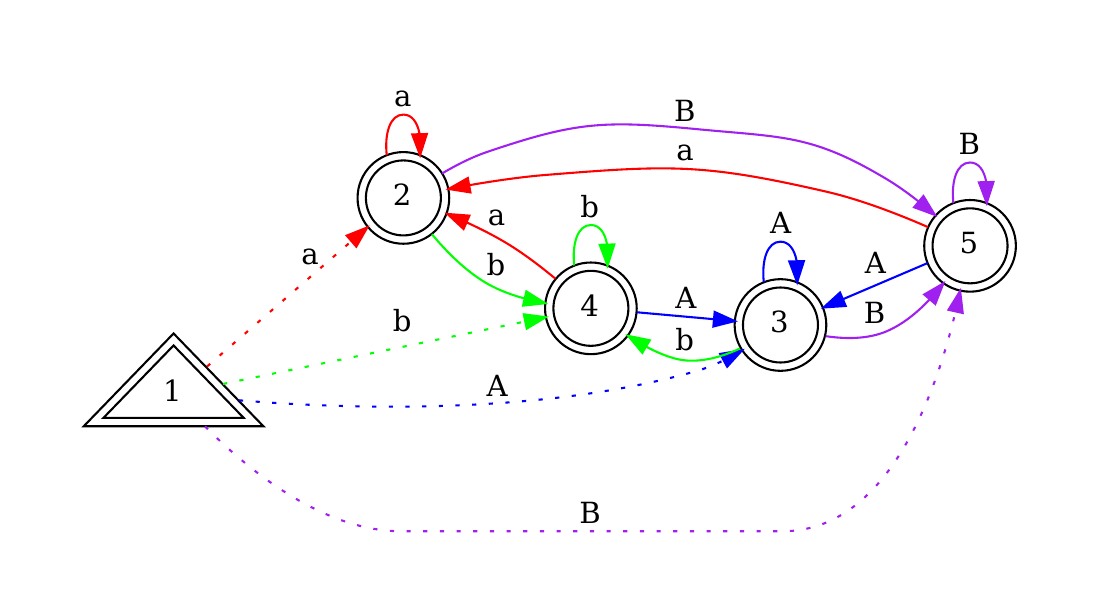}
\hspace{1cm}
\caption{An automatic structure for $(\G, S)$ in Section \ref{Sec:free}.}
\label{Fig:auto_standard_free}
\end{center}
\end{minipage}\\

\begin{minipage}{1\hsize}
\begin{center}\includegraphics[clip, width=70mm, bb=0 0 439 384]{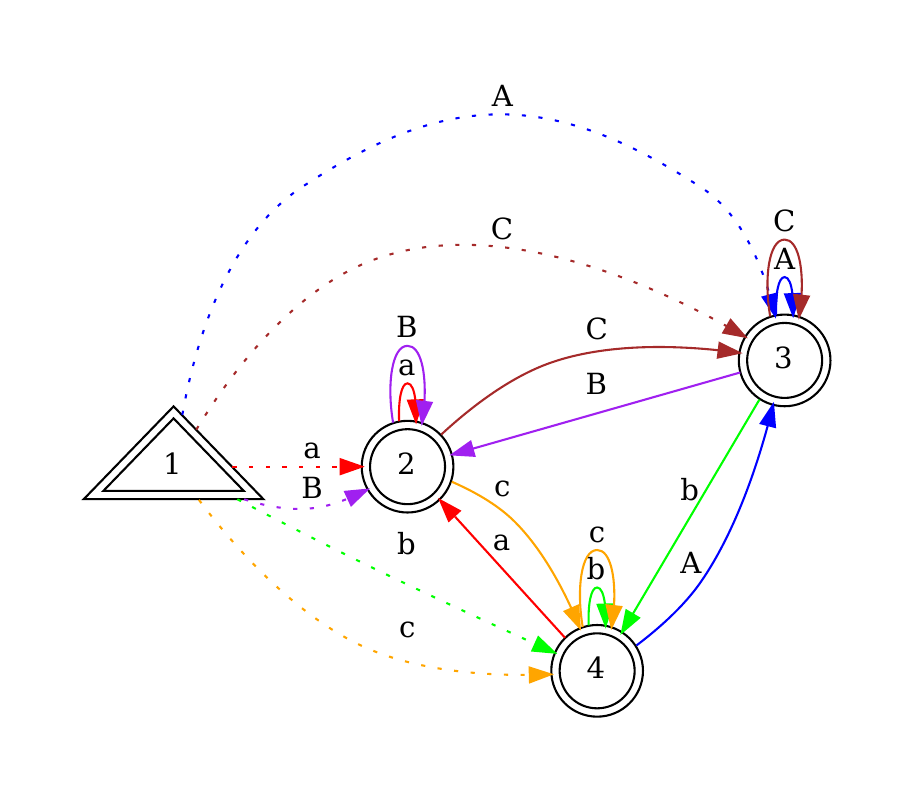}
\hspace{1cm}
\caption{An automatic structure for $(\G, S_\ast)$ in Section \ref{Sec:free}.}
\label{Fig:auto_another_free}
\end{center}
\end{minipage}
\end{tabular}
\end{center}
\end{figure}

\begin{figure}
\centering
\includegraphics[width=100mm, bb=0 0 977 605]{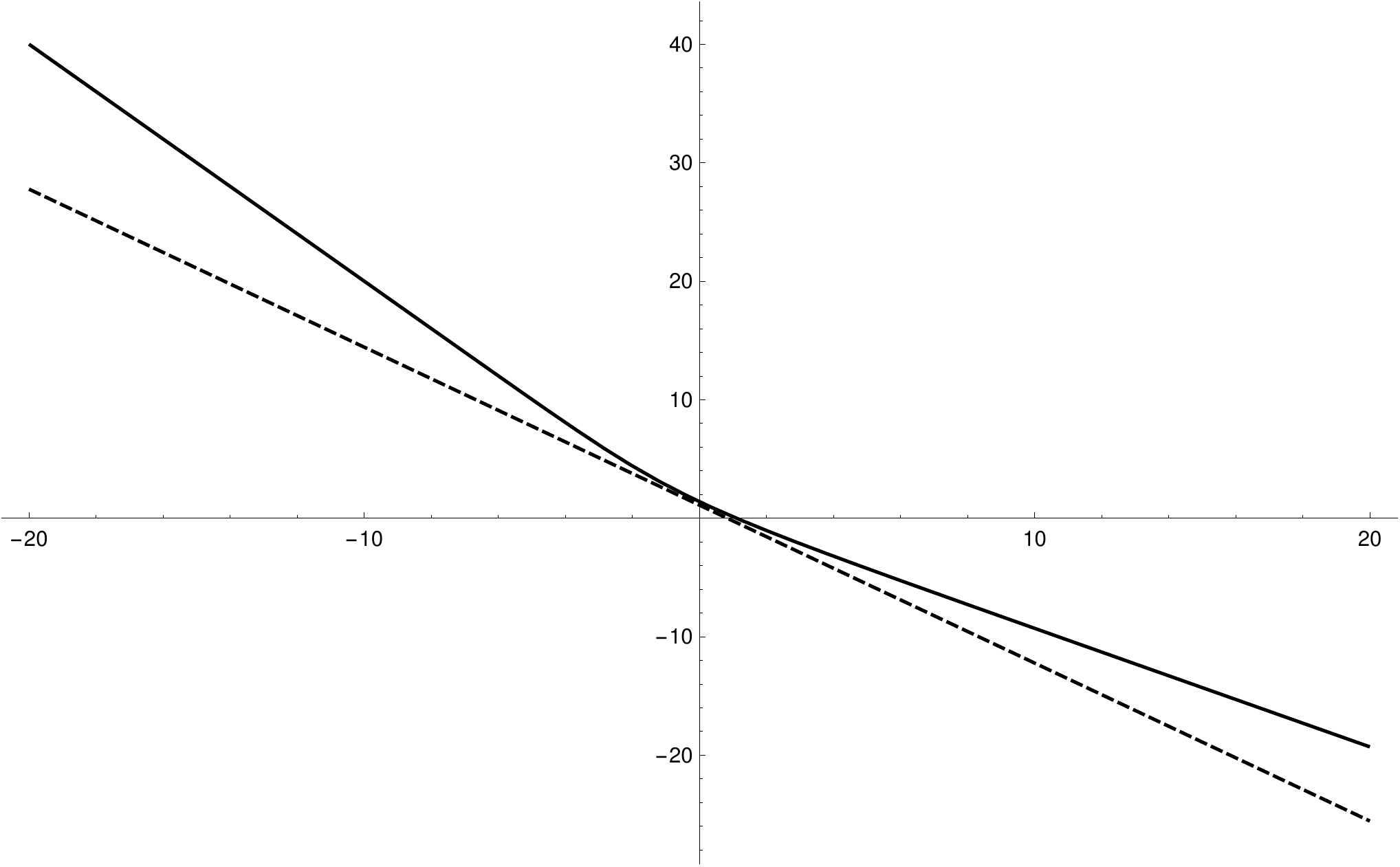}
\hspace{1cm}
\caption{The Manhattan curve (solid) for the example in Section \ref{Sec:free} and the tangent line at $0$ (dotted) for comparison.}
\label{Fig:Manhattan_free}
\end{figure}

The word length with respect to $S$ is computed by setting
\[
d\phi_S(e)
=
\begin{cases}
1 & \text{if $e$ has the label $a, b, A, B$},\\
2 & \text{if $e$ has the label $c, C$},
\end{cases}
\]
where $d\phi_S$ is defined as in Example \ref{Ex:combable1}.
The adjacency matrix of this directed graph is of size $12$ (because we use all the edges in the strongly connected component as indices),
but it is enough to deal with a smaller one:
we observe that the `flip' of the labels by $c \leftrightarrow C$, $a \leftrightarrow B$ and $b \leftrightarrow A$
keeps the directed graph structure with labeling.
This allows us to consider the following matrix of size $6$ where indices correspond to the set of labels $a,b,c,A,B,C$:
\[
\begin{blockarray}{ccccccc}
 {} & a & b & c & A & B & C \\
\begin{block}{c(cccccc)}
  a & 1 & 0 & 1 & 0 & 1 & 1 \\
  b & 1 & 1 & 1 & 1 & 0 & 0\\
  c & 1 & 1 & 1 & 1 & 0 & 0 \\
  A & 0 & 1 & 0 & 1 & 1 & 1 \\
  B & 1 & 0 & 1 & 0 & 1 & 1 \\
  C & 0 & 1 & 0 &1  & 1 & 1\\
\end{block}
\end{blockarray}.
 \]
 Then the Manhattan curve is computed as $\theta_{S/S_\ast}(t) = \log P(e^{-t})$ where $P(s)$ is the spectral radius of the matrix
\[
\begin{blockarray}{cccccc}\\
\begin{block}{(cccccc)}
  s & 0 & s^2 & 0 & s & s^2 \\
  s & s & s^2  & s & 0 & 0\\
  s & s & s^2  & s & 0 & 0 \\
  0 & s & 0 & s & s & s^2  \\
  s & 0 & s^2  & 0 & s & s^2  \\
  0 & s & 0 &s  & s & s^2 \\
\end{block}
\end{blockarray},
 \] 
 (see Lemma \ref{Lem:pr-ss}).
 Hence we obtain
 \[
 \th_{S/S_\ast}(t) = \log \left( \frac{1}{2} e^{-t}(e^{-t}+\sqrt{e^{-t}(e^{-t}+8)} +4)\right),
 \]
 and
 \[
 \alpha_{\max} = \lim_{t \to -\infty}- \frac{1}{t} \th_{S/S_\ast}(t)= 2 \quad \text{and} \quad \alpha_{\min} =  \lim_{t \to \infty} -\frac{1}{t}\th_{S/S_\ast}(t) = 1,
 \]
see Figure \ref{Fig:Manhattan_free}.
Moreover the mean distortion $\t(S/S_\ast)$ and $v_\ast/v$ where $v$ (resp.\ $v_\ast$) is the exponential volume growth rate for $(\G, S)$ (resp.\ $(\G, S_\ast)$) are given by
\[
\t(S/S_\ast)=-\th_{S/S_\ast}'(0)=\frac{4}{3}=1.33333\dots, \quad \text{and} \quad \frac{v_\ast}{v}=\frac{\log 4}{\log 3}=1.26186\dots.
\]

\subsection{The $(3,3,4)$-triangle group}\label{Sec:triangle}
We now turn our attention to computing a Manhattan curve for a pair of word metrics in the case of the $(3,3,4)$-triangle group.
Let 
\[
\G:=\langle a, b, c \, | \, a^3, b^3, c^4, abc\rangle,
\]
where we denote the standard set of generators by
\[
S:=\{a, b, c, a^{-1}, b^{-1}, c^{-1}\},
\]
and another symmetric set of generators by
\[
S_\ast:=\{a, b, c, d, a^{-1}, b^{-1}, c^{-1}, d^{-1}\} \quad \text{where $d:=c^2$}.
\]
Note that $\G$ has a presentation with respect to $S_\ast$,
\[
\langle a, b, c, d \, | \, a^3, b^3, c^4, abc, d^{-1}c^2\rangle.
\]
Based on these presentations for $\G$, 
we compute the exponential growth rate $v$ (resp.\ $v_\ast$) for $(\G, S)$ (resp.\ $(\G, S_\ast)$)
\[
v=0.674756\dots, \quad \text{and} \quad v_\ast=0.732858\dots,
\]
and
we have produced automatic structures for $(\G, S)$ and for $(\G, S_\ast)$
in Figures \ref{Fig:auto_standard_(3,3,4)} and \ref{Fig:auto_another_(3,3,4)}.

\begin{figure}
\begin{center}
\begin{tabular}{c}

\begin{minipage}{1\hsize}
\begin{center}
\includegraphics[clip, width=130mm, bb=0 0 1389 565]{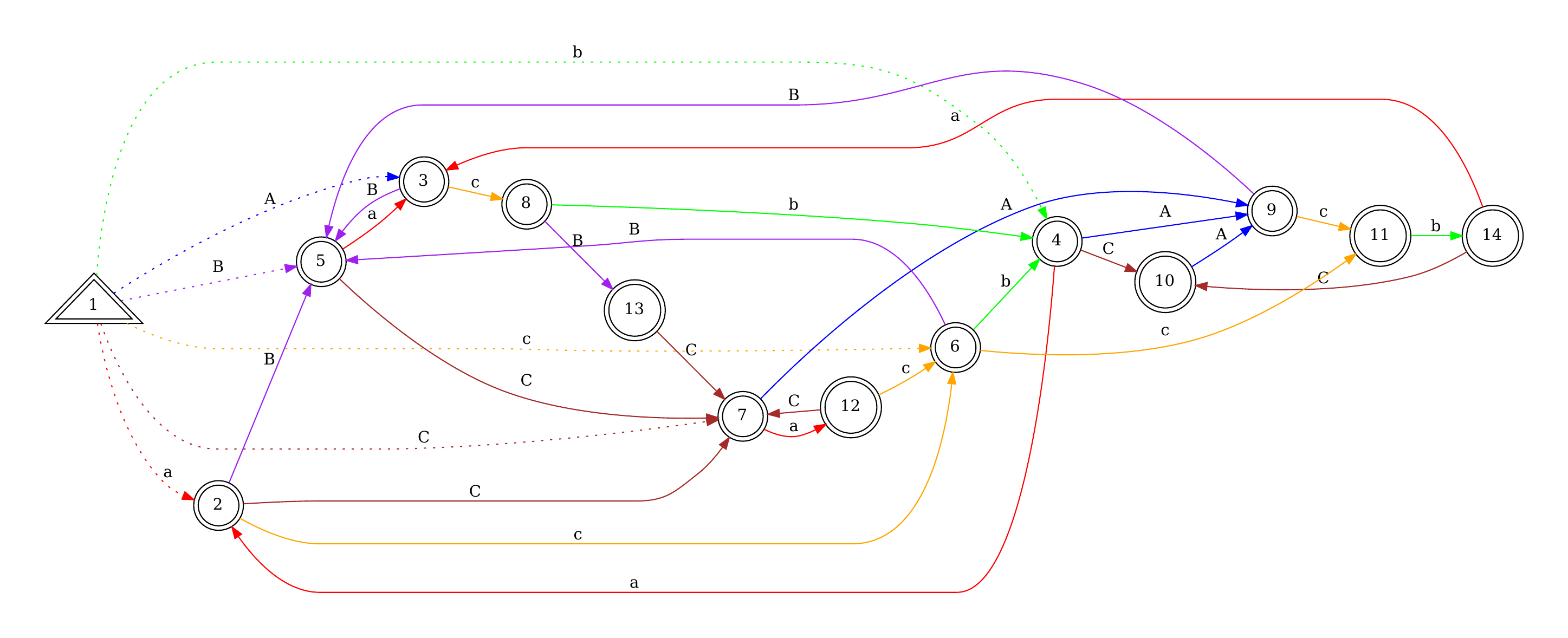}
\hspace{1cm}
\caption{An automatic structure for $(\G, S)$ in Section \ref{Sec:triangle}.}
\label{Fig:auto_standard_(3,3,4)}
\end{center}
\end{minipage}\\

\begin{minipage}{1\hsize}
\begin{center}
\includegraphics[clip, width=100mm, bb=0 0 952 635]{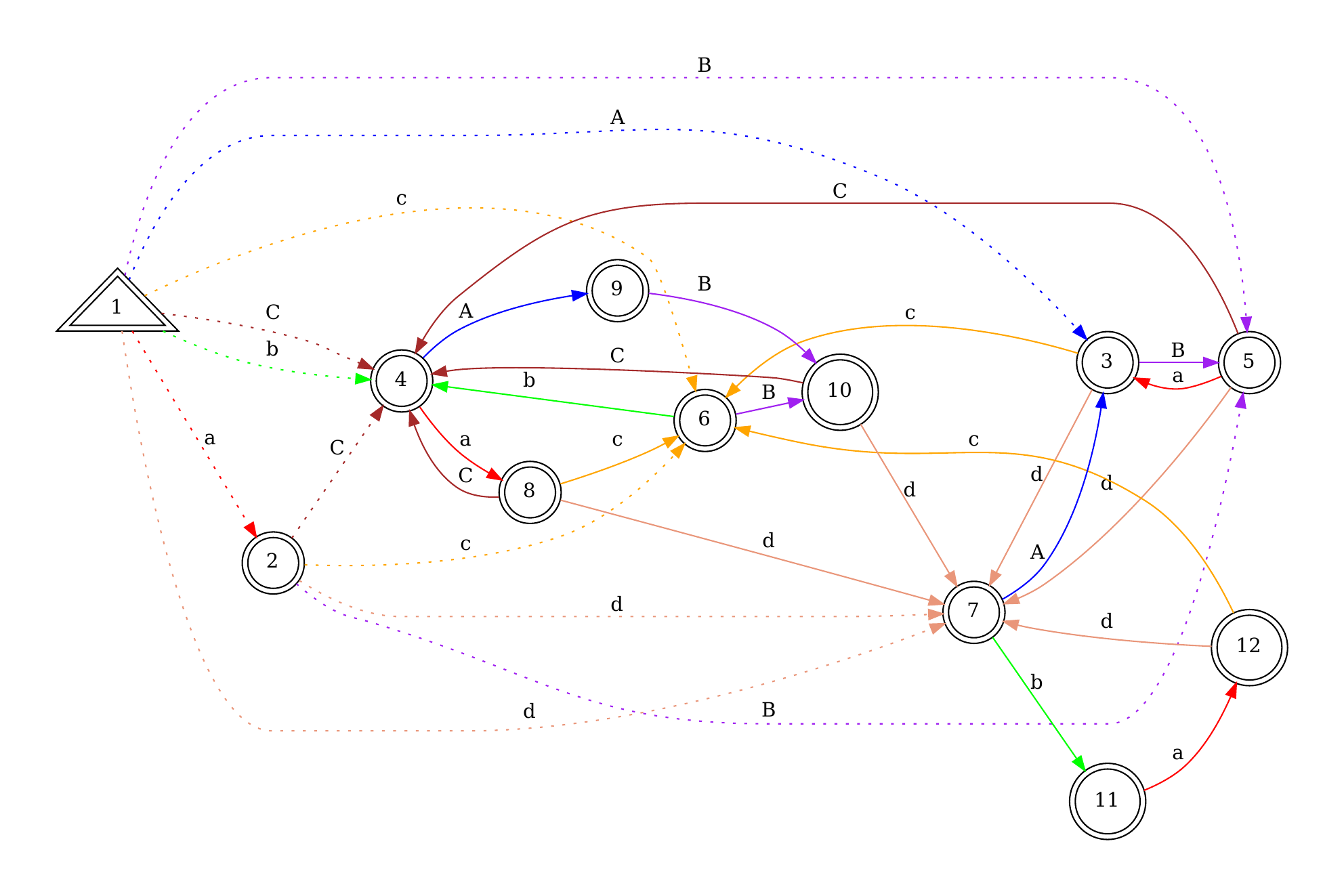}
\hspace{1cm}
\caption{An automatic structure for $(\G, S_\ast)$ in Section \ref{Sec:triangle}.}
\label{Fig:auto_another_(3,3,4)}
\end{center}
\end{minipage}
\end{tabular}
\end{center}
\end{figure}

\begin{figure}
\centering
\includegraphics[width=100mm, bb=0 0 729 384]{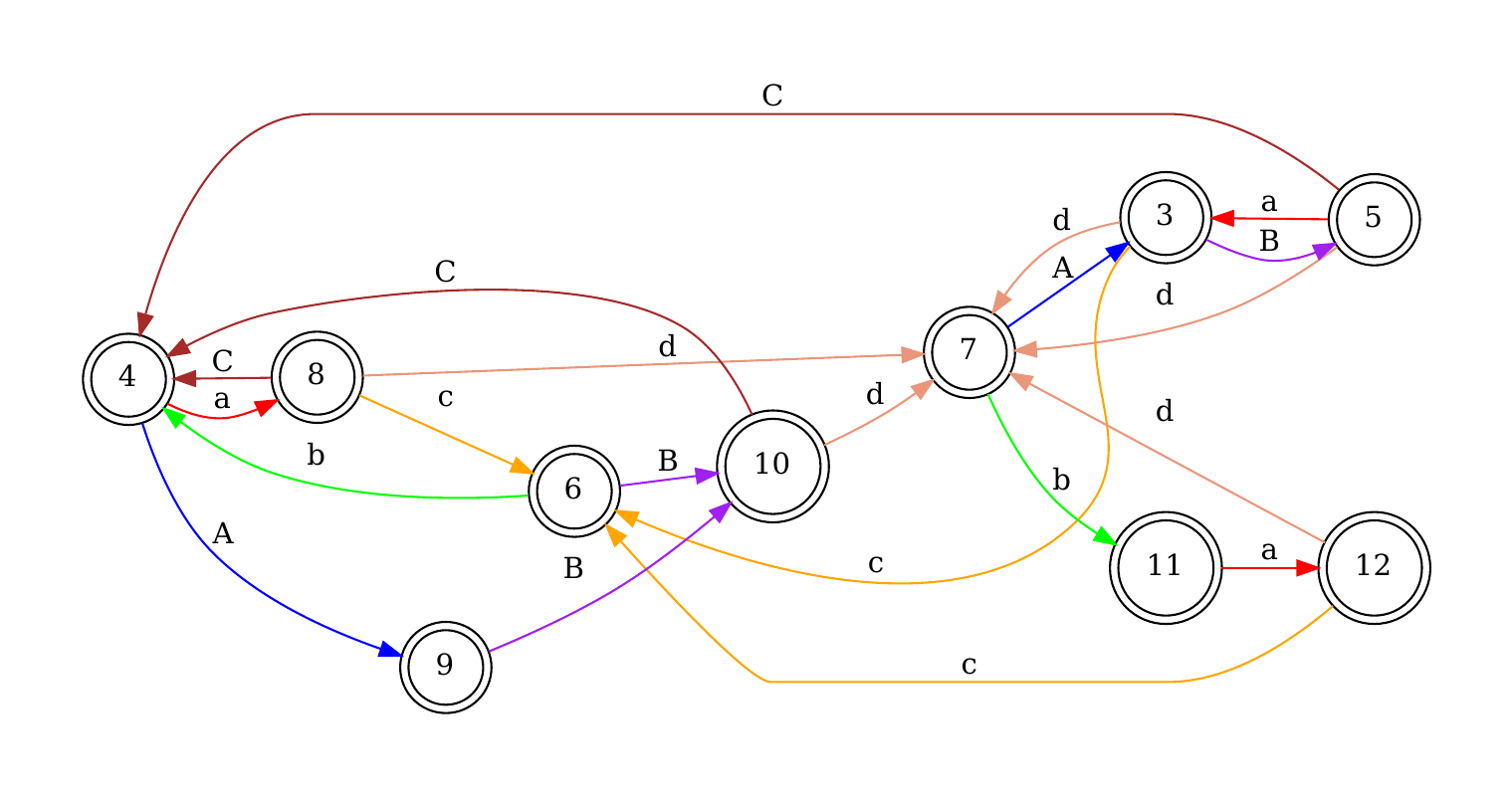}
\hspace{1cm}
\caption{The strongly connected component of the automatic structure for $(\G, S_\ast)$ in Figure \ref{Fig:auto_another_(3,3,4)}.
The weight $1$ is assigned on the directed edges with labels $a, A, b, B, c, C$, and the weight $2$ is assigned on the directed edge with label $d$.}
\label{Fig:auto_another_(3,3,4)SCC}
\end{figure}

\begin{figure}
\centering
\includegraphics[width=110mm, bb=0 0 1040 643]{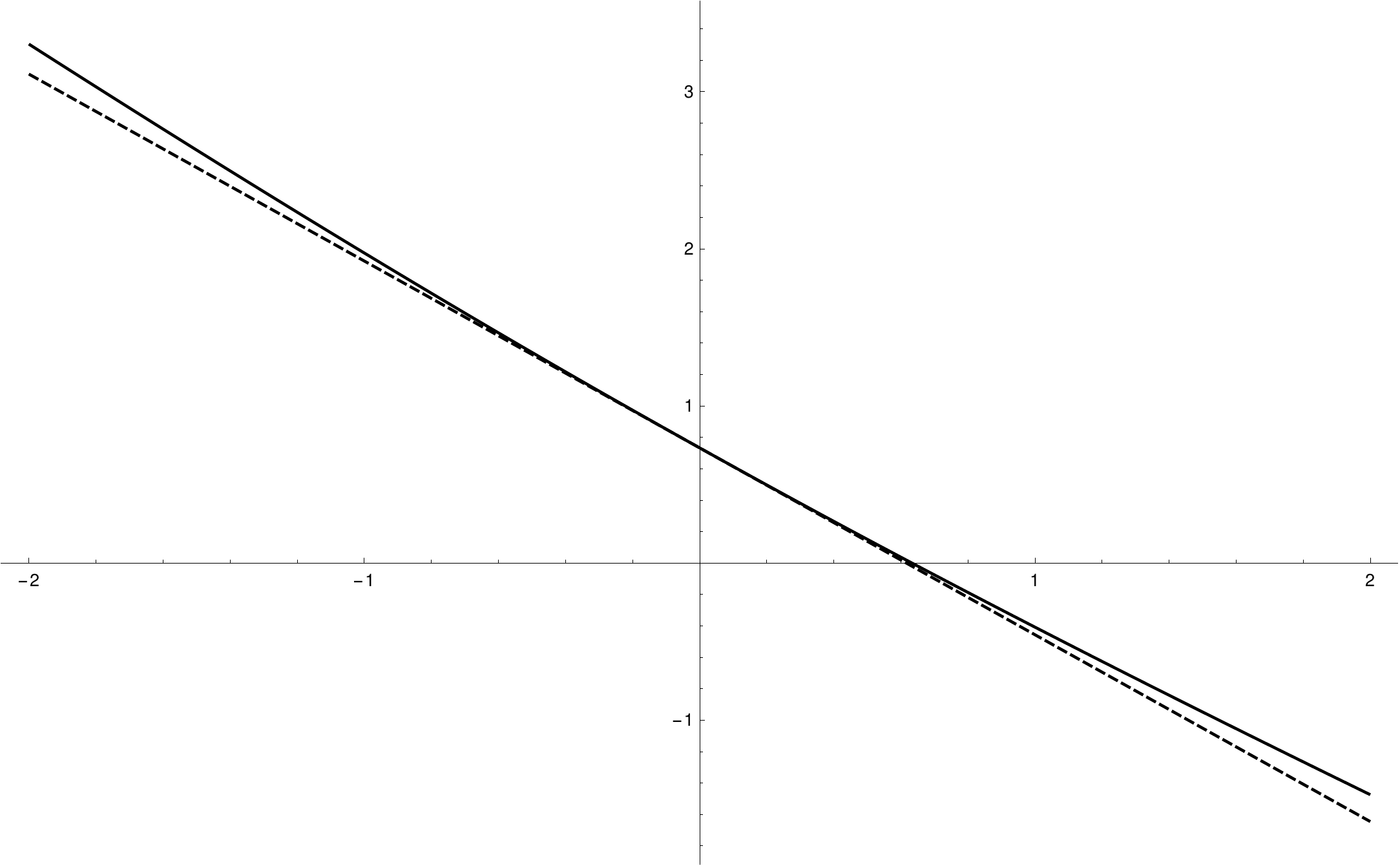}
\hspace{1cm}
\caption{The Manhattan curve (solid) for the example in Section \ref{Sec:triangle} and the tangent line at $0$ (dotted) for comparison.}
\label{Fig:Manhattan_graph_(3,3,4)}
\end{figure}

Following the method in Section \ref{Sec:free},
we find a matrix representation of the transfer operator, which is read off from the strongly connected component with appropriately defined weights in Figure \ref{Fig:auto_another_(3,3,4)SCC}.
The following is the characteristic polynomial of the transfer matrix of size $21$:
\[
-e^{-9s}x^{13}(-1+e^s x)(1+e^s x)(1+e^s x+e^{2s} x^2)
(1-e^s x-e^{2s}x^2-e^{3s}x^2-e^{4s}x^3+e^{5s}x^4).
\]
Let $s \mapsto r(s)$ be the branch given as a root of 
\[
e^{3s}-e^{4s}x-e^{5s}x^2-x^{6s}x^2-e^{7s}x^3+e^{8s}x^4,
\]
such that $r(s)$ coincides with the spectral radius of the transfer operator at $s=0$.
Then the Manhattan curve is obtained by $\th_{S/S_\ast}(s)=\log r(s)$,
see Figure \ref{Fig:Manhattan_graph_(3,3,4)}.
We find that $\a_{\max}=3/2$ and $\a_{\min}=1$.
Moreover the expansion of $\th_{S/S_\ast}(s)$ at $0$ has the following form:
\[
\th_{S/S_\ast}(s)=0.732858\dots -1.18937\dots s+0.0515301\dots s^2+O(s^3),
\]
and 
the mean distortion $\t(S/S_\ast)$ and $v_\ast/v$ are given by
\[
\t(S/S_\ast)=\frac{1}{68}(85-\sqrt{17})=1.18937\dots, \quad \text{and} \quad \frac{v_\ast}{v}=1.08611\dots.
\]
Note that $\t(S/S_\ast)$ is quadratic irrational although it is \textit{a priori} a root of a higher degree polynomial with rational coefficients.
We are still far from having
a systematic understanding of the exact class of numbers to which $\tau(S/S_\ast)$ can belong in general. 
We believe that this deserves further investigation.

\appendix

\section{Proof of Lemma \ref{Lem:diff}}\label{Sec:Lebesgue}

In this section we will refer to some fundamental facts from \cite{Heinonen}.
Let $X$ be a topological space endowed with a quasi-metric $\rho$ which is compatible with the topology on $X$,
and let us call $(X, \rho)$ a {\it quasi-metric space}.
Let $\m$ be a Borel regular measure on a topological space $(X, \rho)$.
For $r>0$ and $x \in X$,
we denote the ball of radius $r$ centered at $x$ relative to the quasi-metric $\rho$ by
\[
B(x, r):=\{y \in X \ : \ \rho(x, y)<r\}.
\]
A measure $\m$ is called {\it doubling} if all balls have {\it finite} and {\it positive} $\m$-measure and there exists a constant $C(\m)>0$ such that
$\m(B(x, 2 r)) \le C(\m)\m(B(x, r))$ for every ball $B(x, r)$ in $X$.
We call $C(\m)$ a {\it doubling constant} of $\m$.

Let $(X, \rho)$ be a quasi-metric space which admits a doubling measure $\m$.
For every Borel regular finite (non-negative) measure $\n$ on $X$, i.e., $\n(X)<\infty$,
let us decompose 
\[
\n=\n_{\ac}+\n_\sing,
\]
where $\n_\ac$ is the absolutely continuous part of $\n$ and $\n_\sing$ is the singular part of $\n$ relative to $\m$, respectively.
Since $\n$ is finite, $\n_\ac$ is also finite and thus $d\n_\ac/d\m$ is integrable.

\begin{lemma}\label{Lem:diff-general}
For $\m$-almost every $x \in X$, we have that
\[
\lim_{r \to 0}\frac{\n_\ac\(B(x, r)\)}{\m\(B(x, r)\)}=\frac{d\n_\ac}{d\m}(x)<\infty
\quad \text{and} \quad
\limsup_{r \to 0}\frac{\n_\sing\(B(x, r)\)}{\m\(B(x, r)\)}=0.
\]
\end{lemma}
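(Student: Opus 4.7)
The plan is to follow the classical proof of the Lebesgue differentiation theorem, with only bookkeeping changes to account for the quasi-metric constant of $\rho$ and the doubling hypothesis on $\m$ in place of the Euclidean ball structure. The cornerstone is a weak-type $(1,1)$ Hardy--Littlewood maximal inequality: for any finite Borel measure $\lambda$ on $X$, setting
\[
M\lambda(x) := \sup_{r>0}\frac{\lambda(B(x,r))}{\m(B(x,r))},
\]
one has $\m(\{M\lambda > t\}) \le C t^{-1}\lambda(X)$ for every $t>0$, where $C$ depends only on the doubling constant of $\m$ and the quasi-metric constant $C_\rho$ of $\rho$. First I would establish this via a Vitali-type covering argument: the inequality $\rho(x,y) \le C_\rho \max\{\rho(x,z),\rho(z,y)\}$ guarantees that whenever $B(x_1,r_1) \cap B(x_2,r_2) \neq \emptyset$ with $r_1 \ge r_2$, one has $B(x_2,r_2) \subset B(x_1, C_\rho r_1)$; combining the standard greedy selection procedure with the doubling condition iterated $\lceil \log_2 C_\rho \rceil$ times absorbs the enlargement factor and yields the estimate.

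Next I handle the absolutely continuous part. Writing $f := d\n_\ac/d\m \in L^1(\m)$, since $\rho^\e$ is bi-Lipschitz to a genuine metric for some $\e>0$ (see e.g., \cite[Proposition 14.5]{Heinonen}), the space $(X,\rho)$ is metrizable, and Borel regularity of $\m$ makes continuous functions with bounded support dense in $L^1(\m)$. For a continuous $g$, the positivity of $\m(B(x,r))$ on every ball (a consequence of doubling) together with the uniform continuity of $g$ on a neighborhood of $x$ give $\m(B(x,r))^{-1}\int_{B(x,r)} g\,d\m \to g(x)$ as $r \to 0$ at every $x$. Choosing continuous approximants $g_n$ with $\|f-g_n\|_{L^1(\m)} \to 0$, and applying the maximal inequality to the measure $|f-g_n|\,d\m$ together with Markov's inequality for the function $|f-g_n|$, controls the $\m$-measure of the set where $r$-averages of $f$ deviate from $f(x)$ by more than $1/k$; a subsequence/Borel--Cantelli argument then yields pointwise convergence $\m$-a.e., proving the first claim (with $f(x)<\infty$ automatic $\m$-a.e.\ from $f \in L^1(\m)$).

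For the singular part, use that $\n_\sing$ is concentrated on a Borel set $E$ with $\m(E)=0$. Inner regularity of $\n_\sing$ provides, for each integer $N \ge 1$, a closed set $E_N \subset E$ with $\n_\sing(E\setminus E_N) < 1/N$. For $x \notin E_N$, the closedness of $E_N$ implies $\rho(x,E_N)>0$, so for all sufficiently small $r$ the ball $B(x,r)$ is disjoint from $E_N$, giving $\n_\sing(B(x,r)) = \n_\sing|_{E\setminus E_N}(B(x,r))$ for such $r$. Consequently
\[
\limsup_{r \to 0}\frac{\n_\sing(B(x,r))}{\m(B(x,r))} \le M\(\n_\sing|_{E\setminus E_N}\)(x) \quad \text{for every $x \notin E_N$,}
\]
and the weak maximal inequality applied to $\n_\sing|_{E\setminus E_N}$ (total mass at most $1/N$), combined with $\m(E_N)=0$, yields $\m(\{\limsup > t\}) \le C(Nt)^{-1}$ for each $t>0$. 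Letting $N \to \infty$ shows $\m(\{\limsup > t\})=0$ for every $t>0$, and taking a countable union over $t=1/k$ proves the second claim. The main obstacle throughout is the careful transfer of the classical Vitali/maximal inequality machinery from metric to quasi-metric balls: the usual ``$5r$-trick'' must be replaced by a ``$C_\rho r$-trick,'' and the doubling condition must absorb the difference via finitely many iterations.
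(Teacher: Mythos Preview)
Your proposal is correct and follows essentially the same route as the paper: the paper cites \cite[Theorem 1.8]{Heinonen} for the absolutely continuous part (your $L^1$-approximation via the maximal inequality is precisely the classical proof behind that citation), and for the singular part the paper also approximates the null carrier from inside by compact sets $K_\e$ (your closed $E_N$), uses that small balls around points outside miss them, and applies the weak maximal inequality to the small-mass remainder before letting $\e\to 0$. The only cosmetic differences are your use of closed rather than compact approximants and your more explicit discussion of the $C_\rho r$-Vitali trick, neither of which changes the argument.
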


\proof
By adapting the proof of the Lebesgue differentiation theorem \cite[Theorem 1.8]{Heinonen} to a quasi-metric,
we have 
\[
\lim_{r \to 0}\frac{\n_\ac\(B(x, r)\)}{\m\(B(x, r)\)}=\frac{d\n_\ac}{d\m}(x) \quad \text{for $\m$-almost every $x \in X$}.
\]
Now we show the second claim.
Let us write $\n=\n_\sing$.
Since $\n$ and $\m$ are mutually singular,
there exists a measurable set $N$ in $X$ such that $\n(N)=\n(X)$ and $\m(N)=0$.
Further (the singular part) $\n$ is also Borel regular and the inner regularity implies that
for all $\e>0$ there exists a compact set $K_\e$ in $N$ such that $\n\(N\setminus K_\e\)<\e$.
Let $\n_\e:=\n|_{X\setminus K_\e}$ be the restriction of $\n$ on $X \setminus K_\e$,
for which $\n_\e(X)<\e$.
For all $x \in X \setminus K_\e$, there exists a small enough $r>0$ such that $B(x, r) \subset X \setminus K_\e$.
If we define
\[
L \n(x):=\limsup_{r \to 0}\frac{\n\(B(x, r)\)}{\m\(B(x, r)\)}
\quad \text{and} \quad
M \n_\e(x):=\sup_{r>0} \frac{\n_\e\(B(x, r)\)}{\m\(B(x, r)\)}
\quad \text{for $x \in X$},
\]
then for all $t>0$,
\[
\{L \n>t\}\subset K_\e \cup \{M \n_\e>t\}.
\]
The weak maximal inequality shows that
\[
\m\(\{M \n_\e>t\}\)\le \frac{C_\m}{t}\n_\e(X),
\]
for all $t>0$ and for a constant $C_\m$ depending only on the doubling constant of $\m$
(where the proof follows exactly as for (2.3) in \cite[Theorem 2.2]{Heinonen})
and thus
\[
\m\(\{L \n>t\}\) \le \m\(K_\e\)+\m\(\{M \n_\e>t\}\) \le \frac{C_\m}{t}\n_\e(X)<\frac{C_\m}{t}\e,
\]
where we have used the fact that $\m\(K_\e\) \le \m(N)=0$.
Therefore for each $t>0$, letting $\e \to 0$, we have that
\[
\m\(\{L \n >t\}\)=0,
\]
and this shows that $L \n(x)=0$ for $\m$-almost every $x \in X$, as required.
\qed

\begin{remark}
The above fact is standard in metric spaces with doubling measures, and the part adapted to a quasi-metric is mainly the place where we use Vitali's covering theorem.
We avoid repeating all the details: see \cite[Chapters 1 and 2]{Heinonen}.
\end{remark}

\proof[Proof of Lemma \ref{Lem:diff}]
If we fix a large enough $R>0$ given $C$ for which $d \in \Dc_\G$ is a $C$-rough geodesic metric,
then Lemma \ref{Lem:shadow-ball} and Lemma \ref{Lem:diff-general} together with the fact that $\m$ is doubling show that
\[
\limsup_{n \to \infty}\frac{\n_\ac\(O(\x_n, R)\)}{\m\(O(\x_n, R)\)} \le C_{\m, R}\frac{d\n_\ac}{d\m}(\x)<\infty,
\quad \text{and} \quad
\limsup_{n \to \infty}\frac{\n_\sing\(O(\x_n, R)\)}{\m\(O(\x_n, R)\)}=0
\]
for $\m$-almost every $\x$ in $\partial \G$
where $C_{\m, R, \d}$ is a positive constant depending only on a doubling constant of $\m$ and $R$ as well as the $\d$-hyperbolicity constant of $d$,
and we recall that $\x_n=\g_\x(n)$ for $n \ge 1$ and $\g_\x$ is a $C$-rough geodesic ray from $o$ converging to $\x$.
This concludes the claim.
\qed

\subsection*{Acknowledgment}
We would like to thank Professor Richard Sharp for his suggestion in Remark \ref{Rem:Sharp}, and the anonymous referee for useful comments.
R.T.\ is partially supported by JSPS Grant-in-Aid for Scientific Research JP20K03602 and JST, ACT-X Grant Number JPMJAX190J, Japan.

\bibliographystyle{alpha}
\bibliography{Manhattan}

\end{document}